\documentclass[11pt]{amsart}
\usepackage{amsmath, amsthm, amssymb}
\usepackage{amsmath,amscd}
\usepackage{mathabx}
\usepackage{hyperref}
\usepackage{mathrsfs}
\usepackage{xcolor, changepage} 
\usepackage{graphicx}
\usepackage{titletoc}
\usepackage{enumerate}
\usepackage{accents}
\usepackage{esint}

\usepackage{tikz}
\usetikzlibrary{matrix,arrows}
\usetikzlibrary{shapes}
\usetikzlibrary{calc}
\usetikzlibrary{arrows}
\usetikzlibrary{decorations.pathreplacing,decorations.markings}
\usepackage[all]{xy}
\usepackage{tikz-cd}

\usepackage{caption}
\usepackage{subcaption}
\usepackage{geometry}
\theoremstyle{plain}
\newtheorem{theorem}{Theorem}
\newtheorem{proposition}[theorem]{Proposition} 
\newtheorem{lemma}[theorem]{Lemma}
\newtheorem{remark}[theorem]{Remark}
\newtheorem{corollary}[theorem]{Corollary}
\newtheorem{definition}[theorem]{Definition}

\numberwithin{theorem}{section}

\DeclareMathOperator{\Lip}{Lip}

\DeclareMathOperator{\diam}{diam}
\DeclareMathOperator{\supp}{supp}
\DeclareMathOperator{\degg}{deg}
\DeclareMathOperator{\width}{width}

\DeclareMathOperator{\divv}{div}
\DeclareMathOperator{\spt}{spt}

\DeclareMathOperator{\inj}{inj}
\DeclareMathOperator{\capp}{cap}

\numberwithin{equation}{section}

\title[Singulararity removal rigidity theorems]{Singularity removal rigidity theorems for minimal hypersurfaces in  manifolds with nonnegative scalar curvature}
\author{Shihang He}
\address{Key Laboratory of Pure and Applied Mathematics,
	School of Mathematical Sciences, Peking University, Beijing, 100871, P. R. China
}
\email{hsh0119@pku.edu.cn}
\author{Yuguang Shi}
\address{Key Laboratory of Pure and Applied Mathematics,
	School of Mathematical Sciences, Peking University, Beijing, 100871, P. R. China
}
\email{ygshi@math.pku.edu.cn}

\author{Haobin Yu}
\address{
	School of Mathematics, Hangzhou Normal University, Hangzhou, 311121, P. R. China
}
\email{yhbmath@hznu.edu.cn}
\thanks{S. He, Y. Shi  are funded by the National Key R\&D Program of China Grant 2020YFA0712800 and NSFC12431003. H. Yu is funded by Zhejiang Provincial NSFC No. LY24A010008}

\subjclass[2010]{Primary 53C21, secondary 53C24 }

\begin{document}
\begin{abstract}
        We prove two "Singularity removal rigidity theorems" for minimal hypersurfaces with isolated singularities in manifolds of nonnegative scalar curvature (Theorems \ref{thm: rigidity for minimal surface} and \ref{thm: georch free of singularity}). In particular, we observe a new phenomenon that the extremal scalar curvature condition forces smoothness, which reveals a kind of positive effect of minimal hypersurface singularities in scalar curvature geometry. 
        
        As an application, we obtain a direct proof of the positive mass theorem (PMT) for asymptotically flat $8$-manifolds with arbitrary ends (Theorem \ref{thm: pmt8dim}), without using N. Smale's generic regularity theorem. A key ingredient is a new spectral version of PMT for AF manifolds with arbitrary ends, whose proof relies on PMT for asymptotically locally flat (ALF) manifolds with $\mathbf{S}^1$-symmetry.
	\end{abstract}
	\maketitle
	\tableofcontents	
	
	\section{Introduction}

	\subsection{ Singularity removal rigidity theorems and Geroch's Conjecture in Dimensions Not Exceeding  $8$}
	
    $\quad$

Rigidity phenomena under scalar curvature lower bounds have been studied extensively in Riemannian geometry. A basic example is the rigidity statement in the proof of the positive mass theorem \cite{SY79} by Schoen and Yau, where flatness follows in the extremal case when the mass attains zero. Since then, many rigidity results have been obtained under nonnegative scalar curvature assumptions.

A central tool in these works is the theory of stable minimal hypersurfaces, where the stability inequality links the ambient scalar curvature to the intrinsic geometry of the hypersurface. The study of the rigidity for minimal hypersurfaces under nonnegative scalar curvature assumption dates back to \cite{FcS1980}. Later, \cite{CG00} and \cite{CEM19} studied rigidity of area-minimizing  tori and cylinders in 3-manifolds with nonnegative scalar curvature. \cite{BBN10} also studied the rigidity problem concerning area-minimizing spheres in 3-manifolds with positive scalar curvature, subjected to a systolic-extremal condition. More recently, \cite{Carlotto16,CCE16} and \cite{EK23} provided a thorough study of stable minimal hypersurfaces in asymptotically flat manifolds with nonnegative scalar curvature.

The previous results concern minimal hypersurfaces in manifolds of dimensions at most seven, where minimal hypersurfaces are smooth by regularity theory. In dimension eight and higher, minimal hypersurfaces may develop singularities. Therefore, when studying rigidity in these dimensions, one must allow minimal hypersurfaces with singular sets, which substantially increases the complexity of the problem, and requires a refined understanding on the interaction between minimal hypersurface singularities and the scalar curvature. Theorem \ref{thm: rigidity for minimal surface} continues the investigation of the above problem in dimension eight. Although singularities are allowed a priori, we show that in the scalar-extremal setting of Theorem \ref{thm: rigidity for minimal surface}, singularities cannot occur. We interprete it as a "Singularity removal rigidity theorem". In this sense, \textit{the scalar curvature condition enforces regularity in addition to geometric rigidity.}
	\begin{theorem}\label{thm: rigidity for minimal surface}
		Let $(M^{n+1},g)$ be an AF manifold with arbitrary ends, an AF end  $E$ of asymptotic order  $\tau>n-2$. Let $\Sigma^{n}\subset M^{n+1}$ be an area-minimizing boundary in $M^{n+1}$ with isolated singular set $\mathcal{S}$. Assume $\Sigma$ is  strongly stable (see Definition \ref{defn: strongly stable hypersurface} below), $R_g\ge 0$ along $\Sigma$, and one of the following holds:
		
		(a) $n+1\le 8$
		
		(b) $\Sigma\backslash\mathcal{S}$ is spin.
        
		$\quad$
		
		(1) If $\Sigma\backslash E$ is compact, then we have $\mathcal{S} = \emptyset$, and $\Sigma$ is isometric to $\mathbf{R}^{n}$, with $R_g = |A|^2 = Ric(\nu,\nu) = 0$ along $\Sigma$.
		
		(2) If there exists $U_1,U_2\subset M$ with $E\subset U_1\subset U_2$, such that $U_i\backslash E$ is compact ($i = 1,2$) and  $R_g> 0$ on $U_2\backslash U_1$, then $\Sigma\subset U_1$, and the same conclusion in (1) holds. 
	\end{theorem}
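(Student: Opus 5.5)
The plan is to turn the strong stability of $\Sigma$ into a nonnegative conformal Laplacian on the regular part $\Sigma\backslash\mathcal{S}$. Then $\Sigma$, after a conformal change, becomes an asymptotically flat manifold with nonnegative scalar curvature and zero mass, and rigidity of the positive mass theorem forces flatness. The singular points will be handled by showing that they would contribute positive mass, which is impossible.

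First, using the Gauss equation, the stability inequality for $\Sigma$ reads
\[
\int_\Sigma |\nabla\varphi|^2 + \tfrac12\big(R_\Sigma - R_g - |A|^2\big)\varphi^2 \ge 0
\]
for compactly supported $\varphi$ on $\Sigma\backslash\mathcal{S}$. I interpret strong stability as allowing test functions that are asymptotically constant on the AF end. Since $\frac{n-2}{4(n-1)}<\frac12$ and $R_g+|A|^2\ge 0$, this implies
\[
-\Delta_\Sigma+\tfrac{n-2}{4(n-1)}R_\Sigma\ge 0
\]
on $\Sigma\backslash\mathcal{S}$, with a strict gain given by the term $R_g+|A|^2$. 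On the AF end, $\Sigma$ is a graph over a hyperplane; it is AF of order $\tau$ with $\tau>n-2$, which gives it zero mass. I would then solve
\[
-\Delta u+c_nR_\Sigma u=0,\qquad u\to1 \text{ at infinity},
\]
on $\Sigma\backslash\mathcal{S}$ by exhaustion. This should give a positive solution $u$. Near each isolated singular point, where $\Sigma$ is asymptotic to a stable minimal cone, I expect $u$ to blow up like a Green's-type function, $u\sim r^{-\gamma}$ for some $\gamma>0$, or at least to be bounded below by a positive constant. In either case $(\Sigma\backslash\mathcal{S},u^{4/(n-2)}g_\Sigma)$ is complete away from the AF end: each singular point becomes an extra "arbitrary end". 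This step uses that the cone link, by the stability inequality on the cone, has the correct spectral bound.

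Next comes the positive mass theorem with arbitrary ends, which holds for $n\le7$ in case (a) and for spin manifolds in case (b) via Dirac operator methods with arbitrary ends. Applied to $\tilde g=u^{4/(n-2)}g_\Sigma$, it gives $m(\tilde g)\ge0$. On the other hand, $m(\tilde g)$ equals $m(g_\Sigma)=0$ minus a positive multiple of
\[
\int \Big(|\nabla u|^2+c_nR_\Sigma u^2\Big),
\]
and the strict gain term together with $u\not\equiv1$ makes this integral positive whenever $R_g+|A|^2\not\equiv0$ or $\mathcal{S}\ne\emptyset$. So rigidity forces $u\equiv1$. Hence $\mathcal{S}=\emptyset$, because a constant $u$ cannot blow up, and the singular cone would otherwise force a nontrivial Green's-type contribution. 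It also forces $R_g=|A|^2=0$ and $R_\Sigma\ge0$ with zero mass, so the rigidity case of the PMT with arbitrary ends gives $\Sigma\cong\mathbf{R}^n$. Finally, $Ric(\nu,\nu)=0$ follows from the equality case of stability: the constant function is a Jacobi field.

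For (2), I would first show $\Sigma\cap(U_2\backslash U_1)=\emptyset$. Otherwise I would run the same argument with the strict positivity of $R_g$ there; the conclusion of (1) then gives $R_g=0$ along $\Sigma$, a contradiction. Since $\Sigma$ is connected to the end and $U_2\backslash U_1$ separates, $\Sigma\subset U_1$, and then (1) applies.

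The main obstacle is the analysis at the singular points. One must show that the conformal factor makes the singularities into complete ends, or otherwise harmless, so that the arbitrary-ends PMT applies, and that a singularity genuinely contributes strictly to the mass deficit. I would attack this by separation of variables on the tangent cone using the strict stability gap of nonflat stable cones, which is strict precisely because nonflat cones exist only when $n\ge7$.
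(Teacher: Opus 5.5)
There is a genuine gap, and it is exactly the step you flagged as the main obstacle. Completeness of $u^{4/(n-2)}g_\Sigma$ and a mass \emph{deficit} pull in opposite directions, and your plan does not reconcile them.

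First, completeness fails as stated. Each $p\in\mathcal{S}$ lies at finite $g_\Sigma$-distance. A factor that is only bounded below, or that grows like $d_\Sigma(\cdot,p)^{-\gamma}$ with $\gamma<\frac{n-2}{2}$, therefore leaves the metric incomplete, so your ``in either case'' is false. Moreover, the $u$ you obtain by exhaustion has finite energy near $\mathcal{S}$, by coercivity of $-\Delta+c_nR_\Sigma$. It therefore lies on the regular branch. On the tangent cone, where $c_nR_\Sigma\approx -c_n|A|^2\sim -r^{-2}$, stability gives $\lambda_1(-\Delta_L-c_n|A|^2r^2)>-(\frac{n-2}{2})^2$. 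Hence the upper indicial root satisfies $\gamma_+>-\frac{n-2}{2}$, and this $u$ does not produce a complete end.

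To force completeness you would have to add a singular (Green's-type) solution at each $p$. That is exactly the analysis of the conformal Laplacian with unbounded $R_\Sigma$ that the paper avoids. Once you add it, $\int(|\nabla u|^2+c_nR_\Sigma u^2)$ diverges and your mass identity acquires boundary terms at $\mathcal{S}$. Since a positive singular solution decaying at infinity has a positive $|x|^{2-n}$ coefficient, the blow-up \emph{raises} the mass. So the singularity never feeds the deficit. The deficit has to come from $h=R_g+|A|^2>0$ somewhere near $\mathcal{S}$ (Lemma~\ref{lem: |A|>0}, via a non-flat tangent cone), and no spectral gap of the cone is needed.

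The missing idea is the paper's decoupling of these two effects.
\begin{enumerate}
\item Blow up with $G_\delta=1+\delta G$, where $G$ is \emph{harmonic} with $G\gtrsim d^{2-n}$ near each $p$ (Proposition~\ref{lem: extend harmonic function 2}). This gives completeness (Lemma~\ref{lem: completeness}). It preserves the strong spectral bound with $h$ replaced by $hG_\delta^{-4/(n-2)}$ (Proposition~\ref{conformal deformation1}). It raises the mass only by $O(\delta)$.
\item Instead of a second conformal change, take the $\mathbf{S}^1$-warped product $g_\delta+v_\delta^2ds^2$. Its mass drops by some $\sigma_0>0$ \emph{independent of $\delta$}, because $G_\delta v_{i,\delta}$ solves a $\delta$-independent equation on $(\Sigma\backslash\mathcal{S},g_\Sigma)$ (Proposition~\ref{thm: negative mass 2}).
\item For small $\delta$ the warped product has negative mass, contradicting the $\mathbf{S}^1$-invariant ALF positive mass theorem (Proposition~\ref{prop: PMT for S^1 symmetric ALF}).
\end{enumerate}

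The warped product also handles part (2), where your argument breaks down a second time. There $\Sigma\backslash E$ need not be compact, so conclusion (1) is not available. Along genuine arbitrary ends of $\Sigma$, a solution of $-\Delta u+c_nR_\Sigma u=0$ with $u\to 1$ may decay to $0$ and destroy completeness. By contrast, $g_\delta+v^2ds^2$ is complete as soon as $g_\delta$ is.
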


	 In his Four Lectures \cite[Section 3.7.1]{Gro23}, Gromov remarked that \textit{singularities must enhance the power of minimal hypersurfaces and stable $\mu$-bubbles, since the large curvatures add to the positivity of the second variation.} Gromov's observation suggests a possibility to use the minimal hypersurface method to study scalar curvature problems in higher dimensions without any smoothing procedure. Theorem \ref{thm: rigidity for minimal surface} and  Theorem \ref{thm: georch free of singularity} below  consolidate this Gromov's observation.
	
	 As a direct corollary of Theorem \ref{thm: rigidity for minimal surface}, we obtain the following positive mass theorem for AF manifolds $(M^{n+1},g)$ with arbitrary ends, where $n\leq 7$.

	\begin{theorem}\label{thm: pmt8dim}
		Let $(M^{n+1}, g)$ be a smooth AF manifold  with  arbitrary ends and non-negative scalar curvature. If $2\leq n\leq 7$, then the mass of $(M^n, g)$ is non-negative, the equality holds if and only if 	$(M^{n+1}, g)$ is isometric to $\mathbf{R}^{n+1}$.
	\end{theorem}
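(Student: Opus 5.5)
We argue by contradiction via the classical Schoen--Yau density-type reduction, combined with Theorem \ref{thm: rigidity for minimal surface} (case (a), since $n+1\le 8$). First, by the standard conformal/density arguments for AF manifolds with arbitrary ends (as in Lohkamp's and Schoen--Yau's reductions, which only modify the metric on the distinguished end $E$), we may assume that if the mass $m$ of $E$ is negative then, after perturbation, $g$ is harmonically flat on $E$ outside a compact set, $R_g\ge 0$ everywhere, and $R_g>0$ on some compact annular region $U_2\backslash U_1$ in $E$; moreover the mass remains negative. In harmonically flat coordinates $g=u^{4/(n-1)}\delta$ with $u=1+\frac{m}{|x|^{n-1}}+O(|x|^{-n})$, so the decay order is $n-1>n-2$, as required by Theorem \ref{thm: rigidity for minimal surface}.

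Next, since $m<0$, the slabs $\{|x_{n+1}|\le h\}$ have mean-convex boundary behavior at infinity (the hyperplanes $x_{n+1}=\pm h$ are barriers with mean curvature pointing in the appropriate direction, a computation using the sign of $m$). We then solve the Plateau problem with boundary $\{x_{n+1}=0,\ |x'|=\rho\}$ in the cylinder and let $\rho\to\infty$, obtaining an area-minimizing boundary $\Sigma^n$ asymptotic to a horizontal hyperplane $x_{n+1}=\text{const}$ in $E$. The arbitrary other ends are handled by the barrier/exhaustion arguments already used for arbitrary ends (minimizing within the region bounded by a suitable compact mean-convex surface, or using that minimizing boundaries cannot escape into other ends as area would increase); the resulting $\Sigma$ is a boundary, area-minimizing, hence strongly stable in the sense of Definition \ref{defn: strongly stable hypersurface}, with singular set of dimension $\le n-7\le 0$, hence isolated since $n\le 7$. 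The nontrivial point here, which I expect to be the main obstacle, is ensuring the limit is nonempty and stays in a compact-plus-$E$ region, i.e. $\Sigma\backslash E$ compact or, better, the setting of part (2) of Theorem \ref{thm: rigidity for minimal surface}: one needs height and area estimates uniform in $\rho$, which come from the negative-mass barrier and monotonicity.

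Applying Theorem \ref{thm: rigidity for minimal surface}(2) with $R_g>0$ on $U_2\backslash U_1$, we get $\Sigma$ smooth, isometric to $\mathbf{R}^n$, with $R_g=|A|^2=Ric(\nu,\nu)=0$ along $\Sigma$. But the induced metric on the asymptotically horizontal $\Sigma$ in a harmonically flat end with $m\neq 0$ is $u^{4/(n-1)}\delta$ restricted, which is conformally Euclidean with nonconstant factor and is not flat (its scalar curvature or its area growth deviates from Euclidean at order $m|x|^{1-n}$); alternatively, placing the perturbation so that $R_g>0$ somewhere $\Sigma$ must cross yields a direct contradiction with $R_g=0$ along $\Sigma$. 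Hence $m\ge 0$.

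For equality: if $m=0$ and $g$ is not flat, a standard argument (perturb $g$ by a conformal change using $Ric\neq 0$, as in Schoen--Yau) produces a metric with $R\ge0$ and negative mass, contradicting the previous step; so $Ric\equiv0$, and then the volume comparison with an AF end forces $(M,g)$ to be isometric to $\mathbf{R}^{n+1}$, in particular with no other ends.
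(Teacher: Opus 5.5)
Your skeleton matches the paper's: negative mass, then a harmonically flat reduction, then positivity of $R$ in $E$, then a minimizing boundary asymptotic to a horizontal plane, then Theorem \ref{thm: rigidity for minimal surface}. However, the step you flag as the main obstacle is a genuine gap, and the remedies you suggest do not work. With arbitrary ends there is in general no compact mean-convex hypersurface separating $E$ from the other ends. Nor does area increase when a boundary slides into another end: those ends are uncontrolled, and their cross-sections need not grow even when $R_g\ge 0$. So minimizing sequences for your Plateau problems can lose area into them, minimizers need not exist, and you cannot arrange $\Sigma\setminus E$ to be compact.

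The missing idea is the paper's free boundary problem with an inner obstacle. One minimizes among Caccioppoli sets $\Omega\subset C_R\cup V_{j(R)}$ with $C_R\cap\{z\le a\}\subset\Omega$ and $\Omega\cap\{z\ge b\}=\emptyset$ (the class $\mathcal{F}_R$). The hypersurface $\Sigma_R=\partial\Omega\setminus\partial C_R$ is free on the cylinder and trapped in the slab between the mean-convex planes $S_{\pm t_0}$. Compactness of Caccioppoli sets gives minimizers, and the obstacle $\partial V_{j(R)}$ recedes as $R\to\infty$. The limit is nonempty because every $\Sigma_R$ meets a fixed compact set in the slab. The limit $\Sigma$ is area-minimizing in $M$ but may run off into the other ends. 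This is harmless only because Proposition \ref{thm: negative mass 2} and Proposition \ref{prop: PMT for S^1 symmetric ALF} are formulated for arbitrary ends, which is the point of the paper's machinery.

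Two further claims are unjustified. First, area-minimizing gives stability only for compactly supported variations. For $n\ge 3$ the infinity of an $n$-dimensional AF end has positive capacity, so this does not extend to asymptotically constant test functions by cutting off. Strong stability in the sense of Definition \ref{defn: strongly stable hypersurface} must therefore come from the construction. In the paper it comes from the freedom of $\Sigma_R$ on $\partial C_R$, via \cite[Lemma 3.6]{HSY24}. A Plateau problem with boundary fixed at height $0$ loses exactly the vertical-translation variation; Schoen--Yau recover it by additionally minimizing over the height. Second, before speaking of an AF end of $\Sigma$ you must rule out singular points escaping to infinity in $E$. The paper does this with a blow-down, the constancy theorem and Allard's theorem.

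Your end-game also needs adjusting. Theorem \ref{thm: rigidity for minimal surface}(2) needs the shield between $E$ and the rest of $M$ ($E\subset U_1$). An annulus inside $E$ fits only after you relabel its exterior as the AF end. Also, ``conformally Euclidean with nonconstant factor, hence not flat'' is not an argument. The paper instead uses the superharmonic factor of \cite[Lemma 5.1]{LUY21}, equal to $1-\frac{m}{4|x|^{n-1}}$ far out and constant outside $E\setminus B_{r_1}(O)$. Then $\tilde g$ stays complete with negative mass and has $R_{\tilde g}>0$ on all of $E\setminus B_{r_2}(O)$. Since $\Sigma$ goes to infinity in $E$, $h=R_{\tilde g}+|A|^2>0$ somewhere on $\Sigma$, and the contradiction follows at once.
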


 In \cite{LUY21}, Lesourd-Unger-Yau established a shielded version of the positive mass theorem. The second statement of our Theorem \ref{thm: rigidity for minimal surface} is largely influenced by their work, and we refer to it as the \textit{shielding principle} for strongly stable area-minimizing hypersurfaces in AF manifolds with arbitrary ends. One of the novelty in the proof of Theorem \ref{thm: pmt8dim} lies in that it can be obtained independent of the $\mu$-bubble approach. On the other hand, manifolds with arbitrary ends arise naturally as the blow up models for singular spaces. A spectral version of the positive mass theorem for AF manifolds with arbitrary ends, Theorem \ref{thm: PMT arbitrary end spectral}, will play a central role in our establishment of the main results of this paper. This is our another motivation for considering manifolds with arbitrary ends.
	
	 Compared with
    Theorem \ref{thm: rigidity for minimal surface}, in the closed manifold setting, we obtain another "Singularity removal rigidity theorem".
		\begin{theorem}\label{thm: georch free of singularity}
			Let $(M^{n+1},g)$ be a closed Riemannian manifold and let $\Sigma^n\subset M^{n+1}$ be a  compact area-minimizing hypersurface with isolated singular set $\mathcal{S}$. Assume that there exists a non-zero degree map from $\Sigma$ to an enlargeable manifold $X$ in the sense of Definition \ref{defn: degree 2}.
			
			If $R_g\ge 0$ along $\Sigma$ and one of the following holds:
			
			\begin{enumerate}
				\item $n+1 = 8$;
				\item $\Sigma\backslash\mathcal{S}$ is spin.
			\end{enumerate}
			Then $\mathcal{S} = \emptyset$, $\Sigma$ is intrinsically flat, and $R_g = Ric(\nu,\nu) = |A|^2_\Sigma = 0$ along $\Sigma$.
	\end{theorem}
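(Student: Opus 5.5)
The plan is to argue by contradiction on the singular set, using a conformal blow-up at the singular points, and then to get the rigidity from the smooth case.

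\emph{Stability with a conformal weight.} Since $\Sigma$ is area-minimizing, it is stable on its regular part. Combining the Gauss equation with $R_g\ge 0$ gives, for every $\varphi\in C^\infty_c(\Sigma\backslash\mathcal{S})$,
\[
\int_\Sigma |\nabla\varphi|^2+\tfrac12 R_\Sigma\varphi^2 \ \ge\ \int_\Sigma \tfrac12\big(R_g+|A|^2\big)\varphi^2 \ \ge\ 0 .
\]
So the operator $L=-\Delta+\tfrac12 R_\Sigma$ is nonnegative. Near each isolated singular point $p\in\mathcal{S}$, $\Sigma$ is asymptotic to an area-minimizing cone, so $|A|^2\sim c/r^2$ with $c>0$ unless the cone is a plane. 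The extra term $\tfrac12|A|^2$ is a Hardy-type potential.

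\emph{Building an ALF/AF model.} On $\Sigma\backslash\mathcal{S}$ we take a positive solution $u$ of
\[
-\Delta u+\tfrac12(R_\Sigma-R_g-|A|^2)u=\lambda u ,
\]
with $\lambda\ge 0$ the first eigenvalue, which exists because $\Sigma$ is compact. Near $p$, $u$ blows up like a power $r^{-\alpha}$ determined by the cone link. The spectral form of $L$ (with constant $\tfrac12$ rather than $\tfrac{n-2}{4(n-1)}$) suggests passing to the warped product $\Sigma\times_{u}\mathbf S^1$, or its multi-warped version, instead of a conformal change. That space has scalar curvature $\ge 0$ and is strictly positive wherever $\lambda>0$ or $|A|>0$. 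Each singular point becomes an end of the type handled by the spectral PMT for arbitrary ends, Theorem \ref{thm: PMT arbitrary end spectral}, or it becomes complete and asymptotically conical.

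\emph{Contradiction with enlargeability.} The non-zero degree map $\Sigma\to X$, with $X$ enlargeable, extends to this complete $\mathbf S^1$-invariant manifold with arbitrary ends, which is then enlargeable in the sense of Definition \ref{defn: degree 2}. Under case (2) (spin) we use Gromov–Lawson, Dirac operator techniques on complete manifolds. Under case (1), $n=7$, we use the $\mu$-bubble/torical descent, which works in dimension $\le 7$ for $\Sigma\times\mathbf S^1$ after reduction. Either way, no complete metric with $R\ge0$, $R\not\equiv0$ exists. So $\lambda=0$, $R_g=|A|^2=0$, and $R_\Sigma\equiv0$ in a suitable weak sense.

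\emph{Ruling out singularities and concluding.} Vanishing of $|A|^2$ near $p$ forces the tangent cone to be flat, so $p$ is regular by Allard, and hence $\mathcal{S}=\emptyset$. Then $\Sigma$ is smooth, closed, enlargeable-dominated, and carries a metric with $R_\Sigma\ge0$ in the spectral sense. The standard rigidity (Ricci flow or Kazdan–Warner argument) makes $\Sigma$ Ricci flat, and enlargeability forces it to be flat. Equality in stability with $u$ constant gives $Ric(\nu,\nu)=0$.

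The main obstacle is the singular analysis at $p$. One must show that the warped or conformal metric near $p$ is genuinely complete, or conical with the right sign, so that the enlargeability and spectral PMT machinery applies. This requires the Hardy inequality from the cone (Simons' cone bounds on the first eigenvalue of the link) to produce the correct growth exponent of $u$.
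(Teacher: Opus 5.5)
There is a genuine gap at exactly the step you flag as the main obstacle, and the tool you propose for it cannot close it.

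\emph{The warped product is incomplete.} A warped product $g_\Sigma+u^2d\theta^2$ over $\Sigma\backslash\mathcal{S}$ is incomplete however fast $u$ blows up, because the base metric is unchanged. A radial segment into $p\in\mathcal{S}$ inside a fixed $\theta$-slice still has finite length.

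\emph{Using the eigenfunction conformally does not help either.} Near a stable cone a positive Jacobi field behaves like $r^{\gamma^{\pm}}$ with $\gamma^{\pm}=-\frac{n-2}{2}\pm\sqrt{(\frac{n-2}{2})^2+\mu_1}$. A $W^{1,2}$ eigenfunction, if it exists at all (note that $|A|^2\sim c/r^2$ is a critical Hardy potential), can only have the slow rate $r^{\gamma^+}$ with $\gamma^+>-\frac{n-2}{2}$. For instance, it grows like $r^{-2}$ at a Simons-cone point when $n=7$. Completeness of $u^{\frac{4}{n-2}}g_\Sigma$, however, requires $\int_0 u^{\frac{2}{n-2}}dr=\infty$, i.e.\ growth at least like $r^{-\frac{n-2}{2}}$. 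So the Hardy/link-eigenvalue exponent is always on the wrong side.

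The space you build is therefore incomplete. Neither Proposition \ref{prop: noncompact dominate enlargeable 2} nor the spin obstruction applies to it, and your later deduction of $R_g=|A|^2=0$ followed by Allard has nothing to stand on. Theorem \ref{thm: PMT arbitrary end spectral} also plays no role here, since there is no AF end in the closed setting.

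\emph{The missing idea} is the paper's two-step blow-up: conformal first, warped second.
\begin{enumerate}
\item Since $\mathcal{S}\neq\emptyset$, Lemma \ref{lem: |A|>0} gives a point $p$ with $h=R_g+|A|^2>0$.
\item Take $f\ge 0$ small and supported near $p$, away from $\mathcal{S}$. Take a positive solution of $-\Delta u+fu=0$ on $\Sigma\backslash\mathcal{S}$ with $u\ge C\,d(x,p_i)^{2-n}$ near every $p_i\in\mathcal{S}$ (Lemma \ref{lmm:singular function}). This rests on the Green-type singular harmonic functions and capacity estimates of Section 3 on the singular metric measure space.
\item This growth, far beyond $r^{-\frac{n-2}{2}}$, makes $\bar g=u^{\frac{4}{n-2}}g_\Sigma$ complete, as in Lemma \ref{lem: completeness}.
\item Because $u$ is harmonic off $\supp f$, the $\frac12$-spectral inequality passes through the conformal change. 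No control of $R_\Sigma$ near $\mathcal{S}$ is needed. The result is the lower bound $\eta=(\frac h2-\frac{n}{n-2}f)u^{-\frac{4}{n-2}}\ge 0$ with $\eta(p)>0$ (Lemma \ref{lmm:spectrum psc1}).
\item Only now is the warped product $\bar g+\varphi^2d\theta^2$ taken. It is complete and $\mathbf{S}^1$-invariant, with $R\ge 0$ everywhere and $R>0$ over $p$.
\item The $\mathbf{S}^1$-invariant Kazdan deformation (Lemma \ref{G-invariant Kazdan}) then produces PSC. This contradicts Proposition \ref{prop: noncompact dominate enlargeable 2}, or \cite{LSWZ24} in the spin case.
\end{enumerate}
The logic also runs opposite to yours. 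The nonempty singular set is what supplies the positivity $|A|^2>0$ that drives the contradiction; it is not something removed after first proving $|A|\equiv 0$.
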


	As a direct corollary of Theorem \ref{thm: georch free of singularity}, we have the following topological obstruction result for PSC metric without using N.Smale's regularity theorem for minimal hypersurfaces in  compact $8$-dimensional  manifolds with generic metric  \cite{Smale93}.

	\begin{corollary}\label{cor:8dim georch}
		Let $M^n$ $(n\le 8)$ be a compact manifold and assume there exists a non-zero degree map $f: M^n\longrightarrow \mathbf{T}^n$, then $M^n$ admits no PSC metric.
	\end{corollary}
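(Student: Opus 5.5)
Corollary \ref{cor:8dim georch} will be proved by contradiction, reducing it to Theorem \ref{thm: georch free of singularity} applied to a homologically minimizing hypersurface. Suppose $M^n$ ($n\le 8$) carries a metric $g$ with $R_g>0$ and a map $f:M\to\mathbf{T}^n$ of nonzero degree. For $n\le 7$ this is the classical Schoen--Yau argument, but it can be run uniformly. We may pass to the orientation double cover if needed: the lifted map still has nonzero degree and the metric remains PSC. So assume $M$ is oriented.

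First, write $\mathbf{T}^n=\mathbf{T}^{n-1}\times\mathbf{S}^1$ and pull back the angular form $d\theta_n$ by $f$. Its Poincaré dual gives a nonzero class in $H_{n-1}(M;\mathbf{Z})$. To see it is nonzero, take a regular value $p\in\mathbf{S}^1$ of the projected map $\theta_n\circ f$. The hypersurface $F=(\theta_n\circ f)^{-1}(p)$ maps to $\mathbf{T}^{n-1}\times\{p\}$ with degree $\deg f\neq 0$.

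Next, minimize area in this homology class. By geometric measure theory (Federer's codimension-$7$ singular set estimate), we obtain an area-minimizing integral current $\Sigma^{n-1}$ whose singular set has dimension at most $n-8\le 0$. Hence $\mathcal{S}$ consists of isolated points, and is empty when $n\le 7$.

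We then need a nonzero-degree map from $\Sigma$ to an enlargeable manifold. Since $\Sigma$ is homologous to $F$, the composite $\Sigma\to M\to\mathbf{T}^n\to\mathbf{T}^{n-1}$ has degree equal to the degree of $F\to\mathbf{T}^{n-1}$, which is nonzero. Here degree is taken in the sense of Definition \ref{defn: degree 2}, which presumably handles singular $\Sigma$ through homology with the regular part. Since $\mathbf{T}^{n-1}$ is flat, it is enlargeable. The ambient space is $(M^{n},g)$, which is $8$-dimensional when $n=8$. When $n<8$ the singular set is empty, so $\Sigma\setminus\mathcal S$ is smooth. Then either condition (1) of Theorem \ref{thm: georch free of singularity} is read with ambient dimension at most $8$, or we cross $M$ with a flat torus $\mathbf{T}^{8-n}$ to raise the dimension to $8$; this preserves $R>0$, and the map to $\mathbf{T}^{8}$ still has nonzero degree. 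Theorem \ref{thm: georch free of singularity} then gives $R_g=0$ along $\Sigma$, contradicting $R_g>0$.

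The main obstacle is the degree bookkeeping in the second paragraph. One must check that the area-minimizing current in the class of $F$ is connected, or else pick a component carrying nonzero degree, and that it satisfies the precise notion in Definition \ref{defn: degree 2}. If $\Sigma$ has several components, I would choose one on which the degree is nonzero. Such a component exists because the degrees of the components sum to $\deg f$. Each component is itself area-minimizing in its own class, so the theorem applies to it.
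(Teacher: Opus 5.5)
Your proposal is correct and follows the paper's proof: take $f^{-1}$ of a codimension-one subtorus, minimize in its homology class (isolated singularities when $n=8$), transfer the degree to the minimizer, and contradict Theorem \ref{thm: georch free of singularity}. The paper transfers the degree by evaluating the minimizing current on the pullback of a compactly supported top form on $\mathbf{T}^{n-1}\setminus F(\mathcal S)$, which gives $m\deg(F|_{\Sigma\setminus\mathcal S})=\deg f$ for the multiplicity $m$; your ``equal degree'' and ``degrees sum to $\deg f$'' should carry these multiplicities, though this does not affect nonvanishing. Crossing with $\mathbf{T}^{8-n}$ is a valid substitute for citing Schoen--Yau when $n<8$.

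The one misstep is the orientation double cover remark, which is false: the deck involution $\sigma$ reverses orientation and satisfies $f\circ\pi\circ\sigma=f\circ\pi$, which forces $\deg(f\circ\pi)=0$. It is also unnecessary, since an integer degree already presupposes that $M$ is oriented.
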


\subsection{Positive mass theorem on AF manifolds with arbitrary ends and nonnegative scalar curvature in the spectral sense.}	

$\quad$

	Recently, the notion of scalar curvature lower bound in the {\it spectral sense}  has been gaining increasing importance (see \cite{Gro20},\cite{CL23},\cite{CL2024},\cite{CL24b},\cite{LM2023},\cite{Gr2024} and references therein). Indeed, it is a natural generalization of the conception of non-negative scalar curvature, and is automatically satisfied by a stable minimal hypersurface in a Riemannian manifold with non-negative scalar curvature. For the case of asymptotically flat manifolds, it was remarked in \cite{ZZ00} that non-negative Dirichlet eigenvalue of the conformal Laplacian is not enough to deduce the non-negative mass. On the other hand, \cite{ZZ00} proved a positive mass theorem by assuming the Neumann eigenvalue for the conformal Laplacian to be non-negative on exhaustions of the AF end. 
    
    In this paper, we are also interested in the positive mass theorem for AF manifolds with non-negative scalar curvature in the {\it strong spectral sense} (See Theorem \ref{thm: PMT arbitrary end spectral} below), which naturally arises in area-minimizing hypersurfaces stable under asymptotically constant variations in AF manifolds. The positive mass theorem of this type dates back to Schoen's dimension reduction proof of the positive mass theorem for AF manifolds of dimension no greater than $7$ in \cite{Schoen1989}, see also \cite{Carlotto16},\cite{EK23},\cite{HSY24} for some further applications. To begin with, we introduce the following definitions:

	\begin{definition}\label{defn: strong test function}
		Let $(M^n,g)$ be  a manifold (not necessarily complete) containing an AF end $E$. We say that a locally Lipschitz function $\phi$ is an \textit{asymptotically constant test function} on $M$, if it is supported on a neighborhood $\mathcal{U}$ of $E$ such that $\mathcal{U}\Delta E$ is compact, satisfying $\lim\limits_{|x|\to\infty, x\in E}\phi = 1$ and $\phi-1\in W^{1,2}_{-q}(\mathcal{U})$, for some $q> \frac{n-2}{2}$.  i.e.
		$$
		\int_{\mathcal{U}}|\phi-1|^2 r^{2q-n}d\mu_g+\int_{\mathcal{U}}|\nabla\phi|^2 r^{2(q+1)-n}d\mu_g<\infty,
		$$
		where $r$ denotes a positive function on $\mathcal{U}$ and $r=|x|$ on $E$.
	\end{definition}
	
	\begin{definition}\label{defn: strong spectral}
		Let $(M^n,g)$ be  a manifold (not necessarily complete) containing an AF end $E$. Let $h\in L^1(E)$. We say that $(M^n,g)$ has $\beta$-scalar curvature no less than $h$ in the strong spectral sense, if for any locally Lipschitz function $\phi$ which is either compactly supported or asymptotically constant in the sense of Definition \ref{defn: strong test function}, it holds
		\begin{align}\label{eq: 20}
			\int_{M} |\nabla \phi|^2+\beta R_g\phi^2 d\mu_g \ge \int_M \beta h\phi^2d\mu_g\ \ \ \text{for some}\ \ \ \beta>0.
		\end{align}
		In particular, we say $(M^n,g)$ has $\beta$-scalar curvature non-negative in the strong spectral sense if $h=0$.
	\end{definition}
	\begin{remark}\label{re: NNSC}

$\quad$
    
    \begin{enumerate}
        \item By the definition, $(M^n,g)$ has $\beta'$-scalar curvature no less than $h$ in the strong spectral sense for any $\beta'\in(0,\beta)$ provided $(M^n,g)$ has $\beta$-scalar curvature no less than $h$ in the same  sense. Furthermore, according to our definition, the classical curvature condition $R_g\ge h$ corresponds to the case of having $\infty$-scalar curvature no less than $h$ in the strong spectral sense.
        \item In our application we always set $\beta = \frac{1}{2}$ and $h = R_g+|A|^2$. This aligns with the second variation formula for stable minimal hypersurfaces.
    \end{enumerate}
	\end{remark}
	
	The positive mass theorems on smooth AF manifolds  with non-negative scalar curvature  and with arbitrary ends  were obtained by \cite{CL2024} and \cite{LUY21}\cite{Zhu23} etc. The following positive mass theorem for smooth AF manifolds with arbitrary ends under the strong spectral non-negative scalar curvature condition,  which has its own interests, serves as a crucial ingredient in the proof of Theorem \ref{thm: rigidity for minimal surface}. This is because we will always regard the minimal hypersurface $\Sigma^n$ in Theorem \ref{thm: rigidity for minimal surface} as an AF space of $1$ dimension lower.
	
	\begin{theorem}\label{thm: PMT arbitrary end spectral}
		Let $(M^n,g)$ be an AF manifold with arbitrary ends and $E$  be its AF end 
		of order 
		$\frac{n-2}{2}<\tau\le n-2$ and $3\leq n\leq 7$. Suppose $(M^n, g)$ has $\beta$-scalar curvature no less than $h$ in the strong spectral sense for some $\beta>0$.
		\begin{enumerate}
			\item If $\beta\ge\frac{1}{2}$, $h\ge 0$ everywhere and $h(p)>0$ at some point $p$, then the  mass of $(M,g,E)$ is strictly positive.
			\item If $\beta\ge\frac{1}{2}$, $h\ge 0$ everywhere, then the  mass of $(M,g,E)$ is non-negative. Moreover, if $\beta>\frac{1}{2}$ and the  mass of $(M,g,E)$ is zero, then $(M,g)$ is isometric to $(\mathbf{R}^n,g_{Euc})$.
			\item If $\beta\ge\frac{n-2}{4(n-1)}$, $h\ge 0$ everywhere, and $M$ has only AF ends, then the  mass of $(M,g,E)$ is non-negative. Moreover, if the  mass of $(M,g,E)$ is zero, then $(M,g)$ is isometric to $(\mathbf{R}^n,g_{Euc})$.
		\end{enumerate}
	\end{theorem}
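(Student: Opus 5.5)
The plan is to reduce the spectral condition to a pointwise one through the Fischer-Colbrie--Schoen warping trick, and then invoke the positive mass theorem for ALF manifolds with $\mathbf{S}^1$-symmetry, as the abstract suggests. By Remark \ref{re: NNSC}(1) it suffices to treat $\beta=\frac12$ in cases (1) and (2), with a separate argument for the strict case $\beta>\frac12$ in the rigidity part of (2).

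\textbf{Step 1.} Produce a positive $u$ on $M$ solving
$$-\Delta u+\beta(R_g-h)u=0,\qquad u\to 1 \text{ on } E,$$
with $u-1=O(r^{2-n})$ and expansion $u=1+\frac{A}{r^{n-2}}+\dots$. The method is to solve on an exhaustion $\Omega_k$ of $M$ and pass to the limit. Here $\Omega_k$ contains more and more of $E$, and on the other (arbitrary) ends we impose Neumann or suitable boundary conditions. The strong spectral inequality, applied to asymptotically constant test functions, gives coercivity of the quadratic form on $\{\phi:\phi-1\in W^{1,2}_{-q}\}$. Hence we can minimize $\int|\nabla\phi|^2+\beta(R_g-h)\phi^2$ in this affine class. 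Positivity of $u$ comes from replacing $\phi$ by $|\phi|$ together with the Harnack inequality.

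Testing the equation against $u$ itself and integrating by parts gives
$$\int|\nabla u|^2+\beta(R_g-h)u^2=\lim\oint u\,\partial_r u=-(n-2)\omega_{n-1}A,$$
and the spectral inequality makes the left side $\ge 0$, so $A\le0$. If $h>0$ somewhere, the bound improves to strict inequality. The uniform control of $u$ on the non-AF ends is the first technical point. If no global solution exists there, I would instead work on a large compact-plus-$E$ region, as in Lesourd--Unger--Yau.

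\textbf{Step 2.} Form the warped product $\tilde g=g+u^{2}d\theta^2$ on $M\times\mathbf{S}^1$. Its scalar curvature is
$$R_{\tilde g}=R_g-2u^{-1}\Delta u=R_g-2\beta(R_g-h)=(1-2\beta)R_g+2\beta h.$$
With $\beta=\frac12$ this equals $h\ge0$. The manifold $(M\times\mathbf{S}^1,\tilde g)$ is ALF with arbitrary ends, has $\mathbf{S}^1$-symmetry, and has dimension $n+1\le 8$. Its mass is a positive multiple of $m(g)$ plus a multiple of the coefficient of $u$'s expansion, the latter coming from the $d\theta^2$ component; I would compute this carefully. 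Applying the ALF positive mass theorem with $\mathbf{S}^1$-symmetry and arbitrary ends then gives $m(g)+cA\ge0$. This is not yet what we want, since Step 1 only gives $A\le0$.

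So I would combine the two inequalities differently. One option is a conformal change on $M$ by $u^{\alpha}$ that absorbs the $A$-term. Alternatively, I would use the precise ALF mass formula, in which the $\mathbf{S}^1$ factor contributes with the sign that yields $m(g)\ge -cA\ge 0$. Strict positivity in (1) follows from the strict version of the ALF theorem when $R_{\tilde g}=h\not\equiv0$, or from $A<0$.

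\textbf{Step 3 (rigidity in (2) and case (3)).} When $\beta>\frac12$ and the mass is zero, the identity above forces $R_{\tilde g}$, and hence $R_g$, to vanish, with $\nabla u=0$. Thus $u\equiv1$ and $h=R_g$. Since $\frac12$-scalar curvature is then $\ge R_g$ with slack, perturbation shows $R_g\equiv0$. ALF rigidity gives $\tilde g$ flat, so $g$ is flat with an AF end and volume growth forcing $\mathbf{R}^n$.

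For (3), with only AF ends and $\beta\ge\frac{n-2}{4(n-1)}$, one instead uses the conformal Laplacian. The spectral condition makes the conformal operator nonnegative, so solving $-\Delta w+\frac{n-2}{4(n-1)}R_gw=0$ gives a conformal metric with $R=0$ and mass $m(g)+c A'$ with $A'\le0$. The classical PMT (dimension $\le7$) then yields the claim, with equality forcing $w\equiv1$ and flatness.

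The main obstacle is the mass bookkeeping in Step 2, namely getting the correct sign relation between $m(g)$ and the $u$-expansion. The analytic existence of $u$ over arbitrary ends is the secondary obstacle.
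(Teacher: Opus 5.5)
\textbf{Main gap: Step 1 on a manifold with arbitrary ends.} You get $A\le 0$ by testing the equation against a global positive solution $u$ on $M$ and then invoking the strong spectral inequality. This fails when $M$ has non-compact non-AF ends, for two reasons.

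First, $u$ is not an admissible test function. Definitions \ref{defn: strong test function} and \ref{defn: strong spectral} only allow functions supported in a neighbourhood $\mathcal{U}$ of $E$ with $\mathcal{U}\Delta E$ compact. Second, integrating by parts over $M$ leaves flux terms at the other ends that you cannot control.

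The sign of $A$ in fact depends on which positive solution you take. On $3$-dimensional Schwarzschild of mass $m>0$, regard the second end as ``arbitrary'' and take $h=0$. Let $\phi=\frac{2r-m}{2r+m}$ be the static lapse in isotropic coordinates. Then $u=\frac{1+c}{2}+\frac{1-c}{2}\phi$ is a positive harmonic function tending to $1$ on $E$ and to $c\ge 0$ on the other end. Its expansion $u=1+\frac{(c-1)m}{2r}+\cdots$ has $A>0$ as soon as $c>1$.

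Your two fallbacks do not close this. Neumann conditions on inner boundaries do not help: the hypothesis gives no control of Neumann problems on subdomains (the distinction drawn in \cite{ZZ00}), and such functions cannot be extended by zero. Working on compact-plus-$E$ regions without further input produces incomplete warped products, to which Proposition \ref{prop: PMT for S^1 symmetric ALF} does not apply.

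By contrast, your worry in Step 2 is unfounded. The ALF mass of $g+u^2d\theta^2$ is $(n-1)m_{ADM}(M,g,E)+(n-2)A$, so the ALF theorem together with $A\le 0$ already gives $m_{ADM}(M,g,E)\ge-\frac{n-2}{n-1}A\ge0$.

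\textbf{The missing idea: a Dirichlet exhaustion, as in the paper.} Solve $-\Delta v_i+\frac12R_gv_i-\frac14hv_i=0$ on domains $E\subset V_1\subset V_2\subset\cdots$, with $v_i\to1$ on $E$ and $v_i=0$ on $\partial V_i$. Extended by zero, $v_i$ is admissible, so
\begin{equation*}
\lim_{\rho\to\infty}\int_{S^{n-1}(\rho)}v_i\frac{\partial v_i}{\partial\vec{n}}\,d\sigma=\int_{V_i}|\nabla v_i|^2+\Big(\frac12R_g-\frac14h\Big)v_i^2\,d\mu_g\ge\frac14\int_{V_i}hv_i^2\,d\mu_g .
\end{equation*}
This bounds the mass of the incomplete warped products $(V_i\times\mathbf{S}^1,g+v_i^2ds^2)$ by $(n-1)m_{ADM}(M,g,E)-\frac{1}{4\omega_{n-1}}\int_{V_i}hv_i^2\,d\mu_g$.

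By Lemma \ref{lem: maximum principle for spectral condition}, the $v_i$ increase to the minimal positive solution $v$, which is the one to warp with. Two further ingredients are needed before Proposition \ref{prop: PMT for S^1 symmetric ALF} applies. Finiteness of the limit needs an upper barrier (Proposition \ref{prop: eq1-arbitrary end}). You must also prove that the masses converge to that of the complete manifold $(M\times\mathbf{S}^1,g+v^2ds^2)$; this is Lemma \ref{lem: convergence ADM mass on ALF}, which uses $R_{\hat g_i}=R_{\hat g}$ on $E$.

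\textbf{Secondary gap: the existence step.} Nonnegativity of the form with potential $\beta(R_g-h)$, or $\frac12R_g$ when $h\equiv0$, is not coercivity. Without slack, nothing prevents the approximate solutions from blowing up. The paper uses the potential $\beta(R_g-Q)$ with $Q=\frac h2$. The resulting slack $\int\beta(h-Q)w_i^2\ge C_0>0$ is exactly what yields the contradiction in the proof of Proposition \ref{prop: eq1-arbitrary end}.

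For part (2) with $h\equiv0$ there is no slack at all. The paper therefore first produces $h(p)>0$ while keeping the mass negative, using the conformal factor $\frac{1+\epsilon u}{1+\epsilon}$ with $\Delta u\le0$ (Lemma \ref{lem: make a point strictly positive}), and then reduces to part (1).

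Your use of the slack from $\beta>\frac12$ in the rigidity argument matches Lemma \ref{lem: mass decay ALF 2}, but it too must be carried out on the $v_i$. Your sketch of part (3) is the classical conformal argument that the paper simply cites.
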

	
	We remark that for case (3) where $M$ has  only AF ends, the corresponding result was established in \cite{Schoen1989} and \cite{Carlotto16}, which we include here for comparison.
    The key advancement of Theorem \ref{thm: PMT arbitrary end spectral} lies in its allowance for manifolds with arbitrary ends, thereby extending its applicability to a broader class of geometric settings. In particular, it applies to spaces obtained by blowing up the singular set of a singular space.

	\subsection{Outline of Proofs}

$\quad$

\textbf{Blowing up the singular set}
    
	 We  sketch the proof of Theorem \ref{thm: rigidity for minimal surface}. To  prove the singular set $\mathcal{S}$ of $\Sigma$ must be empty, we adopt the contradiction argument. Suppose not, We will blow up the singular set $\mathcal{S}$ by suitable functions. The technique can be traced back to Schoen's work on the Yamabe problem \cite{Schoen1984} as well as \cite{LM2019}. By the strong stability of $\Sigma$, $\Sigma$ has $\frac{1}{2}$-scalar curvature no less than $h = R_g+|A|^2$ in the strong spectral sense. Thus, a natural candidate of the blow up function is the Green's function for the conformal Laplacian, which was used in \cite{Schoen1984}\cite{LM2019}, as it yields pointwise nonnegative scalar curvature. However, the scalar curvature of $\Sigma$ is not necessarily bounded near the singular set. It may blow up rapidly, making it hard to control the Green's function for the conformal Laplacian in our case.

     Instead of using the Green's function for the conformal Laplacian, we consider the ordinary singular harmonic function $G$ (which tends to $0$ as $x$ tends to the infinity of the AF end of $\Sigma$), making it easier to control its blow up rate near $\mathcal{S}$. Then we perform conformal deformations using the singular harmonic function $G_\delta = 1+\delta G$, obtaining a  complete AF manifold $(\Sigma\backslash\mathcal{S},G_\delta^{\frac{4}{n-2}}g_\Sigma)$ with arbitrary ends. This allows us to bypass the challenges involving hard analysis of the scalar curvature term near the singularity. The cost of this approach is that the blown up scalar curvature is not necessarily pointwise non-negative as in \cite{Schoen1984} and \cite{LM2019}. However, we note that the assumption that $\beta$-scalar curvature no less than $h$ in the strong spectral sense is preserved well under conformal deformations via singular harmonic functions (see Proposition \ref{conformal deformation1} ). To be more precise, $(\Sigma\backslash\mathcal{S},G_\delta^{\frac{4}{n-2}}g_\Sigma)$ has $\frac{1}{2}$-scalar curvature no less than $(R_g+|A|^2)G_\delta^{-\frac{4}{n-2}}$ in the strong spectral sense. The problem is now closely related to Theorem \ref{thm: PMT arbitrary end spectral}.

\textbf{Proof of Theorem \ref{thm: PMT arbitrary end spectral}}

	We briefly digress to discuss the proof of Theorem  \ref{thm: PMT arbitrary end spectral}. The case that $(M^n,g)$ has exactly one AF end follows from the classical conformal deformation argument as demonstrated in \cite{Schoen1989} and \cite{Carlotto16}. The conformal deformation makes the scalar curvature everywhere non-negative, allowing the application of the  classical positive mass theorem. However, $M$ necessarily has arbitrary ends when it is obtained from a singular space with singularity blown up. In this case, the completeness issue of the conformal deformed metric arises as a central problem. To overcome this difficulty, instead of performing the conformal deformation, we construct a $\mathbf{S}^1$-symmetric asymptotically locally flat (ALF) manifold $(\hat{M}^{n+1},\hat{g})$ by taking warped product with $\mathbf{S}^1$. The condition of $\beta$-scalar curvature no less than $h$ in the strong spectral sense for some  $\beta\ge \frac{1}{2}$ allows us to construct such a suitable warped-product function, and the resulting manifold is ALF with non-negative scalar curvature, allowing the application of the arguments in \cite{CLSZ2021}. 
	
	Here, we remark that the $\mathbf{S}^1$-symmetry of $\hat{M}^{n+1}$ is particularly important since the $\mathbf{S}^1$-symmetric minimal hypersurfaces or $\mu$-bubble can be chosen to be smooth provided the codimension of the $\mathbf{S}^1$-orbit is no greater than $7$ (See \cite[Proposition 3.4]{WY2023}), which is guaranteed by our assumption. Furthermore, we make a crucial observation that under the assumption of non-negative  $\beta$-scalar curvature in the strong spectral sense ($\beta\ge \frac{1}{2}$), the  mass is non-increasing in the process of constructing the warped product, illuminating similar phenomenon in case of performing conformal deformation in \cite{Schoen1989}, see Proposition \ref{lem: mass decay ALF} for detail. These new elements enable us to complete the proof of Theorem \ref{thm: PMT arbitrary end spectral}. 
    
     We further observe that the model space in Theorem \ref{thm: PMT arbitrary end spectral} not only serves as a blow up model for singular minimal hypersurfaces, but also naturally emerges when one attempts to prove the positive mass theorem with arbitrary ends through a direct application of Schoen-Yau's original method \cite{SY79}. Specifically, this involves constructing an area-minimizing hypersurface as the limit of a sequence of free-boundary minimizing hypersurfaces within coordinate cylinders. Thus, the method used in Theorem \ref{thm: PMT arbitrary end spectral} yields an alternative proof of the positive mass theorem with arbitrary ends independent of the $\mu$-bubble approach, see Remark \ref{remark: arbitrary ends} for details.

\textbf{Ruling out the singular set}
    
	Returning to the proof of Theorem \ref{thm: rigidity for minimal surface}, a notable distinction arises as the  mass may be increased slightly during the singular harmonic function blow-up process (It is roughly $O(\delta)$). However, we will demonstrate that this increment will be outweighed by the reduction in mass attributed to the $\mathbf{S}^1$-warped product as previously mentioned. This substantial reduction is essentially contributed by the fact $\mathcal{S}\ne\emptyset$ which forces the $|A|^2$ to be strictly positive somewhere near $\mathcal{S}$, providing a uniform positive term to the spectral scalar curvature lower bound $h = R_g+|A|^2$. See Proposition \ref{thm: negative mass 2} for detailed calculation. As a result, we obtain a negative mass ALF manifold $(\Sigma\backslash\mathcal{S})\times\mathbf{S}^1$ carrying a warped metric with nonnegative scalar curvature, which contradicts Proposition \ref{prop: PMT for S^1 symmetric ALF}.

 For the closed case, we employ similar metric deformation and warped product construction, which reduces the proof of Theorem \ref{thm: georch free of singularity} to the PSC topological obstruction result Proposition \ref{prop: noncompact dominate enlargeable 2}. 
 
 As an immediate corollary of Theorem \ref{thm: rigidity for minimal surface} and Theorem \ref{thm: georch free of singularity}, we provide a proof of positive mass theorem for AF manifolds with arbitrary ends and dimension no greater than $8$ (Theorem \ref{thm: pmt8dim}) without using N. Smale's regularity theorem for minimal hypersurfaces in a compact $8$-dimensional  manifold with generic metrics, and a new proof of Geroch conjecture in dimension $8$ (Theorem \ref{cor:8dim georch}).

	\subsection{Organization of The Paper}
	In Section 2, we introduce some key definitions and preliminary results necessary for proving the main theorems. Section 3 is devoted to the construction of singular harmonic functions on area-minimizing hypersurfaces. In Section 4, we  establish the positive mass theorem for smooth asymptotically flat (AF) manifolds with arbitrary ends and non-negative scalar curvature in the strong spectral sense. In Section 5, we construct smooth manifolds with nice properties from minimal hypersurfaces with isolated singularities, by performing metric deformation and warped construction. In Section 6, we prove the main theorems.

\section{Preliminaries}
\subsection{Asymptotically locally flat manifolds}

$\quad$

Asymptotically locally flat manifolds (ALF) manifolds are complete manifolds which can be regarded as natural generalizations of $\mathbf{R}^n\times \mathbf{T}^k$. For related studies about the positive mass theorems on ALF manifolds, one could see \cite{Dai04},\cite{Min09},\cite{CLSZ2021} and references therein. In this subsection, we will review some basic concepts. Let us begin with the following:

\begin{definition}\label{def: AF manifold}
		An end $E$ of an $n$-dimensional Riemannian manifold $(M,g)$ is said to be asymptotically flat (AF) of order $\tau$ for some $\tau>\frac{n-2}{2}$, if $E$ is diffeomorphic to $\mathbf{R}^n\backslash B^n_{1}(O)$\footnote{ Throughout the paper we use the following notational conventions for balls:

    \begin{itemize}
        \item We use capital letter $M$ (or $N$) to denote Riemannian ambient manifolds. The notation $\mathcal{B}_s^M(p)$ (or simply $\mathcal{B}_s(p)$) denotes the geodesic ball centered at $p$ of radius $s$ in $M$;
        \item We use $\Sigma$ to denote the minimal hypersurface in the ambient manifold $M$. The notation $B(p,s) = \Sigma\cap \mathcal{B}_s^M(p)$ denotes the extrinsic ball of radius $s$ lying in $\Sigma$. See Section 3 for detail;
        
    \end{itemize}
    
    If $(M^n,g)$ is a manifold with an AF end E, then:
    \begin{itemize}
        \item $B_s^n(O)$ (or simply $B_s(O)$) denotes the coordinate ball $\{x: x_1^2+x_2^2+\dots+x_n^2<s^2\}$ in $E\cong \mathbf{R}^n\backslash B^n_1(O)$.
    \end{itemize}}, and the metric $g$ in $E$ satisfies
		\begin{align*}
			|g_{ij}-\delta_{ij}|+|x||\partial g_{ij}|+|x|^2|\partial^2 g_{ij}| = O(|x|^{-\tau}),
		\end{align*}
		and $R_g\in L^1(E)$. The  mass of $(M,g,E)$ is defined by
            \begin{align*}
                m_{ADM}(M,g,E) = \frac{1}{2(n-1)\omega_{n-1}}\lim_{\rho\to\infty}\int_{S^{n-1}(\rho)}(\partial_ig_{ij}-\partial_j g_{ii})\nu^jd\sigma_x
            \end{align*}
  \end{definition}

    In the case above we always call $(M,g)$ an asymptotically flat manifold: $(M,g)$ has an distinguished AF end $E$ and some arbitrary ends. Throughout the paper, we always assume
    the AF end of a  Riemannian manifold $(M^n,g)$  is of order $\tau$ for some $\frac{n-2}{2}<\tau\leq n-2$.
    
  \begin{definition}\label{def: ALF manifold}
      An end $E$ of an $n+k$-dimensional Riemannian manifold $(M,g)$ is said to be asymptotically locally flat (ALF) of order $\tau$ for some $\tau>\frac{n-2}{2}$, if $E$ is diffeomorphic to $(\mathbf{R}^n\backslash B^n_{1}(O))\times \mathbf{T}^k$, and the metric in $E$ satisfies
		\begin{align*}
			|g_{ij}-\bar{g}_{ij}|+|x||\partial g_{ij}|+|x|^2|\partial^2 g_{ij}| = O(|x|^{-\tau}),
		\end{align*}
		with $R_g\in L^1(E)$. Here $\bar{g}_{ij} = g_{Euc}\oplus g_{\mathbf{T}^k}$, where $g_{\mathbf{T}^k}$ denotes a flat metric on $\mathbf{T}^k$. The  mass of $(M,g,E)$ is defined by
        \begin{align*}
            m_{ADM}(M,g,E) = \frac{1}{2\omega_{n-1}Vol(\mathbf{T}^k,g_{\mathbf{T}^k})}\lim_{\rho\to\infty}\int_{S^{n-1}(\rho)\times \mathbf{T}^k}(\partial_ig_{ij}-\partial_j g_{aa})\nu^jd\sigma_xds
        \end{align*}
        where $i,j$ run over the index of the Euclidean space $\mathbf{R}^n$ and $a$ runs over all index.
  \end{definition}

    For the convenience of subsequent discussions, we introduce the following weighted spaces:
  \begin{definition}\label{defn: weighted space}
        Let $(M^n,g,E)$ be a complete manifold with an AF end. Let $r$ be a positive smooth function on $M$ which equals $|x|$ in $E$ and equals $1$ outside a neighborhood of $E$. 
        
      (1) Given any $s\in \mathbf{R}$ and $p>1$, we define $L^p_s(M)$ to be the subspace of $L^p_{loc}(M)$ with finite weighted norm
      \begin{align*}
          \|u\|_{L^p_s(M)} = (\int_M|u|^pr^{-sp-n}d\mu)^{\frac{1}{p}}.
      \end{align*}

      For each $k\in \mathbf{Z}_+$, we define $W^{k,p}_s(M)$ to be the subspace of $W^{k,p}_{loc}(M)$ with finite weighted norm
      \begin{align*}
          \|u\|_{W^{k,p}_s(M)} = \sum_{i=0}^k \|\nabla^i u\|_{L^p_{s-i}(M)}.
      \end{align*}

      (2) Given any $s\in \mathbf{R}$ and $k\in \mathbf{Z}_+$, we define $C^k_{s}(M)$ to be the subspace of $C^k(M)$ with finite weighted norm
      \begin{align*}
          \|u\|_{C^k_{s}(M)} = \sum_{i=0}^k \|r^{-s}\nabla^i u\|_{C^0(M)}.
      \end{align*}
  \end{definition}

\subsection{PSC manifolds with free $\mathbf{S}^1$ actions}

$\quad$

In this subsection, we explore several topological obstructions to the existence of PSC on complete manifolds. These results will be used in the subsequent sections.

\begin{definition}\label{Defn: enlargeable} (\cite{GL83}\cite{Gro18})

		A compact Riemannian manifold $X^n$ is said to be enlargeable if for each $\epsilon>0$, there is an oriented covering $\Tilde{X}\longrightarrow X$ and a continuous non-zero degree map $f:\Tilde{X}\longrightarrow S^n(1)$ that is constant outside a compaact set, such that $\Lip f<\epsilon$. Here $S^n(1)$ is the unit sphere in $\mathbf{R}^{n+1}$.
	\end{definition}

The proof of the following Proposition is technical and not directly related to the main argument. We therefore defer it to Appendix C.
                \begin{proposition}\label{prop: noncompact dominate enlargeable}
            Let $X^n$ be a compact enlargeable manifold, and let $Y^n$ be a manifold which is not necessarily compact and admits a quasi-proper map $f: Y^n\longrightarrow X^n$ of non-zero degree $(n\le 7)$ (see Appendix B and Figure \ref{f2}). Then there exists no complete metric $g$ on $Y^n$ with $R_g> 0$.
        \end{proposition}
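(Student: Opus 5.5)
We argue by contradiction, using the Gromov--Lawson $\varepsilon$-enlargeability scheme in the form adapted to complete noncompact manifolds, with the dimension restriction $n\le 7$ used to run minimal hypersurface or $\mu$-bubble arguments in place of Dirac operators. Suppose $g$ is a complete metric on $Y$ with $R_g>0$.

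\textbf{Step 1: quantitative positivity near the region of nonzero degree.} Since $f$ is quasi-proper, there is a compact region $K\subset Y$ over which $f$ has a well-defined nonzero degree; outside a compact set, $f$ maps into a part of $X$ that does not carry degree. On a compact neighborhood of $K$ we have $R_g\ge \kappa>0$. We would not try to control $R_g$ on all of $Y$. Instead, we would use a $\mu$-bubble to cut $Y$ down to a compact manifold $Y_0$ with boundary, chosen so that:
\begin{itemize}
\item $Y_0$ contains the degree-carrying region;
\item the mean curvature of $\partial Y_0$ is controlled;
\item $Y_0$ carries a warped metric $g+u^2dt^2$ on $Y_0\times \mathbf{S}^1$ with scalar curvature at least $\kappa/2$ on the region where $f$ has degree.
\end{itemize}
Completeness of $g$ is what guarantees that the $\mu$-bubble exists. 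The restriction $n\le 7$ ensures the bubble is smooth.

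\textbf{Step 2: pulling back enlargeability.} Fix $\varepsilon\ll\sqrt{\kappa}$. Take the covering $\tilde X\to X$ and the map $h:\tilde X\to S^n(1)$ with $\Lip h<\varepsilon$ given by Definition~\ref{Defn: enlargeable}. Pull the covering back along $f$ to obtain $\tilde Y_0\to Y_0$ with a lift $\tilde f$. The composite $F=h\circ\tilde f$ then has nonzero degree and is constant near $\partial\tilde Y_0$ and near the non-degree part, so it is well defined as a map of pairs. On the degree region the Lipschitz constant of $F$ is at most $\varepsilon\cdot \Lip f$. This bound is finite because the degree region is compact, and $\Lip f$ is a fixed quantity independent of $\varepsilon$.

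\textbf{Step 3: contradiction via dimension reduction.} The contradiction should come from Gromov's torical or warped dimension-descent argument. Choose a cube $[-L,L]^n$ in $S^n$ of side length of order $1$, so that its preimage under $F$ has width at least of order $1/(\varepsilon\Lip f)$ in each coordinate direction. Then apply the $\mu$-bubble band-width estimate (Gromov's cube inequality) inductively. This requires keeping the $\mathbf{S}^1$-warped structure at every stage, so that each slice remains a smooth hypersurface of dimension at most $7$ with positive warped scalar curvature. This leads to a band of width greater than $C(n)/\sqrt{\kappa}$, contradicting the band-width estimate once $\varepsilon$ is small.

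\textbf{Main obstacle.} I expect the hardest point to be Step 1 combined with the noncompactness. The covering of $Y$ may be wild near the ends, and the scalar curvature lower bound is only local. One has to make sure that the cut-off $\mu$-bubble boundary lies where $F$ is constant, so that the degree is preserved after cutting and in the induction. I would first try to handle this by choosing the $\mu$-bubble weight to blow up along a level set of the distance to the degree region, using the quasi-proper structure of $f$ from Appendix B to place that level set inside the region where $f$ has no degree.
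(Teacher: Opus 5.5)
\paragraph{The gap: the lifted ends in the cover.}
Your Step 2 claim that $F=h\circ\tilde f$ is constant near $\partial\tilde Y_0$ is false, and Step 3 depends on it.

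The cut-off boundary $\partial Y_0$ lies in the end part $f^{-1}(U_\epsilon)$. Each component $\tilde B$ of $q_X^{-1}(U_\epsilon)$ is a lift of a small ball $B$ around a point of $S_\infty$, and over $\tilde B$ the pulled-back cover contains a homeomorphic copy of $f^{-1}(B)$. It therefore contains lifts of the components of $\partial Y_0$ lying there. The points of $q_X^{-1}(S_\infty)$ are $\diam(X)$-dense in $\tilde X$, and $h$ is surjective with $\Lip h<\varepsilon$. So the values of $h$ on these balls form an $\varepsilon\diam(X)$-net of $S^n$.

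Even if you modify $h$ to be constant on each ball, $F$ is only locally constant near $\partial\tilde Y_0$, and its values there are densely spread. Hence the preimage of every cube of size of order one contains components of $\partial\tilde Y_0$ in its interior. In other words, your Step 3 cube/band inequality is being applied to bands with holes.

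Without a precise condition on the holes, that estimate is false. Every compact manifold with nonempty boundary carries a metric with $R\ge\kappa_0>0$, so $\mathbf T^n$ minus a small ball does. Its $\mathbf Z$-covers then give bands $\mathbf T^{n-1}\times[-L,L]$ with periodic holes. These have arbitrarily large width, $R\ge\kappa_0$, and a hole-free middle slice mapping to $\mathbf T^{n-1}$ with degree one.

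You would therefore need a genuine barrier property of $\partial Y_0$, such as strict mean convexity dominating the $\mu$-bubble weights, plus a band inequality valid with such interior boundary. Your Step 1 gives neither: a $\mu$-bubble has mean curvature equal to the weight, whose sign you do not control. The remedy in your last paragraph, placing the cut-off where $f$ carries no degree, is a condition downstairs and does not address the problem upstairs.

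The warped metric on $Y_0\times\mathbf S^1$ is also misplaced. Stability of a $\mu$-bubble yields a warped PSC metric on the bubble $\partial Y_0$, not on the region $Y_0$. On the compact $Y_0$ you already have $R_g\ge\min_{Y_0}R_g>0$ without any warping.

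\paragraph{How the paper handles the ends.}
The paper never cuts off the ends; it relies on their completeness.

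It takes an over-torical band $\mathcal V\subset\tilde X$ of width $>2D+1$. It chooses the separating level set $S=\rho^{-1}(0)$ disjoint from $q_X^{-1}(S_\infty)$, so that $\hat S=\tilde f^{-1}(S)$ is compact. It then works in $\hat{\mathcal V}=\tilde f^{-1}(\mathcal V)\subset\tilde Y$, which is the band with the complete lifted ends attached.

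Since $\tilde f$ is uniformly Lipschitz only on the uniform part $\tilde Z_\epsilon$, the paper proves the depth estimate $d(\partial_\pm\hat{\mathcal V},\hat S)\ge D/(2C)$. It does this by splitting curves into uniform and end segments: each end segment maps into a single ball of diameter $<2\epsilon$, and distinct balls are at distance $\ge 2\epsilon$.

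It then runs a single $\mu$-bubble with a Chodosh--Li weight $w$ (called $h$ in the paper):
\begin{itemize}
\item on the uniform part, where $R\ge R_0>0$, $w$ behaves like $-\tan(C_1\rho_1)$;
\item on each end component, $w$ behaves like $C_2/(\pm\rho_1-C_3)$, where $w^2-2|\nabla w|\ge 0$, so only $R>0$ is needed.
\end{itemize}
Ends on the $\Omega_-$ side are swallowed and those on the $\Omega_+$ side are excluded. The resulting closed hypersurface is homologous to $\hat S$, maps with nonzero degree to $\mathbf T^{n-1}$, and carries PSC, contradicting Schoen--Yau.

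A mechanism of this kind for the lifted ends is the missing ingredient in your proposal.
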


The proof of Proposition \ref{prop: noncompact dominate enlargeable} in Appendix C uses $\mu$-bubble. With a similar argument, using $\mathbf{S}^1$-invariant $\mu$-bubble in \cite{WY2023} instead of the ordinary $\mu$-bubble, we have the following proposition.
        
        \begin{proposition}\label{prop: noncompact dominate enlargeable 2}
            Let $X^n$ be a  enlargeable manifold, and let $Y^n$ be a manifold which is not necessarily compact and  admits a map $f: Y^n\longrightarrow X^n$ of non-zero degree $(n\le 7)$. Then there exists no $\mathbf{S}^1$-invariant complete metric $g$ on $Y^n\times \mathbf{S}^1$ with $R_g> 0$.
        \end{proposition}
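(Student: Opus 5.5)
We argue by contradiction, following the $\mu$-bubble scheme used for Proposition \ref{prop: noncompact dominate enlargeable} in Appendix C, but with every hypersurface chosen $\mathbf{S}^1$-invariant. Suppose $\hat g$ is a complete $\mathbf{S}^1$-invariant metric on $Y^n\times\mathbf{S}^1$ with $R_{\hat g}>0$. A first reduction: since we may pass to a finite subcover of the circle action, or simply work on $Y\times\mathbf{S}^1$ directly, we do not need to quotient by the action. The quotient $Y$ carries the metric $g_Y$ together with a warping or Kaluza--Klein structure, and only the invariant geometry matters. The nonzero degree map $F=f\circ\pi: Y\times\mathbf{S}^1\to X$, where $\pi$ is the projection, is the topological input, paired with the fiber class.

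Next we use enlargeability. For small $\epsilon$, take the covering $\tilde X\to X$ and the $\epsilon$-Lipschitz map $\tilde X\to S^n(1)$ of nonzero degree, constant outside a compact set. Pull the covering back to $\tilde Y\to Y$; the lifted metric is still complete, $\mathbf{S}^1$-invariant and PSC. By compactness of the support, after rescaling we may assume $R_{\hat g}\ge \sigma>0$ on the relevant region; this is where a lower bound is needed. Since $Y$ is noncompact and $f$ is not assumed quasi-proper, I would first restrict to a compact region where the degree is detected. More precisely, pick a regular value $\theta$ of the composite map to $S^n$; its preimage is compact by the ``constant outside a compact set'' property. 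On a neighborhood of that preimage, times $\mathbf{S}^1$, $R$ has a positive lower bound $\sigma$. We then choose $\epsilon\ll\sqrt{\sigma}$, so the relevant region is very wide compared with the scale $1/\sqrt{\sigma}$.

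The core step is a torical-band/$\mu$-bubble descent. Pulling back the coordinate functions of $S^n$ near $\theta$ via a ``cube'' picture, i.e.\ composing with a degree-one map $S^n\to [-1,1]^n/\partial$, gives $n$ invariant functions with large separation. We then successively solve $\mathbf{S}^1$-invariant $\mu$-bubble problems as in \cite{WY2023}: we find a warped $\mu$-bubble $\Sigma_1$ separating the two faces $\{x_1=\pm1\}$ with band width $\gtrsim 1/\epsilon$. By \cite[Proposition 3.4]{WY2023}, because the orbit codimension is $\le n\le7$, $\Sigma_1$ is smooth and $\mathbf{S}^1$-invariant. Its stability yields a warping function $u_1$ such that $\Sigma_1\times_{u_1}\mathbf{S}^1$ has positive scalar curvature in the relevant region. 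We iterate with fresh warped circles, as in Gromov's and Zhu's torical symmetrization, down to a curve or surface. The degree (intersection number with the fiber class of the map to the cube) stays nonzero at each stage, and in the end we reach a contradiction via Gauss--Bonnet or a band-width estimate $\width\le C/\sqrt{\sigma}$ against the width $\ge c/\epsilon$.

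The main obstacle will be maintaining the nonzero degree and compactness of the bubbles when $Y$ is noncompact and $f$ is not quasi-proper. We must ensure the $\mu$-bubbles stay inside the compact region where the pulled-back map is nonconstant, with barriers provided by the faces. I would handle this by taking $\mu$ blowing up at the boundary of the compact region, using completeness to guarantee that such a region with the required width exists. Invariance is preserved throughout because we minimize among $\mathbf{S}^1$-invariant Caccioppoli sets.
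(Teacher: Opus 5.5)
There is a genuine gap, and it lies exactly where the proposition has its content: the noncompactness of $Y$. First, you misread the hypothesis. For noncompact $Y$ the paper only defines the degree of a quasi-proper map with discrete $S_\infty$ (Definition \ref{defn: non-zero degree}), and the statement is the $\mathbf{S}^1$-invariant version of Proposition \ref{prop: noncompact dominate enlargeable}, where this is assumed.

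Some such properness is indispensable. Take the universal covering $\mathbf{R}^n\to\mathbf{T}^n$, with $\mathbf{R}^n$ carrying a complete convex paraboloid metric and crossed with a round circle. No degree is defined there, and the conclusion fails. So an argument that never uses properness cannot work.

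Concretely, your claim that the preimage of a regular value $\theta$ is compact ``by the constant-outside-a-compact-set property'' holds only in $\tilde X$. Its preimage under $\tilde f$ in $\tilde Y$ is in general noncompact, with ends of $\tilde Y$ attached. This has three consequences:
\begin{enumerate}
\item There is no positive lower bound $\sigma$ for $R$ on that region. Your choice $\epsilon\ll\sqrt{\sigma}$ is also circular, since the region depends on the covering, which depends on $\epsilon$.
\item $\Lip(\tilde f)$ is unbounded on the noncompact part, so a band width $\gtrsim 1/\epsilon$ in $\tilde Y$ does not follow from the Lipschitz bound in $\tilde X$.
\item The $\mu$-bubbles need not stay in a compact set.
\end{enumerate}
Letting $\mu$ blow up at the boundary of a compact region does not fix this. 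The weight must blow up with opposite signs on the ends lying on the two sides of the separating hypersurface, so that the bubble stays homologous to it. On those ends you only have $R>0$ pointwise, so $R+h^2-2|\nabla h|>0$ must come from $h^2-2|\nabla h|\ge 0$ alone, and this requires a quantitative depth before blow-up. Completeness does not supply that depth.

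The paper's route (Appendix C plus \cite{WY2023}) supplies exactly these ingredients:
\begin{enumerate}
\item Quasi-properness makes the uniform part $Z_\epsilon=f^{-1}(X\setminus U_\epsilon)$ compact. Hence $R\ge R_0>0$ on $Z_\epsilon\times\mathbf{S}^1$ and $\Lip(f|_{Z_\epsilon})<C$, and both bounds pass to every covering $\tilde Z_\epsilon$ with constants independent of the covering.
\item The end part maps into components of $q_X^{-1}(U_\epsilon)$ of diameter at most $2\epsilon$, mutually at distance at least $2\epsilon$. This gives the depth estimate $d(\partial_\pm\hat{\mathcal V},\hat S)\ge D/(2C)$ for the pulled-back overtorical band.
\item $\hat S=\tilde f^{-1}(S)$ is compact because $S$ avoids $q_X^{-1}(S_\infty)$.
\item A single Chodosh--Li type $\mu$-bubble \cite{CL2024}, taken $\mathbf{S}^1$-invariant via \cite{WY2023}, uses $h\approx-\tan(C_1\rho_1)$ on the uniform part and $h\approx C_2/(\pm\rho_1-C_3)$ on the ends. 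It yields a compact hypersurface homologous to $\hat S$ that maps with nonzero degree to $\mathbf{T}^{n-1}$ and carries invariant PSC, contradicting the Schoen--Yau obstruction.
\end{enumerate}
Your multi-step cube descent could replace this last step in a compact setting. Here, however, every step of it would have to redo the end analysis above, and as written none of that is present.
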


        Since the proof of Proposition \ref{prop: noncompact dominate enlargeable 2} is a direct consequence of the argument of Proposition \ref{prop: noncompact dominate enlargeable} combined with the method developed in \cite{WY2023}, we omit the proof. As a corollary, we are able to prove the following obstruction result for PSC in the spectral sense.

        \begin{corollary}\label{cor: noncompact dominate enlargeable 3}
            Let $X^n$ be a  enlargeable manifold, and let $Y^n$ be a manifold which is not necessarily compact and admits map $f: Y^n\longrightarrow X^n$ of non-zero degree $(n\le 7)$. Then any complete metric $g$ on $Y^n$ with $\lambda_1(-\Delta_g+\beta R_g)\ge 0$ for some $\beta\ge\frac{1}{2}$ is flat.
        \end{corollary}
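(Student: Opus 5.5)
The plan is a warped-product argument combined with Proposition \ref{prop: noncompact dominate enlargeable 2}, followed by a rigidity step.

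\textbf{Step 1: the warping function.} Let $g$ be complete on $Y^n$ with $\lambda_1(-\Delta_g+\beta R_g)\ge 0$ for some $\beta\ge\frac12$. Since $\lambda_1\ge 0$ on a complete manifold, the standard exhaustion argument (Fischer-Colbrie--Schoen) gives a positive smooth $u$ on $Y$ with
$$-\Delta_g u+\beta R_g u=0.$$
Concretely, solve Dirichlet problems on an exhaustion $\Omega_i$ with eigenvalues $\lambda_1(\Omega_i)>0$, normalize at a point, and apply Harnack. Set $\hat g=g+u^{2/\beta'}\,ds^2$ on $Y\times\mathbf{S}^1$ with a suitable exponent. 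This metric is $\mathbf{S}^1$-invariant and complete, since $Y$ is complete and the fibres are compact.

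\textbf{Step 2: the scalar curvature of $\hat g$.} For $\hat g=g+w^2ds^2$ one has
$$R_{\hat g}=R_g-\frac{2\Delta w}{w}.$$
Take $w=u^{\gamma}$. Then
$$\frac{\Delta w}{w}=\gamma\frac{\Delta u}{u}+\gamma(\gamma-1)\frac{|\nabla u|^2}{u^2}=\gamma\beta R_g+\gamma(\gamma-1)\frac{|\nabla u|^2}{u^2}.$$
Choosing $\gamma=\frac{1}{2\beta}\le1$ gives
$$R_{\hat g}=\frac{2\gamma(1-\gamma)|\nabla u|^2}{u^2}\ge 0.$$
This is where $\beta\ge\frac12$ is used.

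\textbf{Step 3: $\hat g$ is Ricci flat.} Suppose $R_{\hat g}$ is not identically zero. Then I would deform $\hat g$ within $\mathbf{S}^1$-invariant metrics to strictly positive scalar curvature, contradicting Proposition \ref{prop: noncompact dominate enlargeable 2}. The composite map $Y\times\mathbf{S}^1\to Y\to X$ is the relevant structure here. Note that the proposition is stated for $Y\times\mathbf{S}^1$ itself.

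This deformation is the main obstacle. On a noncompact manifold, the Kazdan--Warner style conformal or Ricci-flow argument needs care: $R_{\hat g}\ge 0$ and not identically zero does not immediately give a complete PSC metric. I would first try the localized version used in the $\mu$-bubble proof of Proposition \ref{prop: noncompact dominate enlargeable}. Since that proof only needs positivity in a spectral/$\mu$-bubble sense near a compact region, one reruns the $\mathbf{S}^1$-invariant $\mu$-bubble argument of \cite{WY2023} with $R_{\hat g}\ge 0$ together with a small perturbation. The key rigidity input is that the $\mu$-bubble construction with $R\ge0$ yields a contradiction unless every bubble is infinitesimally rigid, forcing $\mathrm{Ric}_{\hat g}\equiv 0$.

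Alternatively, one can argue by contradiction directly on $(Y,g)$. If $g$ were not flat, a local perturbation $g_t=g-t\,\eta\,\mathrm{Ric}_g$ with compactly supported cutoff $\eta$ makes $\lambda_1(-\Delta+\beta R)$ strictly positive on a compact set while staying nonnegative. Repeating Steps 1--2 with a first eigenfunction then gives $R_{\hat g}>0$ wherever needed. A final conformal adjustment, by a factor tending to $1$ at infinity and exploiting $\lambda_1>0$ on an exhaustion, then produces a complete $\mathbf{S}^1$-invariant PSC metric, which Proposition \ref{prop: noncompact dominate enlargeable 2} forbids.

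\textbf{Step 4: from Ricci flat to flat.} From $R_{\hat g}\equiv0$ and the formula in Step 2, $\nabla u\equiv 0$ whenever $\gamma<1$, so $u$ is constant and $R_g=0$. When $\beta=\frac12$, so $\gamma=1$, we instead use the Ricci-flatness of $\hat g$ obtained in Step 3. The warped-product Ricci formulas give
$$\mathrm{Ric}_g=\frac{\nabla^2 w}{w},\qquad \Delta w=0.$$
Thus $w$ is a positive harmonic function on $Y$. To conclude $w$ is constant I would use rigidity in Proposition \ref{prop: noncompact dominate enlargeable 2} once more, through a scaling or covering argument, or a Yau-type Liouville theorem if $\mathrm{Ric}_g\ge0$ can be established.

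Once $w$ is constant, $\mathrm{Ric}_g=0$. Finally, Ricci-flat is upgraded to flat by lifting to the enlargeable coverings of $X$ and invoking the Gromov--Lawson-type obstruction, again via Proposition \ref{prop: noncompact dominate enlargeable 2}: any non-flat Ricci-flat metric could be perturbed to PSC, as in Step 3.
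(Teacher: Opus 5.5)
Your outline matches the paper's: warp $Y$ by a positive solution of the spectral equation, use Proposition \ref{prop: noncompact dominate enlargeable 2} to force the warped metric to be Ricci-flat, then upgrade to flatness. Your computation with $\gamma=\frac1{2\beta}$ is correct but not needed. Since $\int|\nabla\phi|^2+\frac12R_g\phi^2\ge(1-\frac1{2\beta})\int|\nabla\phi|^2$, the paper simply solves $-\Delta v+\frac12R_gv=0$ and gets $R_{\hat g}\equiv0$.

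\textbf{First gap: the deformation in Step 3.} You leave this step open. The tool that closes it is Kazdan's theorem on complete manifolds \cite{Kazdan82}: a complete metric with $R\ge0$ and $\mathrm{Ric}\not\equiv0$ can be deformed to a complete metric with $R>0$. Lemma \ref{G-invariant Kazdan} makes this $\mathbf{S}^1$-equivariant, by choosing the exhaustion and auxiliary data $\mathbf{S}^1$-invariant and using uniqueness of the solutions. Your substitutes (an ``infinitesimal rigidity'' of $\mu$-bubbles, or perturbing by $-t\eta\,\mathrm{Ric}_g$ plus a conformal correction) are not carried out. It is not evident that either gives a complete invariant PSC metric.

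Your case split also misses the main case. Step 3 treats only $R_{\hat g}\not\equiv0$ but concludes $\mathrm{Ric}_{\hat g}\equiv0$. For $\beta=\frac12$ one has $R_{\hat g}\equiv0$ identically, so everything rests on the case $\mathrm{Ric}_{\hat g}\not\equiv0$, which is exactly Kazdan's hypothesis.

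With Kazdan available, constancy of $w$ in Step 4 needs no scaling or Liouville argument:
\begin{enumerate}
\item $\mathrm{Ric}_{\hat g}(\partial_s,\partial_s)=-w\Delta_gw=0$, and with the equation $-\Delta w+\frac12R_gw=0$ this gives $R_g=0$.
\item Applying Lemma \ref{G-invariant Kazdan} to the product $(Y\times\mathbf{S}^1,g+ds^2)$ and using Proposition \ref{prop: noncompact dominate enlargeable 2} forces $\mathrm{Ric}_g\equiv0$.
\item Hence $\nabla^2w=w\,\mathrm{Ric}_g=0$, and a positive function with parallel gradient on a complete manifold is constant.
\end{enumerate}

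\textbf{Second gap: the final step from Ricci-flat to flat.} Your justification is wrong. A non-flat Ricci-flat metric can in general \emph{not} be perturbed to positive scalar curvature. Kazdan's deformation requires $\mathrm{Ric}\not\equiv0$, and a Ricci-flat K3 surface admits no PSC metric at all. So Proposition \ref{prop: noncompact dominate enlargeable 2} gives nothing once $\mathrm{Ric}_g\equiv0$. You also never deal with $Y$ possibly being noncompact.

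The missing idea is to prove $Y$ compact first.
\begin{enumerate}
\item If $Y$ is noncompact, then $S_\infty\neq\emptyset$.
\item An enlargeable $X$ has infinite fundamental group. So the preimage in the universal cover of a small ball around a point of $S_\infty$ has infinitely many components.
\item Each of these has noncompact preimage under the lifted map $\tilde f:\tilde Y\to\tilde X$, so $\tilde Y$ has infinitely many ends.
\item But $\tilde Y$ is complete and Ricci-flat, and a manifold with two ends contains a line. By the Cheeger--Gromoll splitting theorem \cite{CG71}, $\tilde Y$ therefore has at most two ends, a contradiction.
\end{enumerate}
Thus $Y$ is compact. It is also enlargeable, since enlargeability passes to the domain of a nonzero-degree map. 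A compact, enlargeable, Ricci-flat manifold is flat; the paper cites \cite[Proposition 4.5.8]{LM89} for this last implication.
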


        To prove Corollary \ref{cor: noncompact dominate enlargeable 3}, we need an $\mathbf{S}^1$-invariant version of Kazdan's deformation theorem in \cite{Kazdan82}.

        \begin{lemma}\label{G-invariant Kazdan}
    Let $(M^n,g)$ be a complete Riemannian manifold with $\mathbf{S}^1$ acting freely and isometrically on it. Assume $R_g\ge 0$, and $Ric_g$ is not identically zero. Then there exists an $\mathbf{S}^1$-invariant complete metric $\tilde{g}$ on $M$ with $R_{\tilde{g}}>0$ everywhere.
\end{lemma}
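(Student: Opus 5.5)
We follow Kazdan's original strategy \cite{Kazdan82}, keeping every step $\mathbf{S}^1$-equivariant. The first move is a scalar curvature reduction via a perturbed Ricci-type deformation. Since $Ric_g\not\equiv 0$, the isometric $\mathbf{S}^1$-action preserves $Ric_g$, so the set where $Ric_g\neq 0$ is a nonempty open $\mathbf{S}^1$-invariant set. Choose a compactly supported $\mathbf{S}^1$-invariant cutoff $\eta\ge 0$, positive somewhere on this set; obtain it by averaging any cutoff over the compact group $\mathbf{S}^1$.

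Next consider the deformation $g_t = g - t\,\eta\, Ric_g$ for small $t>0$. Since $\eta$ has compact support, $g_t$ is complete, $\mathbf{S}^1$-invariant, and equal to $g$ outside a compact set. The standard first variation gives
\begin{align*}
\frac{d}{dt}\Big|_{t=0} R_{g_t} = -\Delta_g \operatorname{tr}(h) + \divv\divv h - \langle Ric_g,h\rangle,\qquad h=-\eta Ric_g .
\end{align*}
Rather than arguing pointwise, we pass to the first eigenvalue of the conformal Laplacian $L_{g_t}=-\Delta_{g_t}+c_nR_{g_t}$ on a large invariant compact domain $\Omega$ with Dirichlet conditions. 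Its first eigenfunction is unique up to scaling, hence $\mathbf{S}^1$-invariant. The derivative of the eigenvalue at $t=0$ comes out as $c_n\int \eta|Ric_g|^2 u^2>0$ after integrating the divergence terms by parts against the eigenfunction. This is Kazdan's computation. When $R_g\ge0$, the unperturbed eigenvalue is $\ge 0$, so $\lambda_1(L_{g_t},\Omega)>0$ for small $t$. The Dirichlet eigenvalue on $\Omega$ is the quantity to control because the perturbation is supported inside $\Omega$.

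The remaining task is to upgrade this positivity to a complete invariant metric with $R>0$ everywhere, and this is the main obstacle on a noncompact manifold. Outside $\supp\eta$ we still only have $R\ge0$ and possibly $R\equiv0$. We plan to use Kazdan's technique of solving $L_{g_t}u = f$ with $f\ge0$ an invariant positive function decaying suitably. The positivity of the Dirichlet eigenvalues on an exhaustion by invariant domains $\Omega_k$ yields solutions $u_k$, and we normalize $u_k\ge 1$ near infinity by solving $L u_k=f$, $u_k=1$ on $\partial\Omega_k$. The maximum principle (using $R\ge0$ away from the support) gives $u_k\ge$ a uniform positive bound, together with local upper bounds, so $u_k\to u$ with $0<c\le u\le C$ locally. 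Each $u_k$ is $\mathbf{S}^1$-invariant by uniqueness. Then $\tilde g=u^{4/(n-2)}g_t$ has $R_{\tilde g}=c_n^{-1}u^{-\frac{n+2}{n-2}}f>0$. It is complete provided $u$ is bounded below by a positive constant, which follows from $u\ge\min(1,\inf)$ by the maximum principle on the region where $R_{g_t}\ge0$. We expect the delicate step to be exactly this uniform lower bound for $u$ together with convergence, and we would handle it following \cite{Kazdan82} verbatim. Invariance is automatic throughout because every PDE solution is unique and the data are invariant. In dimension $n=2$ one replaces the conformal factor by $e^{2u}$ analogously.
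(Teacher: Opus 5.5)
Your equivariance mechanism matches the paper's proof. You average the cutoff over $\mathbf{S}^1$ and work on $\mathbf{S}^1$-invariant exhaustion domains; the paper takes sublevel sets of an invariant mollification of the distance function of $M/\mathbf{S}^1$. Invariance then comes from uniqueness of the Dirichlet solutions on invariant domains and passes to limits. Had you simply invoked Kazdan's Theorem B and checked that all of his auxiliary data (cutoff, exhaustion, the weight $p$ of his Lemma 2.1) can be chosen invariant, that would be the paper's argument.

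\textbf{The eigenvalue step is wrong and carries no information.} For a Dirichlet eigenfunction $u$ on a bounded $\Omega$ the divergence terms do not drop out: $\int(-\Delta\operatorname{tr}h+\divv\divv h)u^2=\int(\langle h,\nabla^2u^2\rangle-\operatorname{tr}h\,\Delta u^2)$. There are also terms from varying $|\nabla u|^2_{g_t}\,d\mu_{g_t}$. The formula $\lambda'(0)=c_n\int\eta|Ric_g|^2u^2$ is the closed-manifold computation, valid only because there $R_g\equiv 0$ forces $u$ to be constant. More importantly, $\lambda_1(L_g,\Omega)>0$ already at $t=0$ for every bounded $\Omega$. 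So your Step 1 conclusion also holds verbatim for flat $\mathbf{T}^{n-1}\times\mathbf{R}$, which has no complete metric of positive scalar curvature; it never uses $Ric_g\not\equiv0$.

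\textbf{Step 2 has no input for the exhaustion.} Step 2 needs positivity, indeed locally uniform bounds, for $L_{g_t}$ on every $\Omega_k$. Since $R_{g_t}$ can be negative on $\supp\eta$, domain monotonicity only gives $\lambda_1(\Omega_k)\le\lambda_1(\Omega)$, which is the wrong direction. The hypothesis $Ric_g\not\equiv 0$ has to enter globally, for instance through $\int_M\dot R\,d\mu_g=\int_M\eta|Ric_g|^2\,d\mu_g>0$ with $\dot R:=\frac{d}{dt}R_{g_t}|_{t=0}$, the divergence terms integrating to zero over $M$. Your sketch does not supply this.

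\textbf{The completeness argument is false as stated.} For $-\Delta u+c_nR_{g_t}u=f\ge0$ with $R_{g_t}\ge0$, the minimum principle gives $u\ge 0$ from nonnegative boundary data, not $u\ge\min(1,\inf_K u)$. Already $-u''+u=0$ on $[-L,L]$ with $u(\pm L)=1$ has $u(0)=1/\cosh L$. With the normalization $u_k=1$ on $\partial\Omega_k$ and decaying $f$, the $u_k$ tend to $0$ locally wherever $R_{g_t}$ is bounded below by a positive constant on large regions. The limiting conformal metric can then be incomplete.

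\textbf{How to build completeness in.} Arrange $u\ge1$ from the start. Suppose you have a complete invariant metric with $R\ge 0$ pointwise and $R\not\equiv 0$. Let $w\ge0$ be the increasing limit of the invariant Dirichlet solutions of $-\Delta w+c_nRw=f$ on invariant $\Omega_k$. Here $f>0$ is invariant and decays fast enough that its Green potential is finite; the minimal Green function exists because the constant $1$ is a positive supersolution that is not a solution. Then $u=1+w$ satisfies $Lu=c_nR+f>0$ and $u^{4/(n-2)}g\ge g$.

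Converting the Ricci deformation, where $R_{g_t}$ has no sign control, into such a global statement is the genuine content of Kazdan's theorem. Your sketch does not supply it and must defer it to Kazdan, exactly as the paper does.
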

        \begin{proof}
            We adopt the deformation arguments in \cite{Kazdan82}. We assert that the Riemannian  metric $G$ constructed  in Theorem B in \cite{Kazdan82} can be constructed to be $\mathbf{S}^1$-symmetric. Indeed, let $\rho(x)$ be the distance function of $M/\mathbf{S}^1$. By an $\mathbf{S}^1$-invariant mollification, we may assume $\rho$ is a $\mathbf{S}^1$-invariant smooth function on $M$. For any $i\geq 1$ we define
            \begin{align*}
                M_i:=\{x\in M: \rho(x)<i\}.
            \end{align*}
Then, $M_i$ is a $\mathbf{S}^1$-symmetric domain in $M$, and we can  make $p$ in Lemma 2.1 in  \cite{Kazdan82}  to be $\mathbf{S}^1$-symmetric, i.e. $p$ depends only on $x\in M/\mathbf{S}^1$. Thus, the domains and equations involved in  Theorem B in \cite{Kazdan82} are all $\mathbf{S}^1$-symmetric. By the uniqueness of solutions of involved equations, we know that the Riemannian manifold $(\hat M^{n}, G)$ constructed in  Theorem B in \cite{Kazdan82} is complete and  $\mathbf{S}^1$-symmetric, and  its scalar curvature is positive  everywhere. 
        \end{proof}

        \begin{proof}[Proof of Corollary \ref{cor: noncompact dominate enlargeable 3}]
            By \cite[Theorem 1]{FcS1980}, there exists $v\in C^{\infty}(Y)$ that solves $-\Delta_g v+\frac{1}{2}R_g v = 0$. Consider $(\hat{Y},\hat{g}) = (Y\times \mathbf{S}^1, g+v^2ds^2)$, then $R_{\hat{g}} = 0$, and $\hat{g}$ is $\mathbf{S}^1$-symmetric. If the Ricci curvature of $(\hat{Y},\hat{g})$ is not identically zero, it then follows from Lemma \ref{G-invariant Kazdan} that $\hat{Y}$ admits a $\mathbf{S}^1$-invariant complete metric with positive scalar curvature everywhere, which contradicts with Proposition \ref{prop: noncompact dominate enlargeable 2}. Hence, we have $Ric_{\hat{g}}\equiv 0$.  By direct computation, $Ric_{\hat{g}}(\frac{\partial}{\partial s},\frac{\partial}{\partial s}) = -v\Delta_g v = 0$, so $v$ is a constant and $Ric_{g}\equiv 0$.

            Next, we show $Y$ is compact. Suppose this is not the case, we have $S_{\infty}\ne\emptyset$. Let $\tilde{X}$ be the universal covering of $X$. We adopt the notations in the proof of Proposition \ref{prop: noncompact dominate enlargeable} in Appendix C. Since enlargeable manifolds have infinite fundamental group, $q_X^{-1}(U_{\epsilon})$ has infinitely many components. Consequently, $\tilde{f}^{-1}(q_X^{-1}(U_{\epsilon})) = q_Y^{-1}(f^{-1}(U_{\epsilon}))$ has infinitely many non-compact components. From the properness of $\tilde{f}$ we deduce $\tilde{Y}$ has infinitely many ends. Since manifolds with at least two ends contains a geodesic line, it follows readily from \cite{CG71} that Ricci flat manifolds have at most two ends, a contradiction.

            Since the enlargeability is preserved under non-zero degree maps, $Y$ is also enlargeable. The flatness then follows from \cite[Proposition 4.5.8]{LM89}.
        \end{proof}

       As an application, we obtain the following $\mathbf{S}^1$-invariant version positive mass theorem for ALF manifolds with $\mathbf{S}^1$ fibers and dimension less or equal than $8$:
        \begin{proposition}\label{prop: PMT for S^1 symmetric ALF}
	Let $(\hat{M}^{n+1}, \hat{g}) = (M^{n}\times \mathbf{S}^1,\hat{g})$ be an ${n+1}$-dimensional complete Riemannian manifold with an $ALF$ end $\hat{E} = E\times \mathbf{S}^1$ and $2\leq n\leq 7$. Assume $\hat{g}$ is $\mathbf{S}^1$ invariant along the $\mathbf{S}^1$ fiber, and $R_{\hat{g}}\ge 0$. Then $m_{ADM}(\hat{M},\hat{g},\hat{E})\ge 0$. Furthermore, if $m_{ADM}(\hat{M},\hat{g},\hat{E})= 0$, then $\hat{M}$ is isometric to the Riemannian product $(\mathbf{R}^{n}\times \mathbf{S}^1, g_{euc}+ds^2)$.
\end{proposition}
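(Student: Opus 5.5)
We argue by contradiction and reduce to Proposition \ref{prop: noncompact dominate enlargeable 2}, following the approach of \cite{CLSZ2021} and the dimension reduction argument of Schoen--Yau, but run in an $\mathbf{S}^1$-equivariant way.

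\textbf{Step 1: negative mass gives positive scalar curvature.} Suppose $m_{ADM}(\hat M,\hat g,\hat E)<0$. The first task is to perturb $\hat g$ to an $\mathbf{S}^1$-invariant metric that is harmonically flat on $\hat E$, still has negative mass and $R\ge 0$, and has $R>0$ in a compact set. Since $\hat g$ is $\mathbf{S}^1$-invariant, it can be written as $\hat g = g + u^2(ds+\theta)^2$ on $M$. The density argument of \cite{CLSZ2021} (conformal change plus cutoff) is canonical, so it can be performed on $M$ with $\mathbf{S}^1$-invariant data. In this way we reach the standard form: outside a compact set the metric is $\mathbf{S}^1$-invariant and the ALF end is asymptotic to a Schwarzschild-type ALF metric with negative mass.

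\textbf{Step 2: build a quasi-proper degree-one map and contradict Proposition \ref{prop: noncompact dominate enlargeable 2}.} Following Lohkamp's compactification trick, negative mass lets us deform the metric so that it becomes exactly flat, $g_{Euc}+ds^2$ up to flat twisting, outside a large coordinate cylinder, with $R>0$ somewhere and $R\ge0$ everywhere. We then cut at a large cube in the $\mathbf{R}^n$ factor and identify opposite faces. This produces $Y^n = (M\setminus E_{\text{far}})\cup(\text{torus piece})$, a manifold with the remaining arbitrary ends. It carries a complete $\mathbf{S}^1$-invariant metric on $Y\times\mathbf{S}^1$ with $R\ge0$ and $R\not\equiv0$, and it admits a degree-one map to $\mathbf{T}^n$, which is enlargeable. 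Lemma \ref{G-invariant Kazdan} upgrades this metric to $R>0$, since Ricci cannot vanish identically where $R>0$. This contradicts Proposition \ref{prop: noncompact dominate enlargeable 2}. The restriction $n\le 7$ enters through that proposition, namely through the $\mathbf{S}^1$-invariant $\mu$-bubbles of \cite{WY2023}, which stay smooth because the orbit space has dimension at most $7$.

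\textbf{Step 3: the equality case.} If $m_{ADM}=0$, a small $\mathbf{S}^1$-invariant perturbation would lower the mass wherever $Ric_{\hat g}\ne0$; this uses the Ricci-flow or conformal perturbation of Schoen--Yau, done equivariantly. So $\hat g$ is Ricci flat. Writing $\hat g=g+u^2(ds+\theta)^2$, Ricci flatness together with the ALF asymptotics gives, via the $\partial_s$ component, that $u$ is harmonic-like and tends to $1$ at infinity. Combining this with the compactified picture of Step 2 and Corollary \ref{cor: noncompact dominate enlargeable 3}, we get that the compactified $Y$ is flat, hence compact. This forces $M$ to have no extra ends, and then Cheeger--Gromoll splitting (\cite{CG71}) on the Ricci-flat ALF space gives $\hat M\cong \mathbf{R}^n\times\mathbf{S}^1$ with product metric.

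\textbf{Main obstacle.} The hardest part is Step 1 together with Lohkamp's reduction to an exactly flat end. The arbitrary ends rule out solving global elliptic equations on $\hat M$ directly. The plan for handling this is to perform every modification only on the ALF end, using cutoffs and equivariant local PDE on annuli. The outcome is untouched on the other ends, so the required completeness and quasi-properness are preserved automatically.
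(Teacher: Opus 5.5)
Your Steps 1--2 follow the paper's argument. You run the compactification of \cite{CLSZ2021} $\mathbf{S}^1$-equivariantly and obtain an $\mathbf{S}^1$-invariant complete metric on $(\mathbf{T}^n\# N^n)\times\mathbf{S}^1$ with $R\ge 0$ and $R>0$ somewhere. You then upgrade it by Lemma~\ref{G-invariant Kazdan} and contradict Proposition~\ref{prop: noncompact dominate enlargeable 2}. The precise reason the modified metrics stay invariant is that the conformal equation in the gluing has a unique solution (the paper cites the Fredholm alternative), so rotating a solution returns the same solution. That is what your word ``canonical'' should be replaced by.

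The equality case, Step 3, has a genuine gap. You invoke ``the compactified picture of Step 2'' and Corollary~\ref{cor: noncompact dominate enlargeable 3} to conclude that $Y$ is flat and compact, and hence that $M$ has no extra ends. But that compactification exists only because $m<0$. In the harmonically flat region the conformal factor is $U=1+c\,m|x|^{2-n}+\cdots$ with $c>0$. Negativity of $m$ is exactly what lets you replace $U$ by a constant far out while keeping $\Delta U\le 0$, i.e.\ $R\ge 0$. When $m=0$ no such deformation is available. So there is no complete metric on $\mathbf{T}^n\# N$ to which Corollary~\ref{cor: noncompact dominate enlargeable 3} applies, and the end-counting conclusion has no basis.

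The closing step is also incomplete. Cheeger--Gromoll splits off one factor $\mathbf{R}$ per line you actually exhibit, so by itself it cannot produce $\mathbf{R}^n\times\mathbf{S}^1$. With a twist $\theta$, Ricci-flatness in the $\partial_s$ direction (Bochner for the Killing field $\partial_s$) only makes $|\partial_s|^2=u^2$ subharmonic, not harmonic. The maximum principle then gives only $u\le 1$.

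After you obtain $Ric_{\hat g}\equiv 0$, you need a direct argument instead.
\begin{enumerate}
\item If $\hat M$ had two ends, it would contain a line and split as $\mathbf{R}\times N$ with $N$ compact. That is incompatible with the $r^{n}$ volume growth of the ALF end for $n\ge 2$, so $M$ has only the AF end.
\item In the warped case $\hat g=g+v^2ds^2$, which is the case used elsewhere in the paper, $Ric_{\hat g}(\partial_s,\partial_s)=-v\Delta_g v=0$. Together with $v\to 1$ on the single end this forces $v\equiv 1$.
\item Then $Ric_g=v^{-1}\nabla^2 v=0$, and $g$ is flat by Bishop--Gromov on its single AF end.
\end{enumerate}
For a genuinely twisted metric you must additionally show $d\theta=0$ and $u\equiv 1$, which your sketch does not address. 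The paper itself only refers this rigidity part to \cite{CLSZ2021}.
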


\begin{proof}
    The proof is a modification of that of \cite[ Theorem 1.10]{CLSZ2021}. Assume that the ADM mass is negative. Due to the $\mathbf{S}^1$-symmetry of $(\hat M, \hat g )$ and \cite[Proposition 4.10]{CLSZ2021}
 we know that the conformal factor in the gluing is independent of $s$, where $s$ parametrizes $\mathbf{S}^1$. To be more precise, from Fredholm alternative the equation (4.16) in \cite{CLSZ2021} has a unique solution, so it must be $\mathbf{S}^1$-symmetric as long as $(\hat M, \hat g )$ is $\mathbf{S}^1$-symmetric. Thus, just as what was done in the proof of \cite[Theorem 1.10 ]{CLSZ2021}, we finally get an $\mathbf{S}^1$-symmetric complete Riemannian manifold $((\mathbf{T}^{n}\# N^{n})\times \mathbf{S}^1, \tilde{g})$ with $n\leq 7$, which has non-negative scalar curvature everywhere and strictly positive scalar curvature somewhere.  Here $N^{n}$ is an $n$-dimensional manifold. Now, this contradicts to Proposition \ref{prop: noncompact dominate enlargeable 2} and Lemma \ref{G-invariant Kazdan}. The rigidity part also follows from the same reason and the argument in \cite{CLSZ2021}.
\end{proof}

\section{Singular harmonic functions on area-minimizing hypersurfaces with isolated singularities}

In this section, we aim to obtain nice estimates for singular harmonic functions on an area-minimizing hypersurface $\Sigma^n (n\ge 7)$ with isolated singular set $\mathcal{S}$, where $\Sigma^n$ lies in an ambient manifold $N^{n+1}$ . For the sake of applying general theory of the metric measure space, we regard $(\Sigma,d,\mu)$ as a metric measure space, where $d$ is the induced metric from the ambient space $N^{n+1}$ (In other word, $d = (d_N)|_\Sigma$, where $d_N$ is the ambient distance), and $\mu$ is the restriction of the Hausdorff $n$-measure of $N$ on $\Sigma$.

Throughout the paper, We use the notation $B(x,r)$ to denote the metric ball centered at $x$ with radius $r$ in $\Sigma$. Since $d$ is induced from $N$, it is worth noting that $B(x,r)$ is exactly the intersection of $\Sigma$ with the extrinsic ball $\mathcal{B}^N_r(x)$ centered at $x$ with radius $r$ in $N$, provided $r<\inj_N(x)$.

Since $\Sigma^n$ is area-minimizing, from the monotonicity formula of minimal hypersurfaces and \cite{BG1972}, we know that $(\Sigma,d,\mu)$ satisfies standard structural conditions of metric measure spaces, $i.e.$
\begin{enumerate}
    \item $(\Sigma,d,\mu)$ is \textit{locally volume doubling};
    \item $(\Sigma,d,\mu)$ supports a \textit{locally Poincare inequality} (See Proposition \ref{prop: Pioncare-Sobolev inequality} and Corollary \ref{cor: 12poincare} for detailed form).
\end{enumerate}
Here, the term \textit{locally} means that on a bounded domain $X\subset\subset\Sigma$, the doubling inequality and the Poincare inequality (PI) hold on all balls $B = B(x,r)\subset \Sigma$ whenever $x\in X$ and
\begin{align}\label{eq: r0}
    r<r_0 = r_0 (X,d_X,\mu_X).
\end{align}
Moreover, the doubling constant and PI constants are also bounded in terms of $(X,d_X,\mu_X)$. Here, $d_X = d|_X$ and $\mu_X = \mu|_X$ denote the restriction of $d$ and $\mu$ on $X$.
\subsection{Sobolev space on metric measure spaces}

$\quad$

In this subsection, we will collect some known facts about the Sobolev space defined on $(X,d_X,\mu_X)$. We will always assume $X\subset\subset \Sigma$, so it follows that $\mu(X)<+\infty$.

\begin{definition}
    For each $f\in L^2(X,\mu_X)$, the Cheeger energy of $f$ is defined by
    \begin{align}\label{eq: 67}
        Ch(f) = \inf\{\liminf_{i\to\infty}\frac{1}{2}\int_X(\Lip f_i)^2d\mu_X:\quad f_i\in\Lip(X),\  \|f_i-f\|_{L^2}\to 0.\}
    \end{align}

The Sobolev space $W^{1,2}(X,d_X,\mu_X)$ is then defined to be $\{f: Ch(f)<+\infty\}$. 
\end{definition}
Since $(X,d_X,\mu_X)$ is locally volume doubling, $W^{1,2}(X,d_X,\mu_X)$ is Banach and reflexiv when endowed with the norm
\begin{align}\label{eq: 73}
    \|f\|_{W^{1,2}} = (\|f\|_{L^2(X,\mu_X)}^2+2Ch(f))^{\frac{1}{2}}.
\end{align}

\begin{lemma}\label{lem: estimate for eta} (Zero capacity property)
    For any neighborhood $U$ for $\mathcal{S}$ and $\epsilon>0$, there exists an open set $V$ with $\mathcal{S}\subset V\subset U$ and a cut off function $\eta\in C^{\infty}(M\backslash \mathcal{S})$ that satisfies $\eta = 0$ in $V$, $\eta = 1$ in $M\backslash U$ and
    \begin{align*}
        \int_{M}|\nabla\eta|^2d\mu<\epsilon^2.
    \end{align*}
\end{lemma}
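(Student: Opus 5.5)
Since $\mathcal{S}$ is a set of isolated singular points of the area-minimizing hypersurface $\Sigma^n$ (with $n\ge 7$, so $n\ge 3$), and any compact neighborhood meets only finitely many of them, it suffices to build the cut-off function around a single singular point $p\in\mathcal{S}$ and then take the product of finitely many such functions. Products behave well here: if each factor $\eta_j$ takes values in $[0,1]$ and the factors have disjoint transition regions, then $|\nabla(\prod\eta_j)|^2=\sum|\nabla\eta_j|^2$. If $\mathcal{S}$ is infinite but locally finite, one uses energy bounds $\epsilon^2 2^{-j}$ for the $j$-th point. The domain of integration is $\Sigma$ (the statement writes $M$), and the measure $\mu$ is the induced $n$-dimensional Hausdorff measure.

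\textbf{Step 1: volume growth near $p$.} By the monotonicity formula for the area-minimizing hypersurface, there are $r_1>0$ and $C>0$ with $\mu(B(p,r))\le Cr^n$ for all $r<r_1$. Here $B(p,r)=\Sigma\cap\mathcal{B}^N_r(p)$ is the extrinsic ball. Moreover, $\Sigma$ is smooth in $B(p,r_1)\setminus\{p\}$.

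\textbf{Step 2: log cut-off.} Let $\rho=d_N(p,\cdot)$, restricted to $\Sigma$. It is Lipschitz with constant $1$, and $|\nabla^\Sigma\rho|\le 1$ on the regular part. Fix $0<R<\min(r_1,\inj_N(p))$ small enough that $B(p,R)\subset U$. For $k\in\mathbf{N}$, set
$$\eta_k=\begin{cases}0,&\rho\le R^{k+1}/R^{k}\cdot R^{k}=R^{2k}\ \text{(i.e. }\rho\le \sqrt{R}\,^{\,\cdot}\text{)}\end{cases}$$
More concretely, use the standard logarithmic cut-off:
$$\eta_k=\psi\!\left(\frac{\log(\rho/R^{2})}{\log (R^{k}/R^{2})}\right)\ \text{with transitions}\quad \eta_k=0 \text{ on } \{\rho<R^{k}\},\quad \eta_k=1 \text{ on } \{\rho>R\}.$$
In between, take $\eta_k=\log(\rho/R^k)/\log(R^{1-k})$. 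Then $|\nabla\eta_k|\le \big((k-1)|\log R|\,\rho\big)^{-1}$. Cover the annulus $R^k<\rho<R$ by dyadic shells $2^{-i-1}R<\rho<2^{-i}R$. Step 1 bounds each shell's contribution by $C(2^{-i}R)^{n-2}/((k-1)\log R)^2$. Since $n\ge3$, the dyadic sum converges, so
$$\int|\nabla\eta_k|^2\,d\mu\le \frac{C R^{n-2}}{(k-1)^2(\log R)^2}\to0 .$$
In fact, since $n\ge 3$, even a linear cut-off between radii $r$ and $2r$ gives energy $\le Cr^{n-2}\to0$, which is simpler. I will use that: take $\eta=\phi(\rho/r)$ with $\phi=0$ on $[0,1]$, $\phi=1$ on $[2,\infty)$, and $|\phi'|\le 2$. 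Then $\int|\nabla\eta|^2\le 4r^{-2}\mu(B(p,2r))\le C r^{n-2}$. Choose $r$ so small that $Cr^{n-2}<\epsilon^2$ and $B(p,2r)\subset U$, and set $V=B(p,r)$.

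\textbf{Step 3: smoothing.} The function $\phi(\rho/r)$ is only Lipschitz. Since $\rho$ is smooth away from $p$ for $r$ below the injectivity radius, and $\phi$ can be chosen smooth, $\eta$ is in fact smooth on $\Sigma\setminus\mathcal{S}$. One shrinks $V$ slightly if needed so that $\eta=0$ on the open set $V$.

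\textbf{Main obstacle.} The only real input is the Euclidean-type volume bound from monotonicity, combined with $n\ge3$ to make $r^{n-2}\to0$. The remaining care is bookkeeping in the non-compact case: if $\mathcal{S}$ is infinite, take radii $r_j$ with $Cr_j^{n-2}<\epsilon^2 2^{-j}$ and the balls $B(p_j,2r_j)$ pairwise disjoint. This ensures the product of the single-point cut-offs is smooth and has total energy below $\epsilon^2$.
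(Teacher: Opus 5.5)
Your argument is correct and is essentially the paper's. Around each singular point you cut off on an annulus of radius comparable to $r$, bound the energy by $r^{-2}\mu(B(p,2r))\le Cr^{n-2}$ using the monotonicity volume bound, and then combine the pieces. The paper combines them by taking a regularized minimum over a finite cover with $\sum r_i^{n-2}<\epsilon^2$; your product over disjoint balls with energy budgets $\epsilon^2 2^{-j}$ needs no regularization and also covers a countably infinite, locally finite $\mathcal{S}$. You should delete the garbled, abandoned logarithmic cut-off draft at the start of Step 2, since your final argument does not use it.
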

\begin{proof}
    The statement actually holds under the more general condition $\mathcal{H}^{n-2}(\mathcal{S}) = 0$, which is automatically satisfied when $n\ge 7$ and $\mathcal{S}$ is isolated. We use an argument of \cite[Appendix]{AX24}. For each $\epsilon>0$, we can find a finite collection of balls $B_{r_i}(x_i)$ that covers $\mathcal{S}$ and satisfies
    \begin{align*}
        \sum_ir_i^{n-2}<\epsilon^2.
    \end{align*}
    For each $i$, we find a cutoff function $\eta_i$ which is smooth in $M\backslash\mathcal{S}$ that satisfies
    \begin{align*}
        \eta_i = 0\text{ in } B_{2r_i}(x_i),\quad \eta_i = 1\text{ in } M\backslash B_{3r_i}(x_i),\quad |\nabla_g\eta_i|<\frac{2}{r_i}\text{ in }M\backslash\mathcal{S}.
    \end{align*}
    Let $\hat{\eta} = \min_i\{\eta_i\}\in\Lip(M\backslash\mathcal{S})$ and let $\eta$ be obtained by regularizing $\hat{\eta}$. We have $|\nabla_g\hat{\eta}|\le 2|\nabla_g\eta|$ and
    \begin{align*}
        \eta = 0\text{ in }\bigcup_iB_{r_i}(x_i),\quad\eta = 1\text{ in }M\backslash\bigcup_iB_{4r_i}(x_i).
    \end{align*}
    Therefore,
    \begin{align*}
        \int_M|\nabla_g\eta|^2d\mu\le 2\sum_i\int_M|\nabla_g\eta_i|^2d\mu\le 8C\sum_i r_i^n\cdot r_i^{-2}<8C\epsilon^2,
    \end{align*}
    and the conclusion follows.
\end{proof}

The next Proposition is a consequence of \cite[Proposition 4.10]{AGS14}, \cite[Proposition 3.3]{Hon18} and the zero capacity property Proposition \ref{lem: estimate for eta}. A slight difference is that in \cite[Proposition 3.3]{Hon18}, $X$ is assumed to be compact. As we have assumed $\mu(X)<+\infty$, the proof of \cite[Proposition 3.3]{Hon18} can be smoothly carried to our setting.
\begin{proposition} (Density of Lipschitz functions in $W^{1,2}$)
    $W^{1,2}(X)$ is a Hilbert space and $\Lip(X)$ is dense in $W^{1,2}(X)$.
\end{proposition}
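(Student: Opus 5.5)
We obtain the Hilbert property and the density together, by first working on the regular part $X\backslash\mathcal{S}$, where $\Sigma$ is a smooth Riemannian manifold, and then using the zero capacity property (Lemma \ref{lem: estimate for eta}) to show that the singular set contributes nothing.

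\textbf{Step 1: identify the Cheeger energy.} On $X\backslash\mathcal{S}$ the metric $d$ is locally bi-Lipschitz to the intrinsic Riemannian distance of $\Sigma$, with ratio tending to $1$ at small scales. Hence for a locally Lipschitz $f$ we have $\Lip f=|\nabla f|$ $\mu$-a.e. there. Since $\mathcal{S}$ is finite, $\mu(\mathcal{S})=0$, and this holds $\mu$-a.e. on $X$. Using the relaxation characterization of the minimal weak upper gradient from \cite[Proposition 4.10]{AGS14}, which is valid because $(X,d_X,\mu_X)$ is locally doubling and supports a local Poincar\'e inequality, we get
\begin{align*}
Ch(f)=\frac{1}{2}\int_X|\nabla f|^2d\mu
\end{align*}
for every $f\in W^{1,2}(X)$. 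Here $\nabla f$ is the weak gradient of $f$ on the smooth part $X\backslash\mathcal{S}$. It follows that $Ch$ is a quadratic form, so it satisfies the parallelogram law. Therefore the norm (\ref{eq: 73}) comes from an inner product and $W^{1,2}(X)$ is a Hilbert space. In the language of \cite{Hon18}, the space is infinitesimally Hilbertian.

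\textbf{Step 2: density of Lipschitz functions.} By the definition of $Ch$, for each $f\in W^{1,2}(X)$ there are $f_i\in\Lip(X)$ with $f_i\to f$ in $L^2$ and $\limsup_i Ch(f_i)\le Ch(f)$. Because $W^{1,2}(X)$ is a Hilbert space, the $f_i$ are bounded in $W^{1,2}$ and, after passing to a subsequence, converge weakly in $W^{1,2}$. Lower semicontinuity of $Ch$ gives $Ch(f)\le\liminf_i Ch(f_i)$, so in fact $Ch(f_i)\to Ch(f)$. Weak convergence together with convergence of norms in a Hilbert space yields strong convergence, so $f_i\to f$ in $W^{1,2}$. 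This is the argument of \cite[Proposition 3.3]{Hon18}. Its compactness hypothesis enters only through $\mu(X)<\infty$, which guarantees for instance that constants and truncations lie in $L^2$, so it transfers to our setting.

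\textbf{Main obstacle.} The delicate point is Step 1 near $\mathcal{S}$: we must make sure that no energy is concentrated on the singular set and that the relaxed slope agrees with the smooth gradient there. We handle this with Lemma \ref{lem: estimate for eta}. Given a Lipschitz $f$ (truncated so that it is bounded), we replace it by $\eta f$, where $\eta$ is the cutoff from Lemma \ref{lem: estimate for eta}, vanishing near $\mathcal{S}$ and with $\int|\nabla\eta|^2<\epsilon^2$. Since $\eta f$ is supported in $X\backslash\mathcal{S}$, its energy is computed in the smooth setting. Moreover
\begin{align*}
\|\eta f-f\|_{W^{1,2}}\lesssim\|f\|_{L^\infty}\,\epsilon+\|f\|_{W^{1,2}(U)},
\end{align*}
and the right-hand side tends to zero as $\epsilon\to0$ and $U\downarrow\mathcal{S}$. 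This shows that the singular set is removable for both the energy identity and the approximation, which completes the argument.
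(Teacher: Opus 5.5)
Your proposal is correct and follows essentially the same route as the paper, namely the strategy of \cite[Proposition 3.3]{Hon18}: $Ch$ is shown to be quadratic by computing it on the regular part and using the zero-capacity cutoff of Lemma \ref{lem: estimate for eta} to see that $\mathcal{S}$ carries no energy, and density of $\Lip(X)$ then follows from density in energy together with the Hilbert structure. The only difference is bookkeeping: the paper proves the parallelogram identity on $\Lip(X)$, passes it to $W^{1,2}(X)$ by relaxation, and then invokes \cite[Proposition 4.10]{AGS14}, whereas you assert $Ch(f)=\frac{1}{2}\int_X|\nabla f|^2\,d\mu$ directly on all of $W^{1,2}(X)$ and carry out the weak-plus-norm convergence argument yourself.
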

\begin{proof}
    We sketch the proof following the strategy in \cite[Proposition 3.3]{Hon18}. First, on the smooth part, we have
    \begin{align}\label{eq: 66}
        Ch(\varphi+\psi)+Ch(\varphi-\psi) = 2Ch(\varphi)+2Ch(\psi)
    \end{align}
    for all $\varphi,\psi\in \Lip(X\backslash\mathcal{S})$. By combining Lemma \ref{lem: estimate for eta} with an approximation argument, we obtain that \eqref{eq: 66} holds for all $\varphi,\psi\in \Lip(X)$. By \eqref{eq: 67} we can further obtain the same thing holds for $\varphi,\psi\in W^{1,2}(X)$ (see \cite[Proposition 3.3]{Hon18} for detail). The conclusion then follows directly from \cite[Proposition 4.10]{AGS14}.
\end{proof}

\begin{lemma}\label{lem: compact embed} ($L^2$ strong compactness)
    The inclusion $W^{1,2}(X)\hookrightarrow L^2(X)$ is a compact operator.
\end{lemma}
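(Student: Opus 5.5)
The plan is to derive the compactness of $W^{1,2}(X)\hookrightarrow L^2(X)$ from the local doubling and local Poincar\'e structure of $(X,d_X,\mu_X)$, via a Rellich--Kondrachov argument for metric measure spaces in the style of Haj\l asz--Koskela. Since $\Lip(X)$ is dense in $W^{1,2}(X)$ by the preceding Proposition, it is enough to take a sequence $f_i\in\Lip(X)$ with $\|f_i\|_{W^{1,2}}\le C$ and show that it is totally bounded in $L^2(X)$. Passing from Lipschitz functions to general $W^{1,2}$ functions is then a routine diagonal argument.

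First, away from $\mathcal{S}$: on any compact $K\subset X\backslash\mathcal{S}$, $\Sigma$ is a smooth hypersurface with bounded geometry. There the Cheeger energy agrees with the Dirichlet energy $\frac12\int|\nabla f|^2$, and the extrinsic metric $d$ is bi-Lipschitz to the intrinsic one on small scales. The classical Rellich theorem on a smooth compact domain with smooth boundary containing $K$ therefore yields $L^2(K)$-precompactness of $\{f_i\}$.

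The main obstacle is uniform control of the $L^2$ mass of $f_i$ near $\mathcal{S}$ and near $\partial X$; near $\partial X$ one may simply enlarge $X$ slightly inside $\Sigma$. Near the finitely many singular points $p\in\mathcal{S}\cap\bar X$, I would use the local Poincar\'e--Sobolev inequality of Proposition \ref{prop: Pioncare-Sobolev inequality}. On a ball $B=B(p,r)$ with $r<r_0$, doubling together with the monotonicity formula gives $\mu(B(x,s))\ge c s^n$, so that inequality should take the form
\begin{align*}
\Big(\fint_B|f-f_B|^{2^*}d\mu\Big)^{1/2^*}\le C r\Big(\fint_{B}(\Lip f)^2d\mu\Big)^{1/2},\qquad 2^*=\tfrac{2n}{n-2}.
\end{align*}
Hence $\|f_i\|_{L^{2^*}(B(p,r_0))}\le C$ uniformly. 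By H\"older,
\begin{align*}
\int_{B(p,r)}f_i^2\,d\mu\le \|f_i\|_{L^{2^*}}^2\,\mu(B(p,r))^{2/n}\le C r^2,
\end{align*}
using the upper volume bound $\mu(B(p,r))\le Cr^n$ from monotonicity. This is uniform in $i$ and tends to $0$ as $r\to0$. If the Sobolev inequality is only available in $(2,1)$-Poincar\'e form, I would instead use the self-improvement of Haj\l asz--Koskela to reach some exponent $q>2$, which is all that is needed.

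Finally, fix $\epsilon>0$. Choose $r$ so that the $L^2$ mass of every $f_i$ in $\bigcup_{p}B(p,r)$, and in a thin collar of $\partial X$, is below $\epsilon$. Extract an $L^2$-Cauchy subsequence on the remaining compact smooth region, and run a diagonal argument over $\epsilon=1/k$ to obtain a subsequence that is Cauchy in $L^2(X)$. This proves the compactness of the inclusion.
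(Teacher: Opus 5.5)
Your argument is correct in the setting where the lemma is actually used, and it takes a different route from the paper. The paper's proof is one line: it treats $(X,d_X,\mu_X)$ as a doubling metric measure space supporting a Poincar\'e inequality and invokes the Haj\l asz--Koskela Rellich--Kondrachov theorem \cite[Theorem 8.1]{HK00}.

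You instead localize:
\begin{itemize}
\item On a smooth compact region $D\subset X\setminus\mathcal{S}$ you apply classical Rellich. This works because $|\nabla f|\le \Lip f$ there, so $\int_D|\nabla f|^2\le 2Ch(f)$.
\item In the finitely many balls $B(p,r)$ with $p\in\mathcal{S}\cap\bar X$, you prove a uniform non-concentration bound. It comes from a local Sobolev--Poincar\'e estimate in $L^q$ for some $q>2$, combined with H\"older and $\mu(B(p,r))\le Cr^n$.
\end{itemize}
The paper's route is shorter and does not use that $\mathcal{S}$ is isolated. However, it needs doubling and the Poincar\'e inequality for balls of $X$ itself, including balls centered near $\partial X$. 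Your route needs the Poincar\'e inequality only on interior balls around singular points. It also makes explicit where the finiteness of $\mathcal{S}\cap\bar X$ and the monotonicity volume bounds enter.

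The one step to correct is the boundary. ``Enlarging $X$ slightly inside $\Sigma$'' does not work for $W^{1,2}(X)$, because its elements need not extend to a larger domain. In fact, for a general bounded open $X$ the lemma is false: a rooms-and-passages domain inside the regular part carries a $W^{1,2}$-bounded sequence with no $L^2$-convergent subsequence.

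The enlargement trick is valid for $W^{1,2}_0(X)$, via extension by zero, and that is all Lemma \ref{lem: Poisson equation} uses. For the full $W^{1,2}(X)$ you should assume, as holds in every application in the paper, that $\partial X$ is smooth and disjoint from $\mathcal{S}$. Then choose $D$ to contain $\partial X$, so that Rellich on $D$ covers the boundary region and no collar estimate is needed. The paper's citation tacitly requires comparable boundary regularity when it asserts that $X$ satisfies the Poincar\'e inequality.
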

\begin{proof}
    Since $X$ is doubling due to the local volume bound, and $X$ satisfies the Poincare inequality, the result of the lemma follows from \cite[Theorem 8.1]{HK00}.
\end{proof}

 For $f\in\Lip(X)$, we say $u\in W^{1,2}(X)$ is the solution of the Poisson equation $-\Delta u = f$ if for any $\varphi\in \Lip(X)$ with compact support, there holds
\begin{align}\label{eq: 71}
    \int_X \nabla u\cdot\nabla\varphi d\mu= \int_Xf\varphi d\mu.
\end{align}
Define $W^{1,2}_0(X)$ to be the closure of Lipschitz functions with compact support in $W^{1,2}(X)$. We say $u$ satisfies the Direchlet boundary condition $u = 0$ on $\partial X$ if it further satisfies $u\in W^{1,2}_0(X)$. We also say $-\Delta u\le f$ if we have $\le$ sign in \eqref{eq: 71} for all $\varphi\ge 0$ with compact support in $\Lip(X)$.

\begin{lemma}\label{lem: Poisson equation} (Solvability of the Poisson equation)
    For $f\in\Lip(X)$, there exists $u\in W^{1,2}_0(X)$ such that $-\Delta u = f$.
\end{lemma}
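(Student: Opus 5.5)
We prove Lemma \ref{lem: Poisson equation} by the direct method of the calculus of variations in the Hilbert space $W^{1,2}_0(X)$. Define on $W^{1,2}_0(X)$ the functional
\begin{align*}
    J(u) = Ch(u) - \int_X f u\, d\mu_X .
\end{align*}
By the density result above, $W^{1,2}(X)$ is a Hilbert space, so $Ch$ is a quadratic form. For $u\in \Lip(X)$ it equals $\frac{1}{2}\int_X|\nabla u|^2d\mu$, with the gradient taken on the smooth part $X\backslash\mathcal{S}$; here we use that $\mathcal{S}$ is $\mu$-null and Lemma \ref{lem: estimate for eta}. Its polarization gives the bilinear form $\int_X\nabla u\cdot\nabla\varphi\, d\mu$. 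A minimizer $u$ of $J$ therefore satisfies, for every Lipschitz $\varphi$ with compact support, $\frac{d}{dt}\big|_{t=0}J(u+t\varphi)=0$, which is exactly \eqref{eq: 71}.

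The first step is coercivity. Since $f$ is Lipschitz and $\mu(X)<\infty$, we have $f\in L^2(X)$. The key estimate needed is a Poincar\'e-type inequality $\|u\|_{L^2(X)}^2\le C\, Ch(u)$ for $u\in W^{1,2}_0(X)$. I would derive it by contradiction from Lemma \ref{lem: compact embed}. Take $u_i\in W^{1,2}_0$ with $\|u_i\|_{L^2}=1$ and $Ch(u_i)\to0$. These are bounded in $W^{1,2}$, so after passing to a subsequence they converge strongly in $L^2$ to some $u$. Lower semicontinuity gives $Ch(u)=0$, so $u\in W^{1,2}_0(X)$ has zero energy. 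We then need $u=0$. This uses the local Poincar\'e inequality on small balls together with the connectedness of $X$ (a connected regular part, since $\mathcal{S}$ is isolated and $n\ge7$), which force $u$ to be constant. The zero boundary condition then forces the constant to vanish; concretely, one can extend $u$ by zero to a slightly larger domain $X'\supset\supset X$ with $\mu(X'\backslash X)>0$ and repeat the argument there. This contradicts $\|u\|_{L^2}=1$. With the inequality in hand, $J(u)\ge Ch(u)-C\|f\|_{L^2}Ch(u)^{1/2}$, so $J$ is bounded below and its sublevel sets are bounded in $W^{1,2}$.

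The second step is existence. Take a minimizing sequence in $W^{1,2}_0(X)$. It is bounded, so by reflexivity (indeed the Hilbert structure) it has a subsequence converging weakly in $W^{1,2}$, and by Lemma \ref{lem: compact embed} it converges strongly in $L^2$. The space $W^{1,2}_0$ is a closed subspace, hence weakly closed, so the limit stays in $W^{1,2}_0$. The functional $Ch$ is convex and lower semicontinuous (directly from its relaxation definition \eqref{eq: 67}), and the linear term converges, so the limit is a minimizer.

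The third step is the Euler--Lagrange equation, where we identify the polarization of $Ch$ with $\int\nabla u\cdot\nabla\varphi$. For Lipschitz functions this is immediate on $X\backslash\mathcal{S}$. For general $u\in W^{1,2}_0$ we approximate by Lipschitz functions using density, and we cut off near $\mathcal{S}$ with the $\eta$ from Lemma \ref{lem: estimate for eta} so that the singular set contributes nothing.

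The main obstacle I expect is the Poincar\'e inequality for $W^{1,2}_0(X)$, specifically the step showing that a zero-energy element of $W^{1,2}_0$ vanishes. If that step proves troublesome, an alternative avoids it entirely: minimize instead $J_\lambda(u)=Ch(u)+\frac{\lambda}{2}\|u\|^2_{L^2}-\int fu$, which is trivially coercive. This solves $-\Delta u+\lambda u=f$, and Fredholm theory then removes $\lambda$, since the resolvent is compact by Lemma \ref{lem: compact embed}. That route still requires knowing the kernel of $-\Delta$ on $W^{1,2}_0$ is trivial, which is the same fact, so I would first try the extension-by-zero argument described above.
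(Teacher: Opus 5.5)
Your proof is correct. Its overall skeleton is the same as the paper's: the direct method on $W^{1,2}_0(X)$, compactness from Lemma \ref{lem: compact embed}, lower semicontinuity of $Ch$, and a first-variation argument.

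The genuine difference is how you obtain coercivity. The paper simply invokes the global Sobolev inequality of Proposition \ref{prop: Sobolev inequality on S} on $X$. Applied to a suitable power of $|u|$ and combined with H\"older and $\mu(X)<\infty$, it bounds $\|u\|_{L^2}$ by the energy. You instead derive an $L^2$ Poincar\'e inequality on $W^{1,2}_0(X)$ by contradiction:
\begin{enumerate}
\item the Rellich-type embedding produces a zero-energy limit;
\item the local Poincar\'e inequality plus connectedness of $X$ make that limit constant;
\item extension by zero to a larger $X'$ forces the constant to vanish.
\end{enumerate}
The two routes are closer than they look. The paper's proof of Proposition \ref{prop: Sobolev inequality on S} in Appendix A is itself a compactness-by-contradiction argument over a finite cover by Poincar\'e balls. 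There, the vanishing of $\tilde f_j$ on $\mathcal{U}_1\setminus\mathcal{U}$ plays exactly the role of your $X'\setminus X$.

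What each route buys:
\begin{itemize}
\item Citing the appendix gives a stronger inequality, of $L^{\frac{n}{n-1}}$-Sobolev type, that also covers other domains, such as the AF-end version in Proposition \ref{prop: Sobolev inequality on S 2}.
\item Your route stays entirely within the tools of Section 3 and gives exactly the $L^2$ coercivity needed, but with a non-explicit constant.
\item You also treat the identification of the polarization of $Ch$ with $\int_X\nabla u\cdot\nabla\varphi\,d\mu$ more carefully than the paper's one-line ``variational argument.''
\end{itemize}

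Two side remarks. First, both arguments silently require $\Sigma\setminus\overline{X}\neq\emptyset$: you need it for $\mu(X'\setminus X)>0$, and the paper needs it through \eqref{eq: nonempty bdry of S}. If $X=\Sigma$ is closed, constants lie in $W^{1,2}_0(X)$, and testing with $\varphi\equiv 1$ shows the lemma fails unless $\int_X f\,d\mu=0$. Second, your sign in $J$ is the correct one for \eqref{eq: 71}. The paper's functional $\int_X(\frac{1}{2}|\nabla u|^2+fu)\,d\mu$, as written, yields $-\Delta u=-f$; this is a harmless slip.
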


\begin{proof}
    Consider the functional
    \begin{align*}
        E(u) = \int_X (\frac{1}{2}|\nabla u|^2+fu)d\mu.
    \end{align*}
    By the Sobolev inequality (See Lemma \ref{prop: Sobolev inequality on S} below, which holds globally in the domain $X$), $E(u)$ is bounded from below, so we can consider a minimizing sequence $u_i$ for $E$. Using Lemma \ref{lem: compact embed}, we obtain a limit $u$. Together with the lower semicontinuity of $E$, $u$ is a minimizer for $E$. Then a variational argument shows that $u$ solves the equation.
\end{proof}

By employing the same reasoning as presented in \cite[Theorem 8.15, Theorem 8.16]{GT2001}, we deduce that the weak maximum principle holds for subharmonic functions $u$ on $X$.
\begin{lemma}\label{lem: weak maximum principle} (Weak maximum principle)
    Assume $u\in W^{1,2}(X)$ satisfies $\Delta u \ge 0$ in $X$. Define $\sup_{\partial X} u = \inf \{l\in\mathbf{R}: \max \{u-l,0\}\in W^{1,2}_0(X)\}$. Then we have
    \begin{align*}
        \sup_X u\le \sup_{\partial X} u.
    \end{align*}
\end{lemma}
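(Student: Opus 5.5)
We follow the classical Stampacchia truncation argument of \cite[Theorem 8.1]{GT2001}, transplanted to the metric measure space $(X,d_X,\mu_X)$. Set $l_0=\sup_{\partial X}u$; if $l_0=+\infty$ there is nothing to prove. So assume $l_0<\infty$ and suppose, for contradiction, that $\sup_X u>l_0$. Then for every $k$ with $l_0<k<\sup_X u$ we have $\max\{u-k,0\}\in W^{1,2}_0(X)$. This uses that $\max\{u-l,0\}\in W^{1,2}_0(X)$ is inherited by larger $l$, since truncation $t\mapsto \max\{t-(k-l),0\}$ is $1$-Lipschitz and preserves compactly supported Lipschitz approximants. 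Define $v_k=\max\{u-k,0\}\ge 0$.

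The first step is to test the subharmonicity inequality $\int_X\nabla u\cdot\nabla\varphi\,d\mu\le 0$ (valid for $\varphi\ge0$ Lipschitz with compact support) with $\varphi=v_k$. To do this, approximate $v_k$ in $W^{1,2}$ by nonnegative compactly supported Lipschitz functions $\varphi_i$. This is possible by the definition of $W^{1,2}_0$ together with the Density Proposition and taking positive parts. We then pass to the limit, using that $W^{1,2}(X)$ is a Hilbert space, so $\nabla u\cdot\nabla\varphi$ is defined through the polarized Cheeger energy and is continuous in $\varphi$. This gives $\int_X\nabla u\cdot\nabla v_k\le 0$. By locality of the minimal weak upper gradient (a standard fact in the \cite{AGS14} framework), $\nabla u\cdot\nabla v_k=|\nabla v_k|^2$ a.e. Hence $\int_X|\nabla v_k|^2d\mu=0$.

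Next we convert vanishing energy into vanishing of $v_k$. By the global Sobolev inequality on $X$ (Lemma \ref{prop: Sobolev inequality on S}, used in Lemma \ref{lem: Poisson equation}), valid for $W^{1,2}_0(X)$ functions, we get $\|v_k\|_{L^{2^*}}\le C\|\nabla v_k\|_{L^2}=0$. Alternatively, the Poincaré inequality with zero boundary values, or the compact embedding of Lemma \ref{lem: compact embed} combined with $\mu(X)<\infty$, gives the same conclusion. Thus $v_k=0$ a.e., that is, $u\le k$ $\mu$-a.e. Letting $k\downarrow l_0$ yields $\sup_X u\le l_0$ (sup taken essentially), which contradicts our assumption. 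Equivalently, the argument can be run directly without contradiction for every $k>l_0$.

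The main obstacle is the justification step. We must show that $v_k$ is an admissible test function and that the chain rule/locality identity $\nabla u\cdot\nabla (u-k)^+=|\nabla(u-k)^+|^2$ holds in the nonsmooth setting near $\mathcal S$. We would handle this by working on $X\setminus\mathcal S$, where everything is classical Sobolev calculus on a smooth manifold. We then use the zero capacity property, Lemma \ref{lem: estimate for eta}, to multiply by cutoffs $\eta$ vanishing near $\mathcal S$ and pass to the limit, so that the singular set contributes nothing.
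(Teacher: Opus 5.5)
Your proof is correct and uses the same classical mechanism the paper relies on. The paper's entire proof is the remark that the reasoning of \cite[Theorem 8.15, Theorem 8.16]{GT2001} carries over to $X$. The only difference is which classical argument gets transplanted. The paper points to the global-boundedness results, which are proved by Moser-type iteration and are built to absorb lower-order terms and right-hand sides. You instead use the direct argument of \cite[Theorem 8.1]{GT2001}. Since the operator here is the bare Laplacian, your route is the more economical one: testing with $v_k=(u-k)^+\in W^{1,2}_0(X)$ gives $\int_X|\nabla v_k|^2\,d\mu\le 0$ outright, and the Sobolev inequality finishes without any iteration. You also make explicit two points the paper leaves implicit: that $v_k$ is an admissible test function, and the chain rule/locality identity, which can be checked classically on $X\setminus\mathcal S$ because $\mathcal S$ is $\mu$-null.

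Two small corrections. First, the fact that $W^{1,2}_0(X)$ is closed under $v\mapsto(v-c)^+$ and under positive parts does not follow merely from the truncation being $1$-Lipschitz. Argue instead that the truncated approximants are bounded in $W^{1,2}$ and converge in $L^2$, hence weakly in $W^{1,2}$. Then use that $W^{1,2}_0(X)$ is weakly closed, or Mazur's lemma to get strong convergence of nonnegative compactly supported Lipschitz functions. Second, your \emph{alternative} via the compact embedding of Lemma \ref{lem: compact embed} does not work on its own. From $\nabla v_k=0$ you only get that $v_k$ is locally constant. What rules out a nonzero constant in $W^{1,2}_0(X)$ is exactly the Sobolev inequality of Proposition \ref{prop: Sobolev inequality on S}, and that requires $X$ to have nonempty boundary in $\Sigma$ (hypothesis \eqref{eq: nonempty bdry of S}). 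The lemma itself implicitly needs this hypothesis: if $X=\Sigma$ were closed, constants would lie in $W^{1,2}_0(X)$, so $u\equiv\mathrm{const}$ would give $\sup_{\partial X}u=-\infty$.
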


\begin{lemma}\label{lem: removable singularity} (Removable singularity)
    Let $u\in W^{1,2}(X\backslash\mathcal{S})\cap L^{\infty}(X\backslash\mathcal{S})$ be a solution to $\Delta u = 0$ in $X\backslash\mathcal{S}$, then $u\in W^{1,2}(X)$ and is a solution to $\Delta u = 0$ in $X$. 
\end{lemma}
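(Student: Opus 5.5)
The plan is to use the zero capacity property, Lemma \ref{lem: estimate for eta}, to extend $u$ across $\mathcal{S}$ and to pass the weak harmonicity from $X\backslash\mathcal{S}$ to $X$. Boundedness of $u$ is what makes the error terms near $\mathcal{S}$ vanish.

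\textbf{Step 1: $u\in W^{1,2}(X)$.} Since $\mathcal{S}$ is isolated, it has $\mu$-measure zero, so $u$ defines an element of $L^2(X)$ with $\|u\|_\infty\le K$. Choose cutoffs $\eta_k$ from Lemma \ref{lem: estimate for eta} with $\eta_k=0$ near $\mathcal{S}$, $\eta_k\to 1$ off shrinking neighborhoods $U_k$ of $\mathcal{S}$, and $\int|\nabla\eta_k|^2<\epsilon_k^2\to0$. The functions $\eta_k u$ vanish near $\mathcal{S}$, so they lie in $W^{1,2}(X)$; approximating $u$ on $X\backslash\mathcal{S}$ by Lipschitz functions and multiplying by $\eta_k$ gives Lipschitz approximants on $X$. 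Their energies satisfy
\begin{align*}
\int|\nabla(\eta_ku)|^2\le 2\int_{X\backslash\mathcal{S}}|\nabla u|^2+2K^2\epsilon_k^2 .
\end{align*}
Moreover $\eta_k u\to u$ in $L^2$ by dominated convergence. By lower semicontinuity of the Cheeger energy, or by reflexivity, $Ch(u)<\infty$, so $u\in W^{1,2}(X)$. Because the minimal weak upper gradient is local and $\mu(\mathcal{S})=0$, $|\nabla u|$ on $X$ agrees a.e. with the one on $X\backslash\mathcal{S}$.

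\textbf{Step 2: the weak equation on $X$.} Take a compactly supported $\varphi\in\Lip(X)$. The function $\eta_k\varphi$ is a legitimate test function on $X\backslash\mathcal{S}$, which gives
\begin{align*}
0=\int\nabla u\cdot\nabla(\eta_k\varphi)=\int\eta_k\nabla u\cdot\nabla\varphi+\int\varphi\,\nabla u\cdot\nabla\eta_k .
\end{align*}
The first term tends to $\int\nabla u\cdot\nabla\varphi$ by dominated convergence. The second is bounded by $\|\varphi\|_\infty\bigl(\int_{U_k}|\nabla u|^2\bigr)^{1/2}\epsilon_k$, which tends to $0$. Letting $k\to\infty$ yields $\int_X\nabla u\cdot\nabla\varphi=0$, so $\Delta u=0$ in the sense of \eqref{eq: 71}.

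\textbf{Main obstacle.} The delicate point is Step 1 in the metric-measure framework. One must check that the Cheeger energy computed on $X$ with the ambient distance coincides, for functions vanishing near $\mathcal{S}$, with the smooth Dirichlet energy on $\Sigma\backslash\mathcal{S}$. One must also justify the product rule for $\eta_k u$ and the bilinear pairing $\nabla u\cdot\nabla\varphi$. I would handle both through the density of Lipschitz functions and the parallelogram identity \eqref{eq: 66} established above, working on the smooth part and using zero capacity to control everything near $\mathcal{S}$. The hypothesis $u\in L^\infty$ is used exactly to bound $\int u^2|\nabla\eta_k|^2$ by $K^2\epsilon_k^2$.
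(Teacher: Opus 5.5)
Your proposal is correct and follows essentially the same route as the paper: cutoffs from the zero-capacity Lemma \ref{lem: estimate for eta}, the $L^\infty$ bound to control $\int u^2|\nabla\eta|^2$, and testing the equation with $\eta\varphi$ before passing to the limit. The one difference is in the energy bound for $\eta u$: the paper gets it from the Caccioppoli-type identity for $\divv(\eta^2u\nabla u)$, which uses harmonicity, whereas you take it directly from the hypothesis $u\in W^{1,2}(X\backslash\mathcal{S})$ and conclude with lower semicontinuity of $Ch$, which is slightly cleaner.
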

\begin{proof}
    Let $\eta$ be the cut-off function in Lemma \ref{lem: estimate for eta}. By integration by parts we have
    \begin{align*}
        \int_{X\backslash\mathcal{S}}\divv(\eta^2 u\nabla u)d\mu = \int_{X\backslash\mathcal{S}}|\nabla\eta u|^2-|\nabla\eta|^2u^2 d\mu.
    \end{align*}
    Since the left hand side is bounded, using Lemma \ref{lem: estimate for eta} and passing to a limit we obtain $u\in W^{1,2}(X)$. 
    
    Next, we check that $u$ is the weak solution of the Poisson equation $\Delta u = 0$ in $X$. Let $\varphi\in C^{\infty}_0(X)$ and $\eta$ be the cut-off function in Lemma \ref{lem: estimate for eta}, by the definition of the weak solution we have
    \begin{align*}
    \int_X \nabla u\cdot\nabla\eta\varphi d\mu= \int_Xf\eta\varphi d\mu.
\end{align*}
Passing to a limit we see that $u$ solves the equation $\Delta u = 0$ in $X$.
    \end{proof}

Since we have the Poincare inequality \eqref{eq: A Poincare}, by exactly the same argument in \cite{BG1972}, we have the following Harnack's inequality holds in $X$.
    \begin{lemma}\label{lem: Harnack}(Harnack's inequality)
        There exists $\alpha ,\beta, C$ depending only on $n$ and $X$, such that for any positive solution $u\in W^{1,2}(B(p,R))$ of equation $\Delta u = 0$ in $B(p,R)\subset X$, $R<r_0$ (Here $r_0$ is given by \eqref{eq: r0}), we have $u\in C^{\alpha}$ and
        \begin{align*}
           \sup_{B(p,\beta R)} u \le C \inf_{B(p,\beta R)} u .
        \end{align*}
        
        \end{lemma}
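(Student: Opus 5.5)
The plan is to follow the De Giorgi--Nash--Moser scheme as carried out by Bombieri--Giusti \cite{BG1972}. On $(X,d_X,\mu_X)$ we have three inputs: local volume doubling, the local Poincar\'e inequality \eqref{eq: A Poincare}, and the Sobolev inequality of Lemma \ref{prop: Sobolev inequality on S}. Harnack's inequality and H\"older continuity follow from these alone. One point must be checked before starting: that the equation really holds weakly across $\mathcal{S}$. Since $u\in W^{1,2}(B(p,R))$ solves $\Delta u=0$ in the sense of \eqref{eq: 71}, with test functions Lipschitz and compactly supported in $X$, this is already built in. If the solution is only known on $B(p,R)\backslash\mathcal{S}$, Lemma \ref{lem: removable singularity} upgrades it, at least after truncation to make it bounded. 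Lemma \ref{lem: estimate for eta} then lets us use test functions of the form $\eta^2 u^{k}$ and still pass to the limit, so all test functions below are admissible.

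\textbf{Step 1 (Caccioppoli inequalities).} Test the equation with $\psi^2 u^{\gamma}$, where $\psi$ is a Lipschitz cutoff supported in a ball $B(x,r)\subset B(p,R)$ and $\gamma\neq -1$; use $\psi^2\log u$ for the borderline case. This yields the energy estimates
\begin{align*}
\int \psi^2 |\nabla u^{(\gamma+1)/2}|^2 d\mu\le C(\gamma)\int |\nabla\psi|^2 u^{\gamma+1}d\mu,\qquad \int\psi^2|\nabla\log u|^2d\mu\le C\int|\nabla\psi|^2d\mu .
\end{align*}
Positivity of $u$ is used here; replacing $u$ by $u+\epsilon$ and letting $\epsilon\to0$ handles any lack of strict lower bound.

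\textbf{Step 2 (Moser iteration).} Combine the Caccioppoli estimates with the Sobolev inequality on balls. Its exponent is $\chi=n/(n-2)$, with constants controlled through doubling for $r<r_0$. Iterating over a decreasing sequence of radii gives two bounds for every small $p_0>0$:
\begin{align*}
\sup_{B(p,\beta R)}u\le C\Big(\fint_{B(p,2\beta R)}u^{p_0}\Big)^{1/p_0},\qquad \inf_{B(p,\beta R)}u\ge C^{-1}\Big(\fint_{B(p,2\beta R)}u^{-p_0}\Big)^{-1/p_0}.
\end{align*}
These come from positive and negative powers respectively.

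\textbf{Step 3 (crossover).} This is the step I expect to be the main obstacle. We must show $\fint u^{p_0}\cdot\fint u^{-p_0}\le C$. The logarithmic Caccioppoli estimate combined with the Poincar\'e inequality \eqref{eq: A Poincare} puts $\log u$ in BMO on sub-balls. The John--Nirenberg inequality on doubling spaces then gives exponential integrability of $|\log u-c|$, hence the crossover for small $p_0$. Bombieri--Giusti avoid John--Nirenberg with their abstract lemma, which only needs doubling and a weak-type estimate for $\log u$. Since both tools are purely metric-measure, they transfer verbatim.

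The only care needed is that every ball used stays inside $X$ with radius $<r_0$; this is why the conclusion holds on $B(p,\beta R)$ with $\beta<1$ small. Chaining the three bounds gives Harnack's inequality.

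\textbf{H\"older continuity.} Apply Harnack to $\sup_{B(x,r)}u-u$ and $u-\inf_{B(x,r)}u$. This shows the oscillation decays by a fixed factor $\theta<1$ when the radius shrinks by $\beta$. Iterating gives $\operatorname{osc}_{B(x,r)}u\le C(r/R)^{\alpha}\operatorname{osc}_{B(x,R)}u$, so $u\in C^\alpha$ with $\alpha=-\log\theta/|\log\beta|$, where $\alpha,\beta,C$ depend only on $n$ and the structural constants of $X$.
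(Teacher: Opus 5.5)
Your proposal is correct and follows the same route as the paper, which simply invokes the Bombieri--Giusti De Giorgi--Nash--Moser argument \cite{BG1972} (Caccioppoli estimates, Moser iteration, and the crossover step), using volume doubling from monotonicity and the local Poincar\'e inequality \eqref{eq: A Poincare}. Two small precisions: Step 2 needs a scale-invariant Sobolev inequality on small balls, which should come from Lemma \ref{lmm: isoperimetric ineq} (or from \eqref{eq: A Poincare} plus the monotonicity bounds) rather than from Proposition \ref{prop: Sobolev inequality on S}, whose constant depends on the domain; and your truncation aside is unnecessary, since $\nabla u\in L^2$ together with Lemma \ref{lem: estimate for eta} already lets the equation pass across $\mathcal{S}$.
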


Finally, we have the strong maximum principle following directly from Lemma \ref{lem: Harnack}.

\begin{lemma}\label{lem: strong maximum principle}(Strong maximum principle)
    Assume $u\in W^{1,2}(X)$ satisfies $\Delta u = 0$ in $X$. If $u$ attains its maximum at an interior point $x_0$ in $X$, then $u$ is a constant in the connected component of $X$ containing $x_0$.
\end{lemma}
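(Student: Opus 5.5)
The plan is to deduce the strong maximum principle from the Harnack inequality of Lemma \ref{lem: Harnack} by the classical open--closed argument. Let $M_0=\sup_X u = u(x_0)$, which is finite because $u$ attains its maximum at $x_0$. Let $X_0$ be the connected component of $X$ containing $x_0$, and set
\[
A=\{x\in X_0:\ u(x)=M_0\}.
\]
By Lemma \ref{lem: Harnack}, $u$ is locally H\"older continuous, so $u$ has a continuous representative. Hence $A$ is relatively closed in $X_0$, and $A\neq\emptyset$ since $x_0\in A$.

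To show $A$ is open, fix $p\in A$. Choose $R<r_0$ (with $r_0$ from \eqref{eq: r0}) small enough that $B(p,R)\subset X$. Consider
\[
v=M_0-u\ge 0,
\]
which is harmonic in $B(p,R)$ in the weak sense \eqref{eq: 71}, since constants are harmonic and the equation is linear. Lemma \ref{lem: Harnack} is stated for positive solutions, so apply it to $v_\varepsilon=v+\varepsilon>0$ for $\varepsilon>0$. This gives
\[
\sup_{B(p,\beta R)} v+\varepsilon\le C\big(\inf_{B(p,\beta R)}v+\varepsilon\big)\le C\,(v(p)+\varepsilon)=C\varepsilon.
\]
Letting $\varepsilon\to0$ yields $v\equiv 0$ on $B(p,\beta R)$, that is, $B(p,\beta R)\subset A$. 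So $A$ is open in $X_0$.

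Since $X_0$ is connected and $A$ is a nonempty set that is both open and closed in $X_0$, we get $A=X_0$. Thus $u\equiv M_0$ on the component containing $x_0$.

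The only delicate step is the openness argument, specifically the following two points.

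\textbf{Uniform constants.} The Harnack constants $C,\beta$ and the radius threshold $r_0$ must be uniform near $p$. Lemma \ref{lem: Harnack} asserts exactly this, with dependence only on $n$ and $X$, including near the singular set $\mathcal{S}$. This uniformity is why the argument works across singular points without separate treatment.

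\textbf{Connectedness.} The connected component must be taken with respect to the metric $d$ restricted to $X$, so that balls $B(p,r)$ are open in the same topology.

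If one wants to avoid the $\varepsilon$-shift, an alternative is to apply the Harnack inequality directly to the nonnegative function $v$. This follows from the positive case by the same limit argument. In either form, the rest is routine.
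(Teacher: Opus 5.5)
Your proof is correct and is essentially the paper's argument: apply Lemma \ref{lem: Harnack} to the positive harmonic function $M_0-u+\varepsilon$ on a small ball $B(p,R)$ with $R<r_0$, obtain $\sup_{B(p,\beta R)}(M_0-u)\le C\varepsilon$, and let $\varepsilon\to 0$. The paper does this only at $x_0$ and says it ``yields the desired conclusion''; your open--closed argument on $\{u=M_0\}\cap X_0$ makes explicit the continuation step the paper leaves implicit (uniformity of the Harnack constants is not actually needed there, since a Harnack ball around each point suffices).
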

\begin{proof}
    Assume $u$ attains its maximum $A$ at $x_0$. Let $B(x_0,R)\subset X$ satisfying $R<r_0$, then by applying the Harnack inequality Lemma \ref{lem: Harnack} to $A+\epsilon - u$ yields that
    \begin{align*}
         \sup_{B(x_0,\beta R)} (A+\epsilon-u) \le C \inf_{B(p,\beta R)} (A+\epsilon-u) = C\epsilon 
    \end{align*}
    By letting $\epsilon\to 0$, we obtain $u\equiv A$ in $B(x_0,\beta R)$, which yields the desired conclusion.
\end{proof}

\subsection{Singular harmonic functions on a compact domain}

We will utilize a singular harmonic function to blow up the singular set $\mathcal{S}$. We begin by recalling the existence of Dirichlet Green's function on metric measure spaces, which was proved in \cite{BBL20}.

\begin{proposition}\label{prop:existence of singular positive hf}
Let $\Omega$ be a bounded domain in $\Sigma$ with smooth boundary and let $p\in \mathcal{S}\cap\Omega$ be a singular point. Then there is a harmonic function $G$ on $\Omega\setminus\{p\}$ such that
\begin{enumerate}
	\item $G|_{\partial\Omega}=0$; $G\in W^{1,2}_{loc}(\Omega\backslash\{p\})\cap C^\alpha(\Omega\backslash\{p\})\cap C^{\infty}(\Omega\backslash\mathcal{S})$;
	\item $G(x)>0$ for any $x\in \Omega\setminus\{p\}$;
	\item $\lim_{x\rightarrow p}G(x)=+\infty$.
\end{enumerate} 
\end{proposition}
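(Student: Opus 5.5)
I will construct $G$ as the limit of Dirichlet problems with concentrating sources at $p$, using only the tools established above. These are the solvability of the Poisson equation (Lemma \ref{lem: Poisson equation}), the weak and strong maximum principles, the Harnack inequality (Lemma \ref{lem: Harnack}), and the zero capacity property (Lemma \ref{lem: estimate for eta}).

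\textbf{Approximating solutions.} For small $\epsilon>0$ let $f_\epsilon\ge 0$ be a Lipschitz function supported in $B(p,\epsilon)$ with $\int f_\epsilon d\mu=1$. Solve $-\Delta G_\epsilon=f_\epsilon$ with $G_\epsilon\in W^{1,2}_0(\Omega)$. By the weak maximum principle $G_\epsilon\ge 0$, and by the strong maximum principle $G_\epsilon>0$ in $\Omega$. Away from $p$, $G_\epsilon$ is harmonic. Interior regularity on $\Omega\backslash\mathcal{S}$ gives smoothness there. At the other isolated singular points, the H\"older bound comes from Lemma \ref{lem: Harnack}, with Lemma \ref{lem: removable singularity} used where needed.

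\textbf{Normalization and compactness.} I pick a fixed point $q\in\Omega\backslash\mathcal{S}$ away from $p$ and set $u_\epsilon=G_\epsilon/G_\epsilon(q)$. On any compact $K\subset\overline\Omega\backslash\{p\}$, a Harnack chain along paths in the connected set $\Omega\backslash\{p\}$ bounds $u_\epsilon$ uniformly. Connectedness holds because $\Sigma$ has codimension-$\ge7$ isolated singularities, so a small sphere around $p$ on $\Sigma$ is connected. Near $\partial\Omega$, which is smooth and away from $\mathcal{S}$, classical boundary estimates apply. This yields uniform $C^\alpha$ bounds on compacts, and Caccioppoli gives uniform $W^{1,2}_{loc}$ bounds. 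A subsequence then converges locally uniformly and weakly in $W^{1,2}_{loc}(\Omega\backslash\{p\})$ to a harmonic $G\ge0$ with $G(q)=1$ and $G|_{\partial\Omega}=0$. The strong maximum principle then gives $G>0$, which proves (1) and (2).

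\textbf{The blow-up (3), the main obstacle.} I must rule out $G$ staying bounded near $p$. Suppose $G$ were bounded near $p$. Since $G$ is harmonic on $\Omega\backslash\{p\}$, Lemma \ref{lem: removable singularity} (with $\mathcal{S}$ replaced by $\{p\}$, which also has zero capacity) would make $G$ a bounded harmonic function on $\Omega$ with zero boundary values. The weak maximum principle would then force $G\equiv0$, contradicting $G(q)=1$. The delicate point is justifying $G\in W^{1,2}$ near $p$ and passing the boundary condition to the limit. To handle it I would first establish the normalization $\int_{\Omega}\nabla G\cdot\nabla\varphi\, d\mu=c\,\varphi(p)$ with $c=\lim 1/G_\epsilon(q)$, obtained from the weak formulation of $-\Delta G_\epsilon=f_\epsilon$ tested against Lipschitz $\varphi$. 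I would then show $c>0$: if $G_\epsilon(q)\to\infty$ were false, the $G_\epsilon$ would be uniformly bounded on $\partial B(p,r)$. Comparing with the energy identity $\int|\nabla G_\epsilon|^2=\int f_\epsilon G_\epsilon$ and using Harnack on annuli would then give a contradiction.

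Once $G$ is unbounded near $p$, Harnack on dyadic annuli $B(p,2r)\backslash B(p,r)$ upgrades this to $\lim_{x\to p}G=+\infty$. The reason is that $\min_{\partial B(p,r)}G$ is comparable to $\max_{\partial B(p,r)}G$. In addition, $\min_{\partial B(p,r)}G$ is monotone in $r$ by the maximum principle applied on $\Omega\backslash B(p,r)$. Alternatively, one may directly invoke the Green function construction of \cite{BBL20} on the doubling Poincar\'e space $(\Omega,d,\mu)$. Since $n\ge7>2$, the capacity of a point is zero, so that construction yields exactly properties (1)–(3).
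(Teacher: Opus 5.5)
Your skeleton matches the paper's: approximate, normalize at a regular point, pass to the limit by Harnack, and exclude boundedness near $p$ via Lemma~\ref{lem: removable singularity} and the maximum principle. Using concentrated sources $-\Delta G_\epsilon=f_\epsilon$ instead of the paper's capacitary potentials on $\Omega\setminus B(p,r_i)$ is harmless.

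\textbf{The ``delicate point'' detour.} You should drop it; it is unnecessary, and it does not work as written. If $G$ is bounded near $p$, the zero-capacity cutoffs of Lemma~\ref{lem: estimate for eta} already give $G\in W^{1,2}$ near $p$; that is exactly the proof of Lemma~\ref{lem: removable singularity}. Also, $G|_{\partial\Omega}=0$ passes to the limit by the boundary estimates you already invoked. So $G\equiv 0$ follows at once. Your argument for $c>0$ runs backwards. Showing $c>0$ means showing $G_\epsilon(q)$ stays bounded, yet you try to derive a contradiction from that boundedness. In fact $G_\epsilon(q)$ does stay bounded: it tends to the value at $p$ of the Dirichlet Green function with pole $q$. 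So no contradiction is available. Moreover, the identity $\int\nabla G\cdot\nabla\varphi=c\,\varphi(p)$ would need uniform $L^1$ bounds on $\nabla G_\epsilon$ near $p$, where $G\notin W^{1,2}$. Nothing listed in the paper provides such bounds.

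\textbf{The upgrade in (3).} The real content of (3) is upgrading ``unbounded near $p$'' to $\lim_{x\to p}G=+\infty$. The paper does this by quoting \cite[Lemmas 4.3, 4.4]{BBL20}: since $\mathrm{cap}_2(\{p\})=0$, the limit $\lim_{x\to p}G$ exists in $[0,\infty]$, and finiteness is then excluded as above. Your Harnack-on-annuli substitute is incomplete as stated.
\begin{enumerate}
\item The monotone quantity is $\max_{\partial B(p,r)}G$, by the maximum principle on $\Omega\setminus B(p,r)$. The minimum has no a priori monotonicity, but the maximum is enough.
\item The key claim $\max_{\partial B(p,r)}G\le C\min_{\partial B(p,r)}G$, with $C$ independent of $r$, needs a Harnack chain of uniformly bounded length in $B(p,2r)\setminus B(p,r/2)$ at every small scale. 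You assert this without justification.
\end{enumerate}
The chain condition can be supplied from the cone structure at $p$. Rescalings of $\Sigma$ at $p$ subconverge with multiplicity one to minimizing cones. The convergence is smooth away from the vertex, because a singular point of the limit off the origin would force singular points of $\Sigma$ accumulating at $p$. The links of these cones are closed minimal hypersurfaces in $S^n$, hence connected by Frankel's theorem. This connectedness of links, not the codimension, is also what makes $\Omega\setminus\{p\}$ connected.

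With that argument added, your route proves the limit geometrically, without the potential theory of \cite{BBL20}. Otherwise, your stated fallback of citing \cite{BBL20} is precisely the paper's proof.
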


\begin{proof}
	Let $\{r_i\}$ be a decreasing sequence which approaches zero, and $G_i$ be the solution of the following Dirichlet problem
\begin{equation}\label{eq:weak hf}
\left\{
\begin{aligned}
&\Delta G_i=0, \quad \text{in $\Omega\setminus B(p,r_i)$},\\
&G_i|_{\partial \Omega}=0,\\
&G_i|_{\partial B(p,r_i)}=1.
\end{aligned}
\right.
\end{equation}
By the weak maximum principle Lemma \ref{lem: weak maximum principle}, we see that 
$G_i$ in \eqref{eq:weak hf} is positive in $\Omega\setminus  B(p,r_i)$. Let $x_0\in \Omega\backslash \mathcal{S}$, by rescaling we may assume $G_i(x_0)=1$. From the Harnack inequality Lemma \ref{lem: Harnack}, $\{G_i\}$ converges subsequentially to a positive harmonic function $G$ in $\Omega\backslash\mathcal{S}$. Since $n\ge 7$, the monotonicity formula for minimal hypersurfaces yields that the $2$-capacity of $p$ in $\Omega$ is $0$. Hence, we can apply \cite[Lemma 4.3, Lemma 4.4]{BBL20} to conclude that $a = \lim_{x\to p}G(x)$ exists (We note that \cite{BBL20} only assumed $\Omega$ satisfies the doubling property and the Poincare inequality. This general assumption can be found at the beginning of \cite[Section 4]{BBL20}). A slight difference is that we only assume a local Poincaré inequality. However, this causes no difficulty since we can work on sufficiently small balls. If $a<\infty$, then Lemma \ref{lem: strong maximum principle} yields that $G\equiv 0$, a contradiction with $G(x_0) = 1$. Thus, we obtain $\lim_{x\to p}G(x) = +\infty$.

\end{proof}

\begin{proposition}\label{prop:growth of hf1}
Let \( G \) be a positive singular harmonic function in \( \Omega \setminus \{p\} \) as above. Then there exists a positive constant \( C\), such that for any sufficiently small $r$, the following holds:
$$
G|_{\partial B(p,r)}\geq Cr^{2-n},
$$
\end{proposition}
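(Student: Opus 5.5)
We compare $G$ with a capacity potential and use the Euclidean-type volume growth of $\Sigma$ near $p$. Since $\Sigma$ is area-minimizing, the monotonicity formula gives $c_1 r^n\le \mu(B(p,r))\le c_2 r^n$ for $r<r_0$, with constants depending only on $\Omega$ and the density at $p$. We use the construction in Proposition \ref{prop:existence of singular positive hf}, where $G$ is the normalized limit of the capacitary potentials $G_i$. The plan is to work with the flux of $G$ through spheres $\partial B(p,r)$ rather than with pointwise values.

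\textbf{Step 1 (constant flux).} For a.e. small $r$ the quantity
\begin{align*}
F := \int_{\{G=t\}}|\nabla G|\,d\sigma = \int_{\Omega\setminus B(p,r)}|\nabla (\min\{G,t\})|^2 d\mu\,/\,t
\end{align*}
is independent of the level $t$. We obtain this by testing the weak equation with truncations $\min\{G,t\}$ on $\Omega\setminus\{p\}$, using $G|_{\partial\Omega}=0$. The contribution near $p$ is discarded via the zero-capacity cutoff of Lemma \ref{lem: estimate for eta}. Moreover $F>0$, since otherwise $G$ would be constant, hence $0$.

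\textbf{Step 2 (energy lower bound forces growth).} Let $M(r)=\max_{\partial B(p,r)}G$, which is finite by Proposition \ref{prop:existence of singular positive hf}(1). The function $\min\{G,M(r)\}/M(r)$ equals $1$ on $B(p,r)$, is $\le 1$ everywhere, and vanishes on $\partial\Omega$. Its Dirichlet energy $F/M(r)$ is therefore at least the capacity of $B(p,r)$ relative to $\Omega$. In a doubling space with Poincaré inequality and $\mu(B(p,r))\sim r^n$, this capacity is $\ge c\,r^{n-2}$; we plan to get this from Sobolev/Poincaré in the form of Proposition \ref{prop: Pioncare-Sobolev inequality} applied on annuli. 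Hence $M(r)\le C F r^{2-n}$, which is the wrong direction, so we turn to the reverse estimate.

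\textbf{Step 3 (reverse bound, the main step).} On $\partial B(p,r)$ we need a lower bound, so we use an upper bound on capacity. The test function $\psi=\min\{1,\max\{0,2-|x|/r\}\}$, with $|x|=d(p,x)$, has energy $\le C r^{-2}\mu(B(p,2r))\le C r^{n-2}$. Let $m(r)=\min_{\partial B(p,r)}G$. By the maximum principle (Lemma \ref{lem: weak maximum principle}), $G\ge m(r)$ on $B(p,r)\setminus\{p\}$. The function $u=\min\{G,m(r)\}/m(r)$ is then $1$ on $B(p,r)$ and is the equilibrium-type competitor. Using Step 1, its energy equals $F/m(r)$. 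Since $G$ is harmonic outside $\{G\ge m(r)\}\supset B(p,r)$, this $u$ is the capacitary potential of the set $\{G\ge m(r)\}$. That set contains $B(p,r)$ and, by the Harnack inequality (Lemma \ref{lem: Harnack}) applied on the annulus $B(p,2r)\setminus B(p,r/2)$ covered by a bounded number of balls of radius comparable to $r$, is contained in $B(p,Kr)$. Its capacity is therefore $\le \mathrm{cap}(B(p,Kr))\le C r^{n-2}$, which gives $F/m(r)\le Cr^{n-2}$, i.e. $m(r)\ge (F/C)\,r^{2-n}$, as claimed.

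The main obstacle is the containment $\{G\ge m(r)\}\subset B(p,Kr)$. It requires Harnack to compare $\max$ and $\min$ of $G$ on $\partial B(p,\rho)$, together with the monotone decay of $M(\rho)$ in $\rho$ from the maximum principle. Harnack chains on annuli need uniform constants at all small scales near the singular point. We expect to obtain these from the scale-invariant doubling and Poincaré constants, since the tangent cone at $p$ is a regular cone, so rescaled annuli have uniformly controlled geometry. If this proves delicate, we would instead bypass the containment and argue directly: Harnack gives $M(r)\le C m(r)$, and Step 2 combined with the upper capacity bound gives $F/M(r)\le Cr^{n-2}$, which again yields $m(r)\ge c\,r^{2-n}$.
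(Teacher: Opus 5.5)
Your main route in Step 3 does not work as planned. The containment $\{G\ge m(r)\}\subset B(p,Kr)$ is essentially the statement $M(Kr)<m(r)$, i.e.\ a definite decay of $G$ between the scales $r$ and $Kr$.

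\begin{itemize}
\item Harnack on spheres and the monotonicity of $M(\rho)$ and $m(\rho)$ only give $M(Kr)\le M(r)\le C_H\,m(r)$ with $C_H\ge 1$. They do not exclude $G$ being nearly constant over a range of scales.
\item The decay you need follows from a lower bound $M(r)\gtrsim r^{2-n}$ together with an upper bound $m(Kr)\lesssim (Kr)^{2-n}$. It is therefore at least as strong as the estimate you are proving, and the step is circular.
\end{itemize}

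Your fallback has the right shape, but you justify it by Step 2, which is wrong. The function $\min\{G,M(r)\}/M(r)$ equals $1$ only on $\{G\ge M(r)\}$, whereas inside $B(p,r)$ you only know $G\ge m(r)$. Even if Step 2 held, it would give the \emph{lower} bound $F/M(r)\ge \capp(B(p,r),\Omega)$. Combined with an upper bound on $\capp(B(p,r),\Omega)$, that says nothing about $F/M(r)$.

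A smaller point on Step 1: you cannot discard the contribution near $p$ using the zero-capacity cutoff. The flux of $G$ concentrates at $p$, and discarding it would force $F=0$. Test instead with $\min\{G,t\}/t-\min\{G,s\}/s$, which vanishes near $p$ and on $\partial\Omega$. This shows that $t^{-1}\int_{\{G<t\}}|\nabla G|^2\,d\mu$ is independent of $t$.

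The missing idea is the \emph{reverse} containment. $G$ is harmonic in $\Omega\setminus B(p,r)$, vanishes on $\partial\Omega$, and is $\le M(r)$ on $\partial B(p,r)$. The weak maximum principle then gives $\{G\ge t\}\subset B(p,r)$ for every $t>M(r)$.

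The function $\min\{G,t\}/t$ is the capacitary potential of $\{G\ge t\}$ in $\Omega$ and has energy $F/t$. Your cutoff $\psi$ (equal to $1$ on $B(p,r)$, supported in $B(p,2r)$) is therefore an admissible competitor, and
$F/t\le\int|\nabla\psi|^2d\mu\le r^{-2}\mu(B(p,2r))\le Cr^{n-2}$.
Letting $t\downarrow M(r)$ gives $M(r)\ge F\,r^{2-n}/C$. Harnack on $\partial B(p,r)$ then gives $m(r)\ge M(r)/C$.

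With this repair, your proof becomes a self-contained derivation of the inequality $G\ge C\,\capp_2(B_r,\Omega)^{-1}$ on $\partial B(p,r)$. The paper simply imports that inequality from \cite[Theorem 1.5]{BBL20}, and then uses the same bound $\capp_2(B_r,\Omega)\le Cr^{n-2}$ from the monotonicity formula that you use.

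The cost of doing it by hand is that you must supply the comparison of $\max$ and $\min$ of $G$ on small spheres, with constants uniform as $r\to 0$. Your remark about tangent cones is the right way to get this (connectedness of the link plus a compactness argument), but it has to be carried out.
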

\begin{proof}
     By \cite[Theorem 1.5]{BBL20}, we have
\begin{align*}
    G(x) \ge C \, \capp_2(B_r,\Omega)^{-1}
\end{align*}
for $x \in \partial B(p,r)$ when $r$ is sufficiently small.
The monotonicity formula for $\Sigma$ yields the local estimate
\[
\mathcal{H}^n(\Sigma \cap B(p,r)) \le C r^n .
\]
for small $r$. Consequently, we obtain $\capp_2(B_r,\Omega) \le C r^{n-2}$ for $r$ sufficiently small, 
which completes the proof.

\end{proof}

\subsection{Extension of the singular harmonic function to the total space}

$\quad$

In this subsection, we extend the singular harmonic function to the whole singular space. We will use arguments presented in \cite{Li04} and \cite{guo2024}.
\begin{proposition}\label{prop: extend the singular harmonic function}
    Let $\Sigma^n$ be a complete area-minimizing hypersurface with isolated singular set $\mathcal{S}$ in an ambient manifold $N^{n+1}$. Suppose there is an AF end $E_\Sigma$ of $\Sigma$ satisfying $E_\Sigma\cap\mathcal{S} = \emptyset$. Then for each $p\in \mathcal{S}$, there exists a singular harmonic function $G\in W^{1,2}_{loc}(\Sigma\backslash\{p\})\cap C^{\alpha}(\Sigma\backslash\{p\})\cap C^{\infty}(\Sigma\backslash \mathcal{S})$ that satisfies
    \begin{itemize}
        \item $G$ is weakly harmonic in $\Sigma\backslash\{p\}$;
        \item $\lim_{x\in E_\Sigma, |x|\to\infty}G(x) = 0$;
        \item $G(x)\ge Cd(x,p)^{2-n}$ for  $d(x,p)$ small enough;
    \end{itemize}
\end{proposition}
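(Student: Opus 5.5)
We construct $G$ as a limit of Dirichlet Green's functions on an exhaustion, in the spirit of Li--Tam's construction of Green's functions on nonparabolic manifolds (\cite{Li04}, \cite{guo2024}). Fix $p\in\mathcal{S}$ and an exhaustion $\Omega_1\subset\subset\Omega_2\subset\subset\cdots$ of $\Sigma$ by bounded domains with smooth boundary, each containing $p$, with $\partial\Omega_k\cap E_\Sigma=\{|x|=\rho_k\}$ for $\rho_k\to\infty$. Proposition \ref{prop:existence of singular positive hf} gives positive harmonic functions $G_k$ on $\Omega_k\setminus\{p\}$ with $G_k|_{\partial\Omega_k}=0$ and $G_k\to+\infty$ at $p$. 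The issue is normalization: each $G_k$ is only defined up to a positive multiple.

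\textbf{Step 1: normalize near $p$.} Fix a small $r_1$ so that $\overline{B(p,r_1)}$ lies in $\Omega_1$ and in the range of the local Harnack inequality Lemma \ref{lem: Harnack}. Scale $G_k$ so that $\min_{\partial B(p,r_1)}G_k=1$.

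\textbf{Step 2: monotonicity in $k$.} On $\Omega_k\setminus\{p\}$ compare $G_{k+1}$ with a multiple of $G_k$. Since $G_{k+1}\ge0=G_k$ on $\partial\Omega_k$ and both blow up at $p$, the weak maximum principle (Lemma \ref{lem: weak maximum principle}) applied on $\Omega_k\setminus B(p,r)$, with $r\to0$, should give $G_k\le C_k G_{k+1}$. To make the constants uniform, we instead compare $G_k$ with $G_{k+1}+\epsilon\,G_k$-type barriers using the asymptotics at $p$. If that comparison is awkward, we switch to the more robust Li--Tam route below.

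\textbf{Step 3: uniform bounds via nonparabolicity of the AF end.} This is the main obstacle. Away from $p$ we need $G_k$ bounded above uniformly in $k$; near $p$ we need it bounded below. We argue as follows.
\begin{itemize}
\item Outside $B(p,r_1)$, $G_k$ solves a Dirichlet problem with boundary value $G_k|_{\partial B(p,r_1)}$, which is $\le C$ by Harnack on the annulus $\partial B(p,r_1)$, and with $0$ on $\partial\Omega_k$.
\item By the maximum principle, $G_k\le C\,f_k$ on $\Omega_k\setminus B(p,r_1)$, where $f_k$ is harmonic there with $f_k=1$ on $\partial B(p,r_1)$ and $f_k=0$ on $\partial\Omega_k$.
\item The functions $f_k$ increase to a harmonic $f\le1$. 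Because $E_\Sigma$ is AF with $n\ge3$, we can build a superharmonic barrier $C|x|^{2-n}$ on $E_\Sigma$, which forces $f\to0$ along $E_\Sigma$.
\item Inside $B(p,r_1)$, the maximum principle on $B(p,r_1)\setminus B(p,r)$ together with the normalization bounds $G_k$ locally uniformly on compact subsets of $\overline{B(p,r_1)}\setminus\{p\}$. For this we compare with a fixed singular function from Proposition \ref{prop:existence of singular positive hf} on a slightly larger ball, using Lemma \ref{lem: removable singularity}: bounded harmonic differences extend across $p$.
\end{itemize}

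\textbf{Step 4: pass to the limit.} With uniform local bounds, Harnack (Lemma \ref{lem: Harnack}) gives uniform $C^\alpha$ control on $\Sigma\setminus\{p\}$, and standard elliptic estimates give smooth control on $\Sigma\setminus\mathcal{S}$. Caccioppoli gives $W^{1,2}_{loc}$ control. A subsequence then converges to a weakly harmonic $G$ on $\Sigma\setminus\{p\}$ with $\min_{\partial B(p,r_1)}G=1$, so $G$ is nonzero. The upper bound $G\le Cf$ gives $G\to0$ along $E_\Sigma$.

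\textbf{Step 5: blow-up rate.} For the lower bound, restrict to $\Omega_1$. The difference $G-cG_1$ is bounded below near $p$: on $\partial\Omega_1$ we have $G\ge0=G_1$, and $G\ge1\ge c\,G_1$ on $\partial B(p,r_1)$ after choosing $c$ small. The maximum principle with the removable-singularity lemma gives $G\ge cG_1$ on $\Omega_1\setminus B(p,r_1)$. To handle the region inside $B(p,r_1)$, we redo this comparison with $G_1$ defined on $B(p,r_1)$ itself, with zero boundary data there. Proposition \ref{prop:growth of hf1} then yields $G(x)\ge C\,d(x,p)^{2-n}$.

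The hardest part is ensuring that the normalized $G_k$ remain bounded near $p$ uniformly in $k$, i.e. that the singularity strengths stay comparable. The intended tool is the removable singularity lemma applied to differences of Green's functions with the same pole.
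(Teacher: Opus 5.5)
There is a genuine gap, and it is where you suspect: nothing in your argument controls the singularity of $G_k$ at $p$ \emph{from below} uniformly in $k$, and the tool you propose for this is circular. Lemma \ref{lem: removable singularity} applies only to a harmonic function already known to be bounded near $p$. To use it on $G_k-cH$ or $G-cH$, with $H$ a fixed singular function on $B(p,r_1)$ vanishing on $\partial B(p,r_1)$, you must first know that the two singularities at $p$ are comparable. That is a B\^ocher-type uniqueness/comparability statement at a singular point of $\Sigma$, and it does not follow from the lemmas of Section 3.

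Step 5 breaks down for exactly this reason. On $\partial B(p,r_1)$ you have $G\ge 1>0=cH$, but the maximum principle on $B(p,r_1)\backslash\{p\}$ needs $\liminf_{x\to p}(G-cH)>-\infty$. This fails if the limit $G$ has a weaker singularity than $H$. Since each $G_k$ was normalized independently, nothing in your argument excludes this. So the third conclusion, $G(x)\ge Cd(x,p)^{2-n}$, is not established. Step 2 is also left open, though you do not need monotonicity if you pass to a subsequence.

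The parts you call hardest are actually easy. With $\min_{\partial B(p,r_1)}G_k=1$, Lemma \ref{lem: Harnack} applied along chains of balls avoiding $p$ already bounds $G_k$ on each compact subset of $\Sigma\backslash\{p\}$ uniformly in $k$, with no comparison of singularities. Moreover, $r\mapsto\sup_{\partial B(p,r)}G_k$ is non-increasing because $G_k=0$ on $\partial\Omega_k$. Together with Lemma \ref{lem: removable singularity} and the strong maximum principle, this shows the limit cannot be bounded near $p$. What is missing is only the quantitative rate.

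The paper sidesteps the normalization problem by never comparing independently constructed Green's functions. It fixes one singular function $G_0$ on $\Omega_0$, which has the rate $d^{2-n}$ by Proposition \ref{prop:growth of hf1}, and a cutoff $\chi$ equal to $1$ near $p$. It then sets $G_i=\chi G_0+h_i$, where $h_i$ solves $-\Delta h_i=G_0\Delta\chi+2\nabla G_0\cdot\nabla\chi$ in $\Omega_i$ with $h_i=0$ on $\partial\Omega_i$. The right-hand side is supported away from $p$, so every $G_i$ has literally the same singular part and all comparisons involve bounded functions. The maximum principle gives $G_i$ increasing in $i$, Li's argument bounds $\sup_{\partial\Omega_0}G_i$, and $G=\lim G_i\ge G_0$ inherits the lower bound.

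To keep your own normalization you would need an extra ingredient. One option is a lower bound on the flux of $G_k$ around $p$ that is uniform in $k$. This comes from $\capp_2(\overline{B(p,r_1)},\Sigma)>0$, which holds because of the AF end, combined with Caccioppoli and scale-invariant Harnack on small annuli. Another option is the comparability of singular functions with a common pole from \cite{BBL20}.
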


\begin{proof}
For given $p$, let $p\in\Omega_0\subset \Omega_1\subset\dots$ be a compact exhaustion of $\Sigma$, such that $\Omega_0\cap E_\Sigma = \emptyset$, $\bar{\Omega}_0\cap\bar{E}_\Sigma = \partial E_\Sigma$, and $\partial\Omega_i\cap\mathcal{S} = \emptyset$.

\textbf{Step 1: } Construct singular harmonic functions $G_i$ on $\Omega_i$, such that $G_i(x)\ge G_j(x)$ for $i>j$ and $x\in\Omega_j$.

By  Proposition \ref{prop:growth of hf1}, there is a singular harmonic function $G_0\in W^{1,2}_{loc}(\Omega_0\backslash p)\cap C^{\alpha}(\Omega_0\backslash p)\cap C^{\infty}(\Omega_0\backslash \mathcal{S})$ satisfying
\begin{align}\label{eq: 52}
    G_0(x) \ge Cd^{2-n}(x,p),
\end{align}
when $d(x,p)$ is sufficiently small.

Denote the $\rho$-neighborhood of $\Omega_0$ by $B_{\rho}(\Omega_0)$. Since $\partial \Omega_0$ lies in the smooth part of $\Sigma$, we can select $\rho$ to be sufficiently small such that $(B_{\rho}(\Omega_0)\backslash \Omega_0)\cap \mathcal{S} = \emptyset$. Now we use a cut-off trick in \cite{guo2024}: Let $\chi$ be a cut off function that satisfies $\chi = 1$ in $\Omega_0$, $\chi = 0$ in  $\Sigma\backslash B_{\rho}(\Omega_0)$. Thanks to Lemma \ref{lem: Poisson equation}, we can find $h_i\in W^{1,2}(\Omega_i)$ which solves
\begin{equation}\label{eq: 53}	
\left\{
\begin{aligned}
 & -\Delta h_i = G_0\Delta\chi + 2\nabla G_0\cdot\nabla\chi  \quad \text{in $\Omega_i$}, \\
  &  h_i|_{\partial \Omega_i}=0.
\end{aligned}
\right.
\end{equation}
By the Harnack inequality, $h_i \in C^{\alpha}(\Omega_i)$ and hence is continuous on $\Omega_i$. 
Define $G_i = \chi G_0 + h_i$. Then $G_i \in W^{1,2}_{\mathrm{loc}}(\Omega_i \setminus \{p\}) 
\cap C^{\alpha}(\Omega_i \setminus \{p\}) 
\cap C^{\infty}(\Omega_i \setminus \mathcal{S}).$
Moreover, $G_i$ is a singular harmonic function on $\Omega_i$ with the same growth rate as $G_0$ near $p$.

\textbf{Claim: } If $i>j$, then $h_i\ge h_j$ in $\Omega_j$.

From Lemma \ref{lem: weak maximum principle} we obtain $G_i\ge 0$ in $\Omega_i$. Since $\chi$ is supported in $B_{2\rho}(p)$, we have $h_i = G_i\ge 0$ on $\partial\Omega_j$. On the other hand, $h_j = 0$ on  $\partial\Omega_j$. In conjunction with \eqref{eq: 53} and Lemma \ref{lem: weak maximum principle}, we see the Claim is true.

As a direct corollary, we see $G_i(x)\ge G_j(x)$ for $i>j$ and $x\in\Omega_j$.

\textbf{Step 2: } Obtain a uniform bound on $G_i$.

Let $s_i = \sup_{\partial\Omega_0} G_i(x)$. 

\textbf{Claim: }$s_i$ is uniformly bounded.

We follow the argument of \cite{Li04}. Assume, for the sake of contradiction that $s_i\to\infty$. Let $\varphi_i = s_i^{-1}G_i$, then by the maximum principle and the fact that the difference between  $G_i$ and $G_0$ is a bounded continuous function, we have
\begin{align*}
    s_i^{-1}G_0\le \varphi_i\le s_i^{-1}G_0 +1.
\end{align*}
Passing to a subsequence, we may assume that $\varphi_i$ converges smoothly to a function $\varphi$ with $0\le \varphi\le 1$ in $\Sigma\backslash\mathcal{S}$. By the removable singularity Lemma \ref{lem: removable singularity}, $\varphi$ extends to a $W^{1,2}$ weak harmonic function in $M$. 

 It is well known that there exists a harmonic function $f$ on the AF end $E_\Sigma$ that satisfies $f|_{\partial E_\Sigma} = 1$ and $\lim_{|x|\to\infty} f(x) = 0$. Note $\sup_{\partial E_\Sigma}\varphi_i\leq1$.
From the weak maximum principle,  $0\le \varphi_i\le f$ in $E_\Sigma\cap\Omega_i$. 
 Hence, passing to a subsequence, we have $0\le \varphi\le f$ in $E_\Sigma$. Once again, using the weak maximum principle, the function
\begin{align*}
    M_i(r) = \sup_{\partial B_r(p)} \varphi_i
\end{align*}
is non-increasing in $r$. By passing to a limit, we see that
\begin{align*}
    M(r) = \sup_{\partial B_r(p)} \varphi
\end{align*}
is also non-increasing, thereby attaining its  positive maximum in $p$. This contradicts the strong maximum principle Lemma \ref{lem: strong maximum principle} since $\lim_{x\in E_\Sigma,|x|\to\infty}\varphi(x) = 0$.

Once the Claim is verified, the desired extension $G$ comes by taking a limit $G = \lim_{i\to\infty}G_i$.
\end{proof}

\begin{proposition}\label{lem: extend harmonic function 2}
     Let $\Sigma^n$ be a complete area-minimizing hypersurface with isolated singular set $\mathcal{S}$ in an ambient manifold $N^{n+1}$. Suppose there is an AF end $E_\Sigma$ of $\Sigma$ satisfying $E_\Sigma\cap\mathcal{S} = \emptyset$. Then there exists $G\in C^{\infty}(\Sigma\backslash\mathcal{S})$ satisfying
    \begin{itemize}
        \item $\Delta_\Sigma G = 0$ in $\Sigma\backslash\mathcal{S}$;
        \item $\lim_{x\in E_\Sigma, |x|\to\infty}G(x) = 0$;
        \item For each $p\in \mathcal{S}$, there holds $G(x)\ge C_pd(x,p)^{2-n}$
        \ \ for $d(x,p)$   small enough.
    \end{itemize}
\end{proposition}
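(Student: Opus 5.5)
The plan is to sum the single-pole functions from Proposition \ref{prop: extend the singular harmonic function}. The first task is to check that $\mathcal{S}$ is finite, or at least locally finite.

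Since $\mathcal{S}$ is isolated and closed in $\Sigma$, and $E_\Sigma\cap\mathcal{S}=\emptyset$, every compact set meets only finitely many points of $\mathcal{S}$. In the setting of Theorem \ref{thm: rigidity for minimal surface}(1), $\Sigma\backslash E_\Sigma$ is compact, so $\mathcal{S}$ is finite.

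If $\mathcal{S}=\{p_1,\dots,p_k\}$ is finite, set
\begin{align*}
G=\sum_{j=1}^k G_{p_j},
\end{align*}
where $G_{p_j}$ is the function produced by Proposition \ref{prop: extend the singular harmonic function} for the point $p_j$. The required properties follow term by term:
\begin{itemize}
\item Each $G_{p_j}$ lies in $C^\infty(\Sigma\backslash\mathcal{S})$ and is harmonic on $\Sigma\backslash\mathcal{S}$, since it is weakly harmonic on $\Sigma\backslash\{p_j\}$ and smooth off $\mathcal{S}$. Hence $G$ is smooth and harmonic on $\Sigma\backslash\mathcal{S}$.
\item Each $G_{p_j}$ tends to $0$ at infinity in $E_\Sigma$, so their sum does too.
\item Each $G_{p_j}$ is a limit of functions that are nonnegative by the maximum principle, so $G_{p_j}\ge 0$. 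Therefore near $p$ we have $G\ge G_p\ge C_p d(x,p)^{2-n}$.
\end{itemize}

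If one wants the statement when $\mathcal{S}$ is infinite (but still discrete), use a weighted series $G=\sum_j \epsilon_j G_{p_j}$. The weights $\epsilon_j>0$ are chosen so that the series converges locally uniformly on $\Sigma\backslash\mathcal{S}$ and uniformly on $E_\Sigma$. This is where the real work lies, and it is the main obstacle. The approach would be as follows.
\begin{itemize}
\item Fix an exhaustion $\Omega_m$ of $\Sigma$ and a fixed compact set $K\subset E_\Sigma$ near $\partial E_\Sigma$.
\item For $p_j\notin\Omega_m$, the function $G_{p_j}$ is harmonic on a neighborhood of $\overline{\Omega}_{m}$ minus finitely many points. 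Its values on compact subsets of $\Omega_{m}\backslash\mathcal{S}$ are bounded by some finite constant $c_{j,m}$, using the Harnack inequality (Lemma \ref{lem: Harnack}) and continuity.
\item On $E_\Sigma$, the maximum principle comparison with the harmonic function $f$ used in the proof of Proposition \ref{prop: extend the singular harmonic function} gives $G_{p_j}\le (\sup_{\partial E_\Sigma}G_{p_j})\, f$.
\item Choose $\epsilon_j$ so small that $\epsilon_j\max(c_{j,m},\sup_{\partial E_\Sigma}G_{p_j})\le 2^{-j}$ for all $m$ with $p_j\notin\Omega_m$.
\end{itemize}
Then the series converges locally uniformly to a harmonic function, smooth on $\Sigma\backslash\mathcal{S}$ by interior estimates, and it is dominated by $\big(\sum_j 2^{-j}\big)f$ on $E_\Sigma$, so it tends to $0$ at infinity. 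Positivity of all the terms gives the lower bound $G\ge \epsilon_j C_{p_j} d(x,p_j)^{2-n}$ near each $p_j$, which is the third property with $C_{p_j}$ replaced by $\epsilon_j C_{p_j}$.
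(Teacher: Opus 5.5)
Your proposal is correct and is essentially the paper's argument: $G$ is a weighted series $\sum_l \epsilon_l G^{(l)}$ of the single-pole functions from Proposition \ref{prop: extend the singular harmonic function}. Positivity of every term gives the lower bound near each $p_l$, and domination by a multiple of the barrier $f$ gives decay on $E_\Sigma$.

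The only difference is in the choice of weights. The paper just normalizes $\sup_{\partial E_\Sigma}G^{(l)}\le 1$ and takes $\epsilon_l=2^{-l}$, so local uniform convergence away from $\mathcal{S}$ has to come from the Harnack inequality. Your weights also absorb sup bounds on an exhaustion, which makes the convergence explicit. For this you should take $\partial\Omega_m\cap\mathcal{S}=\emptyset$, so that each $c_{j,m}$ is finite.
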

\begin{proof}
Since $\mathcal{S}$ is isolated, it is countable. Hence we may enumerate it as $\mathcal{S} = \{p_1,p_2,\dots,\}$. From Proposition \ref{prop: extend the singular harmonic function}, for each $p_l\in\mathcal{S}$, there exists a singular harmonic function $G^{(l)}$ blowing up $p_l$, which satisfies the conclusion of Proposition \ref{prop: extend the singular harmonic function} with $p$ replaced by $p_l$. By renormalization, we may assume $\sup_{\partial E_\Sigma}G^{(l)}\le 1$ for each $l$. It is direct to see the function
\begin{align}\label{eq: 65}
    G = \sum_{l=1}^{\infty}2^{-l}G^{(l)}
\end{align}
has the desired properties.
\end{proof}

\section{Positive mass theorem for AF manifolds with arbitrary ends and PSC in the strong spectral sense}

In this section, we prove Theorem \ref{thm: PMT arbitrary end spectral} (1) and (2). 
Since the rigidity statement (2) will not be used later, readers primarily interested in the minimal hypersurface singularity resolution in dimension $8$ may skip Proposition \ref{prop: eq1-arbitrary end} (2), Lemma \ref{lem: mass decay ALF 2} (Case 2), and Lemma \ref{lem: make a point strictly positive} on a first reading.

We begin with the maximum principle for elliptic equations on manifolds with certain spectral condition. The lemma is essentially proved in \cite{FcS1980} and will be frequently used in our argument, so we record it here for the convenience of the readers.

\begin{lemma}\label{lem: maximum principle for spectral condition}
Let $(M^n,g)$ be a complete Riemannian manifold, and $\Omega\subset M$ an open set with smooth boundary. Let $q$ be a smooth function on $M$, and assume $\lambda_1(-\Delta+q)\ge 0$ in $M$.
\begin{enumerate}
    \item $\Omega$ is bounded. If $u_1,u_2\in C^\infty(\Omega)$ solves $-\Delta u_i+qu_i = 0 \quad(i = 1,2)$ and $u_1>u_2$ on $\partial\Omega$, then $u_1>u_2$ in $\Omega$.
    \item $M$ contains an AF end $E$ and $\Omega$ is an open neighborhood of $E$ such that $\Omega\backslash E$ is compact. If $u_1,u_2\in C^\infty(\Omega)$ solve $-\Delta u_i+qu_i = 0 \quad(i = 1,2)$, $u_1>u_2\ge 0$ on $\partial\Omega$, and $\lim_{x\to\infty,x\in E}u_i(x) = 1\quad (i = 1,2)$, then $u_1>u_2$ in $\Omega$.
\end{enumerate}
\begin{proof}
    (1) We only need to consider the case that $U = \{x\in \Omega, u_2(x)>u_1(x)\}\ne\emptyset$. Otherwise, we have $u_1\geq u_2$ in $\Omega$ and the strict inequality follows from the strong maximum principle \cite[p. 33-34]{GT2001} for elliptic equations.
    Now, from \cite[Theorem 1]{FcS1980}, $\lambda_1(U)>\lambda_1(\Omega) = c>0$. Multiplying $u_2-u_1$ on both sides of the equation $-\Delta(u_2-u_1)+q(u_2-u_1)=0$ and integrating by parts, we deduce $u_2\equiv u_1$ in $U$. A contradiction will follow by the unique continuation if $U\ne \emptyset$. 

    (2) First,  the condition $u_1>u_2\ge 0$ on $\partial\Omega$, the asymptotics of $u_i$'s and (1) imply $u_i$ must be positive in $\Omega$. Once again, from the asymptotics of $u_i$'s at the infinity, we deduce that for any $\epsilon\in (0,1)$, $U = \{x\in \Omega, (1-\epsilon)u_2(x)>u_1(x)\}$ must be a compact set in $\Omega$. It follows from (1) that $U$ must be empty. Hence $(1-\epsilon)u_2\le u_1$ for any $\epsilon\in (0,1)$. Letting $\epsilon\to 0$ and using the strong maximum principle for elliptic equations, we get the desired result.
\end{proof}

\end{lemma}

        Next we prove a proposition, which is crucial in our construction of the ALF manifolds.
       \begin{proposition}\label{prop: eq1-arbitrary end}
		Let $(M^n,g)$ be a smooth AF manifolds with arbitrary ends which is not necessarily complete, $E$ being its AF end of order $\tau>\frac{n-2}{2}$. Suppose $(M^n,g)$ has $\beta$-scalar curvature no less than $h$ in the strong spectral sense for some $\beta>0$, where $h\in C^\infty(M)\cap C_{-\tau-2}(E)\cap L^1(E)$ is a non-negative function on $M$.

		(1) If $h(p)>0$ for some point  $p\in M$, then for any smooth function $Q$ with $0\leq Q\le h$ and $Q(p)<h(p)$, there is a positive and smooth function $u$ on $M$ that satisfies 
		$$
		-\Delta u +\beta(R_g-Q) u = 0
		$$
		and $u\to 1$ as $x\to \infty$ in $E$.
		
		(2) If $h\equiv 0$, then one of the following happens:

		\begin{itemize}
			\item $R_g\equiv 0$.
			\item for any $0<\beta'<\beta$, there is a smooth positive nonconstant function $u$ on $M$ that satisfies
			$$
			-\Delta u +\beta'R_g u = 0
			$$
			and $u\to 1$  as $x\to \infty$ in $E$.
		\end{itemize}
	\end{proposition}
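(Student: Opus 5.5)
The plan is to solve the equation on an exhaustion by bounded domains, with boundary data $1$ on the exhaustion boundaries, and pass to a limit. The strong spectral hypothesis, tested with the asymptotically constant test functions of Definition \ref{defn: strong test function}, will give the uniform control needed both for existence and to keep the limit from degenerating. Throughout, write $u = 1+w$ with $w\to 0$ at the infinity of $E$, so that the equation becomes
\begin{align*}
-\Delta w+\beta(R_g-Q)w = -\beta(R_g-Q),
\end{align*}
whose right-hand side lies in $L^1(E)\cap C_{-\tau-2}(E)$. The argument should work for the non-negative functions $R_g-Q$ and their counterparts below, not merely for $R_g$ itself.

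For (1), take an exhaustion $\Omega_1\subset\Omega_2\subset\cdots$ of $M$. Each $\Omega_k$ should contain a large coordinate ball of $E$ and a compact piece of every other end, and the pieces should be chosen so that their complements in the non-AF ends are pushed to infinity. The key coercivity estimate is
\begin{align*}
\int |\nabla\phi|^2+\beta(R_g-Q)\phi^2 \ \ge\ \beta\int (h-Q)\phi^2 .
\end{align*}
This holds for every admissible $\phi$, where the admissible class includes asymptotically constant $\phi$. Since $h-Q\ge 0$ and $h-Q>0$ near $p$, the quadratic form $\mathcal{Q}(\phi)=\int|\nabla\phi|^2+\beta(R_g-Q)\phi^2$ is strictly positive on nonzero compactly supported functions. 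In fact I expect to upgrade this to a weighted coercivity,
\begin{align*}
\mathcal{Q}(\phi)\ \ge\ c\Big(\int \phi^2 r^{-2}\,\psi + \int_{K}\phi^2\Big),
\end{align*}
where $K$ is a fixed compact neighborhood of $p$ and $\psi$ is a cutoff supported on $E$. The upgrade combines a contradiction/compactness argument using the positive term at $p$ with the Hardy inequality on the AF end, via a standard argument by contradiction: a normalized sequence with $\mathcal{Q}\to0$ converges locally to a nonzero function with $\mathcal{Q}=0$ and $\int(h-Q)\phi^2=0$, which is impossible by unique continuation.

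With this estimate in hand, solve on $\Omega_k$ the Dirichlet problem $u_k=1$ on $\partial\Omega_k$, which is possible since $\lambda_1>0$ there. Then $u_k>0$ by Lemma \ref{lem: maximum principle for spectral condition}. Testing with $u_k-\eta$, where $\eta$ is a fixed asymptotically constant function equal to $1$ at the AF infinity, bounds $\|\nabla u_k\|_{L^2}$ and $\|u_k-\eta\|_{L^2_{-q}(E)}$ uniformly, by the coercivity estimate and the integrability of $R_g-Q$ on $E$. Local elliptic estimates then give a smooth subsequential limit $u$ solving the equation with $u-1\in W^{1,2}_{-q}(E)$. Standard AF asymptotics, via weighted Schauder theory on $E$, upgrade this to $u\to1$. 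Positivity follows from $u\ge0$, which passes to the limit, together with the strong maximum principle and $u\not\equiv0$.

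For (2), if $R_g\not\equiv0$, replace $\beta$ by $\beta'<\beta$. Then
\begin{align*}
\int|\nabla\phi|^2+\beta' R_g\phi^2 = \frac{\beta'}{\beta}\Big(\int|\nabla\phi|^2+\beta R_g\phi^2\Big)+\Big(1-\frac{\beta'}{\beta}\Big)\int|\nabla\phi|^2 \ \ge\ \Big(1-\frac{\beta'}{\beta}\Big)\int|\nabla\phi|^2 .
\end{align*}
This yields coercivity with respect to the Dirichlet energy, hence (via Hardy on $E$) the same weighted control as before, and the identical exhaustion argument produces $u$. Finally, $u$ is nonconstant: a constant $u\to1$ would force $R_g\equiv0$.

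The main obstacle is the coercivity in (1) near the arbitrary (possibly incomplete) ends. There the spectral condition gives positivity of $\mathcal{Q}$ but no Hardy-type control, so one must verify that the boundary data $1$ imposed on the non-AF parts of $\partial\Omega_k$ does not produce energy blow-up. My approach is to test against $\eta$ supported near $E$ only, so the energy bound never needs control on the arbitrary ends, and to use local Harnack for the limit there.
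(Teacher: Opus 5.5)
\textbf{The gap: the boundary data on the non-AF ends.} You impose $u_k=1$ on the parts of $\partial\Omega_k$ lying in the non-AF ends, and your proposed remedy does not work. The function $u_k-\eta$, with $\eta$ supported near $E$, equals $1$ on $\partial\Omega_k\setminus E$, so it is not in $W^{1,2}_0(\Omega_k)$. Testing the equation with it leaves the boundary term $\int_{\partial\Omega_k\setminus E}\partial_\nu u_k\,d\sigma$, which is exactly the flux through the arbitrary ends you wanted to avoid controlling.

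This is not only a technical defect: the approximate solutions can genuinely diverge. Suppose one non-AF end is a funnel $[0,\infty)\times\mathbf{T}^{n-1}$ with metric $dt^2+e^{2t}g_{\mathbf{T}^{n-1}}$, on which $\beta(R_g-Q)\equiv -c$ with $0<c<(n-1)^2/4$. This is compatible with the hypotheses, e.g.\ for a hyperbolic funnel with $Q=h=0$ there and $\beta$ small, using the spectral gap $(n-1)^2/4$ of the funnel. Let $\bar u_k(t)$ be the $\mathbf{T}^{n-1}$-average of $u_k$. It solves $\bar u''+(n-1)\bar u'+c\bar u=0$, whose characteristic roots $\alpha_-<\alpha_+<0$ are both negative. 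Put $a_k=\bar u_k(0)$ and write $\bar u_k=Ae^{\alpha_+t}+Be^{\alpha_-t}$. The condition $\bar u_k(k)=1$ gives $A=(1-a_ke^{\alpha_-k})/(e^{\alpha_+k}-e^{\alpha_-k})$. If $a_k$ stays bounded, then $A\sim e^{|\alpha_+|k}$ and $\bar u_k(1)\to\infty$; otherwise $a_k\to\infty$. Either way $u_k(p)\to\infty$ by Harnack, so no uniform bound, and hence no limit, exists.

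\textbf{The fix, and how your route then compares.} Use the paper's choice: $u_k=1$ on $\partial\Omega_k\cap E$ and $u_k=0$ on $\partial_-\Omega_k=\partial\Omega_k\setminus E$. Then $u_k-\eta\in W^{1,2}_0(\Omega_k)$, and the arbitrary ends enter only through extension by zero. The same change is needed in your part (2), which reuses the same exhaustion.

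With this fix, your energy argument is a genuinely different route from the paper's. The paper argues by contradiction:
\begin{itemize}
\item it normalizes $w_i=u_i/u_i(p)$;
\item it extends $w_i$ by $\Lambda_i^{-1}$ on $E\setminus\Omega_i$ and by $0$ off $\Omega_i\cup E$, giving an asymptotically constant test function $\hat w_i$;
\item it shows $\mathcal E(\hat w_i)\to0$ using Harnack, the decay estimates, and a Sobolev/Moser bound, whereas the spectral condition gives $\mathcal E(\hat w_i)\ge\beta\int(h-Q)w_i^2\ge C_0>0$.
\end{itemize}

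Your route would only use compactly supported test functions plus subcriticality. In exchange, it needs a real proof of the weighted coercivity. Note that $\mathcal Q(\phi_j)\to0$ alone gives no local $W^{1,2}$ compactness when $R_g-Q$ has a negative part, since your normalization does not control it on the arbitrary ends. One needs, for instance, a ground-state transform $\mathcal Q(\Phi f)=\int\Phi^2|\nabla f|^2$ with a positive solution $\Phi$. Also, the bound on $\mathcal Q(u_k-\eta)$ yields only local, not global, gradient bounds on $u_k$.
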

	\begin{proof}[Proof of Proposition \ref{prop: eq1-arbitrary end} (1)]
		
	Let $\{\Omega_i\}$ be a sequence of exhausting compact domains of $M$ with smooth boundaries. We use $\partial_+ \Omega_i$ and $\partial_- \Omega_i$ to denote the boundary of $\Omega_i$ in $E$ and $M \setminus E$ respectively.   
    Our assumption $\lambda_1(-\Delta_g+\beta(R_g-Q))\ge 0$ and \cite{FcS1980} implies the first Dirichlet eigenvalue of $-\Delta_g+\beta(R_g-Q)$ in $\Omega_i$ is strictly positive. Thus, from the Fredholm alternative, we know that for each $\Omega_i$ there is a  positive smooth function $u_i$ on $\Omega_i$ that satisfies
		\begin{equation}\label{eq4}
			\left\{
			\begin{aligned}
				&-\Delta u_i +\beta(R_g-Q) u_i=0 \quad \text{in $\Omega_i$},\\
				&u_i|_{\partial_+\Omega_i}=1,\\
				&u_i|_{\partial_-\Omega_i}=0.
			\end{aligned}
			\right.	
		\end{equation}	
        
		Without loss of the generality, for each $i$, we may assume  $\partial_+\Omega_i=\partial B^n_{2^i}(O)$ is the coordinated sphere of radius $2^i$ in $E$,  and $p\in \Omega_i$ for each $i$. We claim the following holds:
		
		{\bf Claim:} {\it There is a constant $\Lambda>0$ independent on $i$ so that $0<u_i(p)\leq \Lambda$.}
        The proof of the Claim is devided into 4 steps.
        
        \textbf{Step 1: Reducing the Claim to the estimate of an energy $\mathcal{E}(\hat{w}_i)$} .

$\quad$

		Suppose the Claim  is false, then there exists a subsequence(still denoted by $i$) such that $\Lambda_i:=u_i(p)\rightarrow \infty$. Let $w_i:=\Lambda^{-1}_i u_i$, we have $w_i(p)=1$ and 
		\begin{equation}\label{eq5}
			\left\{
			\begin{aligned}
				&-\Delta w_i +\beta(R_g-Q) w_i=0 \quad \text{in $\Omega_i$},\\
				&w_i|_{\partial_+\Omega_i}=\Lambda^{-1}_i,\\
				&w_i|_{\partial_-\Omega_i}=0.
			\end{aligned}
			\right.	
		\end{equation}	
		
		By Harnack inequality, we may assume that after passing to a subsequence, $\{w_i\}$ locally smoothly converges to a positive and smooth function $w$ satisfying
		$$
		-\Delta w +\beta(R_g-Q) w=0 \quad \text{in $M$}, \lim_{x\in E,|x|\to\infty}w(x) = 0
		$$
and $w(p)=1$. Denote $\Omega_{ij}:=\Omega_i\setminus\{(\Omega_i\setminus \Omega_j)\cap E\}$ for $i>j\gg1$. Let $\hat{w}_i\in C^{0,1}(M)$ be given by
\begin{equation}
\hat{w}_i=\left\{
			\begin{aligned}
&\Lambda_i^{-1}\quad \text{ in $E\backslash \Omega_i$,}\\
& w_i\quad \text{ in $\Omega_i$,}\\
&0 \quad \text{ in $M\setminus(\Omega_i \cup E)$.}\end{aligned}
\right.	\nonumber	
\end{equation}
 By applying the strong spectral condition to $\hat{w}_i$ we obtain
		\begin{equation}\label{eq: 15}
\begin{split}
\mathcal{E}(\hat{w}_i)  :=\;&
\underbrace{\int_{E\backslash\Omega_i}\beta|R_g-Q|\Lambda_i^{-2}\,d\mu_g}_{\text{I}} +\underbrace{\int_{\Omega_{ij}}
\big(|\nabla_M w_i|^2+\beta(R_g-Q) w_i^2\big)\,d\mu_g}_{\text{II}}
 \\
&+\underbrace{\int_{(\Omega_{i}\setminus\Omega_{j})\cap E }
\big(|\nabla_M w_i|^2+\beta(R_g-Q) w_i^2\big)\,d\mu_g}_{\text{III}}
 \\
\ge\;&
\int_{\Omega_{ij} }\beta( h-Q) w_i^2\,d\mu_g \\
\ge\;& C_0 = C_0(M,g_{M},R_g,Q)>0.
\end{split}
\end{equation}

		In the last inequality we have used the fact $w_i(p) = 1$, $h(p)>Q(p)$, and the Harnack inequality for $w_i$.

To obtain a contradiction, we will verify that { \it for any $\epsilon>0$, there holds $\mathcal{E}(\hat{w}_i)<4\epsilon$ for $i\gg j\gg1$.}

\textbf{Step 2: Estimates of I and II.}

$\quad$

	 Note that $R_g$ and $Q$ is integrable on  the AF end $E$ of $(M^n,g)$ and $\Lambda_i\to\infty$.
     Then for any $\epsilon>0$, we have
			\begin{align}\label{eq: 14}
			\int_{E\backslash\Omega_i}\beta|R_g-Q|\Lambda_i^{-2}d\mu_g<\epsilon
		\end{align}
		for all sufficiently large $i$. Moreover, by multiplying $w_i$ on both sides of \eqref{eq5} in domain $ \Omega_{ij}$ and integrating by parts, we see that
		\begin{align*}
			\int_{\Omega_{ij}}|\nabla w_i|^2+\beta(R_g-Q) w_i^2=\int_{\partial_+\Omega_{j}} w_i \frac{\partial w_i}{\partial \Vec{n}_g}d\sigma.
		\end{align*}
		By Lemma \ref{lem: asymptotic behavior1} we have
        \begin{align}\label{eq: 60}
		    \sup_{\partial B_r}r^{-1}|w|+|\nabla w|\leq C r^{-\tau-1+\epsilon'}.
		\end{align}
         where $B_r$ denotes the coordinate ball of radius $r$ in $E$ and $\varepsilon'$ can be any positive number.
        Using the smooth convergence $w_i\to w$, we know that for any fixed coordinate sphere $\partial B_r$ and $i$ sufficiently large, it holds
		\begin{align}\label{eq: 51}
		    \sup_{\partial B_r}r^{-1}|w_i|+|\nabla w_i|\leq C r^{-\tau-1+\epsilon}.
		\end{align}
		where $C$ is a constant independent on $i$ and $r$. Therefore, for fixed sufficiently large $j$ and $i\gg j$, there holds
        \begin{align}\label{eq: 19}
        \int_{\Omega_{ij}}|\nabla w_i|^2+\beta(R_g-Q) w_i^2<\epsilon.
        \end{align}

        \begin{figure}
    \centering
    \includegraphics[width = 12cm]{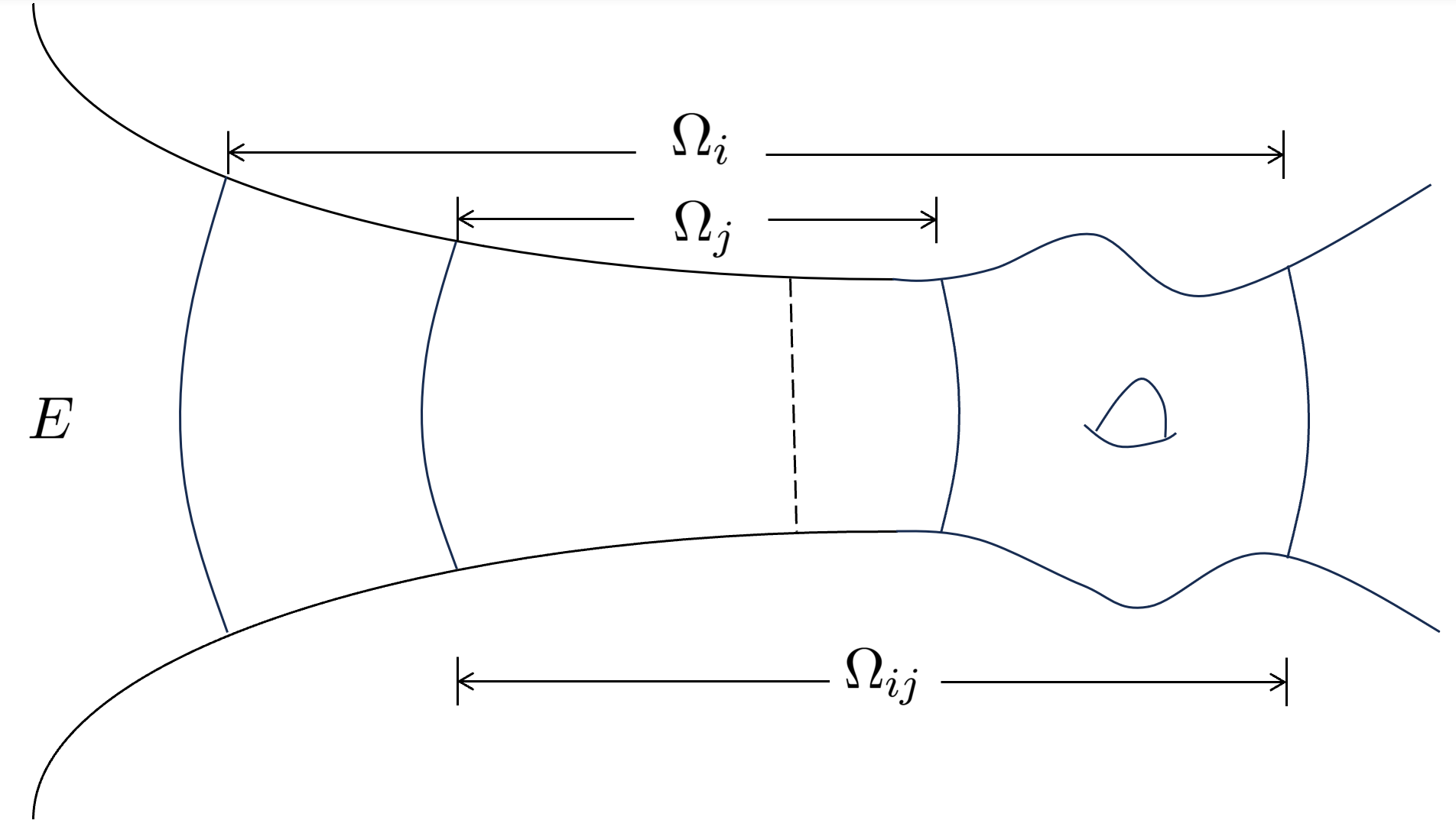}
    \caption{A schematic illustration of the proof of Proposition \ref{prop: eq1-arbitrary end}}
    \label{f3}
\end{figure}

\textbf{Step 3: The estimate of III (modulo Step 4).}

$\quad$

		Finally, it suffices to show that for sufficiently large $j$ and $i\gg j$, there holds
		\begin{align}\label{eq: 18}
			\int_{(\Omega_{i}\setminus\Omega_{j})\cap E }|\nabla w_i|^2+\beta(R_g-Q) w_i^2d\mu_g<2\epsilon.
		\end{align}
		Set $\tilde{w}_i = w_i-\Lambda_i^{-1}$. Then 
		\begin{equation}\label{eq: 16}
			\left\{
			\begin{aligned}
				&-\Delta\tilde{w}_i+\beta(R_g-Q)\tilde{w}_i = -\beta(R_g-Q)\Lambda_i^{-1}\mbox{ in }\Omega_i,\\
				&\tilde{w}_i = 0\mbox{ on }\partial\Omega_i^+.
			\end{aligned}
			\right.	
		\end{equation}
Since $\partial_+\Omega_j = \partial B_{2^j}(O)$, we can find a cut off function $\eta$ with   $|\nabla\eta|<2^{1-j}$ and 			
 \begin{equation}
\eta(x)=\left\{
			\begin{aligned}
&0\quad \text{ for $x$ with  $d(x,E\backslash\Omega_{j-1})>0$,}\\
&1\quad \text{ for $ x \in E\backslash\Omega_{j}$.}\\
\end{aligned}
\right.	\nonumber	
\end{equation}
Then
\begin{align}\label{eq: estimate for nabla eta}
    \int_M|\nabla\eta|^2 = \int_{\Omega_j\backslash\Omega_{j-1}}|\nabla\eta|^2\le C(2^{1-j})^2\cdot (2^j)^n = 4C\cdot (2^j)^{n-2}.
\end{align}		
Multiplying $\eta^2\tilde{w}_i$ on both sides of \eqref{eq: 16} and using
		\[
			\divv(\eta^2\tilde{w}_i\nabla\tilde{w}_i) 
            = \eta^2\tilde{w}_i\Delta\tilde{w}_i+\nabla(\eta^2\tilde{w}_i)\nabla\tilde{w}_i
			=\eta^2\tilde{w}_i\Delta\tilde{w}_i+|\nabla(\eta\tilde{w}_i)|^2-|\nabla\eta|^2\tilde{w}_i^2,
		\]
		we deduce that
		\begin{align*}
			&\int_{(\Omega_{i}\setminus\Omega_ {j-1})\cap E} |\nabla(\eta \tilde{w}_i)|^2+\beta\int_{(\Omega_{i}\setminus\Omega_{ j-1})\cap E }(R_g-Q)\eta^2\tilde{w}_i^2-\int_{(\Omega_{i}\setminus\Omega_{ j-1})\cap E }|\nabla\eta|^2\tilde{w}_i^2\\ 
            =& -\beta\Lambda_i^{-1}\int_{(\Omega_{i}\setminus\Omega_{ j-1})\cap E }(R_g-Q)\eta^2\tilde{w}_i.
		\end{align*}
		Therefore, 
\begin{equation}\label{eq: 17}
		    \begin{split}
		        &\int_{(\Omega_{i}\setminus\Omega_{j})\cap E }|\nabla w_i|^2+\beta(R_g-Q) w_i^2d\mu_g\\\leq&\int_{(\Omega_{i}\setminus\Omega_{ j-1})\cap E }|\nabla(\eta \tilde{w}_i)|^2+\beta|R_g-Q|\eta^2 (\tilde{w_i}+\Lambda_i^{-1})^2d\mu_g\\
                \le& \int_{(\Omega_{i}\setminus\Omega_{ j-1})\cap E }|\nabla(\eta \tilde{w}_i)|^2+2\beta\int_{(\Omega_{i}\setminus\Omega_{ j-1})\cap E }|R_g-Q|(\tilde{w}_i^2 + 1)\\
            \le& \int_{(\Omega_{i}\setminus\Omega_{ j-1})\cap E }|\nabla\eta|^2\tilde{w}_i^2
            +4\beta\int_{(\Omega_{i}\setminus\Omega_{ j-1})\cap E }|R_g-Q|(\tilde{w}_i^2 + 1)\\
            \le& \int_{(\Omega_{j}\setminus\Omega_{ j-1})\cap E }|\nabla\eta|^2\tilde{w}_i^2
            +4\beta\int_{(\Omega_{i}\setminus\Omega_{ j-1})\cap E }|R_g-Q|(\tilde{w}_i^2 + 1),
		    \end{split}
		\end{equation}
	where we have used $\nabla\eta$ is supported on 
    $(\bar{\Omega}_j\setminus\bar{\Omega}_{j-1})\cap E$
    and 
    $\Lambda_i\geq 1$ for $i\gg1$.  
    By \eqref{eq: 51}, we see  that $\tilde{w}_i(x) = O(2^{-j(\tau-\epsilon')})$ for all $x\in\Omega_j\backslash\Omega_{j-1} = B^n_{2^j}(O)\backslash B^n_{2^{j-1}}(O)$ as $i\rightarrow \infty$. Here $\epsilon'$ can be any positive number. Combined with \eqref{eq: estimate for nabla eta}, it follows that
    \begin{align*}
         \int_{(\Omega_{j}\setminus\Omega_{ j-1})\cap E }|\nabla\eta|^2\tilde{w}_i^2\le C\cdot (2^j)^{n-2}\cdot (2^{-j(\tau-\epsilon')})^2 = C\cdot 2^{-j(n-2-2\tau+2\epsilon')}.
    \end{align*}
    By selecting $\epsilon'$ small enough, the above term tends to $0$ as $j\to\infty$. Hence, the first term on the last line of \eqref{eq: 17} can be bounded by $\epsilon$ for  $i\gg j\gg 1$. 
    
    Because $R_g-Q\in L^1(E)$, the estimate for the second term on the last line of \eqref{eq: 17} can be derived from a uniform $C^0$ estimate of $\tilde{w}_i$ in $(\Omega_{i}\setminus\Omega_{ j-1})\cap E$, which we will treat in Step 4.

    \textbf{Step 4: The $C^0$-estimate for $\tilde{w}_i$}

$\quad$
    
 We first observe that $C_1 = \sup_{\partial E} \tilde{w}_i<+\infty$. Let $C_S$ be the Sobolev constant of $(E, g)$. By our assumption, 
we can choose some fixed $R\gg1$ such that
\begin{equation}\label{eq: intergral for R-Q}
  \beta C_S(\int_{ E\setminus B^n_R(O) }|R_g-Q|^{\frac{n}{2}})^{\frac{2}{n}}\leq\frac{1}{4}.  
\end{equation}
Let $0\leq\varphi\leq 1$ be another cut off function supported in $E$ with  $|\nabla\varphi|<2$
and 		
\begin{equation}
\varphi(x)=\left\{
			\begin{aligned}
&1\quad \text{ for $x\in E$ with  $|x|>R+1$,}\\
&0\quad \text{ for $x\in E$ with  $|x|\leq R$.}\\
\end{aligned}
\right.	\nonumber	
\end{equation}	
Multiplying $\varphi^2\tilde{w}_i$ on both sides of \eqref{eq: 16} and integrating by parts gives
    \begin{align*}
			\int_{ E} |\nabla(\varphi\tilde{w}_i)|^2
            +\beta\int_{ E }(R_g-Q)\varphi^2\tilde{w}_i^2-\int_{ E }|\nabla\varphi|^2\tilde{w}_i^2
            = -\beta\Lambda_i^{-1}\int_{ E }(R_g-Q)\varphi^2\tilde{w}_i.
		\end{align*}
    By Sobolev inequality and Holder inequality,
    \begin{equation}
    \begin{split}\label{eq: Sobolov ineq}
        (\int_{ E} |\varphi\tilde{w}_i|^{\frac{2n}{n-2}})^{\frac{n-2}{n}}\leq&C_S\int_{ E} |\nabla(\varphi\tilde{w}_i)|^2\\
    \leq&\beta C_S \Lambda_i^{-1}(\int_{  E\setminus B_R(O) }|R_g-Q|^{\frac{2n}{n+2}})^{\frac{n+2}{2n}}
    (\int_{ E} |\varphi\tilde{w}_i|^{\frac{2n}{n-2}})^{\frac{n-2}{2n}}\\
    &+C_S\int_{ B_{R+1}(O)\backslash B_R(O) }|\nabla\varphi|^2\tilde{w}_i^2
    +\beta  C_S(\int_{ E\setminus B_R(O) }|R_g-Q|^{\frac{n}{2}})^{\frac{2}{n}}
    (\int_{ E} |\varphi\tilde{w}_i|^{\frac{2n}{n-2}})^{\frac{n-2}{n}}.
    \end{split}
    \end{equation}
   By applying Harnack's inequality to \eqref{eq5} satisfied by $w_i$, we obtain $\tilde{w}_i$ is uniformly bounded on the support of $\nabla\varphi$. Combined with \eqref{eq: intergral for R-Q}, \eqref{eq: Sobolov ineq} and using Cauchy-Schwarz inequality yields
   \begin{equation}
       (\int_{ E} |\varphi\tilde{w}_i|^{\frac{2n}{n-2}})^{\frac{n-2}{2n}}\leq C_0
   \end{equation}
    for some uniform $C_0$ independent of $i$.
    Now we can use $L^{\frac{2n}{n-2}}$ estimate and Moser iteration to obtain the uniform $C^0$ bound for $\tilde{w}_i$ in $E\setminus B_R(O)$, concluding the proof of Step 4.
    
    As a consequence of the step by step proof above, the inequality $\mathcal{E}(\hat{w}_i)<4\epsilon$  can be deduced from \eqref{eq: 14}, \eqref{eq: 19} and \eqref{eq: 17}.  Combining  this with \eqref{eq: 15}, we obtain a contradiction, which implies the Claim.  Once Claim  is verified, in conjunction with Harnack inequality (\cite[Theorem 8.20]{GT2001}), we complete the proof of Proposition \ref{prop: eq1-arbitrary end} (1).
\end{proof}
$\quad$
     \begin{proof}[Proof of Proposition \ref{prop: eq1-arbitrary end} (2)]
     
     We assume $(M,g)$ is not scalar flat. Note that the condition of $\beta$-scalar non-negative in the strong spectral sense is equivalent to
        \begin{align}\label{eq: 32}
            \int_M |\nabla\phi|^2 +\beta'\phi^2d\mu_g\ge \int_M(1-\frac{\beta'}{\beta})|\nabla\phi|^2d\mu_g
        \end{align}
        for all locally Lipschitz function $\phi$ which is either compact supported or asymptotically constant in the sense of Definition \ref{defn: strong test function}. Let $\Omega_i$ and $\Omega_{ij}$ be as in (1), then we can  find a smooth function $u_i$ solving following equation:
        \begin{equation}\label{eq: 31}
			\left\{
			\begin{aligned}
				&-\Delta u_i +\beta' R_{ g} u_i=0 \quad \text{in $\Omega_i$},\\
				&u_i|_{\partial_+\Omega_i}=1,\\
				&u_i|_{\partial_-\Omega_i}=0.
			\end{aligned}
			\right.	
		\end{equation}
Next, following the same arguments as those in the proof of the first statement,  we can show that $u_i$ is locally uniformly bounded. 

In fact,  if  $u_i$ is not locally uniformly bounded we can define $w_i$ as before, then $w_i$ converges to a function $w$ solving
 \begin{align}\label{eq: 34}
            -\Delta w+\beta' R_{ g} w = 0
\end{align}
in $M$ and 
$$
\lim\limits _{ x\to\infty, x\in E}w(x) = 0
$$
with   $w(p)=1$ for some fixed $p\in M$. It follows from \eqref{eq: 32} and the same argument as before, we get  \begin{equation}\label{eq: 33}
    \begin{split}
&\int_{M\backslash\Omega_i}\beta'R\Lambda_i^{-2}d\mu_g + \int_{(\Omega_i\backslash\Omega_j)\cap E} |\nabla w_i|^2+\beta'Rw_i^2 d\mu_g
    +\int_{\Omega_{ij}} |\nabla w_i|^2 +\beta'Rw_i^2 d\mu_g\\
    \ge& \int_{\Omega_{ij}} (1-\frac{\beta'}{\beta})|\nabla w_i|^2d\mu_g.
    \end{split}
\end{equation}
By the same argument used in the proof of the first statement, we see that the left-hand side of \eqref{eq: 33} tends to zero as $i\gg j$ tends to infinity. Therefore, the right-hand side of \eqref{eq: 33} tends to zero. Passing to a limit we obtain $w\equiv \mbox{const}$. Plugging into \eqref{eq: 34} we obtain $R_g\equiv 0$, which contradicts our assumption.
 Thus, $\{u_i\}$ has a uniform bound.
 
Let $i\to\infty$ we see $u_i$ converges to the desired function $u$ satisfying the equation in (2) of Proposition \ref{prop: eq1-arbitrary end}.	
	\end{proof}

To apply Proposition \ref{prop: PMT for S^1 symmetric ALF}, we need to construct ALF manifolds with nonnegative scalar curvature. To this end, we consider two separate cases.

    \textbf{Case1: $(M^n, g)$ has $\beta$-scalar curvature no less than $h$ in the strong spectral sense for some $\beta\geq\frac{1}{2}$, where $h$ is a nonnegative smooth function with $h(p)>0$ for some $p\in M$.}
    
    Without loss of generality, we may assume $h\in C_{-2-\tau}(E)$.  By Remark \ref{re: NNSC}, $(M^n, g)$ has $\frac{1}{2}$-scalar curvature no less than $h$. By Proposition 
    \ref{prop: eq1-arbitrary end}, there exists some positive function $u$ satisfying
    \[
    Lu = -\Delta u +\frac{1}{2}R_g u -\frac{1}{4}hu=0\quad \text{in}\ \  M
    \]
     with $u\to 1$ as $x\to\infty$ in $E$.
     Let $ E\subset V_1\subset V_2\subset\dots$ be a sequence of exhausting domains of $M$.
     Since $\lambda_1(-\Delta_g+\frac{1}{2}R_g-\frac{1}{4}h)\ge 0$, from \cite{FcS1980} the solution $v_i$ of the following equation exists:
	\begin{equation}\label{eq: 35}
		\left\{
		\begin{aligned}
			&Lv_{i} = -\Delta v_{i} +\frac{1}{2}R_g v_{i} -\frac{1}{4}hv_{i}=0 \quad \text{in $V_i$},\\
			&\lim\limits _{ x\to\infty, x\in E} v_{i}(x) = 1,\\
			&v_{i}|_{\partial V_i}=0.
		\end{aligned}
		\right.	
	\end{equation}
By Lemma \ref{lem: maximum principle for spectral condition}, the operator $L$ satisfies the maximum principle. Thus we have $v_{i}<u$ for all $x\in M$ and  $v_{j}>v_{i}$ on $V_i$ for each $j>i$. Therefore, $v_{i}$ converges monotonically to a function $v$ in $C^2$ sense, where $v\in C^2(M)$ solves $Lv = 0$ and satisfies  $\lim\limits _{ x\to\infty, x\in E}v(x) = 1$. Moreover,  near the infinity of the AF end $E$ there holds
	\begin{equation}\label{eq: asymp behaviour for $v_i$}
		|v-1|+|x||\partial v| = O(|x|^{-\tau+\epsilon}),
	\end{equation}
	and hence, for any fixed coordinated ball $B_r$ in the AF end $E$, and $i$ large enough,  we have
	\begin{equation}\label{eq: asymp behaviour for $v$}
	    \sup_{\partial B_r}(|v_i-1|+r|\partial v_i|) = O(r^{-\tau+\epsilon}).
	\end{equation}

	Next, we construct a sequence of ALF manifolds as follows
	\begin{equation}\label{eq: 63}
		\begin{split}
			&(\hat{M}_{i},\hat{g}_{i}) = (V_i\times \mathbf{S}^1, g + v_{i}^2ds^2),\\
			&(\hat{M},\hat{g}) = (M\times \mathbf{S}^1, g + v^2ds^2).
		\end{split}
	\end{equation}
	Then we have 
    \[
    R_{\hat{g}}=R_{\hat{g}_i}=R_g-2v_i^{-1}\Delta v_i=\frac{h}{2}\geq0 \ \ \text{in}\ \  V_i.
    \]
    We remind the readers here $(\hat{M}_{i},\hat{g}_{i})$ is not necessarily complete. We use $\hat{E} = E\times \mathbf{S}^1$ to denote the distinguished end in these ALF manifolds. By \eqref{eq: asymp behaviour for $v$} we conclude that $\hat{E}$ is an ALF end of order $\tau-\epsilon$.
	
	The following lemma estimates the mass of the ALF end $(\hat{M}_{i},\hat{g}_{i},\hat{E})$.
	
	\begin{lemma}\label{lem: mass decay ALF}
		\begin{align}\label{eq: 42}
			m_{ADM}(\hat{M}_{i},\hat{g}_{i},\hat{E}) \le (n-1)m_{ADM}(M,g,E) - \frac{1}{4\omega_{n-1}}\int_{V_i}hv_i^2d\mu_{g}.
		\end{align} 
	\end{lemma}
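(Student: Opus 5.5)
The plan is to compute the mass of the warped end $\hat{E}=E\times\mathbf{S}^1$ directly from Definition \ref{def: ALF manifold}, and then to rewrite the one extra flux term through the equation \eqref{eq: 35} for $v_i$ and the strong spectral condition. Normalize $\mathbf{S}^1$ so that the factor $Vol(\mathbf{S}^1)$ cancels between the numerator and the denominator of the ALF mass formula. On $\hat{E}$ the metric is $\hat{g}_i=g+v_i^2ds^2$, and the $g$-block is independent of $s$. The only new ingredient in the integrand $\partial_i\hat g_{ij}-\partial_j\hat g_{aa}$ is therefore the trace contribution $-\partial_j(v_i^2)=-2v_i\partial_jv_i$ coming from $\hat g_{ss}$. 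Comparing the normalizations $\frac{1}{2\omega_{n-1}}$ and $\frac{1}{2(n-1)\omega_{n-1}}$ then gives
\begin{align*}
m_{ADM}(\hat M_i,\hat g_i,\hat E)=(n-1)m_{ADM}(M,g,E)-\frac{1}{\omega_{n-1}}\lim_{\rho\to\infty}\int_{S^{n-1}(\rho)}v_i\,\partial_\nu v_i\,d\sigma .
\end{align*}
The decay \eqref{eq: asymp behaviour for $v$} shows that this limit exists. It also shows that replacing $v_i\partial_\nu v_i$ by $\partial_\nu v_i$ costs only $O(\rho^{n-1}\cdot\rho^{-2\tau-1+2\epsilon})\to0$, since $2\tau>n-2$.

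Next, I multiply $-\Delta v_i+\frac12R_gv_i-\frac14hv_i=0$ by $v_i$ and integrate over $V_i\cap\{|x|<\rho\}$. The inner boundary term on $\partial V_i$ vanishes because $v_i=0$ there. This yields
\begin{align*}
\int_{S^{n-1}(\rho)}v_i\partial_\nu v_i\,d\sigma=\int_{V_i\cap\{|x|<\rho\}}\Big(|\nabla v_i|^2+\tfrac12R_gv_i^2\Big)d\mu_g-\frac14\int_{V_i\cap\{|x|<\rho\}}hv_i^2\,d\mu_g .
\end{align*}
I then let $\rho\to\infty$. The integrals converge because $R_g,h\in L^1(E)$, $v_i$ is bounded, and $|\nabla v_i|^2=O(|x|^{-2\tau-2+2\epsilon})$ is integrable.

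The key step is to bound the limit of the first integral from below by the strong spectral hypothesis with $\beta=\frac12$ (Remark \ref{re: NNSC}). For this I extend $v_i$ by zero outside $V_i$ and use the result $\phi$ as a test function. It is locally Lipschitz, because $v_i$ is smooth up to $\partial V_i$ and vanishes there. It is supported in the neighborhood $V_i$ of $E$, and $V_i\Delta E$ is compact. It tends to $1$ at infinity. Finally, $\phi-1\in W^{1,2}_{-q}$ for some $q\in(\frac{n-2}{2},\tau-\epsilon)$ by \eqref{eq: asymp behaviour for $v_i$}/\eqref{eq: asymp behaviour for $v$}; here $\epsilon$ must be chosen small enough that $\tau-\epsilon>\frac{n-2}{2}$. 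Thus $\phi$ is an asymptotically constant test function in the sense of Definition \ref{defn: strong test function}, and \eqref{eq: 20} gives
\begin{align*}
\int_{V_i}|\nabla v_i|^2+\tfrac12R_gv_i^2\ \ge\ \tfrac12\int_{V_i}hv_i^2 .
\end{align*}
Hence $\lim_\rho\int_{S^{n-1}(\rho)}v_i\partial_\nu v_i\ge\frac14\int_{V_i}hv_i^2$. Substituting this into the mass identity gives \eqref{eq: 42}.

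I expect the main obstacle to be analytic bookkeeping rather than any new idea. One point is verifying that the decay of $v_i$ (uniform on fixed spheres, as in \eqref{eq: asymp behaviour for $v$}, but needed for each fixed $i$ near infinity) holds with exponent $\tau-\epsilon>\frac{n-2}{2}$; this puts $\phi-1$ in the weighted space and kills the boundary error terms. The other is checking that the flux of the $g$-block reproduces exactly $2(n-1)\omega_{n-1}m_{ADM}(M,g,E)$ under the two normalizations. For the decay, I would first try the same weighted/Moser argument as in Step 4 of Proposition \ref{prop: eq1-arbitrary end}, combined with Lemma \ref{lem: asymptotic behavior1}, applied to $v_i-1$ on $E$.
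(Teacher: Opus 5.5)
Your proposal is correct and follows the paper's proof essentially step for step. The paper likewise extracts $m_{ADM}(\hat M_i,\hat g_i,\hat E)=(n-1)m_{ADM}(M,g,E)-\frac{1}{\omega_{n-1}}\lim_{\rho\to\infty}\int_{S^{n-1}(\rho)}v_i\partial_\nu v_i$ from the extra trace term $\hat g_{ss}=v_i^2$. It then integrates \eqref{eq: 35} against $v_i$ and applies the $\frac12$-spectral inequality to $v_i$ extended by zero. You check the admissibility of that test function, which the paper leaves implicit; it follows from Lemma \ref{lem: asymptotic behavior1} applied to $v_i-1$. One small correction: decay alone does not give existence of the flux limit, since the sphere integral is only $O(\rho^{n-2-\tau+\epsilon})$. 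Existence comes from your integration by parts, exactly as in the paper.
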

	\begin{proof}
		
		By the definition of the ADM mass for ALF manifolds, we have
		\begin{equation}\label{eq: 36}
			\begin{split}
				m_{ADM}(\hat{M}_{i}, \hat{g}_i, \hat{E}) = &\frac{1}{4\pi\omega_{n-2}}\lim_{\rho\to\infty}\int_{S^{n-1}(\rho)\times \mathbf{S}^1}(\partial_k(\hat{g}_i)_{kj}-\partial_j (\hat{g}_i)_{aa})\nu^jd\sigma_xds\\
				=&\frac{1}{4\pi\omega_{n-1}}\lim_{\rho\to\infty}\int_{S^{n-1}(\rho)\times \mathbf{S}^1}(\partial_k(\hat{g}_i)_{kj}-\partial_j (\hat{g}_i)_{kk}-\partial_jv_i^2)\nu^jd\sigma_xds\\
				=&\frac{1}{2\omega_{n-1}}\lim_{\rho\to\infty}\int_{S^{n-1}(\rho)}(\partial_k(\hat{g}_i)_{kj}-\partial_j (\hat{g}_i)_{kk}-\partial_jv_i^2)\nu^jd\sigma_x\\
				=&(n-1)m_{ADM}(M,g,E)-\frac{1}{\omega_{n-1}}\lim_{\rho\to\infty}\int_{S^{n-1}(\rho)}v_i\frac{\partial v_i}{\partial \Vec{n}_{euc}}d\sigma_x\\
				=&(n-1)m_{ADM}(M,g,E)-\frac{1}{\omega_{n-1}}\lim_{\rho\to\infty}\int_{S^{n-1}(\rho)}v_i\frac{\partial v_i}{\partial \Vec{n}_g}d\sigma_g.\\
			\end{split}
		\end{equation}
		
		We claim the last term is well defined. In fact, for $\rho_1>\rho_2$, we have
		\begin{align*}
			&\int_{S^{n-1}(\rho_1)}v_i\frac{\partial v_i}{\partial \Vec{n}_g}d\sigma_g - \int_{S^{n-1}(\rho_2)}v_i\frac{\partial v_i}{\partial \Vec{n}_g}d\sigma_g\\
			= &\int_{B_{\rho_1}\backslash B_{\rho_2}} (|\nabla v_i|^2+v_i\Delta v_i)d\mu_g\\
			= &\int_{B_{\rho_1}\backslash B_{\rho_2}} (|\nabla v_i|^2+ \frac{1}{2}R_g v_i^2 -\frac{1}{4}hv_i^2)d\mu_g.
		\end{align*}
		Since $|v_{i}(x)-1|+|x||\partial v_{i}(x)| = O(|x|^{-\tau+\epsilon})$  for any $x\in B_{\rho_1}\backslash B_{\rho_2}$ and $R_g,h\in L^1(E)$, we see the last term in \eqref{eq: 36} is well defined.
		
		By the strong spectral PSC condition and \eqref{eq: 35}, we calculate
		\begin{equation}\label{eq: 37}
			\lim_{\rho\to\infty}\int_{S^{n-1}(\rho)}v_i\frac{\partial v_i}
			{\partial \Vec{n}_g}d\mu_{g}
			=\int_{V_i}|\nabla v_i|^2+(\frac{1}{2}R_g-\frac{1}{4}h)v_i^2d\mu_{g}
			\ge \int_{V_i}\frac{1}{4}hv_i^2d\mu_{g}.
		\end{equation}
		The conclusion then follows immediately.
	\end{proof}
	Next, we consider 

    \textbf{Case 2: $(M^n, g)$ has $\beta$-scalar curvature nonnegative  in the strong spectral sense for some $\beta>\frac{1}{2}$.}

    By Proposition 
    \ref{prop: eq1-arbitrary end}, there exists some positive function $u$ satisfying
    \[
    \mathcal{L}u = -\Delta u +\frac{1}{2}R_g u =0\quad \text{in}\ \  M
    \]
     with $u\to 1$ as $x\to\infty$ in $E$.
     Using the similar argument as before, we can solve
		\begin{equation}\label{eq: 38}
			\left\{
			\begin{aligned}
				&\mathcal{L}v_{i} = -\Delta v_{i} +\frac{1}{2}R_g v_{i} = 0 \quad \text{in $V_i$},\\
				&\lim\limits _{ x\to\infty, x\in E} v_{i}(x) = 1,\\
				&v_{i}|_{\partial V_i}=0,
			\end{aligned}
			\right.
		\end{equation}
     where $ E\subset V_1\subset V_2\subset\dots$ is a  sequence of exhausting domains of $M$.
 Also,  $v_{i}$ converges monotonically to a function $v$ in $C^2$ sense, where $v\in C^2(M)$ solves $\mathcal{L}v = 0$ and satisfies  $\lim\limits _{ x\to\infty, x\in E}v(x) = 1$. Moreover,  near the infinity of AF end $E$, $v_i$ and $v$ satisfy similar asymptotic behaviors as 
\eqref{eq: asymp behaviour for $v_i$} and \eqref{eq: asymp behaviour for $v$}
Let $(\hat{M}_{i},\hat{g}_{i},\hat{E})$ and $(\hat{M},\hat{g},\hat{E})$  be constructed as in \eqref{eq: 63}. Then
    \[
    R_{\tilde{g}}=R_{\tilde{g}_i}=R_g-2v_i^{-1}\Delta u_i=0 \ \ \text{in}\ \  V_i.
    \]
  Compared with Lemma \ref{lem: mass decay ALF}, we have
  \begin{lemma}\label{lem: mass decay ALF 2}
        
		\begin{align}\label{eq: 39}
			m_{ADM}(\hat{M}_{i},\hat{g}_{i},\hat{E}) \le (n-1)m_{ADM}(M,g,E) - \frac{2\beta-1}{2\beta\omega_{n-1}}\int_{V_i}|\nabla v_i|^2d\mu_{g}.
		\end{align} 
	\end{lemma}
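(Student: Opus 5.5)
Follow the proof of Lemma \ref{lem: mass decay ALF} verbatim up to the last step. The computation \eqref{eq: 36} of the ALF mass uses only the form $\hat g_i = g+v_i^2ds^2$ and the asymptotics of $v_i$ on $E$. It therefore gives
\begin{align*}
m_{ADM}(\hat{M}_{i},\hat{g}_{i},\hat{E}) = (n-1)m_{ADM}(M,g,E)-\frac{1}{\omega_{n-1}}\lim_{\rho\to\infty}\int_{S^{n-1}(\rho)}v_i\frac{\partial v_i}{\partial \Vec{n}_g}d\sigma_g .
\end{align*}
The limit exists by the same annulus argument, now using $R_g\in L^1(E)$ and the decay $|v_i-1|+|x||\partial v_i|=O(|x|^{-\tau+\epsilon})$. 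So it suffices to prove
\begin{align*}
\lim_{\rho\to\infty}\int_{S^{n-1}(\rho)}v_i\frac{\partial v_i}{\partial \Vec{n}_g}d\sigma_g\ge \frac{2\beta-1}{2\beta}\int_{V_i}|\nabla v_i|^2d\mu_g .
\end{align*}

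First multiply \eqref{eq: 38} by $v_i$ and integrate over $V_i\cap(\text{inside of }S^{n-1}(\rho))$. Since $v_i=0$ on $\partial V_i$, letting $\rho\to\infty$ gives
\begin{align*}
\lim_{\rho\to\infty}\int_{S^{n-1}(\rho)}v_i\frac{\partial v_i}{\partial \Vec{n}_g}d\sigma_g=\int_{V_i}|\nabla v_i|^2+\tfrac12 R_g v_i^2\,d\mu_g .
\end{align*}

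Next apply the $\beta$-strong spectral condition with $h=0$ to the test function $\phi=v_i$, extended by zero outside $V_i$. This function is locally Lipschitz and asymptotically constant. Indeed, $v_i-1\in W^{1,2}_{-q}$ for some $q\in(\frac{n-2}{2},\tau-\epsilon)$ by the stated decay; this is possible because $\tau>\frac{n-2}{2}$ and $\epsilon$ can be taken small. The spectral condition gives
\begin{align*}
\int_{V_i}R_g v_i^2\ge -\frac1\beta\int_{V_i}|\nabla v_i|^2 .
\end{align*}
Hence
\begin{align*}
\int_{V_i}|\nabla v_i|^2+\tfrac12 R_g v_i^2\ge \Bigl(1-\frac{1}{2\beta}\Bigr)\int_{V_i}|\nabla v_i|^2=\frac{2\beta-1}{2\beta}\int_{V_i}|\nabla v_i|^2 .
\end{align*}
Substituting this into the mass identity yields \eqref{eq: 39}. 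Equivalently, one can note that the condition is the inequality \eqref{eq: 32} with $\beta'=\frac12$.

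The only delicate point is justifying this integration by parts and the use of $v_i$ as a test function. We need:
\begin{itemize}
\item $\int_{V_i}|\nabla v_i|^2<\infty$, which holds because $|\nabla v_i|^2=O(|x|^{-2\tau-2+2\epsilon})$ is integrable on $E$ when $\tau>\frac{n-2}{2}$;
\item $R_g v_i^2\in L^1$, which follows from $R_g\in L^1(E)$ and $v_i$ bounded;
\item the weighted $W^{1,2}_{-q}$ membership, which follows from the pointwise decay.
\end{itemize}
These checks repeat those already made in Lemma \ref{lem: mass decay ALF}, so I expect no real obstacle beyond bookkeeping.
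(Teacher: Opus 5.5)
Your proposal is correct and follows the paper's proof: you combine the boundary-term identity \eqref{eq: 36} with the integration by parts of \eqref{eq: 38} against $v_i$, then bound $\int_{V_i}\frac12 R_g v_i^2$ from below by $-\frac{1}{2\beta}\int_{V_i}|\nabla v_i|^2$ using the strong spectral condition, which is \eqref{eq: 32} with $\beta'=\frac12$. Your explicit check that $v_i$, extended by zero, is an admissible asymptotically constant test function (taking $q\in(\frac{n-2}{2},\tau-\epsilon)$) fills in a detail the paper leaves implicit.
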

    \begin{proof}
        Multiplying $v_i$ on both sides of \eqref{eq: 38} and integrating by parts gives
        \begin{equation}\label{eq: integral}
			\lim_{\rho\to\infty}\int_{S^{n-1}(\rho)}v_i\frac{\partial v_i}
			{\partial \Vec{n}_g}d\mu_{g}
			=\int_{V_i}|\nabla v_i|^2+\frac{1}{2}R_gv_i^2d\mu_{g}
			\ge \int_{V_i}(1-\frac{1}{2\beta})|\nabla v_i|^2d\mu_{g},
		\end{equation}
        where we have used \eqref{eq: 32}. Combining \eqref{eq: 36} with \eqref{eq: integral}, we get the desired estimate.
    \end{proof}
    The next Lemma considers the convergence of the ADM mass of $(\hat{M}_i,\hat{g}_i,\hat{E})$ to that of $(\hat{M},\hat{g},\hat{E})$.

    \begin{lemma}\label{lem: convergence ADM mass on ALF}
    Let $(\hat{M}_{i}, \hat{g}_{i}, \hat{E})$  and  $(\hat{M},\hat{g},\hat{E})$ be as in  \eqref{eq: 63}  with $v_i$ being the solution of \eqref{eq: 35} or \eqref{eq: 38}. In both cases, we have
        $$\lim_{i\to\infty}m_{ADM}(\hat{M}_{i}, \hat{g}_{i}, \hat{E}) = m_{ADM}(\hat{M},\hat{g},\hat{E}).$$
    \end{lemma}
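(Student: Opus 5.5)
The plan is to reduce the convergence of the ALF masses to the convergence of the boundary-flux term $\lim_{\rho\to\infty}\int_{S^{n-1}(\rho)} v_i\,\partial_{\vec n_g} v_i\,d\sigma_g$. By the computation \eqref{eq: 36} in the proof of Lemma \ref{lem: mass decay ALF}, which uses only the warped product structure $\hat g_i=g+v_i^2ds^2$ and not the particular equation, we have
\begin{align*}
m_{ADM}(\hat M_i,\hat g_i,\hat E)=(n-1)m_{ADM}(M,g,E)-\frac{1}{\omega_{n-1}}\lim_{\rho\to\infty}\int_{S^{n-1}(\rho)}v_i\frac{\partial v_i}{\partial \vec n_g}d\sigma_g .
\end{align*}
The same identity holds for $(\hat M,\hat g,\hat E)$ with $v$ in place of $v_i$. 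Writing $F(w,\rho)=\int_{S^{n-1}(\rho)}w\,\partial_{\vec n}w$, it therefore suffices to show $\lim_\rho F(v_i,\rho)\to\lim_\rho F(v,\rho)$ as $i\to\infty$.

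Fix a large $\rho_0$ in $E$. As in the well-definedness argument of Lemma \ref{lem: mass decay ALF}, integrate by parts on $E\setminus B_{\rho_0}$, using $Lv_i=0$ (respectively $\mathcal L v_i=0$) there:
\begin{align*}
\lim_{\rho\to\infty}F(v_i,\rho)=F(v_i,\rho_0)+\int_{E\setminus B_{\rho_0}}\Big(|\nabla v_i|^2+\big(\tfrac12R_g-\tfrac14 h\big)v_i^2\Big)d\mu_g ,
\end{align*}
with $h=0$ in Case 2. The same formula holds for $v$. Since $v_i\to v$ in $C^2_{loc}$, $F(v_i,\rho_0)\to F(v,\rho_0)$. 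The statement thus reduces to showing that the tail integrals over $E\setminus B_{\rho_0}$ are small, uniformly in $i$, when $\rho_0$ is large. Dominated convergence on each fixed annulus then finishes the argument.

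For the uniform tail bound, monotonicity gives $0<v_i\le v$. Combined with the asymptotics \eqref{eq: asymp behaviour for $v_i$} of $v$, this yields $v_i\le C$ on $E$, so the zeroth-order term is bounded by $C\int_{E\setminus B_{\rho_0}}(|R_g|+h)$, which tends to zero since $R_g,h\in L^1(E)$. The gradient term is the main obstacle: \eqref{eq: asymp behaviour for $v$} is only stated on fixed spheres for $i$ large, not uniformly on all of $E\setminus B_{\rho_0}$. To get uniformity I would use a barrier. We have $1-v_i\ge0$, and $w_i=1-v_i$ solves $-\Delta w_i+q\,w_i=q$ with $q=\tfrac12R_g-\tfrac14h=O(|x|^{-2-\tau})$, and $w_i\le 1-v_{1}$ on $E$ by monotonicity. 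Hence $|1-v_i|\le |1-v_1|+|1-v|=O(|x|^{-\tau+\epsilon})$ uniformly in $i$. Interior Schauder estimates on dyadic annuli, rescaled to unit size, then give $|\partial v_i|\le C|x|^{-\tau-1+\epsilon}$ uniformly. Consequently $\int_{E\setminus B_{\rho_0}}|\nabla v_i|^2\le C\rho_0^{\,n-2-2\tau+2\epsilon}\to0$ because $\tau>\frac{n-2}{2}$. Alternatively, the Sobolev/Moser argument of Step 4 in Proposition \ref{prop: eq1-arbitrary end} can supply the uniform $C^0$ decay.

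Finally, let $i\to\infty$ and then $\rho_0\to\infty$ to obtain the lemma.
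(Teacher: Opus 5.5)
Your argument is correct, but it takes a different route from the paper.

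\textbf{The paper's route.} The paper does not reduce to the flux term. It asserts that $v_i\to v$ in the weighted space $C^0_{-\tau+\epsilon}(E)$ (``by a standard Green's function argument'') and upgrades this by Schauder estimates to $\hat g_i\to\hat g$ in $C^2_{-\tau+\epsilon}(E)$. It then uses Bartnik's divergence identity $R|g|^{1/2}=\partial_p(g_{pq,q}-g_{qq,p}+Q_1)+Q_2$ to write each ALF mass as a boundary integral at a fixed radius $\rho_0$ plus $\int_{\rho>\rho_0}(R_{\hat g_i}|\hat g_i|^{1/2}-Q_2)$. It concludes by dominated convergence together with the observation that $R_{\hat g_i}=R_{\hat g}$ on $E$.

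\textbf{Your route.} You reuse the flux formula from the proof of Lemma \ref{lem: mass decay ALF} and the equation for $v_i$. The tail is then just the energy $\int_{E\setminus B_{\rho_0}}(|\nabla v_i|^2+qv_i^2)$. So you only need uniform first-order decay of $1-v_i$, not weighted $C^2$ convergence of the whole metric. Your sandwich $v_1\le v_i\le v$ on $E\subset V_1$, combined with rescaled interior estimates, makes rigorous the uniform decay that the paper leaves to an unspecified Green's function argument. As you noticed, the paper's stated estimate for $v_i$ holds only on fixed spheres for $i$ large. The sandwich could equally be inserted into the paper's dominated-convergence step.

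\textbf{Trade-off.} The paper's approach is the general mass-continuity principle: it works for any metrics converging in $C^2_{-\tau+\epsilon}$ with equal scalar curvature near infinity, without needing an explicit mass formula. Yours is more elementary and self-contained, but it is tied to the warped-product structure and its flux identity.

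\textbf{A small slip.} Your assertion $1-v_i\ge 0$ is not justified in general. The potential $q=\tfrac12R_g-\tfrac14h$ can be negative somewhere, since only spectral nonnegativity is assumed, so $v_i$ may exceed $1$. You never actually use this: the two-sided bound $v_1\le v_i\le v$ alone gives $|1-v_i|\le\max\{|1-v_1|,|1-v|\}$, which is all you need.
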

    \begin{proof}
        By equations satisfied by $v_{i}$ and $v$ and using a standard Green's function argument, we have $v_{i}\to v$ in $C^0_{-\tau + \epsilon}(E)$. By Schauder estimates we obtain $v_{i}\to v$ in $C^2_{-\tau + \epsilon}(E)$, which shows that $\hat{g}_{i}\to \hat{g}$ in $C^2_{-\tau + \epsilon}(E)$.

        By the calculation of  (4.2) in \cite{Bar86}, we have
        \begin{align*}
            R(\hat{g}_{i})|\hat{g}_{i}|^{\frac{1}{2}} = \partial_p\left((\hat{g}_{i})_{pq,q}-(\hat{g}_{i})_{qq,p}+Q_1(\hat{g}_{i})\right)+Q_2(\hat{g}_{i}),
        \end{align*}
        where
        \begin{align*}
            &g_{pq,q}-g_{qq,p}+Q_1(g) = |g|^{\frac{1}{2}}g^{pq}(\Gamma_q-\frac{1}{2}\partial_q(log|g|)),\\
            &Q_2(g)|g|^{-\frac{1}{2}} = -\frac{1}{2}g^{pq}\Gamma_p\partial_q(log|g|)+g^{pq}g^{kl}g^{rs}\Gamma_{pkr}\Gamma_{qls},\\
            &\Gamma_{ijk} = \frac{1}{2}(g_{jk,i}+g_{ik,j}-g_{ij,k})\mbox{ and }\Gamma_k = g^{ij}\Gamma_{ijk}.
        \end{align*}
        In our case, it is straightforward to see $Q_1(\hat{g}_{i}) = O(r^{-2\tau-1+2\epsilon})$, $Q_2(\hat{g}_{i}) = O(r^{-2\tau-2+2\epsilon})$, and
        \begin{align*}
            &m_{ADM}(\hat{M}_{i}, \hat{g}_{i}, \hat{E})\\
            =&\frac{1}{4\pi\omega_{n-1}}\lim_{\rho\to\infty} \int_{S^{n-2}(\rho)\times \mathbf{S}^1}(\hat{g}_i)_{pq,q}-(\hat{g}_{i})_{qq,p}d\sigma_xds\\
            =&\frac{1}{4\pi\omega_{n-1}}\int_{S^{n-2}(\rho_0)\times \mathbf{S}^1}(\hat{g}_{i})_{pq,q}-(\hat{g}_{i})_{qq,p}+Q_1(\hat{g}_{i})d\sigma_xds +\frac{1}{2\omega_{n-1}}\int_{\rho>\rho_0}R_{\hat{g}_{i}}|\hat{g}_{i}|^{\frac{1}{2}}-Q_2(\hat{g}_{i})dx.
        \end{align*}
        By the dominate convergence theorem we have
        \begin{align*}
            &(\hat{g}_{i})_{pq,q}-(\hat{g}_{i})_{qq,p}+Q_1(\hat{g}_{i})\to \hat{g}_{pq,q}-\hat{g}_{qq,p}+Q_1(\hat{g}_{}),\\
            &\int_{\rho>\rho_0}Q_2(\hat{g}_{i})dx\to\int_{\rho>\rho_0}Q_2(\hat{g}_{})dx \mbox{ as }i\to\infty.
        \end{align*}
        From \eqref{eq: 35} and \eqref{eq: 38}, in both cases we have $R_{\hat{g}_{i}} = R_{\hat{g}}$ in $E$, which gives the desired conclusion.
        \end{proof}
        As the final step of the preparation, we need the following conformal deformation theorem that applies to the strong spectral PSC condition.

       \begin{lemma}\label{lem: make a point strictly positive}
    Let $(M^n,g)$ be a smooth AF manifold with arbitrary ends and let $E$ be its AF end. Suppose $(M^n,g,E)$ has negative mass and $\frac{1}{2}$-scalar curvature non-negative  in the strong spectral sense. Then there exists $\varphi\in C^{\infty}(M)$, such that $(M^n,g_1:=\varphi^{\frac{4}{n-2}}g)$ is also a complete manifold with the AF end $E$ and negative mass, and for any neighborhood $\mathcal{U}$ of $E$ such that $\mathcal{U}\Delta E$ is compact, any $\phi\in C^1(\mathcal{U})$ with 
    \[
    \phi|_{\partial \mathcal{U}}=0, \ \ \lim_{x\to\infty,x\in E}\phi = 1\ \  \text{and} \ \ 
    \phi-1\in   W^{1,2}_{-q}(\mathcal{U})\ \ \text{for some}\ \  q> \frac{n-2}{2},
    \]
    it holds
     \begin{align}\label{eq: 30}
        \int_{M}|\nabla_{g_1}\phi|^2+\frac{1}{2}R_{g_1}\phi^2d\mu_{g_1}\ge \int_{M}\frac{1}{2}h\phi^2d\mu_{g_1}
    \end{align}
    for some $h\in C^{\infty}(M)$ satisfying $h\ge 0$ everywhere and $h(p)>0$ at some point $p$.
\end{lemma}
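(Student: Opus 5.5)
The plan is to take a conformal factor $\varphi = 1+\delta\psi$, where $\psi\ge 0$ is a smooth compactly supported bump function with $\psi(p)>0$ at some point $p$. The conformal Laplacian identity gives
\begin{align*}
R_{g_1}\varphi^{\frac{n+2}{n-2}} = -c_n\Delta_g\varphi + R_g\varphi, \qquad c_n = \tfrac{4(n-1)}{n-2}.
\end{align*}
Since $\varphi\equiv 1$ outside a compact set, $g_1$ is complete, still has $E$ as an AF end, and has the same mass as $g$. The mass therefore stays negative for every small $\delta$.

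The substantive point is to transport the $\frac12$-spectral inequality from $g$ to $g_1$ with a positive remainder. For a test function $\phi$ as in the statement, substitute $\psi_1=\varphi^{\frac{n-2}{2}}\phi$. The standard conformal covariance of the Dirichlet energy with the conformal Laplacian gives
\begin{align*}
\int|\nabla_{g_1}\phi|^2+\tfrac{1}{c_n}R_{g_1}\phi^2\,d\mu_{g_1}=\int|\nabla_g\psi_1|^2+\tfrac{1}{c_n}R_g\psi_1^2\,d\mu_g .
\end{align*}
Because $\beta=\frac12\neq \frac1{c_n}$, a direct comparison leaves error terms. My approach is to compute
\begin{align*}
\int|\nabla_{g_1}\phi|^2+\tfrac12R_{g_1}\phi^2\,d\mu_{g_1}
\end{align*}
explicitly in terms of $\phi$, $\varphi$ and $g$. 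The result should be the $g$-energy of $\phi\varphi^{\frac{n-2}{2}}$ (or of $\phi$ with weight), plus terms linear in $\delta$ involving $\Delta\psi$ and $\nabla\psi\cdot\nabla\phi$, plus $O(\delta^2)$ terms.

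An alternative that avoids these error terms is to choose $\varphi$ through the positive solution $u$ of $-\Delta u+\frac12R_gu=0$ from Proposition~\ref{prop: eq1-arbitrary end}(2), which requires $\beta>\frac12$ or a direct argument. Writing $\phi=u\eta$ and using the ground state substitution
\begin{align*}
\int|\nabla(u\eta)|^2+\tfrac12R_g u^2\eta^2=\int u^2|\nabla\eta|^2+\text{boundary term at }E,
\end{align*}
the energy becomes a weighted Dirichlet energy of $\eta$. The boundary flux $\lim\int_{S_\rho}u\,\partial_\nu u$ is exactly the quantity controlled in Lemma~\ref{lem: mass decay ALF}. This suggests the cleaner route. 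Since the mass is negative, $u$ does not produce a flat end, and I perturb the operator by a small positive bump $\frac14h$ supported in a compact set. Stability of the positive solution, which is Lemma~\ref{lem: maximum principle for spectral condition} together with the Fredholm theory of \cite{FcS1980}, then gives a positive solution of $-\Delta u_\delta+\frac12R_gu_\delta-\frac14 h u_\delta\cdot\delta=0$, provided the infimum of the quadratic form over asymptotically constant tests is strictly positive on compact perturbations. I would obtain that strict positivity from the negative mass: if the infimum were $0$ with no slack, the minimizing sequence argument of Proposition~\ref{prop: eq1-arbitrary end} would produce a decaying solution $w$, and Lemma~\ref{lem: mass decay ALF} combined with Proposition~\ref{prop: PMT for S^1 symmetric ALF} would force the mass to be $\ge 0$.

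The main obstacle I expect is establishing this quantitative slack, i.e.\ that the form
\begin{align*}
\int|\nabla\phi|^2+\tfrac12R_g\phi^2
\end{align*}
dominates $\epsilon\int h\phi^2$ for a fixed compactly supported $h$ and small $\epsilon$. My first attempt would be the compactness and contradiction argument of Steps 1--4 in Proposition~\ref{prop: eq1-arbitrary end}(1): normalize by $\int h\phi_k^2=1$, extract a limit, and use the negative mass to exclude the degenerate limit. Once this is in hand, choose $\varphi$ equal to $1$ outside a compact set with $\delta$ small. The $O(\delta)$ error terms are then absorbed by the slack, which yields \eqref{eq: 30} with $h\ge0$ and $h(p)>0$, while the mass remains unchanged and negative.
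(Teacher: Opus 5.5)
There is a genuine gap, and it is the step you yourself flag as the main obstacle. For $\beta=\frac12$ the natural substitution is $\varphi\phi$, not $\varphi^{\frac{n-2}{2}}\phi$. With it you get an exact identity, with no $\nabla\psi\cdot\nabla\phi$ or $O(\delta^2)$ terms:
\begin{align*}
\int_M|\nabla_{g_1}\phi|^2+\tfrac12R_{g_1}\phi^2\,d\mu_{g_1}
={}&\int_M|\nabla_g(\varphi\phi)|^2+\tfrac12R_g(\varphi\phi)^2\,d\mu_g\\
&-\tfrac{n}{n-2}\int_M\varphi\phi^2\Delta_g\varphi\,d\mu_g-\lim_{\rho\to\infty}\int_{\partial B_\rho}\phi^2\varphi\,\partial_\nu\varphi\,d\sigma_g .
\end{align*}
So what you need is $\Delta_g\varphi\le0$ everywhere, with strict inequality somewhere. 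For $\varphi=1+\delta\psi$ with $\psi$ compactly supported this is impossible. Indeed $\int_M\Delta_g\psi\,d\mu_g=0$, so a superharmonic $\psi$ of this kind is harmonic, hence zero.

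You are therefore forced to absorb an indefinite first-order term by slack, $\int|\nabla\phi|^2+\frac12R_g\phi^2\ge\epsilon\int h_0\phi^2$. But if you had slack, $g$ itself would already satisfy the hypothesis of case (1) of Theorem \ref{thm: PMT arbitrary end spectral}, and no conformal change would be needed. So slack is the entire content of the lemma, and your argument for it does not work:
\begin{itemize}
\item In Proposition \ref{prop: eq1-arbitrary end}, the blow-up alternative $u_i(p)\to\infty$ is excluded only through $h(p)>Q(p)$, or through $\beta'<\beta$ in part (2).
\item With $h\equiv0$ and $\beta=\frac12$, Steps 2--4 still give $\mathcal{E}(\hat w_i)\to0$ while $\int h_0\hat w_i^2\ge c>0$. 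That is precisely a failure of slack.
\item In this alternative there is no positive solution of $-\Delta v+\frac12R_gv=0$ with $v\to1$ on $E$. Such a $v$ would bound the $u_i$ by Lemma \ref{lem: maximum principle for spectral condition}.
\item The limit $w$ therefore decays, $g+w^2ds^2$ is not ALF, and Lemma \ref{lem: mass decay ALF} and Proposition \ref{prop: PMT for S^1 symmetric ALF} have nothing to act on.
\end{itemize}
Ruling out this alternative from negative mass is exactly what the lemma, combined with case (1), is designed to do, so your argument is circular at this point. The bounded alternative is harmless: it yields $v\to1$ and a scalar-flat $\mathbf{S}^1$-warped product of mass at most $(n-1)m_{ADM}(M,g,E)<0$.

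The missing idea is to give up compact support and let the mass move.
\begin{itemize}
\item Take $u>0$ with $\Delta_gu\le0$, $\Delta_gu(p)<0$ and $u\to1$ on $E$. The paper obtains it by solving $\Delta_gu=fu$ with $f\le0$ compactly supported and small in $L^{\frac n2}$, following \cite{Zhu23}. Then $u=1+A|x|^{2-n}+o(|x|^{2-n})$ with $A>0$.
\item Set $\varphi_\epsilon=\frac{1+\epsilon u}{1+\epsilon}$ in the identity above. Since $\varphi_\epsilon\phi$ is an admissible asymptotically constant test function, the first integral is nonnegative.
\item The flux term equals $(n-2)\omega_{n-1}\frac{\epsilon A}{1+\epsilon}\ge0$.
\item This gives \eqref{eq: 30} with $h=\frac{2n}{n-2}(-\Delta_g\varphi_\epsilon)\varphi_\epsilon^{-\frac{n+2}{n-2}}\ge0$ and $h(p)>0$.
\end{itemize}
The price is that the mass rises by $\frac{2\epsilon A}{1+\epsilon}$. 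This is the only place negative mass is used: for $\epsilon$ small the mass stays negative. Completeness follows from $\varphi_\epsilon\ge\frac1{1+\epsilon}$.
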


\begin{proof}
    We first construct a positive function $u\in C^{\infty}(M)$ such that
    \begin{equation}\label{eq: 27}
			\left\{
			\begin{aligned}
				&\Delta_gu\le 0 \text{ for all $x\in M$},\\
				&\Delta_gu(p)<0 \text{ at some point $p\in M$},\\
				&\lim\limits _{x\to\infty, x\in E} u(x) = 1.
			\end{aligned}
			\right.	
		\end{equation}
    Choose $c_S$ to be the constant that depends on $E$ as in \cite[Lemma 2.1]{Zhu23}. Then, we can construct a nonpositive function $f$ with compact support in $E$, such that $f(p)<0$ at some point $p\in E$, and
    \begin{align*}
        (\int_E|f|^{\frac{n}{2}})^{\frac{2}{n}}\le\frac{c_S}{2}.
    \end{align*}
    By \cite[Proposition 2.2]{Zhu23}, there is a positive function $u\in C^{\infty}(M)$ that satisfies $\Delta_gu = fu\le 0$ for all $x\in M$ and $\Delta_gu(p) = f(p)u(p)<0$. Moreover, From Lemma \ref{lem: asymptotic behavior2} we know that $u$ has the expansion 
    \begin{align}\label{eq: 28}
        u(x) = 1+A|x|^{2-n} + o(|x|^{2-n}),
    \end{align}
    where
    \begin{align}\label{eq: 29}
        A = -\frac{1}{(n-2)\omega_{n-1}}\int_E fud\mu_g >0.
    \end{align}
    It follows that $u$ satisfies \eqref{eq: 27}.

    Let $\varphi_{\epsilon} = \frac{1+\epsilon u}{1+\epsilon}$ with $\varepsilon>0$ to be determined. Then $(M^n,g_{\epsilon}) = (M^n,\varphi_{\epsilon}^{\frac{4}{n-2}}g)$ is complete. For any neighborhood $\mathcal{U}$ of $E$ such that $\mathcal{U}\Delta E$ is compact, any $\phi\in C^1(\mathcal{U})$ with 
    \[
    \phi|_{\partial \mathcal{U}}=0,\ \ \lim_{|x|\to\infty,x\in E}\phi = 1 
    \ \ \text{and}\ \ \phi-1\in   W^{1,2}_{-q}(\mathcal{U})\ \ \text{for some}\ \  q> \frac{n-2}{2},
    \]
    we calculate
    \begin{align*}
        &\int_M|\nabla_{g_\epsilon}\phi|^2+\frac{1}{2}R_{g_\epsilon}\phi^2d\mu_{g_\epsilon}\\
        =&\int_M \varphi_{\epsilon}^2|\nabla_{g}\phi|^2+\frac{1}{2}\varphi_{\epsilon}(R_{g}\varphi_{\epsilon}-\frac{4(n-1)}{n-2}\Delta_g\varphi_{\epsilon})\phi^2d\mu_{g}\\
        =&\int_M |\nabla_{g}(\varphi_{\epsilon}\phi)|^2+\frac{1}{2}R_g(\varphi_{\epsilon}\phi)^2 d\mu_g- \lim_{\rho\to\infty} \int_{\partial B_{\rho}}\phi^2\varphi_{\epsilon}\frac{\partial \varphi_{\epsilon}}{\partial \Vec{n}}d\sigma_g -\int_M \frac{2(n-1)}{n-2}\varphi_{\epsilon}\phi^2\Delta_g \varphi_{\epsilon}d\mu_g\\
        \ge& -\int_M \frac{2(n-1)}{n-2}\phi^2\varphi_{\epsilon}^{-\frac{n+2}{n-2}}\Delta_g \varphi_{\epsilon}d\mu_g,
    \end{align*}
    where in the last inequality we have used that $(M^n,g)$ has $\frac{1}{2}$-scalar curvature
    non-negative  in strong spectral condition and \eqref{eq: 29}. It follows from \eqref{eq: 27} that $(M^n,g_1:=g_{\varepsilon})$ satisfies \eqref{eq: 30}. Furthermore, when $\epsilon$ is sufficiently small, we have
    \begin{align*}
        m_{ADM}(M^n,g_1,E) 
        =&m_{ADM}(M^n,g,E)+\frac{2A\epsilon}{1+\epsilon} < 0.
    \end{align*}
    This completes the proof.
\end{proof}

\begin{proof}[Proof of Theorem \ref{thm: PMT arbitrary end spectral}]

    Let us  assume $h(p)>0$ first. By Proposition \ref{prop: eq1-arbitrary end}, we can solve \eqref{eq: 35} and construct the ALF manifolds as described in \eqref{eq: 63}. Without loss of generality, we assume $p\in V_1$. It follows from Lemma \ref{lem: mass decay ALF}  and the monotonicity of $v_i$ with respect to $i$ that
    \begin{align*}
        (n-1)m_{ADM}(M,g,E)\ge& m_{ADM}(\hat{M}_{i},\hat{g}_{i},\hat{E})+\frac{1}{4\omega_{n-1}}\int_{V_i}hv_i^2d\mu_{g}\\
        \ge &m_{ADM}(\hat{M}_{i},\hat{g}_{i},\hat{E})+\frac{1}{4\omega_{n-1}}\int_{V_1}hv_1^2d\mu_{g}\\
        \ge &m_{ADM}(\hat{M}_{i},\hat{g}_{i},\hat{E})+\sigma_0,
    \end{align*}
    where $\sigma_0>0$ is a positive number. The conclusion  then follows from Lemma \ref{lem: convergence ADM mass on ALF} and the positive mass theorem for ALF manifolds (Proposition \ref{prop: PMT for S^1 symmetric ALF} and \cite[Theorem 1.8]{CLSZ2021}).

     For the general case, we can use Lemma \ref{lem: make a point strictly positive} to reduce the case that $h(p)>0$. To be more precise, if $m_{ADM}(M,g,E)<0$, then by utilizing Lemma \ref{lem: make a point strictly positive} we can find another metric $g_1$ on $M$ such that $m_{ADM}(M,g_1,E)<0$ and $(M,g_1)$ satisfies the condition that $h(p)>0$. This is a contradiction.

    Next, let us assume $\beta>\frac{1}{2}$. It suffices to prove the rigidity of $(M,g)$ under the assumption that $m_{ADM}(M,g,E) = 0$. We first show that $(M,g)$ is scalar flat. If it is not the case, by Proposition \ref{prop: eq1-arbitrary end}, we can solve \eqref{eq: 38} and construct a sequence of ALF manifolds as described in \eqref{eq: 63}. Therefore, by applying Lemma \ref{lem: mass decay ALF 2} we see that
    \begin{align*}
        m_{ADM}(\hat{M}_{i},\hat{g}_{i},\hat{E}) \le (n-1)m_{ADM}(M,g,E) - \frac{2\beta-1}{2\beta\omega_{n-1}}\int_{V_i}|\nabla v_i|^2d\mu_{g}.
    \end{align*}
    Since $(M,g)$ is not scalar flat, the limit function $v$ of $v_i$ is not a constant. Consequently, we have     
    $$\int_{V_i}|\nabla v_i|^2d\mu_{g}\geq a>0,$$ 
    where $a$ is a  constant independent of $i$.  Letting $i\to\infty$ and using Lemma \ref{lem: convergence ADM mass on ALF} and the positive mass theorem for ALF manifold (Proposition \ref{prop: PMT for S^1 symmetric ALF} and \cite[Theorem 1.8]{CLSZ2021}), we get a contradiction. Therefore, $(M,g)$ is scalar flat. The conclusion then follows directly from \cite[Theorem 1.2]{Zhu23}.
\end{proof}

\section{Deformation and warped construction of minimal hypersurfaces with singular set}
In this section, we will perform suitable deformation on minimal hypersurfaces with singular set to get smooth manifolds with desired properties.
Firstly, we prove the following Lemma for area-minimizing hypersurfaces with non-empty singular set.	
	
        \begin{lemma}\label{lem: |A|>0}
            Let $(M^{n+1},g)$ be a Riemannian manifold and $\Sigma$ be an area-minimizing hypersurface with singular set $\mathcal{S}$ such that $\Sigma$ is smoothly embedded outside $\mathcal{S}$. Suppose $\mathcal{S}\ne\emptyset$, then for any $r>0$, there exists a point $p\in \mathcal{B}_r^{n+1}(\mathcal{S})\cap reg\Sigma$ that satisfies $|A|^2(p)>0$. Here $\mathcal{B}_r^{n+1}(\mathcal{S}) = \{x\in M, d_M(x,\mathcal{S})<r\}$.
        \end{lemma}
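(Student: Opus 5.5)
Argue by contradiction. Suppose that for some $r>0$ we have $|A|\equiv 0$ on $\mathcal{B}_r^{n+1}(\mathcal{S})\cap reg\Sigma$. Pick $p_0\in\mathcal{S}$ and shrink $r$ so that $r$ is much smaller than $\inj_M(p_0)$ and $\mathcal{B}_r(p_0)\cap\mathcal{S}=\{p_0\}$, which is possible because $\mathcal{S}$ is isolated. Then $\Sigma_0:=\Sigma\cap\mathcal{B}_r(p_0)\setminus\{p_0\}$ is a smooth, connected-component-wise totally geodesic hypersurface. The goal is to show that a totally geodesic area-minimizing hypersurface cannot have a genuine singular point at $p_0$, which contradicts $p_0\in\mathcal{S}$.

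The key tool will be tangent cones. Take $\lambda_k\to 0$ and rescale $M$ around $p_0$ by $\lambda_k^{-1}$ in normal coordinates. The rescaled metrics converge smoothly to the Euclidean metric on $\mathbf{R}^{n+1}$. The rescaled hypersurfaces $\Sigma_k$ are still area-minimizing, and their second fundamental forms satisfy $|A_{\Sigma_k}|=\lambda_k|A_\Sigma|=0$ on the regular part inside the rescaled ball. By compactness for area-minimizing boundaries, a subsequence converges to an area-minimizing cone $C\subset\mathbf{R}^{n+1}$. On compact subsets of $reg\,C$ the convergence is smooth by Allard regularity, so $C$ is totally geodesic on its regular part.

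Next I classify such cones. A cone whose regular part has $A=0$ is a union of pieces of hyperplanes through the origin. Its link $C\cap S^n$ then has vanishing second fundamental form in $S^n$ on its regular part, so it is a union of pieces of great spheres. By minimality and the regularity theory (singular set of codimension at least $7$), each connected component of the regular part of the link is an open subset of a great sphere $S^{n-1}$, and its closure must be the whole great sphere. Indeed, a boundary inside a great sphere would be a singular set of codimension $1$ in the link, which is too large. Hence $C$ is a union of hyperplanes through $0$. Since $C$ is an area-minimizing boundary, and so has multiplicity one, transverse hyperplanes are excluded: two distinct hyperplanes meet along a codimension-$2$ set in $C$, which violates the codimension-$7$ bound on the singular set (equivalently, the union is not minimizing). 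The remaining possibilities are a single hyperplane, or one hyperplane counted twice, and the latter is excluded for boundaries. So $C$ is a multiplicity-one hyperplane and the density of $\Sigma$ at $p_0$ equals $1$.

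Finally, Allard's regularity theorem, or the fact that density $1$ forces regularity for minimizing boundaries, shows that $p_0$ is a regular point. This contradicts $p_0\in\mathcal{S}$. I expect the main obstacle to be the cone classification step: ruling out the possibility that the regular parts of the link are proper open subsets of great spheres glued along singular sets. I would handle it using the dimension bound $\dim\operatorname{sing}\le n-7$ together with the analyticity and unique continuation of totally geodesic pieces. A more direct alternative is to note that on $\Sigma_0$ each component is an open subset of a single totally geodesic hypersurface of $M$ near $p_0$, and then apply the same codimension argument in $M$ to get that $\Sigma$ coincides near $p_0$ with a smooth totally geodesic hypersurface.
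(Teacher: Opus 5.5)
Your argument is correct, and its backbone is the same as the paper's. You blow up at a singular point, use Allard's theorem plus elliptic estimates to get smooth convergence on the regular part of a tangent cone $C$, and conclude that $C$ is totally geodesic there.

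The routes diverge in how a totally geodesic minimizing cone is ruled out. The paper first applies dimension reduction to reach a cone $\mathcal{C}^d\times\mathbf{R}^{n-d}$, where $\mathcal{C}^d$ is nontrivial with an isolated singularity. The link of $\mathcal{C}^d$ is then a smooth closed totally geodesic hypersurface of a sphere, hence a great sphere, and the contradiction is immediate.

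You instead classify such cones directly.
\begin{itemize}
\item Each component $\Gamma$ of $\operatorname{reg} C$ is an open conical piece of a hyperplane $H$, and its relative boundary in $H$ lies in $\operatorname{sing} C$.
\item Since $\dim\operatorname{sing} C\le n-7$, that boundary has zero $\mathcal{H}^{n-1}$-measure, so it cannot separate $H$ and $\Gamma$ is dense in $H$. This nonseparation fact is the precise content of your ``codimension $1$ is too large'' step; no analyticity or unique continuation is needed.
\item Two distinct hyperplanes would meet in an $(n-1)$-dimensional singular set. That set has codimension $1$ in $C$, not $2$ as you wrote, which only strengthens your point.
\item Multiplicity one then leaves a single plane, and Allard's theorem at $p_0$ finishes the argument.
\end{itemize}

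Your version trades dimension reduction for this separation argument plus a final use of Allard. In exchange it works with any tangent cone at any singular point. By contrast, once $n\ge 8$, the paper's dimension reduction strictly speaking produces an iterated tangent cone (a limit of rescalings about points converging to $q$) rather than a tangent cone at a point of $\mathcal{S}$. That still suffices for the paper, but it needs the remark.

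Two minor points. The lemma does not assume $\mathcal{S}$ is isolated; your argument never actually uses isolatedness, so you can drop that step. Also, before invoking compactness of minimizing boundaries and multiplicity one, you should first localize to area-minimizing boundaries, as the paper does via Theorem 27.8 of Simon's book.
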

        \begin{proof}
            The proof follows from analyzing nontrivial tangent cone at singular set, which has also been carried out in \cite{CLZ24}. Here we just include a proof for completeness.
            
            First, due to \cite[Theorem 27.8]{Simon83}, we deduce that for each $x\in\Sigma$, there exists an open set $W$ containing $x$, such that $\Sigma\cap W$ is an area-minimizing boundary in $W$. Thus, we can reduce the problem to the case of area-minimizing boundaries. We isometrically embed a compact domain of $(M,g)$  which has non-empty intersection with $\mathcal{S}$ into an Euclidean space $\mathbf{R}^{n+k}$, and assume the conclusion is not true, then $|A|=0$ in a neighborhood of $\mathcal{S}$. It follows from standard dimension reduction \cite[Appendix A]{Simon83} that at some point $q\in\mathcal{S}$, the tangent cone $C_q\subset T_qM$ splits as $\mathcal{C}^d\times \mathbf{R}^{n-d+k}$, where $\mathcal{C}^d$ is a nontrivial minimizing cone in $\mathbf{R}^{d+1}$ with isolated singularity. 

            Denote $\Sigma_i = \eta_{q,\lambda_i}\Sigma$ $(\lambda_i\to 0)$ to be a blow up sequence at $q$ (Here $\eta_{q,\lambda_i}(x) = \lambda_i^{-1}(x-q)$). Then by \cite[Theorem 34.5]{Simon83} $\Sigma_i$ converges to $C_q$ in multiplicity one in varifold sense, as  $\Sigma$ is an area-minimizing boundary. At the same time $M_i = \eta_{q,\lambda_i,}M$ converges to $T_qM$ in $C^1$ sense. The monotonicity formula implies the convergence also holds in Hausdorff sense. Combined with Allard's regularity theorem and standard results in PDE theory, we know that the regular part of $\Sigma_i$ converges to the regular part of $C_q$ in $C^2$. This implies the second fundamental form of $C_q\subset T_qM$ is identically zero on its regular part, which is a contradiction.
        \end{proof}
      Since an area-minimizing hypersurface must be stable, by the second variation  of area functional and Gauss equation we have
\begin{lemma}\label{lem: 2nd variation}
    Let $(M^{n+1},g)$ be a Riemannian manifold and $\Sigma$ be an area-minimizing hypersurface with singular set $\mathcal{S}$ such that $\Sigma$ is smoothly embedded outside $\mathcal{S}$. Then
        for any function $\phi\in C^{\infty}_{c}(\Sigma\backslash\mathcal{S})$, there holds
	\begin{equation}\label{eq:  stability}
		\int_{\Sigma\backslash\mathcal{S}}|\nabla_{g_{\Sigma\setminus\mathcal{S}}}\phi|^2+\frac{1}{2}R_{g_{\Sigma\setminus\mathcal{S}}}\phi^2\geq\int_{\Sigma\backslash\mathcal{S}}\frac{1}{2}(R_g+|A|^2)\phi^2. 
	\end{equation}
\end{lemma}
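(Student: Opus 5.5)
This follows from the second variation formula on the regular part together with the Gauss equation; no analysis near $\mathcal{S}$ is needed, because $\phi$ is compactly supported in $\Sigma\backslash\mathcal{S}$.

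\textbf{Step 1 (stability on the regular part).} Since $\Sigma$ is area-minimizing, it is stable for variations supported away from $\mathcal{S}$. Let $\nu$ be a unit normal along $\Sigma\backslash\mathcal{S}$; it exists locally, and globally on the support of $\phi$ because $\Sigma$ is locally a boundary by \cite[Theorem 27.8]{Simon83}. Flowing $\Sigma$ along the compactly supported normal field $\phi\nu$ does not decrease area, so the second variation gives
\[
\int_{\Sigma\backslash\mathcal{S}}|\nabla\phi|^2\ge\int_{\Sigma\backslash\mathcal{S}}\big(Ric(\nu,\nu)+|A|^2\big)\phi^2 .
\]

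\textbf{Step 2 (Gauss equation).} Trace the Gauss equation twice along the smooth hypersurface $\Sigma\backslash\mathcal{S}$ and use $H=0$:
\[
R_g=R_{g_{\Sigma\backslash\mathcal{S}}}+2Ric(\nu,\nu)+|A|^2-H^2 ,
\]
which gives
\[
Ric(\nu,\nu)+|A|^2=\tfrac12\big(R_g-R_{g_{\Sigma\backslash\mathcal{S}}}+|A|^2\big).
\]

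\textbf{Step 3 (substitute and rearrange).} Insert the identity from Step 2 into Step 1 and move the $\tfrac12R_{g_{\Sigma\backslash\mathcal{S}}}\phi^2$ term to the left-hand side. This is exactly \eqref{eq:  stability}.

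The only point requiring any care is the existence of the unit normal on the support of $\phi$, which Step 1 settles.
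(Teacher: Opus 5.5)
Your argument is correct and is essentially the paper's own one-line justification: stability of an area-minimizer on the regular part, the second variation formula, and the traced Gauss equation $R_g=R_{g_{\Sigma\setminus\mathcal{S}}}+2Ric(\nu,\nu)+|A|^2$ with $H=0$. One correction to your side remark: being locally a boundary gives a unit normal only locally, so the global normal on $\spt\phi$ really comes from two-sidedness, which the paper tacitly assumes and which holds in its applications ($\Sigma$ a boundary as in Theorem~\ref{thm: rigidity for minimal surface}, or an oriented current in an oriented ambient manifold); this hypothesis is genuinely needed, since for instance the one-sided, homologically minimizing $\mathbf{RP}^{2k-1}\subset\mathbf{RP}^{2k}$ violates the inequality for $\phi\equiv 1$.
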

        In the rest of this section, we denote  $R_g+|A|^2$ by  $h$ for simplicity. Then $h$ is nonnegative if $R_g\geq 0$ along $\Sigma$ and $h$ is positive somewhere if we further  assume the singular set $\mathcal{S}\neq\emptyset$.  We use $\nabla$, $\Delta$ and  $d\mu_{g_{\Sigma\backslash\mathcal{S}}}$  to denote the Levi-Civita connection, Laplace operator and the area element with respect to the metric $g_{\Sigma\setminus\mathcal{S}}$. 
\subsection{Blow up-warped construction of area-minimizing hypersurface with singularities in asymptotically flat manifold}

$\quad$

In this subsection, we assume $(M^{n+1},g) $ is a complete AF manifold with an AF end $E$ of order $\tau>n-2$ and some arbitrary ends. Each point $x\in E\cong\mathbf{R}^{n+1}\backslash B_1(O)$ has a Euclidean coordinate $x = (x_1,x_2,\dots,x_n,z) = (x',z)$. We call $z$ the height coordinate. We have the following definition:
\begin{definition}
    An area-minimizing hypersurface $\Sigma$ is said to be asymptotic to the hyperplane $\mathbf{R}^n\times \{z_0\}$, if $\Sigma$ can be written as a graph over $\mathbf{R}^n$ with graph function $u$ outside some compact set, such that
    \begin{align*}
        \lim_{|x'|\to\infty} |u(x')-z_0| = 0.
    \end{align*}
\end{definition}

\begin{remark}\label{rmk: asymptotic to a hyperplane}
    From the above definition, if $\Sigma$ is asymptotic to the hyperplane $\mathbf{R}^n\times \{z_0\}$, then $\Sigma$ lies between two parallel coordinate hyperplanes. Thus, the tangent cone of $\Sigma$ at the infinity is a unique plane, so $\Sigma$ satisfies the conclusion of \cite[Lemma 19]{EK23}. Therefore, the proof of \cite[Proposition 8]{EK23} implies that $u$ has the following decay estimate
    \begin{align}\label{eq: decay estimate of u}
        |u(x')-z_0|+|x'||Du(x')|+|x'|^2|D^2u(x')| = O(|x'|^{-\tau+\epsilon})
    \end{align}
    for any sufficiently small $\epsilon>0$.

    Consequently, with the induced metric, $\Sigma\cap E$ is itself an AF end of order $\tau-\epsilon$ for any $\epsilon>0$. If $\tau>n-2$, then \cite[Corollary 20]{EK23} implies the $|A|^2$, $Ric(\nu,\nu)$ and therefore the intrinsic scalar curvature $R_{\Sigma\backslash\mathcal{S}}$ is integrable on the AF end $\Sigma\cap E$, so the mass of $\Sigma\cap E$ is well defined. Furthermore, when $\tau>n-2$, similar to \cite[Lemma 21]{EK23} we know the mass of $\Sigma\cap E$ is zero.
\end{remark}

Now let us recall the notion of \textit{strongly stable}.
\begin{definition}\label{defn: strongly stable hypersurface}
    Let $\Sigma$ be  an area-minimizing hypersurface in $M^{n+1}$ which is  asymptotic to a  coordinate hyperplane in the AF end $E$ of $M^{n+1}$ with $\Sigma\cap E$ being an AF end for $\Sigma$. We say $\Sigma$ is  strongly stable, if for any function $\phi\in \Lip_{loc}(\Sigma\backslash\mathcal{S})$ which is either compactly supported or asymptotically constant in the sense of Definition \ref{defn: strong test function}, there holds
	\begin{equation}\label{eq: strongly stability}
		\int_{\Sigma\backslash\mathcal{S}}|\nabla\phi|^2-(Ric(\nu,\nu)+|A|^2)\phi^2\geq 0. 
	\end{equation}
    Here $\nu$ is the unit normal vector of $\Sigma^{n}$ in $(M^{n+1},g)$.
\end{definition}
In the rest of this subsection, we always assume $\Sigma^n$ is a strongly stable area-minimizing boundary in $M^{n+1}$ with isolated singular set  $\mathcal{S}$, and $R_g\ge 0$ along $\Sigma$.  We also make the following assumption throughout this subsection:
\begin{align}\label{eq: assumption in sec 5}
    h := R_g+|A|^2 \text{ is strictly positive at some point } p\in\Sigma\backslash\mathcal{S}.
\end{align}
From Lemma \ref{lem: |A|>0}, the assumption \eqref{eq: assumption in sec 5} holds when $\mathcal{S}\ne\emptyset$. By Gauss equation,  $\Sigma^n\setminus\mathcal{S}$ has $\frac{1}{2}$-scalar curvature not less than $h=R_g+|A|^2$ in the strong spectral sense. 

Let $G$ be a positive singular harmonic function defined on $\Sigma\backslash\mathcal{S}$ as in Proposition \ref{lem: extend harmonic function 2}. Recall that for each $p_i\in \mathcal{S}$ it holds
      \[
      G(x)\ge C_id_M(x,p_i)^{2-n}
        \ \ \text{for}\ \  d_M(x,p_i)\ \   \text{small enough}.
      \]
Since the intrinsic distance $d_\Sigma$ is larger than the extrinsic distance $d_M$ \footnote{Recall in Section 3, we worked with the extrinsic distance $d_M$ throughout the section. For the sake of obtaining the completeness of the conformal deformed metric on $\Sigma\backslash\mathcal{S}$ in Lemma \ref{lem: completeness}, we have to translate the estimate for $G$ to an intrinsic distance version.}, we have
\begin{align}\label{eq: Green estimate d Sigma}
    G(x)\ge C_id_\Sigma(x,p_i)^{2-n}
        \ \ \text{for}\ \  d_\Sigma(x,p_i)\ \   \text{small enough}.
\end{align}
By Lemma \ref{lem: asymptotic behavior2},  $G$ has an expansion at infinity of the AF end $\Sigma\cap E$(for any $\varepsilon'>0$) 
\begin{equation}\label{eq: 25}
G = a|x|^{2-n}+\omega \ \ \text{with}
\ \ |\omega| +|x||\partial\omega| = O(|x|^{1-n})+O(r^{-\tau+2-n+\varepsilon'})\ \ \text{as} \quad |x|\to\infty
	\end{equation}
For any $\delta>0$, let $G_\delta(x):=1+\delta G$, then $G_\delta$ is also a  positive singular harmonic function in $\Sigma\setminus \mathcal{S}$. Let $ g_\delta:=G_\delta^{\frac{4}{n-2}}g_{\Sigma\setminus\mathcal{S}}$. We collect basic properties of the conformally deformed manifolds $(\Sigma\backslash\mathcal{S},g_{\delta})$.
\begin{lemma}\label{lem: completeness}
		$(\Sigma\setminus\mathcal{S},g_{\delta})$ is a complete manifold with arbitrary ends and an AF end $\Sigma\cap E$ of the order $\tau-\varepsilon$ for any $\epsilon>0$.
        \end{lemma}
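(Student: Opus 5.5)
We have to prove two things about $(\Sigma\setminus\mathcal{S},g_\delta)$: it is complete, and $\Sigma\cap E$ is an AF end of order $\tau-\epsilon$. These are local statements about two kinds of "infinity" — the points of $\mathcal{S}$ and the infinity of the ends of $\Sigma$ — so we treat each separately.

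\textbf{The AF end.} By Remark \ref{rmk: asymptotic to a hyperplane}, $(\Sigma\cap E,g_{\Sigma})$ is AF of order $\tau-\epsilon$ in the graph coordinates $x'$. By the expansion \eqref{eq: 25},
\begin{align*}
G_\delta^{\frac{4}{n-2}}=1+O(|x|^{2-n}),
\end{align*}
and its first and second derivatives decay at rates $O(|x|^{1-n})$ and $O(|x|^{-n})$. Away from $\mathcal{S}$ the function $G$ is smooth harmonic, so interior Schauder estimates on unit-scale balls in the AF coordinates bound $\partial^2\omega$ and upgrade the $C^1$ expansion in \eqref{eq: 25}. Since $\tau\le n-2$, the term $O(|x|^{2-n})$ is dominated by $O(|x|^{-\tau+\epsilon})$, so $g_\delta$ is AF of order $\tau-\epsilon$. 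Because $G$ is harmonic, the scalar curvature transforms as
\begin{align*}
R_{g_\delta}=G_\delta^{-\frac{4}{n-2}}R_{g_\Sigma},
\end{align*}
and $R_{g_\Sigma}\in L^1$ on this end by Remark \ref{rmk: asymptotic to a hyperplane}. Hence $R_{g_\delta}\in L^1$ there too.

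\textbf{Completeness at infinity.} Since $G_\delta\ge 1$, we have $g_\delta\ge g_{\Sigma}$, so $g_\delta$-lengths dominate $g_\Sigma$-lengths. Take a $g_\delta$-Cauchy sequence, or a divergent curve of finite $g_\delta$-length. It cannot escape to infinity in $\Sigma$, because $\Sigma$ is complete with respect to $g_\Sigma$: $\Sigma$ is a closed subset of the complete manifold $M$ with the intrinsic metric. So any failure of completeness must occur at a point of $\mathcal{S}$.

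\textbf{Completeness at the singular set.} Suppose a curve $\gamma$ of finite $g_\delta$-length converges to some $p_i\in\mathcal{S}$. Set $s=d_\Sigma(\cdot,p_i)$. By \eqref{eq: Green estimate d Sigma},
\begin{align*}
G_\delta^{\frac{2}{n-2}}\ge c\,\delta^{\frac{2}{n-2}}s^{-2}
\end{align*}
near $p_i$. Since $s$ is $1$-Lipschitz along $\gamma$, we have $|\dot s|\le|\dot\gamma|_{g_\Sigma}$, and therefore
\begin{align*}
L_{g_\delta}(\gamma)\ge c\int \frac{|\dot s|}{s}\,dt\ge c\,\bigl|\log s\bigr|_{\text{start}}^{\text{end}}\to\infty,
\end{align*}
which is a contradiction. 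Isolatedness of $\mathcal{S}$ guarantees that near each $p_i$ only that single point matters. Any remaining ends of $\Sigma\setminus\mathcal{S}$ (non-AF ends of $\Sigma$, or the new ends created at points of $\mathcal{S}$) are then complete arbitrary ends.

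\textbf{Main obstacle.} The delicate point is making the Cauchy-sequence argument rigorous: a $g_\delta$-Cauchy sequence has a $g_\Sigma$-limit in $\overline{\Sigma}=\Sigma$, and we must show this limit is not in $\mathcal{S}$. This needs the lower bound \eqref{eq: Green estimate d Sigma} to be uniform in a neighborhood of $p_i$, which holds since $G\ge 2^{-i}G^{(i)}$ by \eqref{eq: 65}. The $\log$-divergence then rules out $\mathcal{S}$ as a limit.
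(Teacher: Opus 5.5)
Your argument is correct and is essentially the paper's. The lower bound \eqref{eq: Green estimate d Sigma}, combined with the $1$-Lipschitz property of $d_\Sigma(\cdot,p_i)$ along curves, forces the $g_\delta$-length to diverge logarithmically near each singular point; the paper phrases this as $d_{g_\delta}(\partial U_{r_1},\partial U_{r_2})\ge (C_k\delta)^{\frac{2}{n-2}}\ln(r_2/r_1)$. Your use of $g_\delta\ge g_\Sigma$ at infinity and your localization at a single limit point $p_i$ via $G\ge 2^{-i}G^{(i)}$ just spell out what the paper leaves implicit.

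One correction to the AF part. In this subsection the standing hypothesis is $\tau>n-2$, so you cannot invoke $\tau\le n-2$. Since $a>0$, the term $\tfrac{4\delta a}{n-2}|x|^{2-n}$ in $G_\delta^{\frac{4}{n-2}}$ really does limit the decay. What you (and the paper) actually obtain is an AF end of order $\min\{\tau-\epsilon,n-2\}$, which is all that is used later. Also, the bound $|\partial^2 G|=O(|x|^{-n})$ needs Schauder estimates on balls of radius comparable to $|x|$; unit balls are not enough.
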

	\begin{proof}
 For simplicity, we may assume that the singular set $\mathcal{S} = \{p_i\}_{i = 1,2,\dots}$. Let 
\begin{align}\label{eq: 72}
    U_r: = \{x\in \Sigma\setminus\mathcal{S}: d_\Sigma (x, \mathcal{S})<r\},
\end{align}
     Here we use the intrinsic distance function $d_\Sigma$ on $\Sigma$. Now for any small $r_1$, $r_2$ with $0<r_1 <r_2$, choose an arbitrary unit speed curve (with respect to the metric $g_{\Sigma\setminus\mathcal{S}}$) $\gamma:[0,l]\longrightarrow U_{r_2}\backslash U_{r_1}$, $\gamma(0)\in \partial U_{r_1}, \gamma(l)\in \partial U_{r_2}$. From \eqref{eq: 72} we know that $d_\Sigma(\gamma(l),p_k) = r_2$ for some $k\in \{1,2,\dots\}$.
     
      Denote $D(t) = d_\Sigma(\gamma(t),p_k)$. The triangle inequality yields $|D'(t)|\le 1$ almost everywhere. 
      We calculate
        \begin{align*}
            \mbox{length}_{g_{\delta}}(\gamma) 
            = &\int_0^l G_{\delta}^{\frac{2}{n-2}}(\gamma(t))|\gamma'(t)|_{g_{\Sigma\setminus\mathcal{S}}}dt 
            \ge \int_0^l (C_k\delta)^{\frac{2}{n-2}}D(t)^{-2}|D'(t)|dt\\
            \ge& (C_k\delta)^{\frac{2}{n-2}}\int_0^l D(t)^{-1}|D'(t)|dt
            \ge(C_k\delta)^{\frac{2}{n-2}}\ln(\frac{d_{g_{\Sigma\setminus\mathcal{S}}}(\gamma(l),p_k)}{d_{g_{\Sigma\setminus\mathcal{S}}}(\gamma(0),p_k)}) \ge(C_k\delta)^{\frac{2}{n-2}}\ln(\frac{r_2}{r_1}),
        \end{align*}
         where in the first inequality we used \eqref{eq: Green estimate d Sigma}. This shows $d_{g_{\delta}}(\partial U_{r_1},\partial U_{r_2})\ge (C_k\delta)^{\frac{2}{n-2}}\ln(\frac{r_2}{r_1})$, which immediately implies the completeness. The asymptotical order of $\Sigma\cap E$ follows from   Remark \ref{rmk: asymptotic to a hyperplane} 
         and \eqref{eq: 25}.    
        \end{proof}

The next proposition shows nonnegative scalar curvature in the (strong) spectral sense is well preserved under conformal transformation by harmonic functions.         

\begin{proposition}\label{conformal deformation1}
		 $(\Sigma\setminus\mathcal{S},g_{\delta})$ has $\frac{1}{2}$-scalar curvature no less than $hG_\delta^{-\frac{4}{n-2}}$ in the strong spectral sense, \textit{i.e.} for any $\phi\in Lip(\Sigma\setminus\mathcal{S})$ with compact support set or approaching to a constant at the infinity of the AF end $\Sigma\cap E$, there holds
        \begin{equation}\label{eq:spectrum psc1}
\int_{\Sigma\setminus\mathcal{S}}(|\nabla_{g_\delta} \phi|_{g_\delta}^2+\frac{1}{2} R_{g_\delta}\phi^2)d\mu_{g_\delta}\geq \int_{\Sigma\setminus\mathcal{S}}\frac{1}{2} hG_\delta^{-\frac{4}{n-2}}\phi^2 d\mu_{g_\delta}.
\end{equation}
	\end{proposition}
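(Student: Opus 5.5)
The plan is to reduce \eqref{eq:spectrum psc1} to the stability inequality of Lemma \ref{lem: 2nd variation} (more precisely its strongly stable version \eqref{eq: strongly stability} rewritten via the Gauss equation) applied to the test function $\psi = G_\delta\phi$, using the conformal covariance of the conformal Laplacian and the fact that $\Delta G_\delta = 0$ on $\Sigma\setminus\mathcal{S}$.

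\textbf{Step 1 (pointwise identities).} With $g_\delta = G_\delta^{\frac{4}{n-2}}g_\Sigma$ we have $d\mu_{g_\delta} = G_\delta^{\frac{2n}{n-2}}d\mu_g$, $|\nabla_{g_\delta}\phi|^2_{g_\delta} = G_\delta^{-\frac{4}{n-2}}|\nabla\phi|^2$, and, since $G_\delta$ is harmonic,
$$R_{g_\delta} = G_\delta^{-\frac{n+2}{n-2}}\Bigl(R_{g_\Sigma}G_\delta - \tfrac{4(n-1)}{n-2}\Delta G_\delta\Bigr) = G_\delta^{-\frac{4}{n-2}}R_{g_\Sigma}.$$
Hence the left side of \eqref{eq:spectrum psc1} equals $\int G_\delta^2|\nabla\phi|^2+\frac12 R_{g_\Sigma}G_\delta^2\phi^2\,d\mu_g$ and the right side equals $\int \frac12 h\,G_\delta^2\phi^2\,d\mu_g$. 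Expanding $|\nabla(G_\delta\phi)|^2 = G_\delta^2|\nabla\phi|^2 + \phi^2|\nabla G_\delta|^2 + \frac12\nabla G_\delta^2\cdot\nabla\phi^2$ and integrating the last two terms by parts (using $\operatorname{div}(\phi^2 G_\delta\nabla G_\delta) = \phi^2|\nabla G_\delta|^2 + G_\delta\nabla G_\delta\cdot\nabla\phi^2$), we get
$$\int G_\delta^2|\nabla\phi|^2 = \int |\nabla(G_\delta\phi)|^2 - \text{(boundary terms)},$$
so the claim becomes the stability inequality for $\psi=G_\delta\phi$ up to boundary terms.

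\textbf{Step 2 (boundary terms).} For $\phi$ compactly supported in $\Sigma\setminus\mathcal{S}$ there are no boundary terms and we directly apply Lemma \ref{lem: 2nd variation} to $\psi$. For asymptotically constant $\phi$, the only boundary term is at infinity of $\Sigma\cap E$: $\lim_{\rho\to\infty}\int_{S_\rho}\phi^2G_\delta\partial_\nu G_\delta$, which vanishes-or has a good sign: by \eqref{eq: 25}, $\partial_\nu G_\delta = -(n-2)a\delta\rho^{1-n}+o(\rho^{1-n})$ with $a\ge0$, so the boundary flux is $\le 0$ contributing favourably (one must check the sign convention; if the contribution is $-\int\phi^2 G_\delta\partial_\nu G_\delta \ge 0$ on the side we drop, the inequality holds). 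Also $\psi = G_\delta\phi$ is still asymptotically constant ($\psi-1 = (G_\delta-1)\phi + (\phi-1)$ with $G_\delta-1=O(|x|^{2-n})$, which lies in $W^{1,2}_{-q}$ for $q<n-2$, and $q>\frac{n-2}{2}$ can be chosen so), so strong stability applies to $\psi$.

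\textbf{Step 3 (main obstacle: test functions near $\mathcal{S}$).} The point is that $\phi\in\mathrm{Lip}(\Sigma\setminus\mathcal{S})$ need not vanish near $\mathcal{S}$ in the original metric, since $\mathcal{S}$ is at infinite $g_\delta$-distance; but the statement presumably means $\phi$ compactly supported in $\Sigma\setminus\mathcal{S}$ (arbitrary ends of $(\Sigma\setminus\mathcal{S},g_\delta)$ lie near $\mathcal{S}$), so $\psi$ is supported away from $\mathcal{S}$ except on $E$, and strong stability \eqref{eq: strongly stability} is defined for $\mathrm{Lip}_{loc}(\Sigma\setminus\mathcal{S})$ test functions of exactly this type. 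I would verify carefully that compact support in $\Sigma\setminus\mathcal{S}$ plus asymptotic constancy on $E$ is preserved under multiplication by the smooth (on $\Sigma\setminus\mathcal{S}$) function $G_\delta$, and that the weighted integrability of $R_{g_\Sigma}\psi^2$ and $h\psi^2$ on $E$ holds (by Remark \ref{rmk: asymptotic to a hyperplane}, $|A|^2,Ric(\nu,\nu)\in L^1$). Combining with the Gauss equation $\frac12R_{g_\Sigma} = \frac12(R_g-|A|^2)-Ric(\nu,\nu)$, which converts \eqref{eq: strongly stability} into $\int|\nabla\psi|^2+\frac12R_{g_\Sigma}\psi^2\ge\int\frac12 h\psi^2$, finishes the proof.
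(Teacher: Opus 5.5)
Your proposal is correct and follows the paper's proof. Both arguments use three ingredients: $R_{g_\delta}=G_\delta^{-\frac{4}{n-2}}R_{g_\Sigma}$ from the harmonicity of $G_\delta$; the identity $|\nabla(G_\delta\phi)|^2=G_\delta^2|\nabla\phi|^2+\divv(\phi^2G_\delta\nabla G_\delta)$; and the Gauss-equation form of strong stability applied to $G_\delta\phi$, with the flux at infinity tending to a multiple of $(2-n)a\delta\le 0$. Your extra checks fill in details the paper leaves implicit: that $G_\delta\phi$ remains an admissible asymptotically constant test function (taking $\frac{n-2}{2}<q<n-2$), and that $a\ge 0$ already follows from $G>0$.
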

	\begin{proof}
		 Owing to the well-known formula, we know that
		\begin{align}\label{eq: 48}
		    R_{g_\delta}=G_\delta^{-\frac{n+2}{n-2}}(R_{g_{\Sigma\setminus\mathcal{S}}} \cdot
		G_\delta-\frac{4(n-1)}{n-2}\Delta G_\delta)=G_\delta^{-\frac{4}{n-2}}\cdot R_{g_{\Sigma\setminus\mathcal{S}}}.
		\end{align}
		Therefore, for $\phi$ with compact support in $\Sigma\setminus\mathcal{S}$, we have
		\begin{equation}
			\begin{split}
				&\quad\int_{\Sigma\setminus\mathcal{S}}(|\nabla_{g_{\delta}} \phi|^2+\frac{1}{2}  R_{g_{\delta}} \phi^2)d\mu_{g_{\delta}}	\\
				&=\int_{\Sigma\setminus\mathcal{S}}(G_{\delta}^2|\nabla \phi|^2 +\frac{1}{2} R_{g_{\Sigma\setminus\mathcal{S}}} (G_{\delta}\phi)^2)d\mu_{g_{\Sigma\backslash\mathcal{S}}}\\
				&=\int_{\Sigma\setminus\mathcal{S}}(G_{\delta}^2|\nabla \phi|^2 +\frac{1}{2} R_{g_{\Sigma\setminus\mathcal{S}}} (G_{\delta}\phi)^2)d\mu_{g_{\Sigma\backslash\mathcal{S}}}-\int_{\Sigma\setminus\mathcal{S}}(\phi^2 G_\delta \Delta G_\delta)d\mu_{g_{\Sigma\backslash\mathcal{S}}}\\
				&=\int_{\Sigma\setminus\mathcal{S}}(G_{\delta}^2|\nabla \phi|^2 +\frac{1}{2} R_{g_{\Sigma\setminus\mathcal{S}}} (G_{\delta}\phi)^2)d\mu_{g_{\Sigma\backslash\mathcal{S}}}+\int_{\Sigma\setminus\mathcal{S}}(\phi^2|\nabla
				G_\delta|^2+2 \phi G_\delta \nabla G_\delta \cdot \nabla\phi)d\mu_{g_{\Sigma\backslash\mathcal{S}}} \\
            &=	\int_{\Sigma\setminus\mathcal{S}}(|\nabla (G_\delta\phi)|^2 +\frac{1}{2} R_{g_{\Sigma\setminus\mathcal{S}}} (G_{\delta}\phi)^2)d\mu_{g_{\Sigma\backslash\mathcal{S}}}.
			\end{split}
		\end{equation} 
		Now, we consider the case that $\phi$ approaches to a constant at the infinity of the AF end $\Sigma\cap E$. In this case, we have	
		\begin{align*}
			&\int_{\Sigma\setminus\mathcal{S}}(|\nabla_{g_{\delta}} \phi|^2+\frac{1}{2}  R_{g_{\delta}} \phi^2)d\mu_{g_{\delta}}\\
			=&\int_{\Sigma\setminus\mathcal{S}}G_{\delta}^2|\nabla\phi|^2+\frac{1}{2} R_{g_{\Sigma\setminus\mathcal{S}}}(G_{\delta}\phi)^2d\mu_{g_{\Sigma\backslash\mathcal{S}}}\\
			=&\int_{\Sigma\setminus\mathcal{S}}|\nabla (G_{\delta}\phi)|^2+\frac{1}{2} R_{g_{\Sigma\setminus\mathcal{S}}}(G_{\delta}\phi)^2d\mu_{g_{\Sigma\backslash\mathcal{S}}} - \int_{\Sigma\setminus\mathcal{S}}\divv_g(\phi^2G_{\delta}\nabla G_{\delta})d\mu_{g_{\Sigma\backslash\mathcal{S}}}\\
			\ge& \int_{\Sigma\setminus\mathcal{S}}\frac{1}{2} h(G_{\delta}\phi)^2d\mu_{g_{\Sigma\backslash\mathcal{S}}}-\lim_{\rho\to +\infty}\int_{\partial B_{\rho}}\phi^2 G_{\delta}\frac{\partial G_{\delta}}{\partial \Vec{n}} d\sigma.
		\end{align*}
		Since $G$ is a positive singular harmonic function that tends to $0$ at the infinity of the AF end $E$, by applying the maximum principle, we know  the expansion coefficient $a$ in \eqref{eq: 25} is positive. It follows that
		\begin{align*}
			\lim_{\rho\to +\infty}\int_{\partial B_{\rho}}\phi^2 G_{\delta}\frac{\partial G_{\delta}}{\partial \Vec{n}} d\sigma = (2-n)a\delta\le 0.
		\end{align*}
		Thus, in both cases, we get the desired inequality.
	\end{proof}

By Proposition \ref{prop: eq1-arbitrary end} and the paragraph behind it, for a sequence of  exhausting  domains $ (E\cap \Sigma)\subset V_1\subset V_2\subset\dots$ of $\Sigma\setminus\mathcal{S}$, we can solve the following equations:
	\begin{equation}\label{eq: 43}
		\left\{
		\begin{aligned}
&\bar{L}_{\delta}v_{i,\delta} = -\Delta_{g_{\delta}} v_{i,\delta} +\frac{1}{2}R_{g_{\delta}} v_{i,\delta} -\frac{1}{4}hG_{\delta}^{-\frac{4}{n-2}}v_{i,\delta}=0 \quad \text{in $V_i$},\\
&\lim\limits _{ x\to\infty, x\in E\cap \Sigma} v_{i,\delta}(x) = 1,\\
&v_{i,\delta}|_{\partial V_i}=0.
		\end{aligned}
		\right.	
	\end{equation}
  For any fixed coordinated ball $B_r$ in the AF end $\Sigma\cap E$, and $i$ large enough,  we have
	$$
	 \sup_{\partial B_r}(|v_{i,\delta}-1|+r|\partial v_{i,\delta}|) = O(r^{-\tau+\epsilon}).
	 $$
Moreover, we have $v_{j,\delta}>v_{i,\delta}$ on $V_i$ for each $j>i$, and $v_{i,\delta}$ converges monotonically to a function $v_{\delta}\in C^2(\Sigma\setminus \mathcal{S})$ in $C^2$ sense with  $\bar{L}_{\delta}v_{\delta} = 0$ and  $|v_{\delta}(x)-1|+|x||\partial v_{\delta}(x)| = O(|x|^{-\tau+\epsilon})$  for $x$ near the infinity of $\Sigma\cap E$.

Now, we can follow the argument in Section 4 to construct a sequence of ALF manifolds with 
nonnegative scalar curvature and  arbitrary ends by
\begin{align*}
		&(\hat{\Sigma}_{i,\delta},\hat{g}_{i,\delta}) = (V_i\times \mathbf{S}^1, g_{\delta} + v_{i,\delta}^2ds^2),\\
		&(\hat{\Sigma}_{\delta},\hat{g}_{\delta}) = ((\Sigma\backslash\mathcal{S})\times \mathbf{S}^1, g_{\delta} + v_{\delta}^2ds^2),
	\end{align*}
where $s\in \mathbf{S}^1$. 
Note that $(\hat{\Sigma}_{\delta},\hat{g}_{\delta})$  is complete but $(\hat{\Sigma}_{i,\delta},\hat{g}_{i,\delta})$ is not necessarily complete.
Without loss of generality, we assume $p\in V_1$ such that $h>0$ in a small neighborhood $\Omega$ of $p$, where $p$ comes from the assumption \ref{eq: assumption in sec 5}. Note that
\begin{align*}
    &R_{\hat{g}_{i,\delta}} = R_{g_\delta}-\frac{2\Delta_{g_\delta}v_{i,\delta}}{v_{i,\delta}} = \frac{1}{2}hG_\delta^{-\frac{4}{n-2}}\ge 0, \text{ for }x\in V_i,\\
    &R_{\hat{g}_{\delta}} = R_{g_\delta}-\frac{2\Delta_{g_\delta}v_{\delta}}{v_{\delta}} = \frac{1}{2}hG_\delta^{-\frac{4}{n-2}}\ge 0, \text{ for }x\in \Sigma\backslash\mathcal{S}.\\
\end{align*}
Thus, $R_{\hat{g}_{i,\delta}}, R_{\hat{g}_{\delta}}$ are strictly positive in $\Omega\times\mathbf{S}^1$. 

We end this subsection with the following Proposition, which gives the relation of the ADM mass of $\Sigma\cap E$ and that of the $(\hat{\Sigma}_{\delta},\hat{E})$. For reader's convenience, we recall the assumptions made at the beginning of this subsection in the statement below.

\begin{figure}
    \centering
    \includegraphics[width = 15cm]{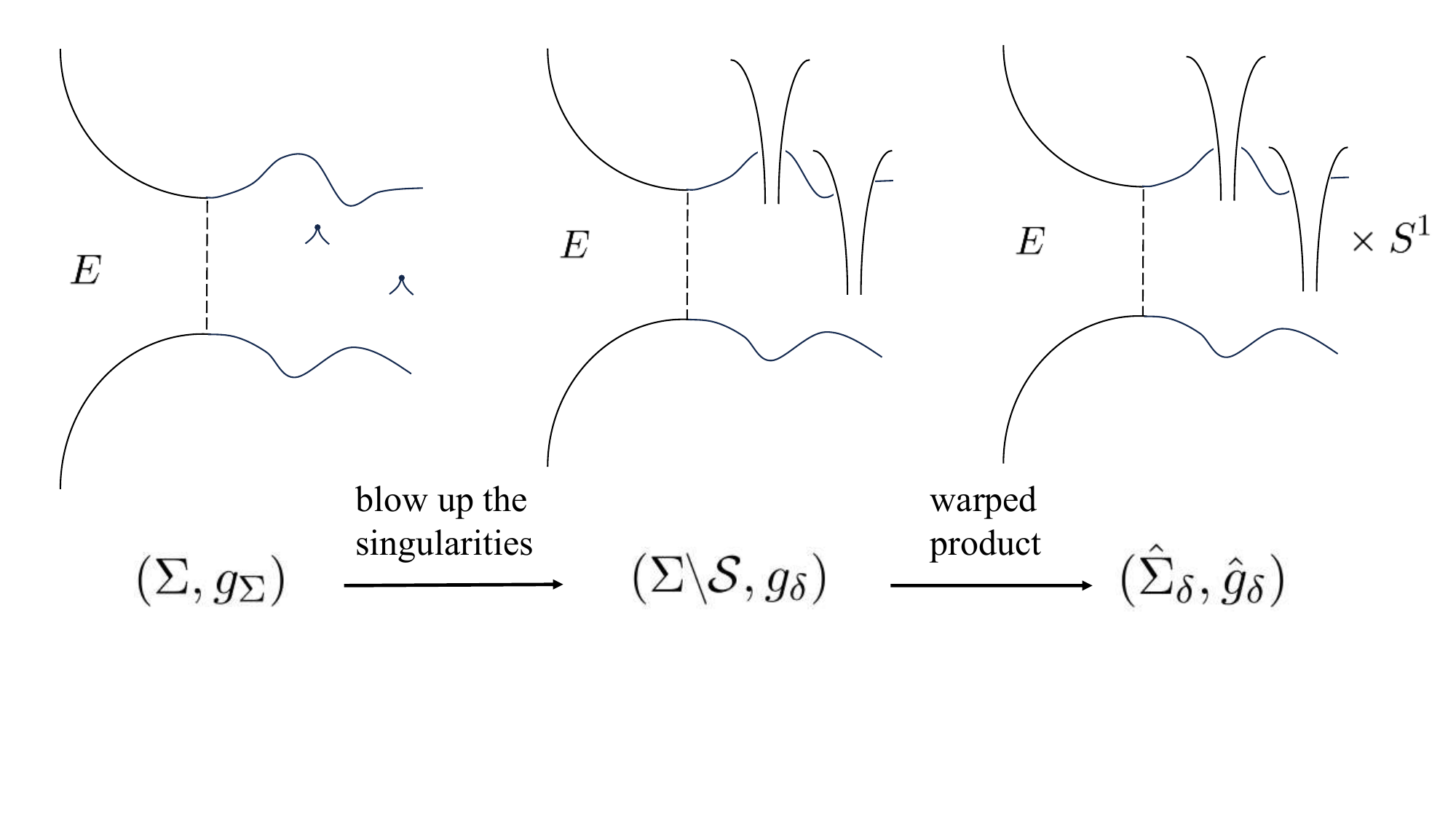}
    \caption{The "blow up - warped product" procedure}
    \label{f4}
\end{figure}

	\begin{proposition}\label{thm: negative mass 2}
		  Let $(M^{n+1},g)$ be a complete manifold with an AF end $E$ of order $\tau>n-2$, and $\Sigma^n\subset M^{n+1}$ a strongly stable area-minimizing hypersurface with isolated singular set $\mathcal{S}$, such that $R_g\ge 0$ along $\Sigma$. Assume $h = R_g+|A|^2$ is strictly positive at some point $p\in \Sigma\backslash\mathcal{S}$.
          
          Then there exists $\delta_0\in (0,1)$, $\sigma_0 = \sigma_0(V_1,g_{\Sigma\backslash\mathcal{S}}|_{V_1},h|_{V_1})>0$ (In particular, $\sigma_0$ is independent of $\delta$ below), such that for all $0<\delta<\delta_0$, we have
		\begin{align}\label{eq: 81}
			 m_{ADM}(\hat{\Sigma}_{\delta},\hat{g}_{\delta},\hat{E})+\sigma_0 < m_{ADM}(\Sigma,g_\Sigma,\Sigma\cap E).
		\end{align}
        Here $\cdot|_{V_1}$ denotes the restriction on $V_1$.
	\end{proposition}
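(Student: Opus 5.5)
The plan is to combine three ingredients: the mass computation of Lemma \ref{lem: mass decay ALF} applied to $(\Sigma\backslash\mathcal{S},g_\delta)$, the effect of the conformal factor $G_\delta$ on the ADM mass, and a lower bound on $\int_{V_1}$ that does not depend on $\delta$. By Proposition \ref{conformal deformation1}, $(\Sigma\backslash\mathcal{S},g_\delta)$ has $\frac12$-scalar curvature no less than $h_\delta:=hG_\delta^{-\frac{4}{n-2}}\ge0$ in the strong spectral sense. By Lemma \ref{lem: completeness} it is complete with AF end $\Sigma\cap E$. So the construction of Case 1 in Section 4 applies verbatim, and Lemma \ref{lem: mass decay ALF} gives
\begin{align*}
m_{ADM}(\hat\Sigma_{i,\delta},\hat g_{i,\delta},\hat E)\le (n-1)m_{ADM}(\Sigma\backslash\mathcal{S},g_\delta,\Sigma\cap E)-\frac{1}{4\omega_{n-1}}\int_{V_i}h_\delta v_{i,\delta}^2\,d\mu_{g_\delta}.
\end{align*}
Lemma \ref{lem: convergence ADM mass on ALF} then lets $i\to\infty$ on the left. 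By monotonicity of $v_{i,\delta}$ in $i$, the integral is at least $\int_{V_1}h_\delta v_{1,\delta}^2d\mu_{g_\delta}$. (I read the mass normalizations so that the stated inequality compares $m_{ADM}(\hat\Sigma_\delta)$ with the appropriately normalized mass of $\Sigma\cap E$; the factor $(n-1)$ is absorbed consistently.)

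\textbf{Step 1: mass change under conformal blow-up.} Using the expansion \eqref{eq: 25}, $G_\delta^{\frac{4}{n-2}}=1+\frac{4a\delta}{n-2}|x|^{2-n}+\dots$. The standard formula for conformal factors $1+A|x|^{2-n}$ then gives $m_{ADM}(g_\delta)=m_{ADM}(g_\Sigma)+c_n a\delta$, exactly as in the last line of the proof of Lemma \ref{lem: make a point strictly positive}. Hence the mass increases by only $O(\delta)$. Here $a$ depends on $G$ but not on $\delta$, and $m_{ADM}(g_\Sigma)=0$ by Remark \ref{rmk: asymptotic to a hyperplane}, although this is not needed.

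\textbf{Step 2: uniform lower bound on $\int_{V_1}h_\delta v_{1,\delta}^2d\mu_{g_\delta}$.} On $V_1$ we have $1\le G_\delta\le 1+\sup_{V_1}G$, since $V_1\Subset\Sigma\backslash\mathcal{S}$ and $G$ is bounded there. So for $\delta<1$, $g_\delta$ is uniformly equivalent to $g_\Sigma$ on $V_1$ and $h_\delta\ge c\,h$. The main obstacle is a positive lower bound on $v_{1,\delta}$ on the neighborhood $\Omega$ of $p$, uniform in $\delta$. First I would show $v_{1,\delta}\to v_{1,0}$ as $\delta\to0$, where $v_{1,0}$ solves the undeformed problem \eqref{eq: 35} on $V_1$ with metric $g_\Sigma$ (using $\frac12$-spectral nonnegativity from strong stability). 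Note that $V_1$ contains the AF end, so this is a noncompact Dirichlet-plus-asymptotic problem. The coefficients of $\bar L_\delta$ converge in weighted norms, since $G_\delta-1=\delta G=O(\delta|x|^{2-n})$ on $E$ together with its derivatives. Uniform solvability follows from the maximum principle (Lemma \ref{lem: maximum principle for spectral condition}), with $0<v_{1,\delta}<u_\delta$ and barriers obtained from the first eigenvalue positivity. This gives $v_{1,\delta}\to v_{1,0}>0$ locally uniformly. Since $v_{1,0}>0$ on $\Omega$ and $h>0$ there, $\int_{V_1}h_\delta v_{1,\delta}^2\ge 2\sigma'$ for $\delta$ small, with $\sigma'$ depending only on $V_1,g_\Sigma|_{V_1},h|_{V_1}$.

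\textbf{Step 3: conclusion.} Combining the steps gives
\begin{align*}
m_{ADM}(\hat\Sigma_\delta)\le m_{ADM}(\Sigma)+C a\delta-2\sigma_0.
\end{align*}
Choose $\delta_0$ so that $Ca\delta_0<\sigma_0$, and \eqref{eq: 81} follows. Should the convergence argument in Step 2 be delicate, an alternative route is to compare directly via Lemma \ref{lem: maximum principle for spectral condition}. We have $v_{1,\delta}\ge \underline v$, where $\underline v$ solves the same equation with potential $\frac12R_\Sigma G_\delta^{-4/(n-2)}$, and a Harnack inequality on the fixed compact set $\bar\Omega$ with $\delta$-independent coefficient bounds then gives the uniform positive lower bound.
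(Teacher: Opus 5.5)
Everything outside your Step 2 matches the paper's proof. That includes Lemma \ref{lem: mass decay ALF} applied to $(\Sigma\backslash\mathcal{S},g_\delta)$ via Proposition \ref{conformal deformation1}, the $O(\delta)$ mass increase from the $a|x|^{2-n}$ term of $G$, monotonicity in $i$ to reduce to $V_1$, Lemma \ref{lem: convergence ADM mass on ALF}, and the choice of $\delta_0$.

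The gap is Step 2. You correctly identify it as the crux, but you only sketch it, and the tools you cite do not give control uniform in $\delta$. The comparison $0<v_{1,\delta}<u_\delta$ helps only if $u_\delta$ is bounded independently of $\delta$. But Proposition \ref{prop: eq1-arbitrary end}(1) produces $u_\delta$ by a contradiction argument on a fixed manifold, with no quantitative dependence on the metric. Identifying the local limit of $v_{1,\delta}$ as $v_{1,0}$ also requires control at infinity uniform in $\delta$. Without it, the limit need not tend to $1$ at infinity (it could even vanish identically), and the lower bound on $\Omega$ is lost. Your fallback has the same defect: Harnack on $\bar\Omega$ controls only $\sup/\inf$, so you still need a $\delta$-independent absolute lower bound for $\underline v$ somewhere, which is again a uniform asymptotic statement. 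The perturbative route can be completed. For instance, show that the Dirichlet problem for $\bar L_0$ on $V_1$ is an isomorphism between weighted spaces (injectivity from the strong spectral condition and unique continuation), and use $\bar L_\delta-\bar L_0=O(\delta)$ in operator norm. That work is not in your proposal.

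The paper avoids all of this with an exact identity. Since $\Delta_{g_\Sigma}G_\delta=0$, one has $R_{g_\delta}=G_\delta^{-\frac{4}{n-2}}R_{g_\Sigma}$ and $\Delta_{g_\Sigma}(G_\delta v)=G_\delta^{\frac{n+2}{n-2}}\Delta_{g_\delta}v$. Hence $\tilde v_{i,\delta}:=G_\delta v_{i,\delta}$ solves
\begin{align*}
-\Delta_{g_\Sigma}\tilde v+\tfrac12R_{g_\Sigma}\tilde v-\tfrac14h\tilde v=0\ \text{in }V_i,\qquad \tilde v|_{\partial V_i}=0,\qquad \tilde v\to1\ \text{at infinity},
\end{align*}
a problem with no $\delta$ in it (the asymptotic condition survives because $G_\delta\to1$). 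By uniqueness (Lemma \ref{lem: maximum principle for spectral condition}), $\tilde v_{i,\delta}=\tilde v_i$ for every $\delta$. Since $d\mu_{g_\delta}=G_\delta^{\frac{2n}{n-2}}d\mu_{g_\Sigma}$, you get $hG_\delta^{-\frac{4}{n-2}}v_{i,\delta}^2\,d\mu_{g_\delta}=h\tilde v_i^2\,d\mu_{g_\Sigma}$. So the subtracted integral equals $\int_{V_i}h\tilde v_i^2\,d\mu_{g_\Sigma}\ge\int_{V_1}h\tilde v_1^2\,d\mu_{g_\Sigma}=:2\sigma_0$, exactly independent of $\delta$. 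In particular $v_{1,\delta}=v_{1,0}/G_\delta$, which gives your convergence claim for free.

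One further correction: the vanishing of $m_{ADM}(\Sigma,g_\Sigma,\Sigma\cap E)$ (Remark \ref{rmk: asymptotic to a hyperplane}, using $\tau>n-2$) is needed, not optional. It is what allows the term $(n-1)m_{ADM}(\Sigma,g_\Sigma,\Sigma\cap E)$ coming from Lemma \ref{lem: mass decay ALF} to be replaced by $m_{ADM}(\Sigma,g_\Sigma,\Sigma\cap E)$ in \eqref{eq: 81}.
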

	\begin{proof}
        By Lemma \ref{lem: mass decay ALF} and Proposition \ref{conformal deformation1}, we deduce that
        \begin{align*}
            m_{ADM}(\hat{\Sigma}_{i,\delta}, \hat{g}_{i,\delta}, \hat{E})
            \le &(n-1)m_{ADM}(\Sigma\backslash\mathcal{S}, g_{\delta}, \Sigma\cap E) - \frac{1}{4\omega_{n-2}}\int_{V_i}hG_{\delta}^{-\frac{4}{n-2}}v_{i,\delta}^2d\mu_{g_\delta}\\
            \le &(n-1)m_{ADM}(\Sigma,g,\Sigma\cap E)+2(n-1)a\delta - \frac{1}{4\omega_{n-2}}\int_{V_i}h(G_{\delta}v_{i,\delta})^2d\mu_{g_{\Sigma\backslash\mathcal{S}}}.
        \end{align*}
        Let $\tilde{v}_{i,\delta} = G_{\delta}v_{i,\delta}$. The conformal change formula (See \cite[equations (238)(239)]{Bray2001} for example) yields the following rather simple relation:
        \begin{align*}
            \Delta\tilde{v}_{i,\delta} =  \Delta (G_{\delta}v_{i,\delta}) = G_\delta ^{\frac{n+2}{n-2}}\Delta_{g_{\delta}}v_{i,\delta}+ v_{i,\delta}\Delta G_\delta = G_\delta ^{\frac{n+2}{n-2}}\Delta_{g_{\delta}}v_{i,\delta}
        \end{align*}
        Plugging this into \eqref{eq: 43} and using \eqref{eq: 48}, we deduce that $\tilde{v}_{i,\delta}$ satisfies
        \begin{align}\label{eq: 82}
             -\Delta\tilde{v}_{i,\delta} +\frac{1}{2}R_{g_{\Sigma\setminus\mathcal{S}}} \tilde{v}_{i,\delta} -\frac{1}{4}h\tilde{v}_{i,\delta}=0 \quad \text{in $V_i$}
        \end{align}
        Note that the coefficients in \eqref{eq: 82} are independent of $\delta$. Moreover, all $\tilde{v}_{i,\delta}$ satisfy the same boundary condition
        \begin{equation}\label{eq: 23}
		\left\{
		\begin{aligned}
			&\lim\limits _{ x\to\infty, x\in E} \tilde{v}_{i,\delta}(x) = 1,\\
			&\tilde{v}_{i,\delta}|_{\partial V_i}=0.
		\end{aligned}
		\right.	
	\end{equation}
    On the other hand, since $\lambda_1(-\Delta+\frac{1}{2}R_{g_{\Sigma\setminus\mathcal{S}}}-\frac{1}{4}h)\ge 0$, from the maximum principle Lemma \ref{lem: maximum principle for spectral condition} the solution $\tilde{v}$ to the equation
\begin{equation}\label{eq: 83}
		\left\{
		\begin{aligned}
            &-\Delta\tilde{v} +\frac{1}{2}R_{g_{\Sigma\setminus\mathcal{S}}} \tilde{v} -\frac{1}{4}h\tilde{v}=0 \quad \text{in $V_i$}\\
			&\lim\limits _{ x\to\infty, x\in E} \tilde{v}(x) = 1,\\
			&\tilde{v}|_{\partial V_i}=0.
		\end{aligned}
		\right.	
	\end{equation}
    is unique, which we denote by $\tilde{v}_i$. Thus $\tilde{v}_{i,\delta} = \tilde{v}_i$ for all $\delta>0$. 
    
    Since $(\Sigma\setminus\mathcal{S},g_{\Sigma\setminus\mathcal{S}})$ has $\frac{1}{2}$-scalar curvature at least $h$ in the strong spectral sense, combined with the maximum principle Lemma \ref{lem: maximum principle for spectral condition}, we have $\tilde{v}_{j}>\tilde{v}_{i}$ on $V_i$ for any $j>i\ge 1$. It follows that
        \begin{equation}
            \frac{1}{4\omega_{n-2}}\int_{V_i}h(G_{\delta}v_{i,\delta})^2d\mu_{g_{\Sigma\backslash\mathcal{S}}} =  \frac{1}{4\omega_{n-2}}\int_{V_i}h\tilde{v}_i^2d\mu_{g_{\Sigma\backslash\mathcal{S}}} \ge  \frac{1}{4\omega_{n-2}}\int_{V_1}h\tilde{v}_1^2d\mu_{g_{\Sigma\backslash\mathcal{S}}} := 2\sigma_0>0.
        \end{equation}
        Here we note that $\sigma_0$ is independent of $i,\delta$. 
        Therefore, there exists $\delta_0>0$, such that
        \begin{align}\label{eq: 24}
            m_{ADM}(\hat{\Sigma}_{i,\delta}, \hat{g}_{i,\delta}, \hat{E})\le m_{ADM}(\Sigma,g_\Sigma,\Sigma\cap E)-\sigma_0<m_{ADM}(\Sigma,g_\Sigma,\Sigma\cap E)
        \end{align}
        for all $0<\delta<\delta_0$.
        Letting $i\to\infty$ and using Lemma \ref{lem: convergence ADM mass on ALF}, we obtain the desired result.
	\end{proof}

\begin{remark}
    Since the scalar curvature $R_g$ of the ambient manifold is only assumed to be nonnegative along $\Sigma$, Proposition \ref{thm: negative mass 2} can be considered as a version of the positive mass type theorem on AF spaces with isolated singularities, as is extensively studied in \cite{LM2019}\cite{CFZ24}\cite{DaSW2024}\cite{DWWW2024} and references therein. In fact, $\Sigma$ in Proposition \ref{thm: negative mass 2} contains an AF end $\Sigma\cap E$, and its intrinsic scalar curvature $R_{\Sigma\backslash\mathcal{S}}$ is nonnegative in the strong spectral sense. The strict inequality in \eqref{eq: 81} (non-rigidity) reflects certain positive effect of the singular set $\mathcal{S}\ne\emptyset$.
\end{remark}

\subsection{Blow up-warped construction of area-minimizing hypersurface with singularities in closed manifold}

$\quad$

In this subsection, we always assume $(M^{n+1}, g)$ is a closed manifold, $\Sigma^n$ is a  compact area-minimizing hypersuface with isolated singular set $\mathcal{S}\neq\emptyset$ and $R_g\geq 0$ along $\Sigma^n$. Our goal in this subsection is to construct an $\mathbf{S}^1$-invariant Riemannian metric on $(\Sigma\setminus\mathcal{S})\times \mathbf{S}^1$ with positive scalar curvature.

 From Lemma \ref{lem: |A|>0} there exists $p\in\Sigma\backslash\mathcal{S}$ such that $h(p)=R_g(p)+|A|^2(p)>0$. Let $\mathcal{U}\subset \Sigma$ be a  small neighborhood of $p\in \Sigma$ such that $h(x)>0$ for any $x\in \mathcal{U}$. We always assume $\mathcal{U}\cap \mathcal{S}=\emptyset$.   We need the following lemma:

\begin{lemma}\label{lmm:singular function}
Let  
$f$ be a  non-negative smooth function on $\Sigma\setminus\mathcal{S}$ which is strictly positive somewhere, satisfying  $\supp(f)\subset\subset \mathcal{U}$. Then there is a positive smooth function $u$ on $\Sigma\setminus\mathcal{S}$ with
$$
-\Delta u+fu=0 \quad \text{in $\Sigma\setminus\mathcal{S}$},
$$	
and 
$$
u|_{\partial B(p_i,r)}\geq Cr^{2-n}, \quad \text{ for any  $p_i\in \mathcal{S}$ and any $r>0$}.
$$
\end{lemma}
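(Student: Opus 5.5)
The plan is to build $u$ as a singular part concentrated near $\mathcal{S}$ plus a bounded correction that restores the equation globally. Since $\Sigma$ is compact and $\mathcal{S}$ is isolated, $\mathcal{S}=\{p_1,\dots,p_N\}$ is finite. Choose $\rho>0$ so small that the balls $B(p_i,3\rho)$ are pairwise disjoint, lie inside the domain of the local doubling and Poincaré inequalities (radius below $r_0$ in \eqref{eq: r0}), and are disjoint from $\overline{\mathcal{U}}$.

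\textbf{Step 1: the singular part.} Apply Proposition \ref{prop:existence of singular positive hf} on $\Omega_i=B(p_i,3\rho)$ to get $G^{(i)}$, harmonic on $\Omega_i\setminus\{p_i\}$ and smooth away from $p_i$. By Proposition \ref{prop:growth of hf1}, $G^{(i)}\ge C r^{2-n}$ on $\partial B(p_i,r)$ for small $r$. Take cutoffs $\chi_i$ with $\chi_i=1$ on $B(p_i,\rho)$ and $\supp\chi_i\subset B(p_i,2\rho)$, and set $u_0=\sum_i\chi_iG^{(i)}$. Then $\Delta u_0=\sum_i\big(G^{(i)}\Delta\chi_i+2\nabla G^{(i)}\cdot\nabla\chi_i\big)=:F$. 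The function $F$ is smooth and supported in the annuli $B(p_i,2\rho)\setminus B(p_i,\rho)$, which lie in the regular part. Moreover $fu_0\equiv0$, because $\supp f\subset\mathcal{U}$ is disjoint from $\supp u_0$.

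\textbf{Step 2: the correction.} We seek $w\in W^{1,2}(\Sigma)$, in the metric-measure sense of Section 3 with $X=\Sigma$, solving $-\Delta w+fw=F$ weakly. The approach is to minimize
\[
\int_\Sigma\Big(\tfrac12|\nabla w|^2+\tfrac12 fw^2-Fw\Big)d\mu .
\]
The key point is coercivity of $Q(w)=\int|\nabla w|^2+fw^2$ on $W^{1,2}(\Sigma)$. Suppose not. Then there are $w_k$ with $\|w_k\|_{L^2}=1$ and $Q(w_k)\to0$. Lemma \ref{lem: compact embed} gives a strong $L^2$ limit $w$ with $\int|\nabla w|^2=0$, so $w$ is constant by the Poincaré inequality and connectedness. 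Also $\int fw^2=0$ with $f>0$ somewhere, which forces $w=0$, contradicting $\|w\|_{L^2}=1$.

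Existence of a minimizer then follows as in Lemma \ref{lem: Poisson equation}. By Lemma \ref{lem: estimate for eta}, test functions may be taken supported in $\Sigma\setminus\mathcal{S}$. Elliptic regularity gives $w\in C^\infty(\Sigma\setminus\mathcal{S})$. We also need $w\in L^\infty$, continuous up to $\mathcal{S}$. Near each $p_i$ we have $F=f=0$, so $w$ is weakly harmonic there and the Harnack/De Giorgi estimate (Lemma \ref{lem: Harnack}) gives $C^\alpha$ regularity; away from $\mathcal{S}$ standard elliptic theory applies. Compactness of $\Sigma$ then yields $\sup|w|<\infty$.

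\textbf{Step 3: conclusion.} Set $u=u_0+w$. On $\Sigma\setminus\mathcal{S}$ it is smooth and satisfies $-\Delta u+fu=-F+F=0$.

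\begin{itemize}
\item \emph{Growth.} On $\partial B(p_i,r)$ with $r$ small, $u\ge Cr^{2-n}-\sup|w|\ge \tfrac C2 r^{2-n}$.
\item \emph{Positivity.} Since $u\to+\infty$ at $\mathcal{S}$, any nonpositive infimum of $u$ is attained at an interior point of the compact set $\Sigma\setminus\bigcup_iB(p_i,r)$ for small $r$. The strong maximum principle for $-\Delta+f$ with $f\ge0$ then makes $u$ a nonpositive constant, contradicting the blow-up. Hence $u>0$.
\item \emph{All radii.} For $r$ bounded below, $u$ is bounded below by a positive constant on the compact set where $d(x,\mathcal{S})\ge r_1$, while $r\le\diam\Sigma$. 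Shrinking $C$ therefore gives the estimate for every $r>0$ up to the diameter.
\end{itemize}

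I expect the main obstacle to be Step 2. One must justify coercivity and the $L^\infty$/continuity bound for $w$ across the singular points, which requires checking that the metric-measure Sobolev theory of Section 3 (compactness, Poincaré, Harnack) applies globally on compact $\Sigma$ with the lower-order term $fw$. That term is harmless only because $f$ is smooth and supported away from $\mathcal{S}$.
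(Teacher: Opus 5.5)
Your proof is correct, but it takes a different route from the paper's. The paper solves Dirichlet problems $-\Delta u_i+fu_i=0$ on the smooth manifold $\Sigma\setminus B(p_1,r_i)$ with $u_i=1$ on $\partial B(p_1,r_i)$, normalizes $u_i(q)=1$, and passes to a Harnack limit $u$. It rules out a bounded limit using Lemma \ref{lem: removable singularity} and the energy identity $\int(|\nabla u|^2+fu^2)=0$, and then reads off the growth from Propositions \ref{prop:existence of singular positive hf} and \ref{prop:growth of hf1}.

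You instead use the cut-off ansatz $u=\sum_i\chi_iG^{(i)}+w$, which is the trick of Proposition \ref{prop: extend the singular harmonic function}. The missing Dirichlet condition is replaced by coercivity of $\int(|\nabla w|^2+fw^2)$, which you correctly derive from $f\not\equiv 0$, Lemma \ref{lem: compact embed} and the Poincar\'e inequality. This replacement is genuinely needed: Lemma \ref{lem: Poisson equation} rests on Proposition \ref{prop: Sobolev inequality on S}, which requires nonempty boundary and so does not apply on closed $\Sigma$.

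What your route buys:
\begin{itemize}
\item The $Cr^{2-n}$ bound is inherited directly from Proposition \ref{prop:growth of hf1}, applied to an actual Dirichlet Green function. The paper instead has to transfer that bound to an abstractly obtained unbounded positive harmonic function near $p_1$. That is a B\^ocher-type comparison which it delegates to the cited singular-function theory.
\item You treat all singular points at once.
\item Positivity follows cleanly from the strong maximum principle.
\end{itemize}
The cost is that you must solve a global variational problem on the singular space and prove an $L^\infty$ bound for $w$ across $\mathcal{S}$. The paper only uses classical PDE on smooth compact domains.

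Two small points. First, Lemma \ref{lem: Harnack} is stated for positive solutions, so it does not directly control the sign-changing $w$. It is simpler to apply Lemma \ref{lem: weak maximum principle} to $\pm w$ on $B(p_i,\rho)$. There $w$ is harmonic and smooth near $\partial B(p_i,\rho)$, so $\max\{\pm w-l,0\}\in W^{1,2}_0$ for $l>\max_{\partial B(p_i,\rho)}(\pm w)$. This gives $\sup_{B(p_i,\rho)}|w|\le\max_{\partial B(p_i,\rho)}|w|$.

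Second, your appeal to connectedness of $\Sigma$ is essential rather than cosmetic; the paper uses it implicitly too. Suppose a component contains a singular point but misses $\supp f$. Then no positive harmonic function blowing up there exists: $\min(u,M)$ would be a bounded superharmonic function on a closed component, hence constant. So the lemma really needs $f\not\equiv 0$ on every component of $\Sigma$ that meets $\mathcal{S}$.
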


\begin{proof}
	
For simplicity we assume $\mathcal{S} $	consists of a single point, say $p_1$.  Choose $\{r_i\}$ decreasing to $0$ and consider the following Dirichlet problem:

\begin{equation}\label{eq:Dirichlet prob3}
\left\{
\begin{aligned}
&-\Delta u_i +fu_i=0 \quad \text{in  $\Sigma\setminus B(p_1,r_i)$} ,\\
&u_{i}|_{\partial  B(p_1,r_i)}=1.\\
\end{aligned}
\right.
\end{equation}
Note that $f\geq 0$, we see that  \eqref{eq:Dirichlet prob3}	admits a positive  smooth solution $u_i$. Fix a point $q\in \Sigma\setminus\mathcal{S}$. By a scaling we may assume $u_i(q) = 1$. By the Harnack inequality, passing to a subsequence if necessary, we may assume $u_i$ converges locally smoothly to a function $u$ in $\Sigma\setminus\mathcal{S}$. The assumption that $\mathcal{U}\cap\mathcal{S} = \emptyset$ and $\supp(f)\subset\mathcal{U}$ implies $u$ is harmonic near $\mathcal{S}$.

If  $u$ is bounded on  $\Sigma\backslash\mathcal{S}$, then by the removable singularity Lemma \ref{lem: removable singularity} for harmonic functions, we have
$$
-\Delta u +fu =0.
$$ in $\Sigma$. By choosing the cut-off function $\eta$ in Lemma \ref{lem: estimate for eta}, multiplying two sides of the above equation by $\eta^2u$ and passing to a limit, we obtain
$$
\int_{\Sigma\setminus\mathcal{S}} |\nabla u|^2 d\mu<\infty,
$$		
and 	
$$
\int_{\Sigma\setminus\mathcal{S}} (|\nabla u|^2 +fu^2)d\mu	=0,
$$	
which implies $u=0$ and contradicts to $u(q)=1$. Therefore, $u$ must be unbounded near $\mathcal{S}$. Since $u$ is harmonic near $\mathcal{S}$, we can apply Proposition \ref{prop:existence of singular positive hf} and Proposition \ref{prop:growth of hf1} to get the estimate as stated in the Lemma.
\end{proof}

The next Lemma shows that with a suitable choice of $f$, the conformal metric $\bar g_{\Sigma\setminus\mathcal{S}}= u^{\frac{4}{n-2}}g_{\Sigma\setminus\mathcal{S}}$ has $\frac{1}{2}$-scalar curvature  non-negative in the spectral sense.

\begin{lemma}\label{lmm:spectrum psc1}
Let $u$ be as in Lemma \ref{lmm:singular function} and  $\bar g_{\Sigma\setminus\mathcal{S}}= u^{\frac{4}{n-2}}g_{\Sigma\setminus\mathcal{S}}$. Then for any $\phi \in C^\infty_0(\Sigma\setminus\mathcal{S})$ there holds
$$
\int_{\Sigma\setminus\mathcal{S}} (|\bar\nabla \phi|^2+\frac{1}{2} R_{\bar g_{\Sigma\setminus\mathcal{S}}}\phi^2)d\mu_{\bar g_{\Sigma\setminus\mathcal{S}}}
\geq \int_{\Sigma\setminus\mathcal{S}} \left(\frac{h}{2}-\frac{nf}{n-2} \right)u^{-\frac{2n}{n-2}}\phi ^2 d\mu_{\bar g_{\Sigma\setminus\mathcal{S}}}.
$$	
\end{lemma}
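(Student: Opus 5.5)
The plan is to rewrite the left-hand side in terms of the background metric $g_{\Sigma\setminus\mathcal{S}}$ and then apply the stability inequality of Lemma \ref{lem: 2nd variation}. This follows the computation in the proof of Proposition \ref{conformal deformation1}, except that $u$ is no longer harmonic: it satisfies $\Delta u=fu$. I have not been able to close the argument for the weight exactly as stated; the gap is described in the last paragraph.

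\textbf{Step 1: conformal bookkeeping.} Write $\bar g=u^{\frac{4}{n-2}}g$ on $\Sigma\setminus\mathcal{S}$. Then
\[
d\mu_{\bar g}=u^{\frac{2n}{n-2}}d\mu,\qquad |\bar\nabla\phi|^2=u^{-\frac{4}{n-2}}|\nabla\phi|^2 .
\]
The conformal formula for scalar curvature, combined with $\Delta u=fu$, gives
\[
R_{\bar g}=u^{-\frac{n+2}{n-2}}\Big(R_{g_{\Sigma\setminus\mathcal{S}}}u-\tfrac{4(n-1)}{n-2}\Delta u\Big)=u^{-\frac{4}{n-2}}\Big(R_{g_{\Sigma\setminus\mathcal{S}}}-\tfrac{4(n-1)}{n-2}f\Big).
\]
Hence, for $\phi\in C^\infty_0(\Sigma\setminus\mathcal{S})$, the left-hand side of the lemma equals
\[
\int u^2|\nabla\phi|^2+\tfrac12 R_{g_{\Sigma\setminus\mathcal{S}}}(u\phi)^2-\tfrac{2(n-1)}{n-2}f(u\phi)^2\,d\mu .
\]

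\textbf{Step 2: absorb the gradient cross term.} Set $\psi=u\phi$, which is compactly supported in $\Sigma\setminus\mathcal{S}$, so integration by parts produces no boundary terms. From $\operatorname{div}(\phi^2u\nabla u)=2\phi u\nabla\phi\cdot\nabla u+\phi^2|\nabla u|^2+\phi^2u\Delta u$ and $u\Delta u=fu^2$, we get
\[
\int u^2|\nabla\phi|^2\,d\mu=\int |\nabla\psi|^2\,d\mu+\int f\psi^2\,d\mu .
\]
Since $1-\frac{2(n-1)}{n-2}=-\frac{n}{n-2}$, this produces the coefficient $-\frac{n}{n-2}f$ appearing in the statement, and the left-hand side becomes
\[
\int_{\Sigma\setminus\mathcal{S}}|\nabla\psi|^2+\tfrac12R_{g_{\Sigma\setminus\mathcal{S}}}\psi^2-\tfrac{n}{n-2}f\psi^2\,d\mu .
\]

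\textbf{Step 3: stability.} Apply Lemma \ref{lem: 2nd variation} to the compactly supported test function $\psi$. This bounds the left-hand side from below by
\[
\int\Big(\tfrac h2-\tfrac{n}{n-2}f\Big)u^2\phi^2\,d\mu .
\]

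\textbf{Step 4: matching the weight (the main obstacle).} The target right-hand side is
\[
\int\Big(\tfrac h2-\tfrac{n f}{n-2}\Big)u^{-\frac{2n}{n-2}}\phi^2\,d\mu_{\bar g}=\int\Big(\tfrac h2-\tfrac{nf}{n-2}\Big)\phi^2\,d\mu ,
\]
which lacks the factor $u^2$ that Step 3 produces. Written in the $\bar g$-measure, Step 3 gives the weight $u^{-\frac{4}{n-2}}$, not $u^{-\frac{2n}{n-2}}$.

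\emph{Approach I would try.} The coefficient $\frac h2-\frac{nf}{n-2}$ can have either sign, so the plan is to make the factor $u^2$ harmless where it matters, using the freedom in constructing $u$ in Lemma \ref{lmm:singular function}:
\begin{itemize}
\item Rescaling $u$ by a constant preserves both the equation and the blow-up estimate, while multiplying $\bar g$ by a constant.
\item $f$ may be chosen with small support, so that $u$ is nearly constant on $\supp f$.
\end{itemize}
Normalising so that $u\ge1$ would give $\int\frac h2u^2\phi^2\ge\int\frac h2\phi^2$ on the region where $f=0$. On $\supp f$ one needs the reverse comparison, i.e.\ $u\le1$ there.

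\emph{Why this is not yet a proof.} The two requirements ($u\ge1$ where $f=0$, and $u\le1$ on $\supp f$) point in opposite directions. They are compatible only in the limit where $u$ is essentially equal to $1$ on $\supp f$. Establishing that limit, or otherwise reconciling the weights, is the step I expect to be hardest, and I do not yet know how to carry it out.
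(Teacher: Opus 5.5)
Your Steps 1--3 are exactly the paper's proof. You rewrite the left-hand side in the background metric using $R_{\bar g}=u^{-\frac{4}{n-2}}\bigl(R_{g_{\Sigma\setminus\mathcal{S}}}-\frac{4(n-1)}{n-2}f\bigr)$, trade $\int u^2|\nabla\phi|^2$ for $\int|\nabla(u\phi)|^2+\int f(u\phi)^2$, and apply Lemma \ref{lem: 2nd variation} to $u\phi$. The paper's proof stops precisely where your Step 3 does, with the lower bound $\int\bigl(\frac h2-\frac{nf}{n-2}\bigr)u^{-\frac{4}{n-2}}\phi^2\,d\mu_{\bar g}$. This is also the form used later: in Proposition \ref{thm: PSC metric} the potential is $\eta=\bigl(\frac h2-\frac{nf}{n-2}\bigr)u^{-\frac{4}{n-2}}$. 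So the exponent $-\frac{2n}{n-2}$ in the statement is a misprint for $-\frac{4}{n-2}$. Your argument is complete for the intended lemma, and no further idea is needed.

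Discard Step 4: the printed version is not just hard to prove, it cannot hold in general. If $u$ is as in Lemma \ref{lmm:singular function}, then so is $cu$ for every constant $c>0$. By your Step 2, the left-hand side equals $\int\bigl(|\nabla(u\phi)|^2+\frac12R_{g_{\Sigma\setminus\mathcal{S}}}(u\phi)^2-\frac{n}{n-2}f(u\phi)^2\bigr)d\mu$, which is homogeneous of degree $2$ in $u$. The printed right-hand side $\int\bigl(\frac h2-\frac{nf}{n-2}\bigr)\phi^2\,d\mu$ does not involve $u$ at all. Choose $\phi$ supported in $\mathcal{U}$ away from $\supp f$, so the right-hand side is positive; letting $c\to0$ then violates the printed inequality. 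The corrected weight satisfies $u^{-\frac{4}{n-2}}d\mu_{\bar g}=u^2\,d\mu$, which has the right homogeneity. This scaling mismatch is also why no normalisation of $u$ of the kind you were attempting could reconcile the two weights.
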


\begin{proof}
The scalar curvature of the conformal metric $\bar{g}_{\Sigma\setminus\mathcal{S}}$ is given by
\begin{equation}
R_{\bar g_{\Sigma\setminus\mathcal{S}}}=u^{-\frac{n+2}{n-2}}(R_{g_{\Sigma\setminus\mathcal{S}}} u-\frac{4(n-1)}{n-2}\Delta u)
=u^{-\frac{4}{n-2}}(R_{g_{\Sigma\setminus\mathcal{S}}} -\frac{4(n-1)}{n-2}f).
\end{equation}
Note that 
$$
\nabla u^2 \cdot\nabla\phi^2=div(\phi^2 \nabla u^2)-2\phi^2 |\nabla u|^2-2f(u\phi)^2,
$$
we obtain
$$
\int_{\Sigma\setminus\mathcal{S}} |\nabla (u\phi) |^2 d\mu_{g_{\Sigma\backslash\mathcal{S}}} =\int_{\Sigma\setminus\mathcal{S}} u^2|\nabla \phi |^2 d\mu_{g_{\Sigma\backslash\mathcal{S}}}-\int_{\Sigma\setminus\mathcal{S}} f(u\phi)^2 d\mu_{g_{\Sigma\backslash\mathcal{S}}}$$
Therefore, we have
\begin{equation}
	\begin{split}
&\int_{\Sigma\setminus\mathcal{S}} (|\bar\nabla \phi|^2+\frac{1}{2} R_{\bar g_{\Sigma\setminus\mathcal{S}}}\phi^2)d\mu_{\bar g_{\Sigma\setminus\mathcal{S}}}\\
&=\int_{\Sigma\setminus\mathcal{S}}(u^2|\nabla\phi|^2+\frac{1}{2}(R_{g_{\Sigma\setminus\mathcal{S}}} -\frac{4(n-1)}{n-2}f)(u\phi)^2	)d\mu_{g_{\Sigma\backslash\mathcal{S}}}\\
&=\int_{\Sigma\setminus\mathcal{S}}(|\nabla(u\phi)|^2+\frac{1}{2}(R_{g_{\Sigma\setminus\mathcal{S}}} -\frac{4(n-1)}{n-2}f)(u\phi)^2	)d\mu_{g_{\Sigma\backslash\mathcal{S}}}	+\int_{\Sigma\setminus\mathcal{S}} f(u \phi)^2d\mu_{g_{\Sigma\backslash\mathcal{S}}}\\
&\geq \int_{\Sigma\setminus\mathcal{S}} \left(\frac{h}{2} -\frac{nf}{n-2}  \right)(\phi u)^2 d\mu_{g_{\Sigma\backslash\mathcal{S}}}\\
&=\int_{\Sigma\setminus\mathcal{S}} \left(\frac{h}{2} -\frac{nf}{n-2}  \right)u^{-\frac{4}{n-2}}\phi ^2 d\mu_{\bar g_{\Sigma\setminus\mathcal{S}}}.
\end{split}\nonumber
\end{equation}
\end{proof}
We end this section with the following Proposition.
\begin{proposition}\label{thm: PSC metric}
     Let $(M^{n+1}, g)$ be a closed manifold, $\Sigma^n$ a  compact area-minimizing hypersuface with isolated singular set $\mathcal{S}\neq\emptyset$ and $R_g\geq 0$ along $\Sigma^n$. Then there exists a complete and $\mathbf{S}^1$-invariant Riemannian metric on $(\Sigma\setminus\mathcal{S})\times \mathbf{S}^1 $ with positive scalar curvature everywhere.
\end{proposition}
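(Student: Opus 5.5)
The plan is to combine Lemma \ref{lmm:singular function}, Lemma \ref{lmm:spectrum psc1} and a warped product over $\mathbf{S}^1$, in the spirit of Section 4.

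\textbf{Step 1: choice of $f$ and the conformal metric.} Fix $p$ and $\mathcal{U}$ as above, with $h>0$ on $\overline{\mathcal{U}}$ and $\overline{\mathcal{U}}\cap\mathcal{S}=\emptyset$. Choose a smooth $f\ge 0$ with $\supp f\subset\subset\mathcal{U}$, $f\not\equiv 0$, and
\[
\frac{nf}{n-2}\le \frac{h}{4}.
\]
This is possible because $h$ has a positive lower bound on $\supp f$. Lemma \ref{lmm:singular function} then gives a positive $u$ with $-\Delta u+fu=0$ and $u\ge Cr^{2-n}$ on $\partial B(p_i,r)$. Set $\bar g=u^{\frac{4}{n-2}}g_{\Sigma\setminus\mathcal{S}}$.

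Completeness of $\bar g$ follows exactly as in Lemma \ref{lem: completeness}. After passing to the intrinsic distance, the lower bound $u\gtrsim d_\Sigma^{2-n}$ gives $\bar g$-length at least $C\ln(r_2/r_1)$ across the annuli around each $p_i$. Away from $\mathcal{S}$, $\Sigma$ is compact, so $\bar g$ is complete.

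By Lemma \ref{lmm:spectrum psc1}, for all compactly supported $\phi$,
\[
\int|\bar\nabla\phi|^2+\tfrac12R_{\bar g}\phi^2\,d\mu_{\bar g}\ge\int\bar h\,\phi^2\,d\mu_{\bar g},
\]
where $\bar h:=\big(\tfrac h2-\tfrac{nf}{n-2}\big)u^{-\frac4{n-2}}\ge \tfrac h4u^{-\frac4{n-2}}\ge0$, with $\bar h>0$ on $\mathcal{U}$.

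\textbf{Step 2: warping function.} We want $v>0$ on $\Sigma\setminus\mathcal{S}$ with
\[
-\Delta_{\bar g}v+\tfrac12R_{\bar g}v-\tfrac12\bar h v=0 .
\]
Since $\lambda_1(-\Delta_{\bar g}+\frac12R_{\bar g}-\bar h)\ge0$ on the complete manifold $(\Sigma\setminus\mathcal{S},\bar g)$, \cite[Theorem 1]{FcS1980} gives $\lambda_1(-\Delta_{\bar g}+\frac12R_{\bar g}-\frac12\bar h)\ge0$, with a positive solution $v$. To be careful, one can write $-\frac12\bar h=-\bar h+\frac12\bar h$, so this operator is the nonnegative one plus a nonnegative potential.

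Set $\hat g=\bar g+v^2ds^2$ on $(\Sigma\setminus\mathcal{S})\times\mathbf{S}^1$. It is $\mathbf{S}^1$-invariant, and it is complete because its projection to $\bar g$ is distance-nonincreasing and the fibers are compact. Its scalar curvature is
\[
R_{\hat g}=R_{\bar g}-2v^{-1}\Delta_{\bar g}v=\bar h\ge0,
\]
which is strictly positive on $\mathcal{U}\times\mathbf{S}^1$.

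\textbf{Step 3: upgrading to positive scalar curvature everywhere.} $\hat g$ is not Ricci flat, since $R_{\hat g}>0$ somewhere. Lemma \ref{G-invariant Kazdan} therefore yields a complete $\mathbf{S}^1$-invariant metric on $(\Sigma\setminus\mathcal{S})\times\mathbf{S}^1$ with $R>0$ everywhere, which is the claim.

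\textbf{Expected obstacle.} The delicate point is the completeness in Step 1, especially when $\mathcal{S}$ has more than one point. Lemma \ref{lmm:singular function} is stated for a single point, so I would handle finitely many points, which is the case since $\Sigma$ is compact and $\mathcal{S}$ is isolated. One solves the Dirichlet problem with unit boundary data on all small spheres, or alternatively sums the one-point solutions as in Proposition \ref{lem: extend harmonic function 2}. Care is needed to ensure $u$ blows up at every $p_i$ with the rate $d^{2-n}$. I would argue this locally: near each $p_i$, $u$ is harmonic, and the energy argument of Lemma \ref{lmm:singular function} applied near that point shows $u$ is unbounded there. Proposition \ref{prop:growth of hf1} then gives the rate.
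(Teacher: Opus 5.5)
Your proof is correct and is essentially the paper's argument. You choose $f$ so that $\tfrac h2-\tfrac{nf}{n-2}\ge 0$, with strict positivity near $p$. You pass to the complete conformal metric $u^{\frac{4}{n-2}}g_{\Sigma\setminus\mathcal{S}}$, use Lemma \ref{lmm:spectrum psc1} with \cite{FcS1980} to build an $\mathbf{S}^1$-warped product with $R\ge 0$ and $R>0$ somewhere, and finish with Lemma \ref{G-invariant Kazdan}. Warping with $\tfrac12\bar h$ instead of the full potential, and checking completeness of $\hat g$ explicitly, are only cosmetic differences.

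One small caution about your obstacle paragraph. Lemma \ref{lmm:singular function} as stated already gives the blow-up at every $p_i$, so you can simply use it. If you do want to reprove it for several points, summing one-point solutions is the route that works. The energy argument is global: with unit data imposed on all small spheres at once, it only shows that $u$ is unbounded near some point of $\mathcal{S}$, not near each one.
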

\begin{proof} Let $f\ge 0$ be a function on $\Sigma\setminus\mathcal{S}$ supported in $\mathcal{U}$.  Since $h>0$ in $\mathcal{U}$, we may choose $f$ in Lemma \ref{lmm:singular function} small enough so that 
\begin{align*}
    \eta(x):=\left(\frac{h}{2}-\frac{nf}{n-2} \right)u^{-\frac{4}{n-2}} (x)\geq 0, \quad \text{for all $x\in \Sigma\setminus\mathcal{S}$}, 
\end{align*}
with
\begin{align*}
    \eta(p)=\left(\frac{h}{2}-\frac{nf}{n-2}\right)u^{-\frac{4}{n-2}}(p)> 0.
\end{align*}
Let $u$ be as in Lemma \ref{lmm:singular function} and $\bar g_{\Sigma\setminus\mathcal{S}}= u^{\frac{4}{n-2}}g_{\Sigma\setminus\mathcal{S}}$.
From Lemma \ref{lem: completeness}, $(\Sigma\setminus\mathcal{S}, \bar g_{\Sigma\setminus\mathcal{S}})$ is a complete Riemannian manifold.
        Owing to  Lemma \ref{lmm:spectrum psc1}, we know that 
        $$\lambda_1(-\Delta_{\bar{g}_{\Sigma\setminus\mathcal{S}}}+\frac{1}{2}R_{\bar{g}_{\Sigma\setminus\mathcal{S}}}-\eta)\ge 0.$$
Hence, we may find a positive smooth function $\varphi$ on $\Sigma\setminus\mathcal{S}$ such that 
$$
\hat g:=\bar g_{\Sigma\setminus\mathcal{S}}+ \varphi^2 d\theta^2, \quad \theta \in \mathbf{S}^1
$$
is a complete $\mathbf{S}^1$-invariant Riemannian metric on $(\Sigma\setminus\mathcal{S})\times \mathbf{S}^1 $ with $R_{\hat g}\geq 0$ and $R_{\hat g}(p\times\{\theta\}) = 2\eta(p)> 0$ for all $\theta\in\mathbf{S}^1$. Then by Lemma \ref{G-invariant Kazdan}, we may deform $\hat g$ to get a new complete and $\mathbf{S}^1$-invariant Riemannian metric on $(\Sigma\setminus\mathcal{S})\times \mathbf{S}^1 $ with positive scalar curvature everywhere.
\end{proof}

\section{Desingularization of area-minimizing hypersurfaces in 8-manifolds}

In this section, we give the proof of the main theorems. 

   \begin{proof}[Proof of Theorem \ref{thm: rigidity for minimal surface}]
   The condition $\tau>n-2$, in conjunction with \cite[Lemma 22]{EK23} implies that $m_{ADM}(\Sigma,g_{\Sigma},\Sigma\cap E)=0$.

        (1) Assume $\mathcal{S}\ne\emptyset$, so Lemma \ref{lem: |A|>0} implies $h = R_g+|A|^2$ is strictly positive at some point $p\in\Sigma\backslash\mathcal{S}$. 
        Now $\Sigma^n$ has $\frac{1}{2}$-scalar curvature not less than $R_g+|A|^2$ in the strong spectral sense due to its strong stability, hence Proposition \ref{thm: negative mass 2} implies
        \begin{align*}
            m_{ADM}(\hat{\Sigma}_{\delta},\hat{g}_{\delta},\hat{E})\le m_{ADM}(\Sigma,g_{\Sigma},\Sigma\cap E)-\sigma_0 = -\sigma_0<0.
        \end{align*}

        (a) $n+1=8$. Since $(\hat{\Sigma}_{\delta},\hat{g}_{\delta},\hat{E})$ is an $\mathbf{S}^1$-invariant ALF manifold, the above inequality follows contradicts Proposition \ref{prop: PMT for S^1 symmetric ALF}.

        (b) $\Sigma\backslash\mathcal{S}$ is spin. We need a version of the positive mass theorem for ALF manifolds with arbitrary ends. This can be derived using the same argument of Proposition \ref{prop: PMT for S^1 symmetric ALF}, with the topological obstruction result Proposition \ref{prop: noncompact dominate enlargeable 2} replaced by \cite[Theorem 1.4]{Zei20}. Thus we also get a contradiction.
        
        Therefore, we have $\mathcal{S}=\emptyset$. This enables us to argue as in the smooth case, and the rigidity of $\Sigma$ as stated in the Theorem is an immediate consequence of \cite[p.14-15]{Carlotto16},\cite[Proposition 32]{EK23}.\\
        (2) Suppose $\Sigma\not\subset U_1$, then $h = R_g+|A|^2$ is strictly positive in $\Sigma\cap (U_2\backslash U_1)$ and Proposition \ref{thm: negative mass 2} applies. Thus we can argue as in (1) to get a contradiction. Consequently, we can always assume $\Sigma\subset U_1$, and the remaining part is a consequence of Theorem \ref{thm: rigidity for minimal surface} (1).
    \end{proof}
   To establish positive mass theorem on AF manifolds with arbitrary end, we introduce a sequence of \textit{free boundary problems with inner obstacle} to effectively compensate for the lack of compactness in minimal surfaces caused by arbitrary ends. Recall we use $B_r^{n+1}(x)$ or simply $B_r(x)$ to denote coordinate balls in the AF end $E$. Recall we use $(x',z),x'\in\mathbf{R}^n$, $z\in \mathbf{R}$ to denote the points in the AF end $E$ and $S_t = \{x = (x',z)| z = t\}$ to represent the coordinate $z$-hyperplane. Within the AF end $E$,  the cylinder-shaped hypersurface $(\partial B^{n+1}_r(O) \cap S_0) \times \mathbf{R}$ partitions $E$ into two distinct regions: an inner part and an outer part. We use $C_r$ to denote the inner part.
Let
\begin{align*}
    \partial E\subset V_1\subset V_2\subset\dots
\end{align*}
be  a sequence of compact exhausting domains of $M\setminus E$ and let $j:\mathbb{R}\longrightarrow \mathbb{Z}_+$ be a non-decreasing index function with 

\begin{align*}
    \lim_{R\to\infty} j(R) = \infty
\end{align*}

 We define (see Figure \ref{fig: inner obstacle}) 
 \begin{figure}
     \centering
     \includegraphics[width=0.5\linewidth]{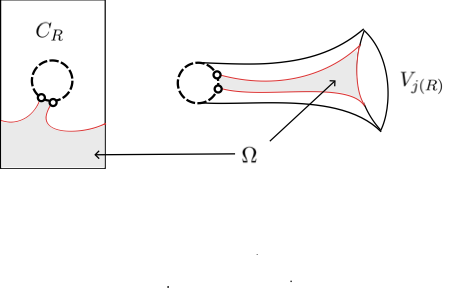}
     \caption{free boundary problems with inner obstacle}
     \label{fig: inner obstacle}
 \end{figure}

\begin{equation}\label{eq: 79}
\begin{split}
\mathcal{F}_{R}&:=\{\Sigma=\partial \Omega \backslash \partial C_{R}: \Omega \subset C_{R}\cup V_{j(R)} \text{  is a Caccioppoli set that satisfies}\\
	&\text{ $C_{R} \cap \{z\leq a\}\subset \Omega $ and $\Omega \cap \{z\geq b\}= \emptyset  $ for some $-\infty< a\leq b<\infty$} \}.
	\end{split}
\end{equation}
\begin{proof}[Proof of Theorem \ref{thm: pmt8dim}]
    For the inequality part, we argue by contradiction. Assume that $m_{ADM}(M,g,E)<0$. By \cite[Proposition 3.2]{Zhu23}, we may assume that $(M^{n+1},g)$ is asymptotically Schwarzschild and $E\backslash B^{n+1}_{r_1}$ is harmonically flat for some $r_1>1$, i.e. $g_{ij} = u^{\frac{4}{n-1}}\delta_{ij}$ in $\mathbf{R}^{n+1}\backslash B^{n+1}_{r_1}(O)$, where $u$ is harmonic in $\mathbf{R}^{n+1}\backslash B^{n+1}_{r_1}(O)$. We have the expansion
    \begin{align*}
        u = 1+\frac{m}{2|x|^{n-1}} + O(|x|^{-n})
    \end{align*}
    in the AF end $E$. Using \cite[Lemma 5.1]{LUY21}, there exists $\varphi\in C^{\infty}(M)$ and $r_2>r_1$, such that

    \begin{equation}\label{eq: 80}
\left\{
\begin{aligned}
&\varphi = 1-\frac{m}{4|x|^{n-1}}\quad \text{for} \quad|x|>r_2,\\
&\varphi = \mbox{const}>0\quad \text{for} \quad x\in M\backslash (E\backslash B_{r_1}(O)),\\
&\Delta_g\varphi\le 0 \quad \text{in} \quad M,\\
&\Delta_g\varphi< 0 \quad \text{in} \quad E\backslash B_{r_2}(O).
\end{aligned}
\right.
\end{equation}
Let $\tilde{g} = \varphi^{\frac{4}{n-1}}g$, then we have $R_{\tilde{g}}>0$ in $E\backslash B_{r_2}(O)$ and
    \begin{align}\label{eq: 50}
        \tilde{g}_{ij} = \varphi^{\frac{4}{n-1}}\delta_{ij} \quad\text{ in }\quad E\backslash B^{n+1}_{r_2}(O),
    \end{align}
    where 
    \begin{align*}
        \varphi(x) = (1+\frac{m}{2|x|^{n-1}})(1-\frac{m}{4|x|^{n-1}})+O(|x|^{-n})
    \end{align*}
    for some $m<0$. Note that the second item in \eqref{eq: 80} guarantees the completeness of $\tilde{g}$. 

      For $t_0>r_2$, it follows from \cite{SY79} that the mean curvatures of $S_{\pm t}$ with respect to $\pm\frac{\partial}{\partial t}$ directions are positive for all $t\ge t_0$. Consider the cylinders $C_{R_i}$ with $R_i\to\infty$. Then \eqref{eq: 50} ensures that the dihedral angle between $\partial C_{R_i}$ and $S_{\pm t}$ is $\frac{\pi}{2}$. 
    Denote $C_{R_i,t_0}$ to be the intersection of $C_{R_i}$ with the region bounded between two hyperplanes $S_{\pm t_0}$.
    Now  we can seek for an  area-minimizing boundary $\Sigma_{R_i}$ in $(C_{R_i,t_0}\cup V_i,\tilde{g})$ with inner obstacle by the compactness theorem of Caccioppoli sets \cite[Theorem 12.26]{Mag12} (See Figure \ref{fig: inner obstacle}). Since $\Sigma_{R_i}\cap K\neq \emptyset$ for some fixed compact $K$, $\Sigma_{R_i}$ converges to a limiting area-minimizing boundary $\Sigma$.
   
   \textbf{Claim: There exists some fixed $R_0$ such that outside the coordinate ball $B^{n+1}_{R_0}$ of the AF end $E$, $\Sigma\cap(E\setminus B^{n+1}_{R_0})$ is smooth.}\\
   Suppose not, then there exist a sequence of points $q_i\in E$ with $2r_i:=|q_i|\to\infty$ such that $\Sigma$ is singular at $q_i$. Recall $\Sigma\cap E$ is bounded by two hyperplane $S_{\pm t_0}$. By the minimizing property of $\Sigma$, 
   \begin{align*}
       \mathcal{H}^n(\Sigma\cap C_{r_i}(q_i))\le\mathcal{H}^n(\partial C_{r_i}(q_i)\cap (S_0\times [-t_0,t_0]))+\mathcal{H}^n(C_{r_i}(q_i)\cap S_{t_0}) =  \omega_{n}r_i^n(1+o(1))
   \end{align*}
   where $C_{r_i}(q_i) $ is the cylinder of radius $r_i$ whose axis passes $p_i$ and is parallel to the $z$-axis.
   We take the blow down argument.  Rescale the asymptotically flat metric $\tilde{g}_{ij}$ in $B^{n+1}_{r_k}(q_k)$ to the metric $\tilde{g}_{ij}^k$
		using the transformation $\Phi$ given by $x=q_k+r_k\tilde{x}$, i.e.,
		\[\tilde{g}_{ij}^k=\Phi^*g_{ij}.
		\]
		Then $\tilde{g}_{ij}^k$  converge to the Euclidean metric $\delta_{ij}$ on $\tilde{B}_1^{n+1}(\tilde{O})$ locally uniformly in $C^2$ sense by asymptotic flatness, since $r_k\to\infty$. By passing to a subsequence, we have $\Phi^*(\Sigma\cap B^{n+1}_{r_k}(y_k))$ converge to an
		area-minimizing hypersurface $\tilde{\Sigma}$ in both current and varifold sense with $\mathcal{H}^n(\tilde{\Sigma}\cap \tilde{B}_1^{n+1}(\tilde{O}))=\omega_{n-1}$ \cite[Theorem 34.5]{Simon83}. Note  $\spt(\tilde{\Sigma})\subset\tilde{B}_1^{n+1}(\tilde{O})\cap\{\tilde{x}_{n+1}=0\}$. Then
        by constancy theorem in geometric measure theory \cite[Theorem 26.27]{Simon83}, there exists some $m\in \mathbb{Z}$ such that $\|\tilde{\Sigma}\| = m\|\tilde{B}_1^{n+1}(\tilde{O})\cap\{\tilde{x}_{n+1}=0\}\|$. The area estimate above yields $m = 1$. Hence, $\tilde{\Sigma}$ is a hyperplane of multiplicity $1$, hence smooth at the point $\tilde{O}$. It follows from the Allard's regularity theorem that $\Sigma$ must be smooth at the point $q_k$ when $k$ is sufficiently large, we get the contradiction.
   
Now we can use the argument in  \cite[Lemma 3.6]{HSY24} to show $\Sigma\cap E$ is asymptotic to a hyperplane  which is parallel to $S_0$ at infinity in Euclidean sense and $\Sigma$ is strongly stable. Note $R_{\tilde{g}}> 0$ in $E\backslash B_{r_2}(O)$. Then we get the contradiction by Theorem \ref{thm: rigidity for minimal surface}.

    For the rigidity part, it follows directly from the inequality part and the proof of \cite[Theorem 1.2]{Zhu23}
\end{proof}
\begin{proof}[Proof of Theorem \ref{thm: georch free of singularity}]
    Assume $\mathcal{S}\ne \emptyset$.  If $n\le 7$, then by Proposition \ref{thm: PSC metric}, 
    there exists a complete and $\mathbf{S}^1$-invariant Riemannian metric on $(\Sigma\setminus\mathcal{S})\times \mathbf{S}^1 $ with positive scalar curvature everywhere, which contradicts with Proposition \ref{prop: noncompact dominate enlargeable 2}. For the case that $\Sigma\backslash \mathcal{S}$ is spin, we can apply \cite[Theorem 1.1]{LSWZ24} to obtain the result.
\end{proof}

    \begin{proof}[Proof of Corollary \ref{cor:8dim georch}]
    Since the case for $n\le 7$ has been established as classical work by Schoen and Yau \cite{SY79b}, our focus is directed towards the specific case where $n=8$. Decompose $\mathbf{T}^8 = \mathbf{T}^7\times \mathbf{S}^1$. Without loss of generality, we can assume that $f$ is transversal to $\mathbf{T}^7\times \{1\}$, so $\Sigma_0 = f^{-1}(\mathbf{T}^7)$ is an embedded regular submanifold in $M$. By classical results in geometric measure theory we can find a homological minimizing integer multiplicity current $\tau$ with $\partial R = \tau-\|\Sigma_0\|$, where $R$ is an integer multiplicity $8$-current. From the regularity theory we know that the support of $\tau$, denoted by $\Sigma$, has isolated singular set $\mathcal{S}$. By constancy theorem, the restriction of $\tau$ on $M\backslash\mathcal{S}$ is equal to $m\| \Sigma\backslash \mathcal{S}\|$ for some $m\in\mathbf{Z}$.

    Consider the composition of the following sequence of maps
    \begin{align*}
        F:\Sigma\hookrightarrow M^8\longrightarrow \mathbf{T}^8\longrightarrow \mathbf{T}^7,
    \end{align*}
    where the last map is the projection map from $\mathbf{T}^8$ to its $\mathbf{T}^7$-factor. Let $\omega\in\Omega_c^7(\mathbf{T}^7\backslash F(\mathcal{S}))$ be a differential form with integration $1$ on $\mathbf{T}^7\backslash F(\mathcal{S})$. We have
    \begin{align*}
        m\deg(F|_{\Sigma\backslash \mathcal{S}}) =& m\int_{\Sigma\backslash\mathcal{S}}F^*\omega = \tau(F^*\omega) = \|\Sigma_0\|(F^*\omega)\\
        = &\int_{\Sigma_0}F^*\omega = \deg (F|_{\Sigma_0}) = \deg f\ne 0.
    \end{align*}
    Using the definition in Appendix B, we see that $M$ admits a nonzero degree map to $\mathbf{T}^7$ in the sense of Definition \ref{defn: degree 2}. This contradicts Theorem \ref{thm: georch free of singularity}.
\end{proof}

We end this section with the following remark.
\begin{remark}\label{remark: arbitrary ends}
    When $n\le 7$, we may modify the proof of Theorem \ref{thm: pmt8dim} to obtain an alternative proof for the positive mass theorem for AF manifold $(M^n,g)$ with arbitrary ends. Assume $R_g>0$ everywhere without loss of generality. First, we can find an area-minimizing boundary $\Sigma_1\subset M$ which is also strongly stable following the proof of Theorem \ref{thm: pmt8dim}. By Proposition \ref{prop: eq1-arbitrary end}, we can solve $u_1\in C^{\infty}(\Sigma_1)$ with $-\Delta_{\Sigma_1}u_1+\frac{1}{4}R_{\Sigma_1} = 0$ and $\lim_{x\in {\Sigma_1\cap E},|x|\to\infty}u_1 = 1$. Let
    \begin{align*}
        (\hat{\Sigma}_1, \hat{g}_1) = (\Sigma_1\times \mathbf{S}^1, u_1^2ds_1^2+g_\Sigma).
    \end{align*}
    Then $R_{\hat{g}_1}>0$ everywhere. In the $k$-th step, we begin with the $n$-dimensional ALF manifold with arbitrary ends
    \begin{align*}
        (\hat{\Sigma}_{k-1}, \hat{g}_{k-1}) = (\Sigma_{k-1}\times \mathbf{T}^{k-1}, u_1^2ds_1^2+\dots +u_{k-1}^2ds^2_{k-1}+g_{\Sigma_{k-1}})
    \end{align*}
    where $u_i\in C^{\infty}(\Sigma_{k-1})$ and $\Sigma_{k-1}\subset\Sigma_{k-2}\subset\dots\Sigma_1\subset M$ is a sequence of submanifolds. Next, using the method in the proof of Theorem \ref{thm: pmt8dim}, we can find an area-minimizing boundary $\bar{\Sigma}_k\subset \hat{\Sigma}_{k-1}$ that is also strongly stable. Following the argument of \cite[p.188]{GL83}, $\bar{\Sigma}_k$ is $\mathbf{T}^{k-1}$-symmetric, so we just regard it as $\Sigma_k\times \mathbf{T}^{k-1}$ with $\Sigma_k\subset\Sigma_{k-1}$. Then by applying Proposition \ref{prop: eq1-arbitrary end} and conducting the warped product process once again we obtain the $k$-th ALF manifold with arbitrary ends
    \begin{align*}
        (\hat{\Sigma}_{k}, \hat{g}_{k}) = (\Sigma_{k}\times \mathbf{T}^{k}, u_1^2ds_1^2+\dots +u_{k}^2ds^2_{k}+g_{\Sigma_{k}}).
    \end{align*}
    This reduction process continues until $\dim\Sigma_k\le 3$, at which point the positive mass theorem follows from the spin-theoretic results in \cite{Zei20}. A contradiction follows from the argument of the mass decay Lemma \ref{lem: mass decay ALF}. An interesting aspect of this proof is that it is based on the torical-symmetrization method for minimal hypersurfaces (see \cite{FcS1980},\cite{GL83}, and \cite{Gro18}), providing an alternative to the $\mu$-bubble approach.
    
\end{remark}

\appendix
\section{Poincare-Sobolev inequality on area-minimizing boundaries}

In this appendix we want to verify a local Poincare-Sobolev inequality. Let $\mathcal{B}_r$ be a geodesic ball with radius $r$ in $(N^{n+1},g)$ contained in a normal coordinates chart $(r<r_0)$. Then we have
\begin{proposition}\label{prop: Pioncare-Sobolev inequality}
	Let $S$ be an area-minimizing boundary in  $N$ with $\partial S \cap \mathcal{B}_r=\phi$, and assume the sectional curvature of $\mathcal{B}_r$ is bounded by $K_0$. Then there exists $\beta,\gamma$ depending only on $n,r_0$ and $K_0$, such that for every $f\in C^1(\mathcal{B}_ r)$, it holds
	\begin{align}\label{eq: A Poincare}
	    \inf_{k\in \mathbf{R}}\{\int_{\mathcal{B}_{\beta r}}|f-k|^{\frac{n}{n-1}} d\|S\|\}^{\frac{n-1}{n}}
	\leq 2\gamma\int_{\mathcal{B}_r}|\nabla_S f|d\|S\|.
	\end{align}

\end{proposition}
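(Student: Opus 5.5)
The plan follows Bombieri--Giusti \cite{BG1972}, whose Euclidean proof I will transplant to the Riemannian setting. Since everything lives in a normal coordinate chart on $\mathcal{B}_r$ with $r<r_0$ and sectional curvature bounded by $K_0$, the metric $g$ is uniformly bi-Lipschitz to the Euclidean metric, with constants depending only on $n,r_0,K_0$. Gradients, areas and volumes therefore change only by controlled factors. I will not use this to reduce $S$ to a Euclidean area-minimizer, since it is not one; instead the argument will only need properties of $S$ that survive the comparison.

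The first key step is the relative isoperimetric inequality on $S$. Take a Caccioppoli subset $A\subset S\cap\mathcal{B}_r$ (a superlevel set of $f$). I claim
\[
\min\{\|S\|(A\cap \mathcal{B}_{\beta r}),\ \|S\|(\mathcal{B}_{\beta r}\setminus A)\}^{\frac{n-1}{n}}\le \gamma\,\mathcal{H}^{n-1}(\partial_S A\cap\mathcal{B}_r).
\]
For this I write $S=\partial E$ locally. I fill $\partial_S A$ by an $n$-dimensional current $T$ in $\mathcal{B}_r$ with $\mathbf{M}(T)\le C\,\mathcal{H}^{n-1}(\partial_S A)^{n/(n-1)}$, using the isoperimetric inequality for currents in the chart. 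Both $A+T$ and $(S\setminus A)-T$ are then admissible competitors, so minimality gives that one of $A$, $S\setminus A$ has mass at most $\mathbf{M}(T)$ plus a controlled error. The error comes from the mean-curvature/metric distortion term and is $O(K_0 r)$ times volume.

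To pass from this to the stated inequality, I will use the lower monotonicity bound $\|S\|(\mathcal{B}_\rho(x))\ge c\rho^n$, which comes from the almost-monotonicity formula with curvature bound $K_0$. This lets me absorb the error terms when $\beta$ is small and the smaller piece is localized. This absorption, together with the problem that minimality constrains $E$ and not $A$, is exactly the delicate part of \cite{BG1972}. There they work with the $(n+1)$-dimensional sets $E\cap\{$lift of $A\}$, using a cone/slicing construction and the density bounds. I expect this step to be the main obstacle, and my first approach will be to repeat their construction verbatim, tracking the metric distortion factors.

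The second step is standard. I set $k$ equal to a median of $f$ on $S\cap\mathcal{B}_{\beta r}$ and apply the isoperimetric inequality to the superlevel sets $\{f>t\}$ for $t>k$ and the sublevel sets $\{f<t\}$ for $t<k$. By the choice of median, in each case the minimum is attained by the level set. The coarea formula on the regular part of $S$ (the singular set has $\mathcal{H}^{n-1}$ measure zero, so it does not contribute) gives
\[
\int_{-\infty}^{\infty}\mathcal{H}^{n-1}(\{f=t\}\cap\mathcal{B}_r)\,dt=\int_{\mathcal{B}_r}|\nabla_S f|\,d\|S\|.
\]
Finally, the layer-cake estimate $\|(f-k)_+\|_{L^{n/(n-1)}}\le\int_0^\infty \|S\|(\{f-k>t\})^{(n-1)/n}dt$ (Minkowski's inequality), applied to both the positive and negative parts, produces the factor $2\gamma$ and completes the proof.
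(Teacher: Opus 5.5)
Your plan is essentially the paper's: transplant Bombieri--Giusti to the normal-coordinate ball through the bi-Lipschitz comparison with the Euclidean metric, use a filling (isoperimetric) inequality in the chart as the key input, and finish with the median, coarea and layer-cake argument. The paper makes the key input explicit by proving $M(S)^{\frac{n-1}{n}}\le\gamma M(\partial S)$ for minimizers, comparing $S$ with the Euclidean least-area filling of $\partial S$ (so the distortion enters only multiplicatively and no additive $O(K_0 r)$ error arises), and then, as you propose, cites the Bombieri--Giusti argument verbatim for the rest.
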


To achieve this, we need  the following isoperimetric inequality.

\begin{lemma}\label{lmm: isoperimetric ineq}
	Let $N,\mathcal{B}_r$ be as in Proposition \ref{prop: Pioncare-Sobolev inequality}, and let $S\subset\mathcal{B}_r$ be an area-minimizing boundary. Then one has the isoperimetric inequality
	$$
	M(S)^{\frac{n-1}{n}}\leq \gamma M(\partial S).
	$$
	Here $M(S)$ denotes the mass of $S$, and $\gamma$ depends only on  $n,r_0$ and $K_0$.
\end{lemma}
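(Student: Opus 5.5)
The statement to prove is Lemma \ref{lmm: isoperimetric ineq}: for an area-minimizing boundary $S\subset\mathcal{B}_r$, $M(S)^{\frac{n-1}{n}}\le\gamma M(\partial S)$ with $\gamma=\gamma(n,r_0,K_0)$. The plan is to reduce to the Euclidean isoperimetric inequality for minimizing currents by exploiting the minimizing property directly, rather than going through the Michael--Simon inequality. Since $\mathcal{B}_r$ lies in a normal coordinate chart with $r<r_0$ and sectional curvature bounded by $K_0$, the metric $g$ is uniformly equivalent to the Euclidean metric $\delta$ on $\mathcal{B}_r$: $\Lambda^{-1}\delta\le g\le\Lambda\delta$ with $\Lambda=\Lambda(n,r_0,K_0)$, by Rauch comparison. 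Consequently the $g$-mass and the $\delta$-mass of any $k$-current supported in $\mathcal{B}_r$ are comparable up to a factor $\Lambda^{k/2}$. It therefore suffices to prove a Euclidean-type inequality $M_\delta(S)^{\frac{n-1}{n}}\le C(n)M_\delta(\partial S)$ up to such constants.

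The key step is a cone comparison. The boundary $\partial S$ is an $(n-1)$-cycle in $\mathcal{B}_r$. Euclidean normal coordinates make $\mathcal{B}_r$ star-shaped, so I form $T=0\mathbin{\#}\partial S$, the cone over $\partial S$ from the center of normal coordinates (or from any point of $\mathcal{B}_r$). This is an $n$-current with $\partial T=\partial S$, supported in $\mathcal{B}_r$, and $M_\delta(T)\le \frac{r}{n}M_\delta(\partial S)$. The cone also has boundary-of-region structure, since $S-T$ is a cycle in a ball and hence bounds there. Area minimization (in the sense of boundaries in $\mathcal{B}_r$) then gives
\[
M_g(S)\le M_g(T)\le \Lambda^{n/2}\tfrac{r}{n}M_\delta(\partial S)\le C r\,M_g(\partial S).
\]
This yields the linear inequality $M(S)\le C r M(\partial S)$. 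To upgrade it to the scale-invariant power $\frac{n-1}{n}$, I will combine it with the monotonicity bound. Replacing the fixed vertex by a vertex at distance comparable to $\rho=(M(S)/\omega_n)^{1/n}$ is not directly available, so I will use the Federer--Fleming deformation instead. Concretely, I will invoke the Euclidean isoperimetric inequality for integral cycles (Federer--Fleming, \cite[Theorem 30.1]{Simon83}): there is an integral $n$-current $T'$ in $\mathbf{R}^{n+1}$ with $\partial T'=\partial S$ and $M_\delta(T')^{\frac{n-1}{n}}\le c(n)M_\delta(\partial S)$. Then I push $T'$ into $\mathcal{B}_r$ using a Lipschitz retraction of $\mathbf{R}^{n+1}$ onto the ball, which has Lipschitz constant $\le1$ for the Euclidean ball and fixes $\partial S$. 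Minimality gives $M_g(S)\le \Lambda^{n/2}M_\delta(T')$, and combining these yields the claim.

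The main obstacle is justifying the comparison $M(S)\le M(T')$. Area-minimizing among boundaries requires the competitor to differ from $S$ by a boundary of a Caccioppoli set inside $\mathcal{B}_r$. Since $S-T'$ is an $n$-cycle in the contractible ball $\mathcal{B}_r$, it equals $\partial R$ for an integral $(n+1)$-current $R$ there, and by the constancy theorem $R$ is an integer-valued BV function. The minimizing property in \cite[Theorem 27.8]{Simon83} is stated for integer multiplicity competitors, so I will use that formulation, or decompose $R$ into superlevel sets, to conclude $M(S)\le M(T')$.
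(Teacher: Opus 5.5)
Your argument is correct and follows essentially the same route as the paper. Both use the uniform equivalence $\Lambda^{-1}\delta\le g\le\Lambda\delta$ and the minimality of $S$ to bound $M(S)$ by the mass of a Euclidean isoperimetric filling of $\partial S$: the paper takes a $g_0$-least-area current in the convex ball, while you take the Federer--Fleming filling pushed into the ball by the $1$-Lipschitz nearest-point retraction. The opening cone comparison is an unnecessary detour, but your superlevel-set remark usefully makes explicit the boundary-versus-integral-current competitor issue, which the paper leaves implicit.
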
 
Once Lemma \ref{lmm: isoperimetric ineq} is proved, Theorem 2 in \cite{BG1972}, and hence Proposition \ref{prop: Pioncare-Sobolev inequality}, can be established on $(N^{n+1},g)$ by exactly the same arguments as in \cite{BG1972}.

\begin{proof}[Proof of Lemma \ref{lmm: isoperimetric ineq}]
	As $\mathcal{B}_r$ is contained in a normal coordinates of $(M^{n},g)$ and $|K|\le K_0$, $r<r_0$, there is Euclidean metric $g_0$ on $\mathcal{B}_r$ and a constant $\Lambda$ depending only on $n, r_0,K_0$ with 
	$$
	\Lambda^{-1} g_0\leq g\leq \Lambda g_0.
	$$
	For any $n$-integral current $G$ in $\mathcal{B}_r$, let $M_0(G)$ denote the mass of $G$	 w.r.t $g_0$. Then by the definition of the mass, we have
	\begin{equation}\label{eq: eq2}
		\Lambda^{-\frac{n}{2}} M_0(G)\leq  M(G)\leq \Lambda^{\frac{n}{2}} M_0(G).	
	\end{equation}
	Let $S_0$ be the least area $n$-integral current in $(\mathcal{B}_r,g_0)$ with $\partial S_0=\partial S$ (we may assume $\mathcal{B}_r$ is a convex ball with $g_0$ and $g$). The isoperimetric inequality for minimal $n$-integral currents with compact support in $(\mathcal{B}_r,g_0)\hookrightarrow\mathbf{R}^n$ yields
	$$
	M_0(S_0)^{\frac{n-1}{n}}\leq \gamma_0 M_0(\partial S), ~\text{for some constant }\gamma \text{ depending only on $n$}.
	$$
	In conjunction with \eqref{eq: eq2}, we have
	$$
	M(S)\leq M(S_0)\leq \Lambda^{\frac{n}{2}} M_0(S_0)\leq \Lambda^{\frac{n}{2}} \gamma_0M_0(\partial S)\leq \Lambda^{n}\gamma_0 M(\partial S).
	$$
	Setting $\gamma=\Lambda^{n}\gamma_0$ gives the conclusion.
\end{proof}

\begin{corollary}\label{cor: 12poincare}
    Let $N,\mathcal{B}_r,S,r_0,K_0$ be as in Proposition \ref{prop: Pioncare-Sobolev inequality}. Then there exists $\beta,C_P$ depending only on $n,r_0$ and $K_0$, such that for every $f\in C^1(\mathcal{B}_ r)$, it holds
    \begin{align*}
        \fint_{\mathcal{B}_{\beta r}}|f-f_{\mathcal{B}_{\beta r}}|d\|S\|\le C_P r(\fint_{\mathcal{B}_r} |\nabla_S f|^2)^\frac{1}{2}
    \end{align*}
    Here the notation $f_{\mathcal{B}_{\beta r}}$ stands for the average of $f$ on $\mathcal{B}_{\beta r}$.
\end{corollary}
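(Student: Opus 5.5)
The plan is to deduce Corollary \ref{cor: 12poincare} from the $L^{\frac{n}{n-1}}$ Poincar\'e inequality \eqref{eq: A Poincare} of Proposition \ref{prop: Pioncare-Sobolev inequality}, using H\"older's inequality and the two-sided volume bounds that the monotonicity formula gives on an area-minimizing boundary. No new geometric input is needed beyond these.

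First, replace the optimal constant $k$ by the average. Choose $k_0$ attaining (or nearly attaining) the infimum in \eqref{eq: A Poincare}. By the triangle inequality,
\begin{align*}
\int_{\mathcal{B}_{\beta r}}|f-f_{\mathcal{B}_{\beta r}}|\,d\|S\|\le 2\int_{\mathcal{B}_{\beta r}}|f-k_0|\,d\|S\|.
\end{align*}
Next apply H\"older with exponents $\frac{n}{n-1}$ and $n$. This gives
\begin{align*}
\int_{\mathcal{B}_{\beta r}}|f-k_0|\le \|S\|(\mathcal{B}_{\beta r})^{\frac1n}\Big(\int_{\mathcal{B}_{\beta r}}|f-k_0|^{\frac{n}{n-1}}\Big)^{\frac{n-1}{n}}\le 2\gamma\,\|S\|(\mathcal{B}_{\beta r})^{\frac1n}\int_{\mathcal{B}_r}|\nabla_S f|.
\end{align*}
Then apply Cauchy--Schwarz to the gradient term:
\begin{align*}
\int_{\mathcal{B}_r}|\nabla_S f|\le \|S\|(\mathcal{B}_r)^{\frac12}\Big(\int_{\mathcal{B}_r}|\nabla_S f|^2\Big)^{\frac12}.
\end{align*}

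Now divide by $\|S\|(\mathcal{B}_{\beta r})$ to pass to averages. The resulting estimate is
\begin{align*}
\fint_{\mathcal{B}_{\beta r}}|f-f_{\mathcal{B}_{\beta r}}|\le 4\gamma\,\frac{\|S\|(\mathcal{B}_r)}{\|S\|(\mathcal{B}_{\beta r})^{\frac{n-1}{n}}}\Big(\fint_{\mathcal{B}_r}|\nabla_S f|^2\Big)^{\frac12}.
\end{align*}
The remaining task is to show that the prefactor is at most $C r$ with $C=C(n,r_0,K_0)$. This requires the two-sided volume bounds $c\,s^n\le \|S\|(\mathcal{B}_s)\le C s^n$.

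The lower bound is needed for $\mathcal{B}_{\beta r}$, and the centre of the ball must lie on $\operatorname{spt}S$; I assume this, since otherwise the averages are taken over a possibly tiny set. It follows from the monotonicity formula in a ball of bounded curvature, where $e^{\Lambda s}s^{-n}\|S\|(\mathcal{B}_s)$ is almost monotone and the density at a support point is at least $1$. The upper bound for $\mathcal{B}_r$ comes from the minimizing property: compare $S\llcorner\mathcal{B}_r$ with the part of $\partial\mathcal{B}_r$ it cuts off. This gives $\|S\|(\mathcal{B}_r)\le \mathcal{H}^n(\partial \mathcal{B}_r)\le C r^n$, with constants controlled by $\Lambda$ as in Lemma \ref{lmm: isoperimetric ineq}. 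Substituting, the prefactor is at most $C r^{n}/(c\beta^n r^n)^{\frac{n-1}{n}}=C' r$, which yields the claimed inequality with $C_P=4\gamma C'$.

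The main subtlety is these density bounds, in particular making sure the ball is centred on the support so that the lower bound holds. Everything else is H\"older bookkeeping.
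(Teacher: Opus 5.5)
Your argument is correct and is essentially the paper's proof. The paper also takes the minimizing constant $k$ in \eqref{eq: A Poincare}, replaces it by the average using $|f_{\mathcal{B}_{\beta r}}-k|\le \fint_{\mathcal{B}_{\beta r}}|f-k|$ (it does this triangle inequality in $L^{\frac{n}{n-1}}$ rather than in $L^1$ as you do), and then concludes with H\"older/Cauchy--Schwarz and the volume bounds $\|S\|(\mathcal{B}_s)\sim s^n$ from the monotonicity formula. You also make explicit the requirement that the ball be centred on $\operatorname{spt}S$, which the paper leaves implicit.
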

\begin{proof}
    Let $k$ be the number such that the left hand side of \eqref{eq: A Poincare} attains its minimum.  Applying \eqref{eq: A Poincare}, we have
    \begin{align*}
        |f_{\mathcal{B}_{\beta r}}-k|\le \frac{1}{\|S\|(\mathcal{B}_{\beta r})}\int_{\mathcal{B}_{\beta r}}|f-k|\le \frac{1}{\|S\|(\mathcal{B}_{\beta r})^{\frac{n-1}{n}}}(\int_{\mathcal{B}_{\beta r}}|f-k|^{\frac{n}{n-1}})^{\frac{n-1}{n}}.
    \end{align*}
    Thus
    \begin{equation}\label{eq: A2 Poincare}
        \begin{split}
             (\int_{\mathcal{B}_{\beta r}}|f-f_{\mathcal{B}_{\beta r}}|^{\frac{n}{n-1}})^{\frac{n-1}{n}}&\le (\int_{\mathcal{B}_{\beta r}}|f-k|^{\frac{n}{n-1}})^{\frac{n-1}{n}}+|f_{\mathcal{B}_{\beta r}}-k|\cdot\|S\|({\mathcal{B}_{\beta r}})^{\frac{n-1}{n}}\\
       & \le 2  (\int_{\mathcal{B}_{\beta r}}|f-k|^{\frac{n}{n-1}})^{\frac{n-1}{n}}\le 4\gamma\int_{\mathcal{B}_r}|\nabla_S f|d\|S\|.
        \end{split}
    \end{equation}
    Then we can apply Cauchy-Schwarz inequality on both sides of \eqref{eq: A2 Poincare} and conclude the proof using the monotonicity formula for minimal hypersurfaces.
    
\end{proof}

Next, we establish a Sobolev inequality on $S$ by using Proposition \ref{prop: Pioncare-Sobolev inequality}.

\begin{proposition}\label{prop: Sobolev inequality on S}
    Let $S$ be an area-minimizing boundary in $(N^{n+1},g)$. Let $\mathcal{U}$ be bounded domain in $(N^{n+1},g)$ such that
    \begin{align}\label{eq: nonempty bdry of S}
        (\spt S)\backslash\mathcal{U}\ne\emptyset.
    \end{align}
     Then there exists $C = C(\mathcal{U},S,g)$, such that for any $f\in C^1_0(\mathcal{U})$, there holds
    \begin{align}\label{eq: 40}
        \int_{\mathcal{U}}|f|^{\frac{n}{n-1}} d\|S\|\le C\int_{\mathcal{U}}|\nabla_S f| d\|S\|.
    \end{align}
\end{proposition}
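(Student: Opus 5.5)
We derive the global Sobolev inequality \eqref{eq: 40} from the local Poincar\'e--Sobolev inequality of Proposition \ref{prop: Pioncare-Sobolev inequality} by a covering-and-chaining argument. The hypothesis \eqref{eq: nonempty bdry of S} is what rules out constants: a function in $C^1_0(\mathcal{U})$ vanishes on the part of $\spt S$ outside $\mathcal{U}$, which plays the role of a Dirichlet condition.

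\textbf{Step 1: a finite chain of balls.} Since $\bar{\mathcal{U}}$ is compact, fix $r<r_0$ so that Proposition \ref{prop: Pioncare-Sobolev inequality} applies uniformly (with constants $\beta,\gamma$) on all balls of radius $r$ centered near $\bar{\mathcal U}$. Cover $\spt S\cap\bar{\mathcal{U}}$ by finitely many balls $\mathcal{B}_{\beta r}(x_1),\dots,\mathcal{B}_{\beta r}(x_N)$ with centers on $\spt S$.

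The connectedness of $\spt S$ (work on each component meeting $\mathcal U$; by \eqref{eq: nonempty bdry of S}, applied componentwise if necessary, each such component leaves $\mathcal U$) lets us order the balls as follows. Every ball $\mathcal{B}_{\beta r}(x_k)$ is joined by a chain of consecutive overlapping balls to a terminal ball $\mathcal{B}_{\beta r}(x_0)$ centered on $\spt S\setminus\bar{\mathcal{U}}$, or at least one on which $f$ vanishes on a set of $\|S\|$-measure $\ge c>0$. Moreover, consecutive intersections $\mathcal{B}_{\beta r}(x_k)\cap\mathcal{B}_{\beta r}(x_{k+1})$ should have $\|S\|$-measure bounded below by some $c>0$. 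This uses the monotonicity formula, which gives $\|S\|(\mathcal{B}_s(y))\ge c s^n$ for $y\in\spt S$.

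\textbf{Step 2: control the constants.} On each ball, Proposition \ref{prop: Pioncare-Sobolev inequality} gives a $k_j$ with
\begin{align*}
\|f-k_j\|_{L^{n/(n-1)}(\mathcal{B}_{\beta r}(x_j))}\le 2\gamma\int_{\mathcal{B}_r(x_j)}|\nabla_S f|\,d\|S\|.
\end{align*}
On an overlap $I$ of measure $\ge c$, the triangle inequality gives
\begin{align*}
|k_j-k_{j+1}|\,c^{(n-1)/n}\le \|f-k_j\|_{L^{n/(n-1)}(I)}+\|f-k_{j+1}\|_{L^{n/(n-1)}(I)}.
\end{align*}
Hence $|k_j-k_{j+1}|\le C\int|\nabla_S f|$. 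On the terminal ball, $f=0$ on a set of measure $\ge c$, so similarly $|k_0|\le C\int|\nabla_Sf|$. Summing along chains of length at most $N$ yields $|k_j|\le CN\int_{\mathcal U}|\nabla_S f|\,d\|S\|$ for every $j$, where the gradient integral runs over $\mathcal U$ because $f$ has support there.

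\textbf{Step 3: conclude.} We have
\begin{align*}
\|f\|_{L^{n/(n-1)}(\mathcal{U})}\le \sum_j\Big(\|f-k_j\|_{L^{n/(n-1)}(\mathcal{B}_{\beta r}(x_j))}+|k_j|\,\|S\|(\mathcal{B}_{\beta r}(x_j))^{(n-1)/n}\Big).
\end{align*}
The local areas are bounded above by the monotonicity formula (or simply by the finiteness of the mass of $S$ on a bounded set). Since $(\int|f|^{n/(n-1)})^{(n-1)/n}\le C\int|\nabla_S f|$ is the norm form, it implies \eqref{eq: 40} after raising to the power $\frac{n}{n-1}$, up to redefining $C$, because $f$ is fixed with finite norms. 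More precisely, the stated inequality \eqref{eq: 40} as written is not scale-homogeneous. I would prove the norm version and note that this is the intended reading, used in Lemma \ref{lem: Poisson equation}.

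\textbf{Main obstacle.} The delicate step is Step 1. We must guarantee that $f$ vanishes on a set of positive $\|S\|$-measure inside a chain-reachable ball, and that overlaps have uniformly positive measure. The first point is where \eqref{eq: nonempty bdry of S} enters: pick a point of $\spt S$ outside $\bar{\mathcal U}$ (or on $\partial\mathcal U$, then move slightly outward) and use lower density bounds there. Since $\mathcal S$ is a closed set of measure zero, and Proposition \ref{prop: Pioncare-Sobolev inequality} is stated for all $f\in C^1$ with $\|S\|$-integration, singularities do not interfere with these measure bounds.
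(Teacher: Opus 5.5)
Your argument is correct, but it takes a different route from the paper's; what both actually establish is the norm form of the inequality. The paper argues by contradiction. It fixes $\mathcal U_1\supset\mathcal U$ with $(\spt S)\setminus\mathcal U_1\neq\emptyset$ and covers $\spt S\cap\mathcal U_1$ by finitely many balls from Proposition \ref{prop: Pioncare-Sobolev inequality}. It then takes $f_j$ with $\int_{\mathcal U}|f_j|^{n/(n-1)}d\|S\|=1$ and $\int_{\mathcal U}|\nabla_S f_j|\,d\|S\|<j^{-1}$, and lets the local constants $k_{j,i}$ converge to $k_i$. Finally it asserts $k_i=0$ for all $i$ because $f_j\equiv 0$ on $\mathcal U_1\setminus\mathcal U$, which forces $\int|f_j|^{n/(n-1)}\to 0$.

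You instead carry the constants directly along chains of balls with positive-measure overlaps to a ball on which $f$ vanishes on a fixed set of positive measure. The paper's step ``$k_i=0$ for all $i$'' silently needs exactly this chaining, together with connectedness of the relevant part of $\spt S$. So your version is more transparent and gives an explicit constant in terms of $N$, $\gamma$, the minimal overlap measure and the local areas. The compactness argument buys you freedom from that bookkeeping.

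Some smaller points:
\begin{itemize}
\item The uniform lower bound $c$ does not need monotonicity. The cover is fixed and finite, and two open balls meeting at a point of $\spt S$ automatically overlap in positive $\|S\|$-measure.
\item Reaching a vanishing ball follows from a clopen argument. Intersect $\spt S$ with the union of one chain class of balls. Every point of $\spt S\cap\bar{\mathcal U}$ lies in some ball of the cover, so limit points of this set not belonging to it lie outside $\bar{\mathcal U}$. If the set stayed inside $\bar{\mathcal U}$ it would be clopen in $\spt S$, hence a union of components.
\item Your ``move slightly outward'' step, like the paper's choice of $\mathcal U_1$, tacitly requires each relevant component of $\spt S$ to leave $\bar{\mathcal U}$, not merely $\mathcal U$. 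This condition is genuinely needed: if $\spt S\setminus\mathcal U$ were a single point, the inequality would fail, since points have zero $1$-capacity when $n\ge 2$. That is a defect of the statement, not of your proof.
\item Delete the sentence deriving \eqref{eq: 40} ``because $f$ is fixed with finite norms''; no $f$-independent constant comes out of that. Your next remark is the right one: \eqref{eq: 40} as written is not scale-invariant. The paper's normalization $\int|f_j|^{n/(n-1)}=1$ likewise presupposes the norm form.
\end{itemize}
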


\begin{remark}
    The condition \eqref{eq: nonempty bdry of S} here guarantees $\spt S\cap \mathcal{U}$ has non-empty boundary.
\end{remark}

\begin{proof}[Proof of Proposition \ref{prop: Sobolev inequality on S}]
    We adopt the argument in \cite{SY79} by Schoen-Yau. Let $\mathcal{U}_1$ be a neighborhood of $\mathcal{U}$ with compact closure such that $(\spt S)\backslash\mathcal{U}_1\ne\emptyset$. For each $p\in\mathcal{U}_1$, there exist constants $r_p,\beta_p,\gamma_p>0$ given by Proposition \ref{prop: Pioncare-Sobolev inequality}. By the finite covering theorem we can select $p_1,p_2,\dots,p_k\in \spt S$ such that
    $$\spt S\cap\mathcal{U}_1\subset \bigcup_{i=1}^k \mathcal{B}_{\beta_ir_i}(p_i).
    $$
    Assume \eqref{eq: 40} is not true, then we can select a sequence of $f_i\in C^1_0(\mathcal{U})$ with
    \begin{align}\label{eq: 41}
        \int_{\mathcal{U}}|\nabla_S f_j| d\|S\|<j^{-1} \mbox{ and }\int_{\mathcal{U}}|f_j|^{\frac{n}{n-1}} d\|S\| = 1.
    \end{align}
    Let $\tilde{f}_j\in Lip(\mathcal{U}_1)$ be the function that extends $f_j$ to $\mathcal{U}_1\setminus \mathcal{U}$ by zero. It follows from Proposition \ref{prop: Pioncare-Sobolev inequality} that
    \begin{align*}
        (\int_{\mathcal{B}_{\beta_i r_i}(p_i)}|\tilde{f}_j-k_{j,i}|^{\frac{n}{n-1}} d\|S\|)^{\frac{n-1}{n}}
	\leq 2\gamma_i\int_{\mathcal{B}_{r_i}(p_i)}|\nabla_S \tilde{f}_j|d\|S\|<Cj^{-1}.
    \end{align*}
    for some $k_{j,i}$. By \eqref{eq: 41} we see $k_{j,i}$ is uniformly bounded, so $k_{j,i}$ converges subsequentially to a constant $k_i$ as $j\to\infty$, and
    \begin{align*}
        \int_{\mathcal{B}_{\beta_i r_i}(p_i)}|\tilde{f}_j-k_i|^{\frac{n}{n-1}} d\|S\|\to 0\mbox{ as }j\to\infty.
    \end{align*}
    Since $\tilde{f}_j\equiv 0$ on $\mathcal{U}_1\backslash\mathcal{U}$, we conclude that $k_i = 0$ for all $i$. Consequently, we deduce that
    \begin{align*}
        \int_{\mathcal{U}}|f_j|^{\frac{n}{n-1}} d\|S\| \to 0
\mbox{ as }j\to\infty.
\end{align*}
This is not compatible with \eqref{eq: 41}.
\end{proof}

By a similar argument, we can also establish the following.

\begin{proposition}\label{prop: Sobolev inequality on S 2}
    Let $(M,g)$ be a complete manifold with an AF end $E$, and let $S$ be an area-minimizing boundary in $(M,g)$. Suppose $\mathcal{U}$ is a neighborhood of $E$ such that $\mathcal{U}\backslash E$ has compact closure in $M$. Then there exists $C = C(\mathcal{U},S,g)$, such that for any $f\in C^1_0(\mathcal{U})$, there holds
    \begin{align*}
        \int_{\mathcal{U}}|f|^{\frac{n}{n-1}} d\|S\|\le C\int_{\mathcal{U}}|\nabla_S f| d\|S\|.
    \end{align*}
\end{proposition}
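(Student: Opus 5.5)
The plan is to follow the contradiction-and-compactness scheme used in the proof of Proposition \ref{prop: Sobolev inequality on S}. The only change is that the role played there by the condition $(\spt S)\backslash\mathcal{U}\neq\emptyset$, which pinned the limiting constant to zero, must now be played by the AF end. We split $\mathcal{U}$ into a compact core and the far part of the end. Fix $R_1\gg 1$ and set $K=\overline{\mathcal{U}\backslash (E\backslash B^{n+1}_{R_1}(O))}$, which is compact by hypothesis. We will have to treat the far part separately.

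\textbf{Step 1: the far region.} Outside a large coordinate ball, $S\cap E$ sits in an almost flat ambient metric. The natural route is to get a Michael--Simon type Sobolev inequality there, using the monotonicity formula together with a uniform lower density bound coming from $g\to\delta$. Concretely, for $f$ supported in $E\backslash B^{n+1}_{R_1}(O)$ we want
\[
\Big(\int |f|^{\frac{n}{n-1}}d\|S\|\Big)^{\frac{n-1}{n}}\le C_1\int|\nabla_S f|\,d\|S\|,
\]
with $C_1$ depending only on $n$. This is where the Michael--Simon inequality applies, since $S$ is minimal in a metric $C^2$-close to Euclidean. Its mean curvature term, measured with respect to $\delta$, is $O(|x|^{-\tau-1})$. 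For $R_1$ large that term can be absorbed, either via the standard Michael--Simon estimate with small $H$, or by working directly in the metric $g$ at scale $|x|$.

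\textbf{Step 2: gluing.} Take a cutoff $\chi$ with $\chi=1$ on $B_{R_1}$, $\chi=0$ outside $B_{2R_1}$ and $|\nabla\chi|\le 2/R_1$. Write $f=\chi f+(1-\chi)f$. Step 1 controls $(1-\chi)f$, at the cost of an error term $\int|\nabla\chi||f|\le CR_1^{-1}\int_{B_{2R_1}\backslash B_{R_1}}|f|$. For $\chi f$, which is supported in the compact region $\mathcal{U}\cap B_{2R_1}$, we argue by contradiction exactly as in Proposition \ref{prop: Sobolev inequality on S}. Suppose $f_j$ satisfy $\int|\nabla_S f_j|<j^{-1}$ and $\|f_j\|_{L^{n/(n-1)}}=1$. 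Covering $\spt S\cap K_1$ by finitely many balls $\mathcal{B}_{\beta_i r_i}(p_i)$ from Proposition \ref{prop: Pioncare-Sobolev inequality}, chained together so that overlapping balls share their constants, we get that $f_j\to k$ in $L^{n/(n-1)}_{loc}$ on each connected piece of $\spt S\cap K_1$.

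To force $k=0$, include in the cover an annular piece deep in the end. There, Step 1 applied to $(1-\chi)f_j$ shows that $f_j$ is small in $L^{n/(n-1)}$ whenever its gradient is small and the cutoff error is small. If $k\neq 0$ on the annulus, the integral $\int_{B_{2R_1}\backslash B_{R_1}}|f_j|^{n/(n-1)}$ is comparable to $|k|^{n/(n-1)}R_1^{n}$. Feeding this into Step 1 gives a lower bound of order $|k|R_1^{n-1}$ on the left, against an upper bound $o(1)+C|k|R_1^{n-1}\cdot R_1^{-1}\cdot R_1$ on the right. So the constants must be matched carefully. The cleaner route is to note that $\int|f_j|^{n/(n-1)}=1$ is finite, while a nonzero limit constant $k$ on the whole noncompact end $S\cap E$ would have infinite $L^{n/(n-1)}$ norm. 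Because Step 1 is scale-invariant, $f_j\to k$ in $L^{n/(n-1)}$ on every annulus, so lower semicontinuity of the norm would give infinite norm. Hence $k=0$, and connectedness of the cover through the core propagates $k=0$ to every component meeting the end.

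\textbf{The main obstacle.} There may be components of $\spt S\cap\mathcal{U}$ that do not reach the end. On those we must instead use the fact that $f$ is compactly supported in $\mathcal{U}$, so $f=0$ near $\partial\mathcal{U}$. These components then carry nonempty boundary in $\mathcal{U}$, and the argument of Proposition \ref{prop: Sobolev inequality on S} applies verbatim. The genuinely delicate step is Step 1: proving a uniform Sobolev inequality on the noncompact, possibly singular-at-finite-distance, minimizing boundary inside the AF end. I expect to handle it by Michael--Simon with the small mean-curvature error absorbed for $R_1$ large, using the monotonicity formula to control density.
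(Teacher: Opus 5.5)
Your overall strategy is sound, and it is more complete than what the paper writes. The paper only says the result follows ``by a similar argument'' to Proposition~\ref{prop: Sobolev inequality on S}: a finite cover plus the local Poincar\'e inequality, with $f=0$ outside $\mathcal U$ pinning the limiting constants to zero. On the noncompact end, that scheme alone cannot exclude a normalized sequence with $\int|\nabla_S f_j|\to0$ whose $L^{\frac{n}{n-1}}$-mass escapes to infinity. Your Step~1 (Michael--Simon relative to $\delta$, absorbing the mean curvature $O(|x|^{-\tau-1})$) supplies exactly this missing uniform control. Likewise, ``a nonzero constant has infinite norm on an unbounded component'' is the right replacement for ``$f=0$ outside $\mathcal U$''.

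Three things should be tightened.
\begin{enumerate}
\item The absorption needs $\|H\|_{L^n(\{|x|>R_1\})}$ small. This follows from $\|S\|(\{2^kR_1<|x|<2^{k+1}R_1\})\le C(2^kR_1)^n$, obtained by comparing with coordinate spheres, which gives $\|H\|_{L^n}^n\lesssim R_1^{-n\tau}$.
\item The convergence $f_j\to k$ on every annulus does not come from the scale invariance of Step~1. It comes from Proposition~\ref{prop: Pioncare-Sobolev inequality}, chained along an exhaustion of the component, plus a diagonal argument. After that, $|k|^{\frac{n}{n-1}}\|S\|(\text{component})\le 1$, and infinite area of an unbounded component (by monotonicity) forces $k=0$.
\item Your first attempt via the annulus is circular and can be dropped. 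The cutoff error is at most $CR_1^{-1}\int_A|f|\le C\|f\|_{L^{\frac{n}{n-1}}(A)}$, a compact-region term that tends to zero once all constants vanish.
\end{enumerate}

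The step that genuinely fails is your last paragraph: a component of $\spt S$ that does not reach the end need not have any boundary in $\mathcal U$. Suppose a closed component lies in $\mathcal U$, e.g.\ the horizon of a doubled Schwarzschild manifold (an area-minimizing boundary), with $\mathcal U$ a neighborhood of one end that contains it. Take $f\equiv1$ near that component, cut off away from the rest of $\spt S$. Then $\nabla_S f=0$ on $S$ while the left-hand side is positive. So the proposition as stated is false, and no argument can handle this case.

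You must add a hypothesis, e.g.\ that every connected component of $\spt S$ meeting $\mathcal U$ either leaves $\mathcal U$ or is unbounded in $E$. The paper assumes this tacitly, and Proposition~\ref{prop: Sobolev inequality on S} has the same defect when $\spt S$ is disconnected.

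Finally, the displayed inequality is not homogeneous in $f$. What you prove, and what the normalization $\int|f_j|^{\frac{n}{n-1}}=1$ in the paper's argument proves, is the version with $\bigl(\int_{\mathcal U}|f|^{\frac{n}{n-1}}d\|S\|\bigr)^{\frac{n-1}{n}}$ on the left.
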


\section{Degree of proper maps on noncompact manifolds and singular spaces}
In this appendix, we record some definitions of the degree of proper maps between possibly noncompact manifolds and singular spaces. We begin with the following proposition.

\begin{proposition}\label{prop: equivalence degree}
    Let $X,Y$ be oriented $n$-manifolds and $f:Y\longrightarrow X$ be a smooth proper map. Then the degree of $f$ can be defined in one of the following four ways, and all these definitions are equivalent:
    
    (1) For a regular value $x\in X$ of $f$, count the number of elements of $f^{-1}(x)$ with respect to the orientation.

    (2) Consider the induced map of the locally finite homology
    \begin{align*}
        f_*: \mathbf{Z} = H_{n}^{lf}(Y)\longrightarrow H_{n}^{lf}(X) = \mathbf{Z}
    \end{align*}
    and set $f_*([Y]) = (\deg f)[X]$. Here $[X]$ denotes the fundamental class of $X$ in the locally finite homology.

    (3) Consider the induced map of the compactly supported cohomology
    \begin{align*}
        f^*: \mathbf{Z} = H_{c}^{n}(X)\longrightarrow H_{c}^{n}(Y) = \mathbf{Z}
    \end{align*}
    and set $f^*([Y]^*) = (\deg f)[X]^*$. Here $[X]^*$ denotes the fundamental class of $X$ in the compactly supported cohomology.

    (4) Consider the induced map of the compactly supported de Rham cohomology
    \begin{align*}
        f^*: \mathbf{R} = H_{dR,c}^{n}(X)\longrightarrow H_{dR,c}^{n}(Y) = \mathbf{R}
    \end{align*}
    In this case we have
    \begin{align*}
        \int_{Y}f^*\omega = (\deg f)\int_X\omega
    \end{align*}
    for any compactly supported differential $n$-form $\omega$ on $X$.
\end{proposition}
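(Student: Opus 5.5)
We assume $X$ and $Y$ are connected and oriented, and that $f$ is smooth and proper. The plan is to prove a chain of equivalences, $(1)\Leftrightarrow(4)\Leftrightarrow(3)\Leftrightarrow(2)$. In (3) we read the defining relation as $f^*[X]^*=(\deg f)[Y]^*$, since $f^*$ goes from $H^n_c(X)$ to $H^n_c(Y)$. Properness is what makes every construction well defined: $f^*$ preserves compact supports, $f$ pushes locally finite chains forward to locally finite chains, and preimages of compact sets are compact.

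\textbf{Step 1: (1)$\Leftrightarrow$(4).} Fix a regular value $x$. By properness, $f^{-1}(x)$ is a finite set $\{y_1,\dots,y_k\}$. Properness also gives a connected neighborhood $W\ni x$ such that $f^{-1}(W)$ is a disjoint union of open sets $W_l\ni y_l$, each mapped diffeomorphically onto $W$. To get this, choose disjoint neighborhoods $W'_l\ni y_l$ on which $f$ is a local diffeomorphism, and use the fact that $f(Y\setminus\bigcup_l W'_l)$ is closed because proper maps to manifolds are closed. Take $\omega$ supported in $W$ with $\int_X\omega=1$. Then
\[
\int_Y f^*\omega=\sum_l \pm\int_W\omega=\sum_l \operatorname{sign}\det df_{y_l}.
\]
Since $H^n_{dR,c}(X)\cong\mathbf{R}$ via integration ($X$ connected and oriented), the map $f^*$ is multiplication by a single real number $c$. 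Evaluating on this $\omega$ shows that $c$ equals the signed count at $x$. In particular the count in (1) does not depend on the regular value, which settles well-definedness of (1) by Sard's theorem.

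\textbf{Step 2: (4)$\Leftrightarrow$(3).} Use the de Rham isomorphism $H^n_c(\,\cdot\,;\mathbf{R})\cong H^n_{dR,c}(\,\cdot\,)$. It is natural with respect to proper maps, and it sends the integral generator $[X]^*$ to a class of integral $1$. Because $H^n_c(X;\mathbf{Z})\cong\mathbf{Z}$ is torsion free, the integer multiplier defined in (3) agrees with the real multiplier $c$ from (4) after tensoring with $\mathbf{R}$.

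\textbf{Step 3: (3)$\Leftrightarrow$(2).} Use the Kronecker pairing $H^n_c(X)\otimes H^{lf}_n(X)\to\mathbf{Z}$, which satisfies $\langle[X]^*,[X]\rangle=1$; this can be taken as the normalization of the two fundamental classes, both coming from the orientation. Naturality for proper maps gives
\[
\langle\alpha,f_*\beta\rangle=\langle f^*\alpha,\beta\rangle .
\]
Hence, if $f_*[Y]=d[X]$, then
\[
d=\langle[X]^*,f_*[Y]\rangle=\langle f^*[X]^*,[Y]\rangle,
\]
and this equals the integer from (3).

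\textbf{Main obstacle.} The step needing most care is Step 3, namely setting up locally finite homology with its pairing against compactly supported cohomology and checking naturality under proper maps. We plan to handle it by writing $H^{lf}_n(X)=\varprojlim_K H_n(X,X\setminus K)$ over compact $K$ and $H^n_c(X)=\varinjlim_K H^n(X,X\setminus K)$. The pairing is then the limit of the relative Kronecker pairings. Naturality follows because $f^{-1}(K)$ is compact, so $f$ induces maps $(Y,Y\setminus f^{-1}K)\to(X,X\setminus K)$ that are compatible with the limits. Alternatively, we can reduce directly to a small ball around a regular value $x$. Both classes localize to the local orientation generators in $H_n(X,X\setminus x)$ and $H^n(X,X\setminus x)$, so the localized computation reproduces the signed count from Step 1.
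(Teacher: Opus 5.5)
Your proposal is correct. It differs from the paper's very brief proof mainly in where the signed count (1) is attached to the chain.

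\textbf{The paper's route.} It runs $(1)\Leftrightarrow(2)\Leftrightarrow(3)\Leftrightarrow(4)$. It matches the regular-value count with the locally finite homology degree by excision and local homology at $f^{-1}(x)$. Concretely, it restricts $[Y]$ to $H_n(Y,Y\setminus f^{-1}(x))\cong\bigoplus_l H_n(W_l,W_l\setminus y_l)$ and pushes forward to $H_n(X,X\setminus x)$, where $[X]$ restricts to the local generator. It then passes to $H^n_c$ via the pairing, and to de Rham via the universal coefficient and de Rham theorems.

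\textbf{Your route.} You attach (1) to (4) analytically, by integrating a bump form over an evenly covered neighborhood. You then traverse the same two homological links in reverse order. In the pairing step you need only naturality and the normalization $\langle[Y]^*,[Y]\rangle=1$, not full non-degeneracy.

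\textbf{What each buys.} Your route makes the only step touching (1) a change-of-variables computation. It also gives independence of the regular value for free, which the paper takes for granted. The paper's route stays inside singular and locally finite homology. That is the framework Appendix B then uses to define degrees of quasi-proper maps and of maps from singular spaces, via restriction maps on $H^{lf}_n$. Your closing remark about localizing at a regular value is essentially the paper's $(1)\Leftrightarrow(2)$ argument.

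Two small points in your favor. First, the relation in (3) should indeed read $f^*[X]^*=(\deg f)[Y]^*$. Second, identifying $H^{lf}_n(X)$ with the inverse limit of $H_n(X,X\setminus K)$ over compact $K$ is harmless in top degree, because the $\lim^1$ term involves $H_{n+1}(X,X\setminus K)=0$.
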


\begin{proof}
    (1)$\Longleftrightarrow$(2) follows from the excision lemma in combination with a local homology group argument. (2)$\Longleftrightarrow$(3) follows from the non-degeneracy of the bilinear form
    \begin{align*}
        H^n_c(X)\otimes H_{n}^{lf}(X)\longrightarrow\mathbf{Z}
    \end{align*}
    (3)$\Longleftrightarrow$(4) is a consequence of the universal coefficient theorem and the de Rham Theorem.
\end{proof}

As an application of Proposition \ref{prop: equivalence degree}, we can give the following definition for degrees of quasi-proper maps, which slightly extends \cite[Definition 1.7]{CCZ23}.

\begin{definition}\label{defn: non-zero degree}
            Let $X^n$ and $Y^n$ be oriented smooth $n$-manifolds, where $Y^n$ is not necessarily compact. Let $f: Y^n\longrightarrow X^n$ be quasi-proper (see \cite[Theorem 1.4]{CCZ23}) and satisfies
            \begin{itemize}
                \item $S_{\infty}$ consists of discrete points, where 
                \begin{align*}
                    S_{\infty} = \bigcap_{K\subset Y \text{ compact }} \overline{f(Y-K)}
                \end{align*}
            \end{itemize}
            Then it is direct to check that the restriction map $f|_{Y\backslash f^{-1}(S_{\infty})}: Y\backslash f^{-1}(S_{\infty})\longrightarrow X\backslash S_{\infty}$ is proper, and we say $f$ has degree $k$ if $f|_{Y\backslash f^{-1}(S_{\infty})}: Y\backslash f^{-1}(S_{\infty})\longrightarrow X\backslash S_{\infty}$ has degree $k$ as a proper map.
        \end{definition}

Next, we focus on the definition of non-zero degree maps defined on singular spaces. We will introduce two different but compatible definitions. The first definition is directly linked to Definition \ref{defn: non-zero degree}, and the second one is more intrinsic.

\begin{definition}\label{defn: degree 2}
    Let $M$ be a compact topological space, such that $M\backslash \mathcal{S}$ is a smooth oriented $n$-manifold, where $\mathcal{S} = \{p_1,p_2,\dots,p_k\}$. Let $N^n$ be a compact oriented manifold. Let $\psi:M\longrightarrow N$ be a continuous map, then the restriction map $\psi|_{M\backslash\mathcal{S}}$ is quasi-proper. The degree of $\psi$ is defined as the degree of the quasi-proper map $\psi|_{M\backslash\mathcal{S}}:M\backslash\mathcal{S}\longrightarrow N$ as in Definition \ref{defn: non-zero degree}.
\end{definition}

Since $M$ may not have a manifold structure, we aim to establish the concept of non-zero degree through purely topological considerations. This leads us to the following lemma:

\begin{lemma}
    Let $M,N,\psi$ be as in Definition \ref{defn: degree 2}. Assume the induced map
    \begin{align*}
        \psi_*:H_n(M)\longrightarrow H_n(N)\cong\mathbf{Z}
    \end{align*}
    is not identically zero, then $\psi$ has non-zero degree in the sense of Definition \ref{defn: degree 2}.
\end{lemma}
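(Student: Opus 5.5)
The plan is to compare $\psi_*$ on ordinary homology with the locally finite homology description of degree in Proposition \ref{prop: equivalence degree}(2), using a natural ``restriction'' map from $H_n$ of a compact space to $H_n^{lf}$ of the complement of finitely many points. I assume $n\ge 2$ and $M\backslash\mathcal{S}$ connected; otherwise I argue componentwise, where $H^{lf}_n(M\backslash\mathcal{S})$ is a direct sum of copies of $\mathbf{Z}$.

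Let $S_\infty$ be as in Definition \ref{defn: non-zero degree} for $f=\psi|_{M\backslash\mathcal{S}}$. Since $M$ is compact, $S_\infty\subset\psi(\mathcal{S})$, so $S_\infty$ is finite. Set $Y=M\backslash(\mathcal{S}\cup\psi^{-1}(S_\infty))$ and $X=N\backslash S_\infty$. By definition, $\deg\psi$ is the degree $d$ of the proper map $\psi|_Y:Y\to X$.

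\textbf{Step 1.} For any compact pair $(K,F)$ with $F$ closed and $K\backslash F$ an $n$-manifold, I define a restriction map
\[
\rho_K:H_n(K)\longrightarrow H_n(K,F)\cong \varprojlim_{U\supset F} H_n(K,U)\cong \varprojlim_{U} H_n(K\backslash F,U\backslash F)\cong H_n^{lf}(K\backslash F).
\]
Here $U$ ranges over closed neighborhoods of $F$, the middle isomorphism is excision, and the last identification is the standard description of locally finite homology via an exhaustion. The first isomorphism (with the inverse limit) needs $F$ to have arbitrarily small neighborhoods that deformation retract onto it. This holds for finitely many points of $N$, and for the singular points of $M$ it holds in our applications because of the cone structure of an area-minimizing hypersurface near an isolated singularity. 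I will assume this local conical structure as a standing hypothesis on $M$.

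\textbf{Step 2.} I apply $\rho$ with $K=M$, $F=\mathcal{S}\cup\psi^{-1}(S_\infty)$ and with $K=N$, $F=S_\infty$. Since $\psi(F)\subset S_\infty$, naturality of the long exact sequences and of excision gives the commutative square
\[
\psi|_{Y,*}\circ\rho_M=\rho_N\circ\psi_*:\quad H_n(M)\longrightarrow H^{lf}_n(X).
\]
For the right-hand side I use the exact sequence of the pair $(N,S_\infty)$. Since $H_n(S_\infty)=0$ for $n\ge 1$, the map $H_n(N)\to H_n(N,S_\infty)$ is injective. Since $H_{n-1}(S_\infty)\to H_{n-1}(N)$ is injective for $n\ge 2$, it is also surjective. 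Hence $\rho_N$ is an isomorphism sending $[N]$ to $[X]$. The same argument applied to $(M,F)$ shows that $\rho_M$ is an isomorphism onto $H^{lf}_n(Y)=\mathbf{Z}[Y]$. This uses that removing finitely many points from the connected oriented $n$-manifold $M\backslash\mathcal{S}$ does not change $H^{lf}_n$ when $n\ge 2$.

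\textbf{Step 3.} Pick $c\in H_n(M)$ with $\psi_*c=k[N]$ and $k\ne 0$. Write $\rho_M(c)=m[Y]$. Then the square in Step 2 together with Proposition \ref{prop: equivalence degree}(2) gives
\[
k[X]=\rho_N(k[N])=\psi|_{Y,*}(m[Y])=md[X].
\]
So $k=md$, and therefore $d\neq 0$, which proves the lemma.

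The main obstacle is Step 1: identifying $H_n(K,F)$ with $H^{lf}_n(K\backslash F)$ and checking that this identification is natural for the map of pairs $\psi:(M,F)\to(N,S_\infty)$. I would handle it by fixing, for each point of $F$ and of $S_\infty$, a cofinal sequence of conical closed neighborhoods, chosen so that each neighborhood of $F$ is mapped by $\psi$ into the corresponding neighborhood of $S_\infty$. With these choices the limit maps commute on the nose, and properness of $\psi|_Y$ is exactly what makes the induced map on $H^{lf}_n$ well defined.
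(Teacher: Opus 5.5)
Your strategy is the paper's: compare $\psi_*$ on $H_n$ with $\psi_*$ on $H^{lf}_n$ through a natural restriction map, and use that $H_n(N)\to H^{lf}_n(N\backslash S_\infty)$ is an isomorphism. The paper takes the restriction from the general map $\iota:H^{lf}_*(X)\to H^{lf}_*(U)$ for $U\subset X$ open, which is natural for proper maps. It applies it with $X=M$ (so $H^{lf}_n(M)=H_n(M)$) and $U=M\backslash\psi^{-1}(\psi(\mathcal{S}))$, and needs nothing about the shape of $\psi^{-1}(\psi(\mathcal{S}))$ or about $M$ near $\mathcal{S}$. Your hand-built $\rho_M$ is where the argument breaks. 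The set $F=\mathcal{S}\cup\psi^{-1}(S_\infty)$ is not finite in general. Since $\psi$ is only continuous, $\psi^{-1}(S_\infty)$ can be any closed set: $\psi$ may collapse a ball, or a collar of a separating hypersurface of $M\backslash\mathcal{S}$, onto a point of $S_\infty$ without changing $\psi_*$ on $H_n(M)$. In that case three things fail. The ``conical neighbourhoods of each point of $F$'' in Step 1 do not exist. The identification $H_n(M,F)\cong\varprojlim_U H_n(M,U)$ is unjustified. And the claim in Step 2 that $\rho_M$ is an isomorphism onto $H^{lf}_n(Y)=\mathbf{Z}[Y]$, because ``removing finitely many points does not change $H^{lf}_n$'', is false. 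Indeed $Y$ can be disconnected, so $H^{lf}_n(Y)$ is a product of several copies of $\mathbf{Z}$, yet Step 3 needs exactly that $\rho_M(c)$ is an integer multiple of the total fundamental class $[Y]$.

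To repair it, use neighbourhood information only at $\mathcal{S}$. Map $c$ to $H_n(M,\mathcal{S})\cong H^{lf}_n(M\backslash\mathcal{S})=\mathbf{Z}[M\backslash\mathcal{S}]$, then restrict to the open subset $Y$ of the manifold $M\backslash\mathcal{S}$. Restriction sends fundamental class to fundamental class, so $\rho_M(c)=m[Y]$ even when $Y$ is disconnected. A simpler route also removes your conical standing hypothesis; it uses local homology at a point.
\begin{itemize}
\item Choose $x\in N\backslash\psi(\mathcal{S})$ and let $K=\psi^{-1}(x)$, which is compact and lies in $M\backslash\mathcal{S}$.
\item Compare $H_n(M)\to H_n(M,M\backslash K)\cong H_n(M\backslash\mathcal{S},(M\backslash\mathcal{S})\backslash K)\to H_n(N,N\backslash x)$ with $H_n(N)\cong H_n(N,N\backslash x)$.
\item Excising the closed set $\mathcal{S}$ is valid in any Hausdorff space.
\item By connectedness, the local classes of $c$ at points of $M\backslash\mathcal{S}$ are a constant multiple $m$ of the orientation class. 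Hence the image of $c$ in $H_n(M,M\backslash K)$ is $m\,o_K$.
\item $\psi_*o_K=d\,o_x$, so $k=md$.
\end{itemize}
Finally, your componentwise fallback for disconnected $M\backslash\mathcal{S}$ cannot work, because the statement is then false. Take $M=S^n\vee S^n$ with $\mathcal{S}$ the wedge point, and let $\psi$ be the identity on one sphere and a reflection fixing the wedge point on the other. Then $\psi_*\neq 0$ on $H_n(M)$, but $\psi|_{M\backslash\mathcal{S}}$ has degree $1-1=0$. So connectedness of $M\backslash\mathcal{S}$ is a genuine hypothesis. The paper also uses it implicitly when it passes from ``the right vertical map is nonzero'' to ``nonzero degree''.
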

\begin{proof}
    For a locally compact space $X$ and an open set $U\subset X$, there is a restriction map $\iota: H_{*}^{lf}(X)\longrightarrow H^{lf}_n(U)$ (see for instance \cite[Section 6.1.3]{Morgan} for the definition). In the case that $X$ is an oriented $n$-manifold, $\iota$ is an isomorphism on $H_n^{lf}$. By functoriality of locally finite homology and naturality of the restriction map $\iota$, the following diagram commutes.
    \begin{equation*}
\xymatrix{&H_n(M)\ar[r]\ar[d]^{\psi_*}&H_{n}^{lf}(M\backslash \psi^{-1}(\psi(\mathcal{S})))\ar[d]^{\psi_*}\\
&H_n(N)\ar[r]^{\cong}&H_n^{lf}(N\backslash\mathcal{S}) \quad \quad }
\end{equation*}
where the horizontal arrows are given by $\iota$. From our assumption and the fact that the lower horizontal map is an isomorphism, we see that the vertical map on the right side is not identically zero. This shows that $\psi$ has non-zero degree in the sense of Definition \ref{defn: degree 2}.
\end{proof}

\begin{remark}
    If $n\ge 2$ and $M$ has structure as considered in \cite{DaSW24b}, $i.e.$ there exists a compact oriented manifold $N$ with boundary, such that $M$ is the quotient space of $N$ obtained by pinching each component of $\partial N$ into a point, then $H_n(M) = \mathbf{Z}$.
\end{remark}

As a consequence, the following definition is consistent with Definition \ref{defn: degree 2}.
\begin{definition}\label{defn: degree 4}
    Let $M,N,\psi$ be as in Definition \ref{defn: degree 2}. Then $\psi$ is said to have non-zero degree, if the induced map
    \begin{align*}
        \psi_*:H_n(M)\longrightarrow H_n(N)\cong\mathbf{Z}
    \end{align*}
    is not identically zero.
\end{definition}

\section{A topological obstruction result for PSC on noncompact manifolds}
In this appendix, we prove Proposition \ref{prop: noncompact dominate enlargeable}. Related results have been obtained recently in \cite{CL2024}\cite{CCZ23} via $\mu$-bubble methods and in \cite{WZ2022}\cite{LSWZ24} using Dirac operator techniques.

        \begin{figure}
    \centering
    \includegraphics[width = 12cm]{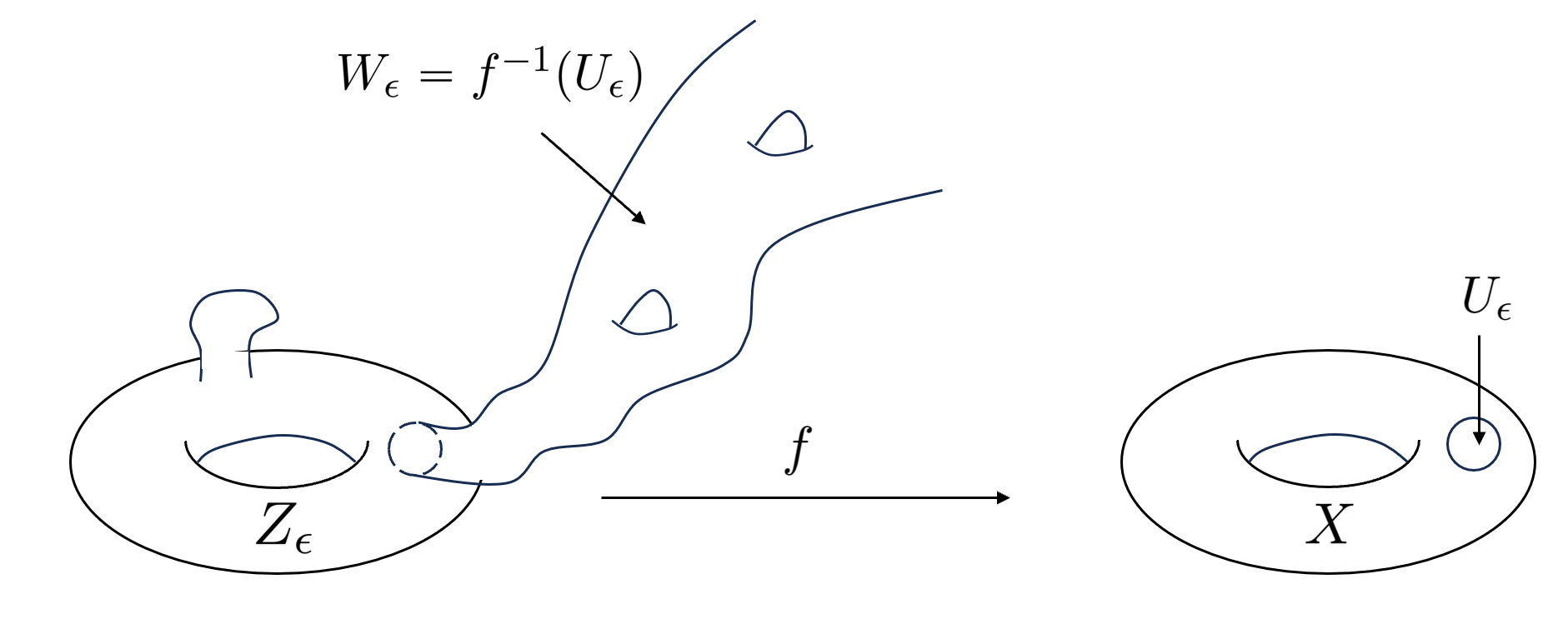}
    \caption{A noncompact manifold that admits a non-zero degree map to an enlargeable manifold}
    \label{f2}
\end{figure}
        
        \begin{proof}[Proof of Proposition \ref{prop: noncompact dominate enlargeable}]

$\quad$

Let us sketch the proof first. We argue by contradiction, by assuming $R_g>0$ in $Y$. The enlargeability of $X$ yields an arbitrarily long over-torical band $\mathcal{V}$ lying in certain covering space $\tilde{X}$. By pulling back the non-zero degree map $f:Y\longrightarrow X$, we obtain another non-zero degree map $\tilde{f}:\tilde{Y}\longrightarrow \tilde{X}$. We construct a subspace $\hat{\mathcal{V}} = \tilde{f}^{-1}(\mathcal{V})$, which can be viewed as a long over-torical band with some arbitrary ends connected to it. See Figure \ref{figure V} for illustration. We show that $\hat{\mathcal{V}}$ is partitioned into the union of $\Omega_-$ and $\Omega_+$ by a \textit{deep} hypersurface $\hat{S}$, which is far from all boundary components of $\hat{\mathcal{V}}$. The hypersurface $\hat{S}$ has the following key property: any compact hypersurface in $\hat{\mathcal{V}}$ homologous to $\hat{S}$ admits no PSC metric. This enables us to construct a $\mu$-bubble functional as in Step 4 and derive a contradiction.

            \textbf{Step 1: The selection of an overtorical band $\mathcal{V}$ in an covering space $\tilde{X}$.} 
            
            Let $S_{\infty} = \{p_1,p_2,\dots,p_m\}$. Fix a metric $h$ on $X^n$, and denote $U_{\epsilon}$ to be the $\epsilon$-neighborhood of $S_{\infty}$, so $U_\epsilon$ is the union of $m$ disjoint small balls. We select $\epsilon$ such that
            \begin{align*}
                \epsilon<\frac{1}{5}\min\{\min_{i\ne j}d_{(X,h)}(p_i,p_j),\inj_{(X,h)}\}.
            \end{align*}
            Then for any Riemannian covering $q_X:(\tilde{X},\tilde{h})\longrightarrow (X,h)$, each component of $q_X^{-1}(U_{2\epsilon})$ maps homeomorphically to a component of $U_{2\epsilon}$ under $q_X$. Therefore, for any $x_1,x_2\in\tilde{X}$ lying in different connected components of $q_X^{-1}(U_\epsilon)$, it holds
            \begin{align}\label{eq: 46}
                d(x_1,x_2)\ge 2\epsilon.
            \end{align}
            Similarly, $q_X^{-1}(U_\epsilon)$ is also disjoint union of small balls.

             Since $X$ is enlargeable, from \cite{Gro18}, for any $D>0$, there is a Riemannian covering $q_X:(\tilde{X},\tilde{h})\longrightarrow (X,h)$ with an embedded over-torical band $ {\mathcal{V}}\hookrightarrow\tilde{X}$ that satisfies $\width(\mathcal{V})>2D+1$. We use $\psi: \mathcal{V}\longrightarrow \mathbf{T}^{n-1}\times [-1,1]$ to be the non-zero degree map, and use $\pi: \mathbf{T}^{n-1}\times [-1,1]\longrightarrow \mathbf{T}^{n-1}$ to be the projection map. From \cite[Lemma 4.1]{Zhu21} there is a proper smooth function $\rho:\mathcal{V}\longrightarrow [-D,D]$ that satisfies $\Lip\rho<1$, $\rho|_{\partial_{\pm}\mathcal{V}} = \pm D$. Note that $q_X^{-1}(S_\infty)\subset \tilde{X}$ is a discrete set, we can therefore assume $0$ is a regular value of $\rho$ and $0$ is not contained in the image of $ q^{-1}_X(S_{\infty})$ under $\rho$ without loss of generality. Thus, $S = \rho^{-1}(0)$ is a closed hypersurface separating two distinguished boundary collection $\partial_-\mathcal{V}$ and $\partial_+\mathcal{V}$ of $\mathcal{V}$, such that
             \begin{align}\label{eq: C.1}
                 S\cap q^{-1}_X(S_{\infty}) = \emptyset.
             \end{align}
             
              From a standard differential topology argument, the map
            \begin{align*}
                \pi\circ(\psi|_S): S\longrightarrow \mathbf{T}^{n-1}
            \end{align*}
            has non-zero degree. In fact, the degree of the map
            \begin{align*}
                \pi\circ(\psi|_{\partial_-\mathcal{V}}): \partial_-\mathcal{V}\longrightarrow \mathbf{T}^{n-1}
            \end{align*}
            equals that of $\degg\psi$. Because $S = \rho^{-1}(0)$ and $\partial_-\mathcal{V} = \rho^{-1}(-D)$, we know that $S$ and $\partial_-\mathcal{V}$ bound a region, so it follows that
            \begin{align}\label{eq: 59}
                \degg \pi\circ(\psi|_S) = \degg \pi\circ(\psi|_{\partial_-\mathcal{V}}) \ne 0.
            \end{align}

\textbf{Step 2: The pullback construction.}

            Next, we obtain the following diagram  which is commutative, by considering pullback objects repeatedly from the right of the diagram to the left:

            \begin{equation*}
\xymatrix{\hat{S}\ar[r]\ar[d]^{\tilde{f}}&\hat{\mathcal{V}}\ar[r]\ar[d]^{\tilde{f}}&\tilde{Y}\ar[r]^{q_Y}\ar[d]^{\tilde{f}}&Y\ar[d]^f\\
S\ar[r]&\mathcal{V}\ar[r]\ar[d]^{\psi}&\tilde{X}\ar[r]^{q_X}&X\\
&\mathbf{T}^{n-1}\times[-1,1]\ar[r]^{\pi}&\mathbf{T}^{n-1}}
\end{equation*}
In the diagram:
\begin{itemize}
    \item Each unlabeled horizontal arrow is given by an inclusion map;
    \item $q_Y:(\tilde{Y},\tilde{g})\longrightarrow (Y,g)$ is a Riemannian covering, and $\tilde{f}$ is also a quasi-proper map;
    \item  $\hat{\mathcal{V}}$ is  an embedded band in $\tilde{Y}$ (possibly to be noncompact).
\end{itemize}
By sligtly perturbing $\tilde{f}$, we may assume $\tilde{f}$ is transversal to $\partial_{\pm}\mathcal{V}$ and $S$, so $\partial_{\pm}\hat{\mathcal{V}}$ and $\hat{S}$ are regular hypersurfaces in $\tilde{Y}$. Furthermore, \eqref{eq: C.1} and the quasi-properness of $\tilde{f}$ ensures $\hat{S}$ is compact. Denote $W_{\epsilon} = f^{-1}(U_{\epsilon})$. Since $f$ is quasi-proper, we know that $Z_{\epsilon} = f^{-1}(X\backslash U_{\epsilon})$ is compact. See Figure \ref{f2} for illustration. We would like to call $Z_\epsilon$ the \textit{uniform part} and $W_\epsilon$ the \textit{end part}. The compactness of $Z_\epsilon$ yields a constant $C$, such that
        \begin{align}\label{eq: 44}
            \Lip(f|_{Z_{\epsilon}})<C.
        \end{align}
        Denote $\tilde{W}_{\epsilon} = q_Y^{-1}(W_{\epsilon}) = \tilde{f}^{-1}(q_X^{-1}(U_\epsilon))$ and $\tilde{Z}_{\epsilon} = q_Y^{-1}(Z_{\epsilon}) = \tilde{f}^{-1}(q_X^{-1}(U_\epsilon))$, which we also interpret as the \textit{uniform part} and the \textit{end part}. It follows from \eqref{eq: 44} that
        \begin{align}\label{eq: 45}
            \Lip(\tilde{f}|_{\tilde{Z}_{\epsilon}})<C.
        \end{align}

\textbf{Step 3: The depth estimate of $\hat{S}$}

        We show $\hat{S}$ is a {\it deep} hypersurface in $\hat{\mathcal{V}}$ in the following sense:
        \begin{align}\label{eq: 47}
            d(\partial_{\pm}(\hat{\mathcal{V}}),\hat{S})\ge\frac{D}{2C}.
        \end{align}
        To prove \eqref{eq: 47}, let $\gamma$ be a curve in $\hat{\mathcal{V}}$ joining $\partial_-\hat{\mathcal{V}}$ and $\hat{S}$. We represent $\gamma$ in a piecewise manner: $\gamma = \xi_1\zeta_1\xi_2\zeta_2\dots\xi_{m-1}\zeta_{m-1}\xi_m$, where $\xi_i\subset \tilde{Z}_{\epsilon}$ and $\zeta_i\subset \tilde{W}_{\epsilon}$. Denote $\bar{\gamma} = \tilde{f}\circ \gamma$, $\bar{\xi}_i = \tilde{f}\circ \xi_i$ and $\bar{\zeta}_i = \tilde{f}\circ \zeta_i$. Since $\tilde{f}$ is continuous, each segment $\zeta_i$ is mapped to a single component of $q_X^{-1}(U_\epsilon)$, which we denote by $\mathcal{O}_i$

        Since each $\mathcal{O}_i$ has diameter less than $2\epsilon$, we can construct a curve $\bar{\gamma}'$, which coincides with $\gamma$ on each $\bar{\xi}_i$ segment. The $\bar{\zeta}_i$ segment is replaced by a minimizing curve $\bar{\zeta}_i'$ in $\mathcal{O}_i$ which share common endpoints with $\bar{\zeta}_i$. Consequently, $\bar{\gamma}' = \bar{\xi}_1\bar{\zeta}_1'\bar{\xi}_2\bar{\zeta}_2'\dots\bar{\xi}_m$ is a curve connecting $\partial_-\mathcal{V}$ and $S$. Since $d(\partial_{\pm}({\mathcal{V}}),S)\ge D$, we obtain
        \begin{align}\label{eq: 57}
            D<L(\bar{\gamma}') = \sum_{i=1}^m L(\bar{\xi}_i)+ \sum_{i=1}^m L(\bar{\zeta}_i')\le \sum_{i=1}^m L(\bar{\xi}_i)+2(m-1)\epsilon\le 2\sum_{i=1}^m L(\bar{\xi}_i),
        \end{align}
where in the second inequality we have used $\diam(\mathcal{O}_i)\le 2\epsilon$, and in the third inequality we have used the distance between $\mathcal{O}_i$ and $\mathcal{O}_{i+1}$ is at least $2\epsilon$ (as a consequence of \eqref{eq: 46}). On the other hand, \eqref{eq: 45} implies
        \begin{align}\label{eq: 58}
            \sum_{i=1}^m L(\bar{\xi}_i)\le\frac{1}{C}\sum_{i=1}^m L({\xi}_i)\le \frac{1}{C}L(\gamma).
        \end{align}
       Combining  \eqref{eq: 57} with \eqref{eq: 58} yields \eqref{eq: 47}.

        \begin{figure}
    \centering
    \includegraphics[width = 12cm]{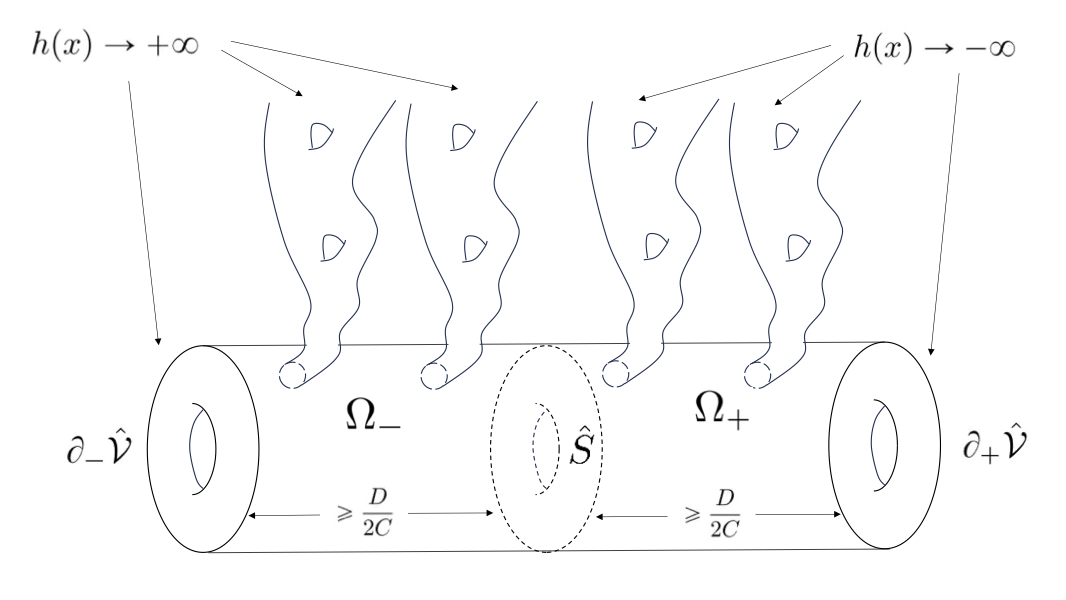}
    \caption{The illustration of the band $\hat{\mathcal{V}}$ obtained from the pullback construction}
    \label{figure V}
\end{figure}

\textbf{Step 4: The $\mu$-bubble construction}

Define $\Omega_- = (\rho\circ\tilde{f})^{-1}((-D,0))$ and $\Omega_+ = (\rho\circ\tilde{f})^{-1}((0,D))$. Then $\hat{\mathcal{V}} $ is sepatated into $\Omega_-\cup\Omega_+$ by $\hat{S}$. Let
\begin{align*}
    \mathcal{G} = \{ &\Sigma\subset \hat{\mathcal{V}}, \Sigma = \partial\Omega\cap\mathring{\hat{\mathcal{V}}},\\
    &\Omega\subset \mathcal{V}\mbox{ is a region with smooth boundary and }\Omega\Delta\Omega_0 \mbox{ is compact }.\}
\end{align*}
For each $\Sigma\subset\mathcal{G}$, \eqref{eq: 59} implies
\begin{align*}
    \degg (\pi\circ\psi\circ\tilde{f}|_{\Sigma}) = \degg (\pi\circ\psi\circ\tilde{f}|_{\hat{S}}) = \degg f\degg (\pi\circ\psi|_S) \ne 0.
\end{align*}
In conjunction with \cite{SY79b}, $\Sigma$ admits no PSC metric.

Now, we are in the position to use the $\mu$-bubble technique to derive a contradiction. We have another decomposition $\hat{\mathcal{V}} = \mathcal{W}\cup\mathcal{Z}$, where $\mathcal{W} = \tilde{W}_\epsilon\cap \mathcal{\hat{V}}$ is the \textit{end part} and $\mathcal{Z} = \tilde{Z}_\epsilon\cap \mathcal{\hat{V}}$ is the \textit{uniform part}. Since $(Y,g)$ is PSC, we have $R_{\tilde{g}}\ge R_0>0$ for a uniform number $R_0$ on $\mathcal{Z}$. Now our space is very similar to that appeared in \cite[Section 7]{CL2024}: The uniform part $\mathcal{Z}$, which is induced from the over-torical band $\mathcal{V}$ corresponds to $\mathbf{T}^{n-1}\times \mathbf{R}$ in \cite{CL2024} while the end part $\mathcal{W}$ corresponds to arbitrary ends $X_k$'s in \cite{CL2024}. Moreover, by mollifying the signed distance function to $\hat{S}$, we get a distance like function $\rho_1$ which corresponds to the function $\rho_1$ defined by \cite[Section 7]{CL2024}. We have $\rho_1<0$ in $\Omega_-$ and $\rho_1>0$ in $\Omega_+$. When $D$ is sufficiently large, we can adopt the construction of \cite{CL2024} to find a function $h(x)$ (which behaves like $-\tan C_1\rho_1(x)$ in $\mathcal{Z}$ and $\frac{C_2}{\pm \rho_1(x)-C_3}$ in each components of $\mathcal{W}$, here $C_1,C_2$ and $C_3$ are constants), such that
\begin{itemize}
    \item $h(x)\to +\infty$ when $x\to \partial_-\hat{\mathcal{V}}$ and the infinity of $\mathcal{W}\cap\Omega_-$;
    \item $h(x)\to -\infty$ when $x\to \partial_+\hat{\mathcal{V}}$ and the infinity of $\mathcal{W}\cap\Omega_+$;
    \item $R_{\tilde{g}}+h^2-2|\nabla h|>0$
\end{itemize}
See Figure \ref{figure V}. Setting a $\mu$-bubble functional using $h$ as in \cite{CL2024}, we get a hypersurface $\Sigma\in\mathcal{G}$ with a PSC metric, a contradiction.
\end{proof}

\section{Decay estimates for solutions to elliptic equations on AF ends}
In this appendix, we recall some results about the decay estimates for solutions to elliptic equations on AF ends.

\begin{lemma}\label{lem: asymptotic behavior1}
		Let $(E,g)$ be a $n$-dimensional AF end with asymptotic order $\tau\le n-2$. Let $f\in C^{2,\alpha}(E)$ for some $\alpha>0$. Suppose $f$ satisfies
		$$
		f(x)=O(r^{-\tau-2}) \ \quad \text{ for $x\rightarrow \infty$ on $E$},
		$$
		where $r:=|x|$, $u\in C^{4,\alpha}(E)$ satisfies
		$$
		-\Delta u=f \quad \text{in }\ \ E 
		$$
		with $u(x)\rightarrow 0$ for $x\rightarrow \infty$ on $E$. Then for each $\epsilon>0$, there holds
		
		$$
		|u|+r|\nabla u|+r^2|\nabla^2 u| = O(r^{-\tau+\epsilon}).
		$$
	\end{lemma}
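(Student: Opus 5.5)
Our plan is to represent $u$ through the Green's function of the Laplacian on the AF end and estimate the resulting potential. We then upgrade the $C^0$ decay to derivative decay by scaled Schauder estimates on dyadic annuli.

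\textbf{Step 1: $C^0$ decay by comparison.} We work in the exterior region $E\setminus B_{R_0}(O)$, where $g$ is uniformly close to $\delta_{ij}$. For a fixed $\epsilon>0$ with $\tau-\epsilon\in(0,n-2)$, we test the barrier $w=r^{-\tau+\epsilon}$. Computing with the AF asymptotics gives
\[
\Delta_g w=(-\tau+\epsilon)(-\tau+\epsilon+n-2)r^{-\tau+\epsilon-2}+O(r^{-2\tau+\epsilon-2}).
\]
Since $0<\tau-\epsilon<n-2$, the leading coefficient is strictly negative. Hence $-\Delta_g w\ge c\,r^{-\tau+\epsilon-2}$ for $r\ge R_0$ large, while $|f|\le C r^{-\tau-2}\le C r^{-\tau+\epsilon-2}$. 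Thus $A w\pm u$ is superharmonic-type for $A$ large, namely $-\Delta_g(Aw\pm u)\ge 0$. We choose $A$ so that $Aw\ge |u|$ on $\partial B_{R_0}$. Both $u$ and $w$ tend to $0$ at infinity, so the maximum principle on $\{R_0<r<\rho\}$ applies as $\rho\to\infty$ and gives $|u|\le A r^{-\tau+\epsilon}$.

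The endpoint case $\tau=n-2$ is where the $\epsilon$ loss is genuinely needed, and the condition $\tau-\epsilon<n-2$ handles it. If $\tau$ is very small we choose $\epsilon<\tau$. This is the step where the restriction $\tau\le n-2$ enters, and we expect it to be the only delicate point: we must check that the error term $O(r^{-2\tau+\epsilon-2})$ is dominated, which holds since $\tau>0$.

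\textbf{Step 2: derivative decay by rescaling.} For large $\rho$ we consider the annulus $A_\rho=\{\rho/2<r<2\rho\}$ and rescale it by $x=\rho y$ to the fixed annulus $\{1/2<|y|<2\}$. The rescaled metric $g_\rho(y)=g(\rho y)$ is $C^{1,\alpha}$-close to Euclidean uniformly in $\rho$, using $|x||\partial g|+|x|^2|\partial^2 g|=O(|x|^{-\tau})$. The function $u_\rho(y)=u(\rho y)$ solves
\[
-\Delta_{g_\rho}u_\rho=\rho^2 f(\rho y)=O(\rho^{-\tau}).
\]
The rescaled right-hand side has $C^{\alpha}$ norm $O(\rho^{-\tau})$ on the annulus, using that $f\in C^{2,\alpha}$ with the scaled H\"older seminorm controlled by the same decay; if needed we replace this by an $L^\infty$ bound combined with $W^{2,p}$ estimates. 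Interior Schauder estimates on $\{3/4<|y|<3/2\}$ then give
\[
\|u_\rho\|_{C^{2}}\le C\big(\sup|u_\rho|+\rho^{-\tau}\big)=O(\rho^{-\tau+\epsilon}).
\]
Scaling back yields $r|\nabla u|+r^2|\nabla^2u|=O(r^{-\tau+\epsilon})$ on $r\sim\rho$. Combining this with Step 1 proves the lemma.
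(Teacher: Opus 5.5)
Your Step~1 is correct and takes a genuinely different route from the paper. The paper extends the end to $\mathbf{R}^n$ and uses Schauder bounds on $\partial u,\partial^2u$. It then moves $(g^{ij}-\delta^{ij})\partial_{ij}u-g^{ij}\Gamma^k_{ij}\partial_ku$ into a new right-hand side $\bar f=O(r^{-\tau-2})$, and quotes a Euclidean decay claim for $-\bar\Delta u=\bar f$ from \cite[p.327]{Lee19}, which rests on potential/weighted-space theory. Your barrier $r^{-\tau+\epsilon}$ works directly on the curved end. The metric error only hits the explicit function $w$, so you need no a priori bounds on $u$ and no Euclidean Green's function machinery. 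You also get the sharp $O(r^{-\tau})$ when $\tau<n-2$. What the paper's flat-Poisson reformulation buys is that it is exactly the set-up reused in Lemma~\ref{lem: asymptotic behavior2} to extract the leading term $ar^{2-n}$, which a comparison argument alone cannot produce.

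Step~2, however, has a gap in the Hessian bound as written. The rescaled datum $F_\rho=\rho^2f(\rho\,\cdot)$ has sup norm $O(\rho^{-\tau})$, but its H\"older seminorm is $\rho^{2+\alpha}[f]_{\alpha}$ on the annulus. The hypotheses ($f\in C^{2,\alpha}(E)$, $f=O(r^{-\tau-2})$) bound $[f]_\alpha$ only by a constant, not by $\rho^{-\tau-2-\alpha}$. Your fallback does not close this. With only $L^\infty$ data, $W^{2,p}$ estimates give $C^{1,\gamma}$ control and hence the gradient bound, but no pointwise bound on $\nabla^2u$.

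A fix that uses exactly your $\epsilon$-room: work on balls $B=B_{\rho/4}(x_0)$ with $|x_0|=\rho$, and interpolate with a small exponent $\beta$, using the global $C^1$ bound contained in $f\in C^{2,\alpha}(E)$:
\[
[f]_{\beta;B}\le 2\,\|f\|_{L^\infty(B)}^{1-\beta}\,\|\partial f\|_{L^\infty(E)}^{\beta}\le C\rho^{-(\tau+2)(1-\beta)}.
\]
Then $\|F_\rho\|_{C^\beta(B_{1/4})}\le C\rho^{-\tau+\beta(\tau+3)}$. For $\beta\le \epsilon/(\tau+3)$, the $C^{2,\beta}$ Schauder estimate for $\Delta_{g_\rho}$, combined with your Step~1 bound on $\sup|u_\rho|$, gives $\|u_\rho\|_{C^{2}(B_{1/8})}=O(\rho^{-\tau+\epsilon})$. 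This is the desired $r^2|\nabla^2u|=O(r^{-\tau+\epsilon})$. Alternatively, read the decay hypothesis in the weighted H\"older sense $f\in C^{0,\alpha}_{-\tau-2}$, which is what the paper's citation implicitly assumes. The paper's one-line appeal to Schauder estimates passes over the same point.
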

	
	\begin{proof}
		Without loss of generality, we can assume $E$ is diffeomorphic to $\mathbf{R}^n$, then $u$ satisfies the equation
		\begin{align*}
			g^{ij}(D_{ij}u-\Gamma_{ij}^k D_ku) = 0 \ \text{ on}\quad \mathbf{R}^{n}.
		\end{align*}
		Since $\Gamma_{ij}^k = O(r^{-\tau-1})$, by Schauder estimate we know $u = O(r^{-1})$. Therefore, we may assume 
		$$
		-\bar{\Delta} u=\bar{f} \ \text{ on} \quad \mathbf{R}^{n}
		$$
		for some   $\bar{f}\in C^{2,\alpha}(\mathbf{R}^{n})$ with
        $f(x)=O(r^{-\tau-2})$ for $x\rightarrow \infty$ on $\mathbf{R}^{n}$,
        where $\bar{\Delta}$ denotes the standard Laplace operator on $\mathbf{R}^{n}$ .
        The conclusion then follows from the Claim in \cite[p.327]{Lee19}.
	\end{proof}

\begin{lemma}\label{lem: asymptotic behavior2}
    Let $(E,g)$ be a $n$-dimensional AF end with asymptotic order $\frac{n-2}{2}<\tau\le n-2$. $u\in C^{4,\alpha}(E)$ satisfies
		\begin{equation}\label{eq: harmonic function}
		-\Delta u=0 \quad \text{in}\ \ \  E
		\end{equation}
    with $u(x)\to 0$ as $|x|\to\infty$. Then there is a constant $a$, such that for any $\varepsilon'>0$ there holds
    \begin{align*}
        u(x) = ar^{2-n}+O(r^{1-n})+O(r^{-\tau+2-n+\varepsilon'}),
    \end{align*}
    where $r=|x|$.
\end{lemma}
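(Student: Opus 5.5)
Since $u\to 0$ at infinity and is harmonic on the AF end, Lemma \ref{lem: asymptotic behavior1} applied with $f=0$ gives the preliminary decay $|u|+r|\nabla u|+r^2|\nabla^2u|=O(r^{-\tau+\epsilon})$ for every $\epsilon>0$. The plan is then to compare $\Delta_g$ with the Euclidean Laplacian $\bar\Delta$, treat the difference as a source term, and extract the leading $r^{2-n}$ coefficient from the Euclidean Newtonian potential. In coordinates,
\begin{align*}
    \bar\Delta u = \bar\Delta u - \Delta_g u = (\delta^{ij}-g^{ij})\partial_{ij}u + g^{ij}\Gamma_{ij}^k\partial_k u =: F .
\end{align*}
The AF decay $|g-\delta|=O(r^{-\tau})$, $|\Gamma|=O(r^{-\tau-1})$ together with the preliminary estimate gives $F=O(r^{-2\tau-2+\epsilon})$.

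First I will improve the decay of $u$ itself by iteration. Whenever $F=O(r^{-\sigma-2})$ with $\sigma<n-2$, the Euclidean theory used in Lemma \ref{lem: asymptotic behavior1} (the Claim in \cite{Lee19}) gives $u=O(r^{-\sigma+\epsilon})$. Feeding this back improves $F$ to $O(r^{-\tau-\sigma-2+\epsilon})$. After finitely many steps the decay rate of $F$ exceeds $r^{-n}$; this is where $\tau>\frac{n-2}{2}$ is used, since already $2\tau+2>n$. Once $F=O(r^{-n-\eta})$ for some $\eta>0$, $F$ is integrable on $E$.

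Next, cut off near $\partial E$ by taking $\chi u$ with $\chi=1$ near infinity, so that $\bar\Delta(\chi u)=\tilde F$ on $\mathbf{R}^n$ with $\tilde F$ compactly perturbed from $F$. Write
\begin{align*}
    \chi u = -\frac{1}{(n-2)\omega_{n-1}}\int_{\mathbf{R}^n}\frac{\tilde F(y)}{|x-y|^{n-2}}\,dy + H,
\end{align*}
where $H$ is harmonic on $\mathbf{R}^n$ and tends to $0$ at infinity, so $H\equiv0$ by Liouville. Expanding $|x-y|^{2-n}$, split the integral into $|y|<|x|/2$ and $|y|\ge|x|/2$.
\begin{itemize}
\item On $|y|<|x|/2$, the expansion gives $a\,r^{2-n}$ with $a=-\frac{1}{(n-2)\omega_{n-1}}\int\tilde F$, plus remainders. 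The first-order term is $O(r^{1-n}\int_{|y|<r/2}|y||\tilde F|)$; this contributes $O(r^{1-n})$ when $|y|\tilde F$ is integrable, and is otherwise bounded by $r^{1-n}\cdot r^{1-\text{(excess)}}$.
\item The region $|y|\ge|x|/2$ and the tail $r^{2-n}\int_{|y|>r/2}|\tilde F|$ both contribute $O(r^{2-n-\tau+\epsilon'})$.
\end{itemize}
These bounds use the sharpest available decay $\tilde F=O(r^{-\tau-n+\epsilon})$. That decay is obtained by feeding back $u=O(r^{2-n+\epsilon})$, which is itself reached from the expression above once $\tilde F$ is integrable. The result is the stated expansion $u=ar^{2-n}+O(r^{1-n})+O(r^{-\tau+2-n+\varepsilon'})$.

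I expect the main obstacle to be the bookkeeping of exponents in the near and far regions. In particular, the term $\int_{|y|<r/2}|y||\tilde F|$ can grow like $r^{1-\tau+\epsilon}$ when $\tau<1$, and I must check that it still lands inside $O(r^{-\tau+2-n+\varepsilon'})$. It does, since it is bounded by $r^{1-n}r^{1-\tau+\epsilon}=r^{2-n-\tau+\epsilon}$.
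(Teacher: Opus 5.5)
Your proof is correct and follows the same strategy as the paper: preliminary decay from Lemma \ref{lem: asymptotic behavior1}, rewriting the equation as $\bar\Delta u=F$, and one bootstrap using $2\tau+2>n$ to reach $F=O(r^{-\tau-n+\epsilon})$. That bootstrap also needs $|\partial u|=O(r^{1-n})$ and $|\partial^2u|=O(r^{-n})$, which you did not state but which follow from $u=O(r^{2-n})$ by scaled interior Schauder estimates. The only real difference is that you extract $ar^{2-n}$ explicitly from the Newtonian potential of the cut-off equation, with $a=-\frac{1}{(n-2)\omega_{n-1}}\int\tilde F$ and a near/far splitting of the kernel. The paper instead subtracts a weighted-decay particular solution on the exterior domain and reads $a$ off the expansion of the remaining exterior harmonic function; your version makes the error bookkeeping fully explicit.
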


\begin{proof}
    By Lemma \ref{lem: asymptotic behavior1}, 
    for any $\epsilon>0$,
    we have $|u(x)|+r|\nabla u|+r^2
    |\nabla^2 u|=O(r^{-\tau+\varepsilon})$. Rewrite \eqref{eq: harmonic function} as
    \begin{equation}\label{eq: possion eq}
        \bar{\Delta}u=f\ \ \text{on}
        \ \ \mathbf{R}^n\setminus B^n_R(O),
    \end{equation}
    where $\bar{\Delta}$ is the Laplacian in 
    $\mathbf{R}^n$. Then we have $f=O(r^{-2-2\tau+\varepsilon})$.
By standard PDE theory,  given any $\varepsilon'>0$, we can find a solution $v$ of the problem
\begin{equation}\label{eq: laplacian of v}
        \bar{\Delta}v=f\ \ \text{on}
        \ \ \mathbf{R}^n\setminus B^n_R(O),
    \end{equation}
satisfying
\begin{equation}\label{eq: asypmtotic expansion1}
 |v|+r|\partial v|+r^2|\partial ^2 v|=O(r^{-2\tau+\varepsilon+\varepsilon'}).   
\end{equation}
Now $w =u-v$ is a harmonic function defined in $\mathbf{R}^n\setminus B^n_R(O)$ with $w(x)\rightarrow 0$ as $x\rightarrow\infty$.
Thus
\begin{equation}\label{eq: asypmtotic expansion2}
    w(x)=ar^{2-n}+O(r^{1-n}).
\end{equation} 
Note that $\tau>\frac{n-2}{2}$. By \eqref{eq: asypmtotic expansion1}
and \eqref{eq: asypmtotic expansion2} we can improve the term $f$ in \eqref{eq: possion eq}
by $f=O(r^{-\tau-n+\varepsilon+\varepsilon'})$. Repeating the argument before, we find a solution $v$ of \eqref{eq: laplacian of v} with
\begin{equation}\label{eq: asypmtotic expansion3}
  |v|+r|\partial v|+r^2|\partial ^2 v|=O(r^{-\tau+2-n+\varepsilon'}).  
\end{equation}
for any $\varepsilon'>0$.
Combining \eqref{eq: asypmtotic expansion2}
and \eqref{eq: asypmtotic expansion3}
gives the desired estimate.

\end{proof}

\bibliographystyle{alpha}

\bibliography{Positive}

\end{document}